\newtheorem{theoremalphabetical}{Theorem}[section]
\newtheorem{theorem}{Theorem}[section]
\newtheorem{proposition}[theorem]{Proposition}
\newtheorem{lemma}[theorem]{Lemma}
\newtheorem{corollary}[theorem]{Corollary}
\theoremstyle{definition}
\newtheorem{definition}[theorem]{Definition}
\newtheorem{example}[theorem]{Example}
\newtheorem{notation}[theorem]{Notation}
\newtheorem{setting}[theorem]{Setting}
\newtheorem{remark}[theorem]{Remark}
\newtheorem*{remarksnonumber}{Remarks}
\newenvironment{altenumerate}
{\begin{list}
		{\textup{(\theenumi)} }
		{\usecounter{enumi}
			\setlength{\labelwidth}{0pt}
			\setlength{\labelsep}{0pt}
			\setlength{\leftmargin}{0pt}
			\setlength{\itemsep}{0pt}
			\setlength{\topsep}{0pt}
			\renewcommand{\theenumi}{\roman{enumi}}
	}}
	{\end{list}}
\newenvironment{altitemize}
{\begin{list}
		{$\bullet$}
		{\setlength{\labelwidth}{0pt}
			\setlength{\itemindent}{5pt}
			\setlength{\labelsep}{5pt}
			\setlength{\leftmargin}{0pt}
			\setlength{\itemsep}{0pt}
			\setlength{\topsep}{0pt}
	}}
	{\end{list}}
\newenvironment{altenumeratelevel2}
{\begin{list}
		{\textup{(\theenumi)} }
		{\usecounter{enumii}
			\setlength{\labelwidth}{2em}
			\setlength{\labelsep}{0pt}
			\setlength{\leftmargin}{2em}
			\setlength{\itemsep}{2pt}
			\setlength{\topsep}{2pt}
			\setlength{\itemindent}{0pt}
			\renewcommand{\theenumi}{\arabic{enumii}}
	}}
	{\end{list}}
\numberwithin{equation}{section}
\def\@seccntformat#1{%
	\protect\textup{\protect\@secnumfont
		\ifnum\pdfstrcmp{subsection}{#1}=0 \bfseries\fi
		\csname the#1\endcsname
		\protect\@secnumpunct
	}%
}  
\def\@tocline#1#2#3#4#5#6#7{\relax
	\ifnum #1>\c@tocdepth 
	\else
	\par \addpenalty\@secpenalty\addvspace{#2}%
	\begingroup \hyphenpenalty\@M
	\@ifempty{#4}{%
		\@tempdima\csname r@tocindent\number#1\endcsname\relax
	}{%
		\@tempdima#4\relax
	}%
	\parindent\z@ \leftskip#3\relax \advance\leftskip\@tempdima\relax
	\rightskip\@pnumwidth plus4em \parfillskip-\@pnumwidth
	#5\leavevmode\hskip-\@tempdima
	\ifcase #1
	\or\or \hskip 1em \or \hskip 2em \else \hskip 3em \fi%
	#6\nobreak\relax
	\hfill\hbox to\@pnumwidth{\@tocpagenum{#7}}\par
	\nobreak
	\endgroup
	\fi}
\newcommand{\Aut}{{\mathrm{Aut}}}
\newcommand{\abs}[1]{{\lvert{#1}\rvert}}
\newcommand{\ad}{{\mathrm{ad}}}
\newcommand{\blank}{{\,\_\,}}
\newcommand{\Coker}{{\mathrm{Coker}}}
\newcommand{\cont}{{\mathrm{cts}}}
\renewcommand{\det}{{\mathrm{det}}}
\newcommand{\defeq}{\vcentcolon=}
\newcommand{\eqdef}{=\vcentcolon}
\newcommand{\ev}{{\mathrm{ev}}}
\newcommand{\Ext}{{\mathrm{Ext}}}
\newcommand{\GL}{{\mathrm{GL}}}
\newcommand{\Hom}{{\mathrm{Hom}}}
\renewcommand{\Im}{{\mathrm{Im}}}
\newcommand{\id}{{\mathrm{id}}}
\newcommand{\Ker}{{\mathrm{Ker}}}
\newcommand{\lra}{\longrightarrow}
\newcommand{\lto}{\longmapsto}
\DeclareRobustCommand\longtwoheadrightarrow
\DeclareRobustCommand\longhookrightarrow
\renewcommand{\mod}{{\,\,\mathrm{mod}\,\,}}
\newcommand{\mto}{\mapsto}
\newcommand{\ov}[1]{\overline{#1}}
\newcommand{\pr}{{\mathrm{pr}}}
\newcommand{\Proj}{{\mathrm{Proj}}}
\newcommand{\PGL}{{\mathrm{PGL}}}
\newcommand{\Pic}{{\mathrm{Pic}}}
\newcommand{\ra}{\rightarrow}
\newcommand{\res}[1]{{\!\,\mid_{#1}}}
\newcommand{\rig}{{\mathrm{rig}}}
\newcommand{\Tr}{{\mathrm{Tr}}}
\newcommand{\ul}[1]{{\underline{#1}}}
\let\originalmiddle\middle
\renewcommand{\middle}[1]{\,\originalmiddle#1\,}
\newcommand{\BF}{{\mathbb {F}}}
\newcommand{\BN}{{\mathbb {N}}}
\newcommand{\BP}{{\mathbb {P}}}
\newcommand{\BR}{{\mathbb {R}}}
\newcommand{\BZ}{{\mathbb {Z}}}
\newcommand{\CB}{{\mathcal {B}}}
\newcommand{\CL}{{\mathcal {L}}}
\newcommand{\CM}{{\mathcal {M}}}
\newcommand{\CO}{{\mathcal {O}}}
\newcommand{\CP}{{\mathcal {P}}}
\newcommand{\CS}{{\mathcal {S}}}
\newcommand{\CT}{{\mathcal {T}}}
\newcommand{\CU}{{\mathcal {U}}}
\newcommand{\CV}{{\mathcal {V}}}
\newcommand{\bA}{{\mathrm{\bf A}}}
\newcommand{\bE}{{\mathrm{\bf E}}}
\newcommand{\bV}{{\mathrm{\bf V}}}
\title{Classification of Equivariant Line Bundles on the Drinfeld Upper Half Plane}
\author{Georg Linden}
\date{March 31, 2026}
\keywords{Drinfeld upper half plane, Equivariant Picard group, Drinfeld tower, Harmonic cochains}
\subjclass{11F85, 11G18, 14G22}
\newcommand{\Prof}{{\mathrm{Prof}}}
\newcommand{\unif}{{\pi}}
\newcommand{\1}{{\mathbf{1}}}
\newcommand{\Cur}{F}
\newcommand{\conjelt}{{s}}
\begin{document}

\begin{abstract}
	We explicitly determine the group of isomorphism classes of equivariant line bundles on the non-archimedean Drinfeld upper half plane for $\GL_2(F)$, for its subgroups of matrices whose determinant has even (respectively trivial) valuation, and for $\GL_2(\CO_F)$.
	Our results extend a recent classification of torsion equivariant line bundles with connection due to Ardakov and Wadsley, but we use a different approach.
	A crucial ingredient is a construction due to Van der Put which relates invertible analytic functions on the Drinfeld upper half plane to currents on the Bruhat--Tits tree.
	Another tool we use is condensed group cohomology.
\end{abstract}
\maketitle
\tableofcontents

\section{Introduction}

Let $F$ be a non-archimedean local field with ring of integers $\CO_F$, and let $K$ be a complete extension of $F$ contained in the completion of the algebraic closure of $F$.
The Drinfeld upper half plane associated to $F$ is the rigid $F$-analytic space 
\begin{equation*}
	\Omega_F = \BP_F^1 \setminus \BP^1 (F) .
\end{equation*}
We let $\Omega$ denote its base change to $K$.
The space $\Omega$ carries a natural action of $G \defeq \GL_2(F)$ by restricting the action on the projective line.

For $p$-adic $F$ and when $K$ contains the quadratic unramified extension of $F$, Ardakov and Wadsley \cite{ArdakovWadsley23EquivLineBun} explicitly described the torsion subgroup of $\mathrm{PicCon}^{G^0}(\Omega)$, i.e.\ of the group of isomorphism classes of $G^0$-equivariant line bundles with integrable connection\footnote{Such equivariant line bundles (or more generally vector bundles) with integrable connection on $\Omega$ are closely related to the tower of $G$-equivariant \'etale coverings of $\Omega$ defined by Drinfeld \cite{Drinfeld76CovpAdicSymmDom} when $K$ is $\breve{F}$, i.e.\ the completion of the maximally unramified extension of $F$. See for example the introduction of \cite{ArdakovWadsley23EquivLineBun} for details.} on $\Omega$.
Here, $G^0$ is the subgroup of matrices $g \in G$ whose determinant has trivial valuation.

The space $\Omega_F$ also admits a formal model $\widehat{\Omega}$ due to Deligne.
Junger \cite{Junger23CohModpFibresEquivDrinfeld} classified $G$- and $G^{(2)}$-equivariant line bundles on the base change $\widehat{\Omega}_{\CO_{\breve{F}}}$ of $\widehat{\Omega}$ where $G^{(2)}$ is the subgroup of matrices $g \in G$ whose determinant has even valuation.
\\

In this article our aim lies between these two directions.
We determine the structure of the group $\Pic^G(\Omega)$ of isomorphism classes of $G$-equivariant\footnote{Like Ardakov and Wadsley, we implicitly mean that the equivariant structure is continuous in a natural way as well, see \Cref{Def - Continuous equivariant module}.} line bundles on $\Omega$ as well as those of $\Pic^{G^{(2)}}(\Omega)$, $\Pic^{G^0}(\Omega)$ and $\Pic^{G_0}(\Omega)$ where $G_0 \defeq \GL_2(\CO_F)$.
Here, $F$ and $K$ are as in the beginning.

To state our results we fix some notations and recall two families of such equivariant line bundles:
For $n\in \BZ$, we let $\CO(n)$ denote the $n$-th twisting sheaf\footnote{See \Cref{Example - Cohomology class of twisting sheaf} for our convention concerning the $G$-equivariant structure on $\CO(n)$.} on $\BP_K^1$ restricted to $\Omega$.
Furthermore, for any continuous character $\chi\colon G \ra K^\times$ (or for one of $G^{(2)}$, $G^0$ or $G_0$), we obtain an equivariant line bundle $\CO_\chi$ on $\Omega$ by twisting the canonical equivariant structure of the structure sheaf by $\chi$.

Let $q$ be the cardinality of the residue field of $F$.
Let $\mu_{q-1}(F)$ denote the $(q-1)$-st roots of unity of $F$.
Moreover, we write $\CO_F^{\times \times}$ for the principal units of $F$, i.e.\ those elements of $\CO_F$ which are congruent to $1$ modulo its maximal ideal, and define $\CO_K^{\times \times}$ analogously.
For $x\in F^\times$, we let $\widehat{x}$ (resp.\ $\langle x \rangle$, $\widetilde{x}$) denote its image under the projection to $\mu_{q-1}(F)$ (resp.\ to $\CO_F^{\times \times}$, $\unif^\BZ \times \CO_F^{\times \times}$).

\begin{theoremalphabetical}\label{Thm - A}
	There are isomorphisms of abelian groups
	\begin{equation*}
	\arraycolsep=1.4pt
	\def\arraystretch{1.2}
	\begin{array}{rrccrrl}
		\text{(i)}\quad &\BZ &\oplus &&&\Hom_\cont(F^\times, K^\times) &\,\overset{\sim}{\lra}\, \Pic^{G}(\Omega) \,, \hspace{20cm}	\\
		&&&&&{(n, \chi) } &\,\lto\, \big[ \CO(n) \otimes \CO_{\chi \circ \det} \big] \,,	 \\[9pt]
		\text{(ii)}\quad&\BZ &\oplus & \BZ/ (q^2 - 1) \, \BZ  &\oplus &\Hom_\cont\big( \unif^{2\BZ} \times \CO_F^{\times \times}, K^{\times} \big) &\,\overset{\sim}{\lra}\, \Pic^{G^{(2)}}(\Omega) \,, \\
		&&&&&{(n,k, \chi)} &\,\lto\, \big[ \CO(n) \otimes \CL^{\otimes k} \otimes \CO_{\chi \circ\widetilde{ \det}} \big] \,, \\[9pt]
		\text{(iii)}\quad&\BZ &\oplus & \BZ/ (q^2 - 1) \, \BZ  &\oplus &\Hom_\cont\big( \CO_F^{\times \times}, \CO_K^{\times \times} \big) &\,\overset{\sim}{\lra}\, \Pic^{G^0}(\Omega) \,, \\
		&&&&&{(n,k, \chi)} &\,\lto\, \big[ \CO(n) \otimes \CL^{\otimes k} \otimes \CO_{\chi \circ \langle \det \rangle} \big] \,,	 \\[9pt]
		\text{(iv)}\quad&\BZ_p &\oplus &\BZ/ (q^2 - 1) \, \BZ  &\oplus &\Hom_\cont \big( G_0 , \CO_K^{\times \times} \big) &\,\overset{\sim}{\lra} \,\Pic^{G_0}(\Omega) \,, \\
		&&&&& {(\lambda, k , \psi) } &\,\lto\, \big[ \CL \otimes \CO(1) \big]^{\otimes \lambda} \otimes \big[ \CL^{\otimes k} \otimes \CO_\psi \big] .	
	\end{array}
	\end{equation*}
	Here, $\CL$ is a certain $G^{(2)}$-equivariant line bundle which satisfies $[\CL]^{\otimes q+1} = [\CO_{\widehat{\det} {}^{-1}}]$.
\end{theoremalphabetical}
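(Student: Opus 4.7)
The plan is to study $\Pic^H(\Omega)$ for $H \in \{G, G^0, G_0\}$ via the low-degree exact sequence
\begin{equation*}
0 \lra H^1_\cts(H, \CO(\Omega)^\times) \lra \Pic^H(\Omega) \lra \Pic(\Omega)^H \lra H^2_\cts(H, \CO(\Omega)^\times),
\end{equation*}
arising from the interpretation of an $H$-equivariant structure as a $1$-cocycle with values in the global units. The cohomology is taken in the condensed sense, which is essential because $\CO(\Omega)^\times$ is not locally compact. I would first dispose of the outer terms: by Gerritzen--Van der Put, $\Pic(\Omega) \cong \BZ$ is generated by $\CO(1)\res{\Omega}$, and since this class extends from $\BP^1_K$ the full $\GL_2(F)$-action fixes it, so $\Pic(\Omega)^H \cong \BZ$. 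The canonical $\GL_2$-linearization of $\CO(1)$ realizes a preimage in $\Pic^H(\Omega)$, proving surjectivity and reducing the task to the computation of $H^1_\cts(H, \CO(\Omega)^\times)$ together with the resulting splitting.

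The heart of the argument would decompose this $H^1$ via Van der Put's $G$-equivariant short exact sequence
\begin{equation*}
1 \lra K^\times \lra \CO(\Omega)^\times \lra \CC(\CT, \BZ) \lra 0,
\end{equation*}
where $\CC(\CT, \BZ)$ is the module of harmonic $\BZ$-valued cochains on the Bruhat--Tits tree $\CT$. The long exact sequence then reduces everything to two more accessible pieces. First, $H^1_\cts(H, K^\times) = \Hom_\cts(H^{\mathrm{ab}}, K^\times)$; combining the description of $H^{\mathrm{ab}}$ via $\det$ with the decomposition $F^\times \cong \BZ \times \mu_{q-1}(F) \times \CO_F^{\times \times}$ yields the character factor $\Hom_\cts(F^\times, K^\times)$ in row $(\ast_1)$, part of the $\BZ/(q^2-1)$-torsion in $(\ast_2)$, and the continuous characters in $(\ast_3)$. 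Second, $H^*_\cts(H, \CC(\CT, \BZ))$ would be computed by exploiting the $H$-action on $\CT$, which for $G$ has two vertex orbits and one edge orbit; a Mayer--Vietoris / amalgamated-product spectral sequence together with Shapiro's lemma for the vertex and edge stabilizers reduces the problem to the cohomology of these compact-open subgroups with values in $\BZ$, producing the remaining torsion contributions.

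Explicit generators are then identified as follows. $[\CO(n)]$ accounts for $\Pic(\Omega)^H$; the classes $[\CO_\chi]$ realize the $K^\times$-cohomology via twisting of the trivial bundle; and $\CL$ is to be constructed as the unique $G^0$-equivariant line bundle whose Van der Put datum represents a chosen generator of the torsion in $H^1_\cts(G^0, \CC(\CT, \BZ))$. The identity $[\CL]^{q+1} = [\CO_{\widehat{\det} {}^{-1}}]$ would then be verified on an explicit cocycle representative, exhibiting how $[\CL]$ amalgamates the tree torsion with the $\mu_{q-1}$-character torsion into a single cyclic factor of order $q^2 - 1$.

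The main obstacles I anticipate are threefold. Controlling $H^2_\cts(H, \CO(\Omega)^\times)$ enough to guarantee vanishing of the obstruction to lifting $[\CO(1)]$ and to extract the required splitting is nontrivial, and is precisely where the condensed framework earns its keep, since ordinary continuous cohomology of $\CO(\Omega)^\times$ is unwieldy. Computing the tree cohomology cleanly, so that the torsion order emerges exactly as $q^2 - 1$ rather than as some noncanonical extension, will require careful cocycle bookkeeping at the stabilizers. Finally, the $\BZ_p$-factor in row $(\ast_3)$ rather than $\BZ$ reflects the compactness of $G_0$: continuous cohomology of a pro-$p$-by-finite group with values in a torsion-free module is naturally a $\BZ_p$-module, so one must check that $[\CL \otimes \CO(1)]$ spans a free pro-cyclic $\BZ_p$-subgroup inside $\Pic^{G_0}(\Omega)$.
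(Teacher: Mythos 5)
Your overall route matches the paper's: reduce to continuous (condensed) group cohomology $H^1\big(H,\CO^\times(\Omega)\big)$, push this through Van der Put's sequence to the cohomology of currents on the Bruhat--Tits tree, and compute the latter via orbit decompositions, Shapiro's lemma and a Mayer--Vietoris argument for the amalgam $G^0 = G_0 \ast_I {}^{\conjelt}G_0$. That much is right.

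However, there is a genuine error at the start that propagates: you assert that $\Pic(\Omega)\cong\BZ$, generated by $\CO(1)\res{\Omega}$, and build a low-degree exact sequence ending in $\Pic(\Omega)^H$ on which you then argue surjectivity and lifting. In fact $\Pic(\Omega) = H^1(\Omega,\CO^\times)$ is \emph{trivial} (this is the classical result of Fresnel--Van der Put, Thm.\ 2.7.6, generalised by Junger to higher dimension). The restriction of $\CO(1)$ to $\Omega$ is trivial as a line bundle; what is nontrivial is its equivariant structure, and that is an $H^1$ phenomenon, not a $\Pic(\Omega)$ phenomenon. Consequently the five-term sequence degenerates immediately to $\Pic^H(\Omega)\cong H^1\big(H,\CO^\times(\Omega)\big)$ (which is exactly what the paper uses), there is no $H^2$ obstruction to manage, and there is no separate $\Pic(\Omega)^H$ term to "account for." The $\BZ$ summand in $(\ast_1)$ and $(\ast_2)$ (and the $\BZ_p$ in $(\ast_3)$) arise not from an underlying Picard group but from the image of the Van der Put transform $P_\ast$ in the current cohomology: $[j]=[\CO(1)]$ maps to $(1,1)\in\tfrac{1}{q-1}\BZ\oplus\BZ/(q+1)\BZ$ and the image of $P_\ast$ for $G^0$ is the index-$(q-1)$ subgroup $\BZ\oplus\BZ/(q+1)\BZ$, not all of it; establishing that this is exactly the image requires the connecting homomorphism analysis via the Mayer--Vietoris diagram and the lift of the torsion class $[\alpha]$ to $G^0$, which your sketch does not anticipate. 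So while the broad strategy is sound, the misidentification of $\Pic(\Omega)$ sends you down a wrong branch and obscures where the free part of the answer really comes from.
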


\begin{remarksnonumber}
	\begin{altenumerate}
		\item
			In fact, we prove something stronger than what is stated above.
			We identify $\Pic^G(\Omega)$ (and analogously for $G^{(2)}$, $G^0$ and $G_0$) with the underlying abelian group of the condensed group cohomology $\ul{H}^1 \big( G, \CO^\times (\Omega) \big)$.
			We establish isomorphisms on the level of condensed abelian groups between these cohomology groups and the right hand side terms of (i) -- (iv) in the Theorems \ref{Thm - Group cohomology for G_0}, \ref{Thm - Group cohomology of G^0} and \ref{Thm - Group cohomology for G}.
			From those we deduce the isomorphisms of \Cref{Thm - A}.
	
		\item 
			The above isomorphisms are compatible with respect to the canonical forgetful homomorphism on the right hand side and the following homomorphisms on the left hand side:
			From (i) to (ii)
			\begin{equation*}
				(n,\chi) \lto \big(n ,\, (q+1) \,l ,\, \chi\res{\unif^\BZ \times \CO_F^{\times \times}} \big) \,,
			\end{equation*}
			where $l \in \BZ/ (q-1) \, \BZ$ such that $\chi(\zeta) = \zeta^l$ for $\zeta \in \mu_{q-1}(F)$.
			From (ii) to (iii)
			\begin{equation*}
				(n,k, \chi) \lto \big( n , k , \chi\res{\CO_F^{\times \times}} \big) \,,
			\end{equation*}
			and from (iii) to (iv)
			\begin{equation*} 
				(n,k,\chi)\lto \big(n,\,k-n ,\, \chi \circ \langle \det \rangle \big) . 
			\end{equation*}
				
	\end{altenumerate}
\end{remarksnonumber}
\medskip

Our description of $\Pic^{G^0} (\Omega)$ relates to the one of Ardakov and Wadsley \cite{ArdakovWadsley23EquivLineBun} for $\mathrm{PicCon}^{G^0} (\Omega)_\mathrm{tors}$ in their setting as follows:
They construct an isomorphism of abelian groups
\begin{equation*}
	\mathrm{PicCon}^{G^0} (\Omega)_\mathrm{tors} \overset{\sim}{\lra} \Hom \big( \CO_D^\times , K^\times \big)_\mathrm{tors}
\end{equation*}
where $D$ is the quaternion division algebra over $F$ and $\CO_D$ its maximal order.
This isomorphism depends on the choice of a point $z \in \Omega_F(L)$ and an embedding $L \hookrightarrow D$ of $F$-algebras.
However, after quotienting out natural actions of $G$ and $D^\times$ on both sides respectively the resulting bijection is independent of these choices.

Forgetting the integrable connection induces a group homomorphism
\begin{equation*}
	\mathrm{PicCon}^{G^0}(\Omega) \lra \Pic^{G^0}(\Omega) .
\end{equation*}
It follows from a result of Taylor \cite{Taylor25CatLubinTateDrinfeldBun} that this map yields an isomorphism between the respective torsion subgroups.
In this way, \Cref{Thm - A} yields an alternative description of $\mathrm{PicCon}^{G^0} (\Omega)_\mathrm{tors}$.
We moreover recover the statement that $\mathrm{PicCon}^{G^0} (\Omega)_\mathrm{tors}$ and $\Hom \big( \CO_D^\times , K^\times \big)_\mathrm{tors}$ are isomorphic, see \Cref{Rmk - Connection to Ardakov-Wadsley}.
\\

The description of \cite{ArdakovWadsley23EquivLineBun} for $\mathrm{PicCon}^{G^0}(\Omega)_\mathrm{tors}$ in terms of representations of $\CO_D^\times$ has been generalised by Taylor \cite{Taylor25EquivVecBun} to Drinfeld symmetric spaces and vector bundles of arbitrary dimension and rank (although in a way which is not as explicit).
We expect that \Cref{Thm - A} and our proof also can be generalised to Drinfeld symmetric spaces of higher dimension.

\subsection*{Overview of the Proof}

In the following, let $H$ be one of $G$, $G^{(2)}$, $G^0$ or $G_0$. 
Since every line bundle on $\Omega$ is trivial, $\Pic^H (\Omega)$ is isomorphic to the group cohomology $H^1 \big( H, \CO^\times(\Omega) \big)$ of continuous\footnote{The continuity condition on the $H$-equivariant structures translates to considering continuous group cohomology.} cochains (\Cref{Cor - Equivariant Picard group and group cohomology on DHP}).
For example, the isomorphism class $[\CO(1)]$ corresponds to the group cohomology class $[j]$ represented by the $1$-cocycle (\Cref{Example - Cohomology class of twisting sheaf})
\begin{equation*}
	j \colon G \lra \CO^\times(\Omega) \,,\quad g \lto \bigg[ \, \Omega \ni [z_0 \!:\! z_1] \mto a\, \frac{z_1}{z_0} - c \, \bigg] \,, \quad\text{where $g = \bigg( \begin{matrix} a & b \\ c & d \end{matrix} \bigg)$.}
\end{equation*}

To handle the occurring homological algebra for topological groups we found it convenient to employ the language of condensed mathematics due to Clausen and Scholze.
More precisely, we pass to the condensed $\ul{H}$-module associated to $\CO^\times(\Omega)$ and consider its condensed group cohomology $\ul{H}^i \big( H, \CO^\times(\Omega) \big)$ (\Cref{Def - Condensed Group Cohomology}).
Because the topological abelian group $\CO^\times(\Omega)$ -- and in fact all topological groups relevant here -- is well behaved (meaning separable and completely ultrametrisable, see \Cref{Lemma - Group of invertible functions is Polish}), we can recover $H^1 \big( H, \CO^\times(\Omega) \big)$ as the underlying abelian group of $\ul{H}^1\big( H, \CO^\times(\Omega) \big)$ (\Cref{Prop - Comparison condensed and continuous group cohomology}).
\\

Our first crucial ingredient is a construction due to Van der Put \cite{vanderPut92DiscGrpsMumfordCurv}.
Let $\CT$ denote the Bruhat--Tits tree of $\PGL_2(F)$ with vertices $\bV$, undirected edges $\bE$ and directed edges (``arrows'') $\bA$. 
Van der Put defines the $G$-module  $\Cur(\bA, \BZ)$ of currents (harmonic cochains) on $\CT$ as the abelian group of functions $\varphi \colon \bA \ra \BZ$ such that
\begin{altenumeratelevel2}
	\item 
		$\varphi((v,w)) = - \varphi((w,v))$, for all $(v,w)\in \bA$, and 
		
	\item 
		$\sum_{(v,w)\in \bA} \varphi((v,w)) = 0 $, for all $v\in \bV$, summing over the edges originating from $v$.
\end{altenumeratelevel2}
He then constructs a group homomorphism $P \colon \CO^\times(\Omega) \ra \Cur(\bA, \BZ)$ which fits into a short exact $G$-equivariant sequence
\begin{equation*}\label{Eq - Introduction 1}
	1 \lra K^\times \lra \CO^\times(\Omega) \overset{P}{\lra} \Cur(\bA, \BZ) \lra 0 .  \tag{$\ast$}
\end{equation*}
We verify that this sequence is a strictly exact sequence of topological abelian groups with respect to a natural topology on $\Cur(\bA, \BZ) $ (\Cref{Cor - Solid Van der Put sequence}).

Via the long exact sequence of (condensed) group cohomology associated to (\ref{Eq - Introduction 1}) we can relate $\ul{H}^1\big( H, \CO^\times(\Omega) \big)$ to the group cohomology of $\Cur(\bA,\BZ)$.
The latter is more accessible to computation.
For convenience, instead of $\Cur(\bA,\BZ)$ we work with $\Cur(\bE,\BZ)$, i.e.\ the abelian group of functions on $\bE$ which satisfy the analogue of property (2):
We have $\Cur(\bE,\BZ) \cong \Cur(\bA, \BZ)$ as $G^0$-modules (\Cref{Lemma - Isomorphism between currents on directed and undirected edges}).
With a homomorphism $\Sigma$ which expresses this condition (2), we obtain a short strictly exact $G^0$-equivariant sequence
\begin{equation*}\label{Eq - Introduction 2}
	0 \lra \Cur(\bE, \BZ) \lra C(\bE, \BZ) \overset{\Sigma}{\lra}  C(\bV, \BZ) \lra 0 . \tag{$\dagger$}
\end{equation*}
Here, $C(\bE, \BZ)$ (resp.\ $C(\bV, \BZ)$) denotes the group of $\BZ$-valued functions on $\bE$ (resp.\ on $\bV$).

Using the long exact sequence associated to (\ref{Eq - Introduction 2}) we deduce the vanishing of the $G^0$- and $G_0$-invariants of $\Cur(\bA,\BZ)$ as well as (\Cref{Prop - Cohomology of currents} and \Cref{Cor - Cohomology of currents for maximal compact subgroup})
\begin{equation*}
\arraycolsep=1.4pt
\def\arraystretch{1.2}
\begin{array}{rcccl}
	\ul{H}^1 \big( G^0 , \Cur(\bA,\BZ) \big) &\cong &\textstyle \frac{1}{q-1} \BZ &\oplus &\BZ/ (q+1)\,\BZ \,,\\
	\ul{H}^1 \big( G_0, \Cur(\bA,\BZ) \big) &\cong &\BZ_p &\oplus &\BZ/ (q+1)\, \BZ \,.
\end{array}
\end{equation*}
From the long exact sequence associated to (\ref{Eq - Introduction 1}) we thus extract the exact sequences
\begin{equation*}\label{Eq - Introduction 3}
\arraycolsep=1.4pt
\def\arraystretch{1.2}
\begin{array}{cccccccl}
	1 \lra &\ul{H}^1 ( G^0 ,K^\times ) &\lra& \ul{H}^1 \big( G^0, \CO^\times(\Omega) \big) &\overset{P_\ast}{\lra}& \textstyle \frac{1}{q-1} \BZ &\oplus &\BZ/ (q+1)\,\BZ \,, \\
	1 \lra &\ul{H}^1 ( G_0 ,K^\times ) &\lra& \ul{H}^1 \big( G_0, \CO^\times(\Omega) \big) &\overset{P_\ast}{\lra}& \BZ_p &\oplus &\BZ/ (q+1)\, \BZ 
\end{array}
\tag{$\ddagger$}
\end{equation*}
which give a ``bound'' on the shape of $\ul{H}^1 \big( G^0, \CO^\times(\Omega) \big)$ and $\ul{H}^1 \big( G_0, \CO^\times(\Omega) \big)$.
It remains to determine the precise image of $P_\ast$ and the class of the extension induced by (\ref{Eq - Introduction 3}) in both cases.
\\

To this end, the next step is to construct some explicit classes in $H^1 \big( G_0, \CO^\times(\Omega) \big)$.
The basis here is a continuous $1$-cocycle $\alpha \colon G_0 \ra \CO^\times(\Omega)$ which essentially was defined by Ardakov and Wadsley in \cite{ArdakovWadsley23EquivLineBun} (see \Cref{Prop - Prime to p torsion of group cohomology for affinoid} and \Cref{Def - Certain torsion cocycle} for a recapitulation).
The class $[\alpha]$ generates the prime-to-$p$-torsion subgroup of $H^1 \big( G_0 , \CO^\times(\Omega) \big)$ and corresponds to $[\CL] \in \Pic^{G_0}(\Omega)$ in \Cref{Thm - A}.

We then prove that the condensed subgroup generated by $[j \alpha]$ extends to a direct summand of $\ul{H}^1 \big( G_0, \CO^\times(\Omega) \big)$ which is isomorphic to $\BZ_p$ (\Cref{Cor - Zp-family in group cohomology}).
In particular, in (\ref{Eq - Introduction 3}) the homomorphism $P_\ast$ for $G_0$ is surjective.
At this stage we can deduce the decomposition of \Cref{Thm - A} for $\ul{H}^1 \big( G_0, \CO^\times(\Omega) \big)$ (\Cref{Thm - Group cohomology for G_0}).

To show the decompositions for $G^0$, $G^{(2)}$ and $G$, we analyse which classes of $H^1 \big( G_0, \CO^\times(\Omega) \big)$ can be ``lifted'' to these groups, i.e.\ lie in the image of the canonical forgetful homomorphisms.

The group $G^0$ is the amalgamated free sum
\begin{equation*}
	G^0 = G_0 \ast_{I} {}^\conjelt G_0
\end{equation*}
of $G_0$ and its conjugate ${}^\conjelt G_0$ along the Iwahori subgroup $I = G_0 \cap {}^\conjelt G_0$.
Here, $\conjelt$ is the element $\big( \begin{smallmatrix} 0 & 1\\ \unif & 0 \end{smallmatrix} \big)$ for a fixed uniformiser $\unif$ of $F$.
This allows us to relate the group cohomology of $G^0$ to the one of $G_0$, ${}^\conjelt G_0$ and $I$ via a Mayer--Vietoris-type sequence for condensed group cohomology (\Cref{Cor - Mayer-Vietoris sequence}).

Concretely (see \eqref{Eq - Diagram induced by Mayer-Vietoris}) and together with an explicit computation in $H^1 \big( I , \CO^\times(\Omega'_0) \big)$ (\Cref{Prop - Shape of prime to p torsion class restricted to affinoid over edge}), we deduce that $[\alpha]$ lifts to $H^1 \big( G^0, \CO^\times(\Omega) \big)$ (\Cref{Prop - Lifting of prime-to-p torsion}).
Here, $\Omega'_0$ denotes the affinoid subdomain of $\Omega_K$ lying over the edge of $\CT$ which is stabilised by $I$.
Subsequently, we show the decomposition for $\ul{H}^1 \big( G^0, \CO^\times(\Omega) \big)$ (\Cref{Thm - Group cohomology of G^0}).
We note that in (\ref{Eq - Introduction 3}) the map $P_\ast$ for $G^0$ is not surjective but rather its image is $\BZ \oplus \BZ/ (q+1)\,\BZ$.

Finally, we have semi-direct product decompositions $G= G^0 \rtimes \conjelt^\BZ$ and $G^{(2)}= G^0 \rtimes \conjelt^{2\BZ}$.
Via the associated Hochschild--Serre spectral sequences and knowledge of the conjugation action by $\conjelt$ on $\ul{H}^1 \big( G^0, \CO^\times(\Omega) \big)$ we establish that $[\alpha]$ (and hence $[\CL]$) ``lifts'' to $G^{(2)}$, as well as the decompositions for $\ul{H}^1 \big( G^{(2)}, \CO^\times(\Omega) \big)$ and $\ul{H}^1 \big( G, \CO^\times(\Omega) \big)$ (\Cref{Thm - Group cohomology for G}).

\subsection*{Acknowledgements}

I would like to thank Konstantin Ardakov for his detailed answers to my questions (in particular about \cite{ArdakovWadsley23EquivLineBun}) and his interest in this work. 
Moreover, I would like to thank Vytautas Paškūnas, James Taylor, Simon Wadsley and Yingying Wang for helpful comments and discussions.

This work was done while the author was a member of the Research Training Group \textit{Symmetries and Classifying Spaces – Analytic, Arithmetic and Derived} which is funded by the DFG (Deutsche Forschungsgemeinschaft).

\subsection*{Notation and Conventions}\label{Sect - Notation and conventions}

Throughout, $F$ is a non-archimedean local field with fixed uniformiser $\unif$ and residue field $\BF_q$ of order $q$ and characteristic $p$. 
Let $\CO_F$ denote the ring of integers and $\CO_F^{\times \times} \defeq \{ x \in F \mid \abs{x-1}< 1 \}$ the subgroup of principal units.

Furthermore, $K$ will be a complete extension of $F$ contained in $C$.
Here $C$ is a completion of an algebraic closure of $F$ both of which we fix.
Then $\CO_K$ and $\CO_K^{\times\times}$ are defined analogously to above, and $\kappa$ will denote the residue field of $K$.
We denote the valuation with respect to $\unif$ by $v_\unif \colon F^\times \ra \BZ$ and normalise the absolute value $\abs{\blank}$ of $C$ and its subfields such that $\abs{\unif} = \frac{1}{q}$.

For a rigid $K$-analytic variety $X$, we will denote its structure sheaf simply by $\CO$ if the context allows, and write $\lVert \blank \rVert_X$ for the supremum seminorm on $\CO(X)$.
Regarding the \emph{sheaf of units} $\CO^\times$, for reduced affinoid subdomains $U\subset X$, we always consider $\CO^\times(U)$ as a topological group with the subspace topology $\CO^\times(U) \subset \CO(U)$, see \Cref{Lemma - Group of invertible functions is Polish}.
We let $\CO^{\times\times} \subset \CO^\times$ denote the \emph{sheaf of principal units} (also called \emph{sheaf of small units}) with $\CO^{\times\times}(U)= \{f \in \CO(U) \mid \lVert f - 1 \rVert_U < 1 \}$.

For topological spaces $A$ and $B$, we let $C(A,B)$ denote the space of continuous maps from $A$ to $B$ which we always consider with the compact-open topology.
When $A$ and $B$ are topological groups, we write $\Hom(A,B)$ for the subspace of continuous group homomorphisms.

The category of topological abelian groups is quasi-abelian in the sense of \cite{Schneiders98QuasiAbCat}.
In particular, a continuous homomorphism $\varphi \colon A \ra B$ between topological abelian groups is called \emph{strict} if the induced map $A/\Ker(\varphi) \ra \Im(\varphi)$ is a topological isomorphism with respect to the quotient and subspace topology respectively.
A \emph{short strictly exact sequence} is an algebraically short exact sequence whose homomorphisms are continuous and strict.

The continuous group cohomology we consider is the one defined by Tate in \cite[\S 2]{Tate76RelK2GalCohom} via continuous cochains. 
For a topological group $G$ and a topological $G$-module $M$, we let $C^n(G,M)$ (resp.\ $Z^n(G,M)$, $B^n (G,M)$) denote the subgroup of continuous $n$-cochains (resp.\ $n$-cocycles, $n$-coboundaries), and $H^n(G,M) \defeq Z^n(G,M) / B^n(G,M)$ the $n$-th continuous group cohomology group.

We treat notions for condensed group cohomology in \Cref{Sect - Condensed group cohomology} and only mention the following at this point:
For $G$ and $M$ as above, we let $\ul{H}^n (G, M) $ denote the $n$-th condensed group cohomology of the associated condensed $\ul{G}$-module $\ul{M}$.
Moreover, when $A$ is a topological abelian group, we write $\ul{\Hom}(G, A) \defeq \ul{H}^1 (G, A) $ where $A$ is endowed with the trivial $G$-action.
In the situation occurring in the following (namely locally profinite $G$, and ultrametrisable Polish $M$ and $A$) the condensed abelian groups $\ul{H}^n (G, M) $ and $\ul{\Hom}(G, A) $ are solid.
In addition, their underlying abelian groups satisfy
\begin{equation*}
	\ul{H}^n (G, M)(\ast) = H^n (G, M) \qquad\text{and}\qquad \ul{\Hom}(G, A)(\ast) = \Hom (G, A) 
\end{equation*}
and $\ul{\Hom}(G, A)$ agrees with the condensation of $\Hom(G,A)$.
Furthermore, we will use the open mapping theorem in this situation, i.e.\ the fact that every surjective continuous homomorphism between Polish groups is strict.

When $X$ is a set and $S\subset X$ a subset, we let $\1_S$ denote the characteristic function of $S$ with $\1_S(x) = 1$ if $x\in S$ and $\1_S(x)=0$ if $x\in X\setminus S$.

\section{Currents on the Bruhat--Tits Tree}

Recall that $G \defeq \GL_2(F)$, the subgroup $G^0$ is the kernel of the continuous group homomorphism $G \ra \BZ$, $g \mto v_\unif\big( \det(g) \big)$, while $G^{(2)}$ is the preimage of $2  \BZ$ under this homomorphism.
Moreover, we have the compact open subgroups
\begin{equation*}
	G_0 \defeq \GL_2(\CO_F) \qquad\text{and} \qquad {}^\conjelt G_0 \defeq \conjelt G_0 \conjelt^{-1} \quad\text{where $\conjelt \defeq \bigg( \begin{matrix} 0 & 1 \\ \unif & 0 \end{matrix} \bigg)$}
\end{equation*}
of $G^0$.
Their intersection
\begin{equation*}
	I \defeq G_0 \cap {}^\conjelt G_0 = \bigg\{ \bigg( \begin{matrix} a & b \\ c & d \end{matrix} \bigg) \in G_0   \,\bigg\vert\, c \equiv 0 \mod (\unif) \bigg\}
\end{equation*}
is the \emph{Iwahori subgroup} of $G_0$.

For a topological $G$-module $M$ and a closed subgroup $H$ of $G$, the conjugation by $\conjelt$ induces a topological isomorphism $\conjelt^\ast$ of complexes with
\begin{equation*}
	\conjelt^\ast \colon C^n( H, M) \lra C^n ( {}^\conjelt H , M ) \,,\quad \varphi \lto \big[ (g_1,\ldots,g_n) \mto \conjelt\cdot \varphi( \conjelt^{-1} g_1 \conjelt , \ldots, \conjelt^{-1} g_n \conjelt ) \big] .
\end{equation*}
If $M$ is ultrametrisable Polish, it follows from \Cref{Prop - Comparison condensed and continuous group cohomology} (ii) that $\conjelt^\ast$ gives rise to isomorphisms of condensed group cohomology.

\begin{definition}\label{Def - Conjugation action on cohomology}
	We denote the induced isomorphisms by
	\begin{equation*}
		\conjelt^\ast \colon \ul{H}^n \big( H , M \big) \lra \ul{H}^n \big( {}^\conjelt H , M \big) .
	\end{equation*}
	The underlying isomorphism of abelian groups is given by $[\varphi] \mto \conjelt^\ast [\varphi] \defeq [\conjelt^\ast \varphi]$.
\end{definition}

We remark that ${}^\conjelt G^0 = G^0$ and ${}^\conjelt I = I$ so that $\conjelt^\ast$ is an endomorphism in these cases. 
Moreover, if the centre of $G$ acts trivially on $M$, we have $(\conjelt^\ast)^2 = \id$ because $\conjelt^2$ lies in the centre.

\subsection{The Bruhat--Tits Tree}

We recall the definition of the Bruhat--Tits tree $\CT$ of $\PGL_2(F)$ following \cite[Sect.\ I.1]{BoutotCarayol91ThmCerednikDrinfeld}.

A \emph{lattice} in $F^2$ is a free $\CO_F$-submodule of rank $2$.
Two lattices $M$ and $M'$ are \emph{homothetic} if there exists $\lambda \in F^\times$ such that $M' = \lambda M$. 
We let $[M]$ denote the homothety class of a lattice $M$.

The \emph{Bruhat--Tits tree of $\PGL_2(F)$} is the graph $\CT$ with vertices $\bV$ defined to be the set of homothety classes of lattices in $F^2$.
Two vertices are joined by an edge if and only if there exist lattices $M$ and $M'$ representing them such that $\unif M \subsetneq M' \subsetneq M$.
We let $\bE$ denote the set of (undirected) edges and write $\bA$ for the set directed edges (``arrows'') of $\CT$. 
Then $\CT$ indeed is a tree and each vertex has exactly $q+1$ edges incident on it; they are in bijection with the lines in $M/\unif M$, i.e.\ with $\BP^1(\BF_q)$.

On $F^2$ we consider the $G$-action by matrix multiplication on column vectors.
This induces an action of $G$ on the set of homothety classes of lattices in $F^2$ and therefore on the tree $\CT$.
The centre of $G$ acts trivially.

We distinguish the family of vertices\footnote{The apartment of the $(v_i)_{i\in \BZ}$ is the one stabilised by the diagonal matrices of $G$.} and edges
\begin{equation*}
	v_i \defeq [ \CO_F \oplus \unif^i  \CO_F ]  \qquad\text{and}\qquad e_i \defeq \{v_i,v_{i+1}\}\,, \qquad\text{for $i \in \BZ$.}
\end{equation*}
Then $\conjelt \cdot v_i = v_{-i+1}$ and the stabiliser of $v_0$ (resp.\ $v_1$) is the maximal compact open subgroup $G_0$ (resp.\ ${}^\conjelt G_0$).

For $n\in \BN$, let $\CT_n = (\bV_n, \bE_n)$ denote the finite subtree of $\CT$ whose vertices $v$ are precisely the ones with $d(v,v_0) \leq n$, i.e.\ with distance less or equal to $n$ from $v_0$.
Moreover, let $\CT'_n = (\bV'_n, \bE'_n)$ be the finite subtree of $\CT$ whose vertices $v$ satisfy $d(v, v_0) \leq n $ or $d(v,v_1) \leq n $.
Then $\CT'_n = \CT_{n+1} \cap \conjelt \CT_{n+1} = \CT_n \cup \conjelt\CT_n$.

\begin{lemma}\label{Lemma - Action on finite subtree}
	\begin{altenumerate}
		\item
		The stabiliser in $G^0$ of $v_0$ (resp.\ $e_0$) is equal to $G_0$ (resp.\ $I$).
		Moreover, $\bV = G^0 \cdot v_0 \cup G^0 \cdot v_1$ is a decomposition into $G^0$-orbits, and $G^0$ acts transitively on $\bE$.
		
		\item 
		The subtree $\CT_n$ (resp.\ $\CT'_n$) is stable under the action of $G_0$ (resp.\ $I$).
		
		\item
		We have the following decompositions into $G_0$-orbits (resp.\ $I$-orbits)
		\begin{align*}
			\bV_n = \bigcup_{i=0}^n  G_0\cdot v_i \,,			\qquad	\bE_n = \bigcup_{i=0}^{n-1} G_0 \cdot e_i \,, \qquad
			\bV'_n = \bigcup_{i=-n}^{n+1}  I \cdot v_i			\,,\qquad	\bE'_n = \bigcup_{i=-n}^{n} I \cdot e_i .
		\end{align*}
		
		\item 
		The congruence subgroup $G_n \defeq 1 + \unif^n M_2 (\CO_F)$ acts trivially on $\CT_n$ and on $\CT'_{n-1}$.
	\end{altenumerate}
\end{lemma}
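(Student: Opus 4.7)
The plan is to verify parts (i)--(iv) in order, with the elementary divisor theorem and the matrix description of the $G$-action on $F^2$ providing the main tools throughout.

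For (i), the stabiliser of $v_0 = [\CO_F^2]$ in $G^0$ equals $G_0$: any $g \in G^0$ with $g\, \CO_F^2 = \lambda\, \CO_F^2$ satisfies $\abs{\det g} = \abs{\lambda}^2 = 1$, so $\lambda \in \CO_F^\times$ and $g \in \GL_2(\CO_F)$. Since $v_1 = \conjelt\, v_0$, the stabiliser of $v_1$ in $G^0$ is ${}^\conjelt G_0$, so the pointwise stabiliser of $e_0 = \{v_0, v_1\}$ is $G_0 \cap {}^\conjelt G_0 = I$; no $g \in G^0$ can swap $v_0$ and $v_1$, since $g\, \CO_F^2 = \lambda\, L_1$ with $L_1 = \CO_F \oplus \unif \CO_F$ would force $\abs{\det g} = \abs{\lambda}^2/q$, incompatible with $g \in G^0$. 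For the $G^0$-orbit decomposition on $\bV$, I would combine the elementary divisor theorem (which yields $G \cdot v_0 = \bV$) with the factorisation $G = \bigsqcup_{k \in \BZ} G^0\, \conjelt^k$ coming from $v_\unif \circ \det$, and the observation that $\conjelt^2 \in F^\times \cdot G_0$ (hence $\conjelt^2 v_0 = v_0$). This collapses to $\bV = G^0 v_0 \cup G^0 v_1$, disjoint by the same determinant obstruction. Transitivity of $G^0$ on $\bE$ follows by first moving an endpoint to $v_0$ and then using the surjection $G_0 \twoheadrightarrow \GL_2(\BF_q)$ acting transitively on $\BP^1(\BF_q) \cong \{\text{neighbours of } v_0\}$.

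Part (ii) is immediate from (i): $G_0$ preserves $d(\cdot, v_0)$ and $I$ preserves both $d(\cdot, v_0)$ and $d(\cdot, v_1)$. For (iii), the elementary divisor theorem identifies $G_0$-orbits on $\bV$ with distance from $v_0$, giving $\bV_n = \bigsqcup_{i=0}^n G_0 \cdot v_i$. For $\bE_n$, each edge runs between adjacent distance levels, so $G_0$-transitivity reduces us to edges incident at $v_i$; a direct computation shows the stabiliser of $v_i$ in $G_0$ (matrices with $v_\unif(c) \geq i$) reduces mod $\unif$ to a lower-triangular subgroup of $\GL_2(\BF_q)$ which acts transitively on the $q$ outward neighbours of $v_i$. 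The $I$-orbit analysis on $\bV'_n$ and $\bE'_n$ runs in parallel: since $I$ preserves the pair $(d(v, v_0), d(v, v_1))$, and in the tree $\CT$ this pair takes exactly one of two admissible values at each distance level, vertices split into two families represented by $v_i$ ($i \geq 1$) and $v_{-j}$ ($j \geq 0$). An induction on distance, using at each step the transitivity of an appropriate shrinking Iwahori-type subgroup on the next outward layer (exactly as in the $G_0$ case above), yields the claimed decompositions with the stated index ranges.

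For (iv), the key observation is that $G_n$ is normal in $G_0$ (being a congruence subgroup) and therefore also in $I \subset G_0$; combined with the orbit decompositions of (iii), it suffices to check $G_n \subseteq \mathrm{Stab}(v_i)$ for every orbit representative. Writing $g = 1 + \unif^n X$ with $X \in M_2(\CO_F)$, this reduces to the congruence conditions found in (i) and (iii): stabilising $v_i$ with $i \geq 0$ requires $c \in \unif^i \CO_F$ and with $i \leq 0$ requires $b \in \unif^{-i} \CO_F$, both automatic whenever $\abs{i} \leq n$, matching the index ranges for $\CT_n$ and $\CT'_{n-1}$.

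The main obstacle I anticipate lies in the inductive $I$-transitivity argument in (iii): one must simultaneously track the two distances $d(v, v_0)$ and $d(v, v_1)$, identify which shrinking iterate of the Iwahori fixes the chosen subpath, and verify that its mod-$\unif$ reduction is precisely the Borel subgroup of $\GL_2(\BF_q)$ acting transitively on the correct $q$-element subset of $\BP^1(\BF_q)$. Once this bookkeeping is in place, everything else reduces to essentially mechanical lattice and determinant manipulations.
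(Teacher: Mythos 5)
Your proof is correct, but it takes a markedly more hands-on route than the paper's. The paper dispatches (i) by citing II.1.3 Lemma 1 and II.1.4 Theorem 2 of Serre's \emph{Trees}, and then establishes (ii)--(iv) in one stroke via a $G_0$-equivariant bijection between $\BP^1(\CO_F/\unif^i)$ and the set of vertices at distance exactly $i$ from $v_0$ (descended from the $G$-equivariant bijection between $\BP^1(F)$ and ends of $\CT$); transitivity of $G_0$ on each distance sphere and triviality of the $G_n$-action on $\CT_n$ then both fall out of standard facts about $\GL_2(\CO_F/\unif^i)$ and the congruence filtration. You instead argue everything from scratch: explicit lattice stabiliser computations, the determinant obstruction for the parity of orbits, a layer-by-layer induction with Borel subgroups of $\GL_2(\BF_q)$ acting on outward neighbours, and a separate bookkeeping argument for the $I$-orbits tracking the pair $(d(\cdot,v_0),\, d(\cdot,v_1))$. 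The conceptual cost of your route is that the $I$-transitivity induction in (iii) genuinely requires care (as you flag), whereas in the paper's approach it follows immediately once one observes that the $\BP^1(\CO_F/\unif^i)$-bijection is equivariant for the whole of $G_0$, hence for $I$. The benefit of your route is that it is self-contained and makes the index ranges in (iii)--(iv) completely transparent without appealing to Serre. One cosmetic point: with the lattice $\CO_F \oplus \unif^i\CO_F$, the stabiliser condition $c \equiv 0 \bmod \unif^i$ reduces modulo $\unif$ to the \emph{upper}-triangular Borel, not the lower-triangular one; this does not affect the argument, since either Borel acts transitively on the $q$ non-fixed points of $\BP^1(\BF_q)$.
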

\begin{proof}
	The statements of (i) follow from II.1.3 Lemma 1 and II.1.4 Thm.\ 2 of \cite{Serre80Trees}.

	The action of $G$ on $\CT$ preserves the distance between vertices.
	Furthermore, there is a $G$-equivariant bijection between $\BP^1(F)$ and infinite, non-back\-track\-ing sequences of adjacent vertices originating from $v_0$, see \cite[Sect.\ II.1.1]{Serre80Trees}.
	By construction this bijection descends to a $G_0$-equivariant bijection between $\BP^1(\CO_F/\unif^i)$ and the vertices whose distance to $v_0$ is exactly $i$.
	From this one deduces the remaining assertions.
\end{proof}

Let $\bA_{\leq n}$ (resp.\ $\bA'_{\leq n}$) denote the subset of directed edges of $\CT$ which originate from a vertex of $\CT_n$ but possibly end in $\CT_{n+1}$ (resp.\ originate from a vertex of $\CT'_n$ but possibly end in $\CT'_{n+1}$).
We consider several modules consisting of functions arising from $\CT$ and the above subtrees.

\begin{definition}\label{Def - Cochains}
	The topological abelian group $C(\bA, \BZ)$ (where $\bA$ and $\BZ$ are endowed with the discrete topology) becomes a $G$-module via
	\begin{equation*}
		(g \cdot \varphi) ( a ) \defeq \varphi ( g^{-1} \cdot a ) \,,\qquad\text{for $g \in G$, $\varphi \in C(\bA,\BZ)$, $a\in \bA$.}
	\end{equation*}
	We call $C(\bA, \BZ)$ the group of \emph{cochains} on $\CT$.
	Analogously we define $C(\bE, \BZ)$ and $C(\bV, \BZ)$.

	In this way, we also obtain the $G_0$-modules $C(\bA_{\leq n}, \BZ)$, $C(\bE_n, \BZ)$ and $C(\bV_n, \BZ)$, and the $I$-modules $C(\bA'_{\leq n}, \BZ)$, $C(\bE'_n, \BZ)$ and $C(\bV'_n, \BZ)$. 
\end{definition}

\begin{lemma}\label{Lemma - Cochains for finite trees are topological modules}
	For $n \in \BN$, the action of $G_0$ (resp.\ of $I$) is continuous on $C(\bA_{\leq n}, \BZ)$, $C(\bE_n, \BZ)$ and $C(\bV_n, \BZ)$ (resp.\ on $C(\bA'_{\leq n}, \BZ)$, $C(\bE'_n, \BZ)$ and $C(\bV'_n, \BZ)$), and these topological modules carry the discrete topology.
\end{lemma}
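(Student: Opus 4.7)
The plan is to reduce the lemma to two elementary observations: each of the six index sets $\bA_{\leq n},\bE_n,\bV_n,\bA'_{\leq n},\bE'_n,\bV'_n$ is \emph{finite}, and the action of the relevant open subgroup factors through a finite quotient by a congruence subgroup.

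For the topological statement, I would first note that since every vertex of $\CT$ has exactly $q+1$ incident edges, the subtrees $\CT_n$ and $\CT'_n$ are finite, and therefore so are all six sets above. For a finite set $X$ with the discrete topology and a discrete target $\BZ$, the compact-open topology on $C(X,\BZ)$ coincides with the product topology on $\BZ^X$, and a finite product of discrete spaces is discrete. Hence each of the six modules is discrete as a topological abelian group.

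For continuity of the action on a discrete module it suffices to show that the stabiliser of every element is open. By \Cref{Lemma - Action on finite subtree} (iv), the congruence subgroup $G_n=1+\unif^n M_2(\CO_F)$ acts trivially on $\CT_n$, hence pointwise on $\bV_n$ and $\bE_n$; applying the same with $n$ replaced by $n+1$ handles $\bA_{\leq n}$, whose arrows have both endpoints in $\bV_{n+1}$. Since $G_n$ is open in $G_0$, the $G_0$-action on each of $C(\bV_n,\BZ)$, $C(\bE_n,\BZ)$, $C(\bA_{\leq n},\BZ)$ factors through a finite discrete quotient, so every stabiliser is open. For the primed versions, the same argument applies to $I$ acting on $C(\bV'_n,\BZ)$, $C(\bE'_n,\BZ)$, $C(\bA'_{\leq n},\BZ)$, using that $G_{n+1}$ (resp.\ $G_{n+2}$) acts trivially on $\CT'_n$ (resp.\ on $\CT'_{n+1}$), and that these congruence subgroups are open in $I$.

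No step here presents a genuine obstacle; the lemma is essentially a bookkeeping exercise combining the finiteness of the relevant subtrees with the congruence filtration of $G_0$. The only minor point requiring a sentence is that the action of $G_0$ (resp.\ $I$) really does preserve the subsets in question as sets, so that the $G_0$- (resp.\ $I$-)module structure on the corresponding cochain groups is well defined; this is exactly \Cref{Lemma - Action on finite subtree} (ii).
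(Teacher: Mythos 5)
Your proof is correct and takes essentially the same approach as the paper: observe that the relevant function spaces are finite products of $\BZ$ with itself, hence discrete, and then use part (iv) of \Cref{Lemma - Action on finite subtree} to exhibit an open congruence subgroup acting trivially, which makes stabilisers open. The paper simply quotes the single uniform bound $G_{n+2}$ for all six cases rather than tracking $n$, $n+1$, $n+2$ separately as you do, but the content is identical.
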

\begin{proof}
	Since the above spaces of functions all have finite discrete domains and discrete codomains, the compact-open topology on them is the discrete one.
	Moreover, the group actions are continuous because by \Cref{Lemma - Action on finite subtree} (iv) the stabiliser of any element contains the open subgroup $G_{n+2}$.
\end{proof}

\begin{proposition}\label{Prop - Inverse limit description for cochains}
	The group of cochains $C(\bA, \BZ)$ is a Polish $G$-module and
	\begin{equation*}
		C(\bA,\BZ) \overset{\sim}{\lra} \textstyle \varprojlim_{n\in \BN} C(\bA_{\leq n}, \BZ)   \,,\quad \varphi \lto ( \varphi\res{\bA_{\leq n}} )_{n\in \BN} ,
	\end{equation*}
	is an isomorphism of topological $G_0$-modules.
	Similarly, there is an isomorphism of toplogical $I$-modules $C(\bA, \BZ) \cong \varprojlim_{n\in \BN} C(\bA'_{\leq n} , \BZ)$.

	Moreover, the analogous assertions hold for $C(\bE,\BZ)$ and $C(\bV,\BZ)$.
\end{proposition}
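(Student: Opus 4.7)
My plan is to observe that, because $\CT$ is a locally finite tree with countably many vertices, $\bA$ is a countable discrete set, and therefore the compact--open topology on $C(\bA,\BZ)$ agrees with the product topology on $\BZ^{\bA}$. This immediately makes $C(\bA,\BZ)$ a countable product of discrete Polish groups, hence a Polish topological abelian group; the same reasoning applies to $C(\bE,\BZ)$ and $C(\bV,\BZ)$.

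Next, I would build the inverse-limit isomorphism from the exhaustion $\bA = \bigcup_{n\in\BN} \bA_{\leq n}$, which holds because every vertex of $\CT$ lies in some $\CT_n$. Under the identifications $C(\bA,\BZ) = \prod_{a\in\bA}\BZ$ and $C(\bA_{\leq n},\BZ) = \prod_{a\in \bA_{\leq n}} \BZ$ with their product topologies, the restriction map $\varphi \mapsto (\varphi\res{\bA_{\leq n}})_n$ is visibly the canonical homeomorphism $\prod_{a\in\bA}\BZ \cong \varprojlim_n \prod_{a\in \bA_{\leq n}}\BZ$. To promote this to an isomorphism of topological $G_0$-modules I would invoke \Cref{Lemma - Action on finite subtree}~(ii): each $\CT_n$ is $G_0$-stable, and hence so is $\bA_{\leq n}$, so that the restriction is $G_0$-equivariant. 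The same argument with $\CT'_n$ and $I$ in place of $\CT_n$ and $G_0$ handles the $I$-equivariant inverse limit via the exhaustion $\bA = \bigcup_n \bA'_{\leq n}$, and the arguments transport verbatim to $\bE$ and $\bV$ via the corresponding $\bE_n, \bE'_n, \bV_n, \bV'_n$.

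It remains to verify that the full group $G$ acts continuously on $C(\bA,\BZ)$, so that $C(\bA,\BZ)$ is genuinely a Polish $G$-module. The key input is \Cref{Lemma - Action on finite subtree}~(iv): since the congruence subgroup $G_{n+1}$ acts trivially on $\CT_{n+1}$, it stabilises every arrow in $\bA_{\leq n}$, so every arrow has open stabiliser in $G$. Given this, joint continuity of $G \times C(\bA,\BZ) \to C(\bA,\BZ)$ at a point $(g_0,\varphi_0)$ follows by a standard check on a basic neighbourhood $\{\psi : \psi(a_i) = \varphi_0(g_0^{-1} a_i)\}$ of $g_0\cdot\varphi_0$: one takes $U$ an open neighbourhood of $g_0$ in $G$ such that $g_0^{-1}g$ stabilises each $a_i$ for $g\in U$, together with $W$ the open set in $C(\bA,\BZ)$ cutting out the prescribed values at $g_0^{-1} a_i$. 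The most delicate—but still routine—step is precisely this continuity verification; I expect no genuine obstacle, since everything reduces to the combinatorial exhaustion of the tree by finite stable subgraphs together with the product-topology description of the cochain groups.
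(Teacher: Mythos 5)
Your proof is correct and reaches the same conclusion, but by a more elementary route than the paper. The paper observes that $C(\blank,\blank)$ is an internal Hom-functor in the category of compactly generated Hausdorff spaces, which preserves limits in the appropriate variable, and deduces the inverse-limit description directly from the exhaustion $\bA = \bigcup_n \bA_{\leq n}$. You instead unwind everything through the concrete identification $C(\bA,\BZ) = \BZ^{\bA}$ with the product topology (valid since $\bA$ is discrete and countable, so compact-open coincides with pointwise convergence); the inverse-limit statement then reduces to the familiar fact that a product over a countable set is the inverse limit of products over an exhausting sequence of finite subsets, and Polishness is immediate. Both arguments identify the same homeomorphism; yours is self-contained and bypasses the internal-Hom machinery, while the paper's is shorter granted that machinery. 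For continuity of the $G$-action, the paper factors the problem: it first shows each fixed $g\in G$ acts by a continuous automorphism (by passing through a larger finite stage) and then reduces joint continuity to the restricted $G_0$-action, which is automatic as an inverse limit of actions on discrete modules. Your direct joint-continuity check at $(g_0,\varphi_0)$, using only openness of arrow stabilisers from \Cref{Lemma - Action on finite subtree}~(iv), is equally valid.

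One small slip in the continuity check: since the target neighbourhood of $g_0\cdot\varphi_0$ and the set $W$ around $\varphi_0$ prescribe values at $g_0^{-1}a_i$, the neighbourhood $U$ of $g_0$ should be chosen so that $g g_0^{-1}$ (equivalently $g_0 g^{-1}$) stabilises each $a_i$, which is exactly what forces $g^{-1}a_i = g_0^{-1}a_i$. Your phrase ``$g_0^{-1}g$ stabilises each $a_i$'' is the wrong translate of the stabiliser and does not yield that identity. The fix is trivial and the correct $U$ is still open by the same openness of stabilisers, so the argument stands.
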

\begin{proof}
	For the claimed isomorphisms involving $C(\bA, \BZ)$, we first note that forming $C(\blank, \blank)$ is an internal Hom-functor in the category of compactly generated Hausdorff spaces. 
	Since this internal Hom-functor preserves limits\footnote{But recall that the first entry of the internal Hom-functor is in the opposite category.}, we deduce from $\bA = \bigcup_{n \in \BN} \bA_{\leq n} = \bigcup_{n \in \BN} \bA'_{\leq n}$ that the claimed maps are homeomorphisms.
	In particular, $C(\bA,\BZ)$ is Polish because the $C(\bA_{\leq n}, \BZ)$ are. 
	Moreover, one directly verifies that the map is a homomorphisms of $G_0$-modules (resp.\ $I$-modules).

	It remains to show that the action of $G$ on $C(\bA, \BZ)$ is continuous.
	We first show that $G$ acts by continuous automorphisms.
	Indeed, fix $g\in G$ and let $n\in \BN$. 
	Then the finite subtree $g^{-1} \cdot \CT_n$ is contained in $\CT_m$, for some $m\geq n$. 
	We obtain continuous homomorphisms
	\begin{equation*}
		C(\bA_{\leq m}, \BZ) \lra C(\bA_{\leq n}, \BZ)\,,\quad \varphi \lto \big[ a \mto \varphi(g^{-1} \cdot a ) \big] ,
	\end{equation*}
	which induce the automorphism by which $g$ acts on $C(\bA, \BZ)$.

	It now suffices to prove that the action restricted to an open subgroup of $G$ on $C(\bA, \BZ)$ is continuous. 
	But the action of $G_0$ is the inverse limit of the continuous actions on the $C(\bA_{\leq n}, \BZ)$.

	The proofs for $C(\bE,\BZ)$ and $C(\bV, \BZ)$ are completely analogous.	
\end{proof}

\subsection{Currents}

The currents that van der Put defines in \cite[Sect.\ 2]{vanderPut92DiscGrpsMumfordCurv} are a subset of $C(\bA, \BZ)$. 
However, we will also consider functions with domain $\bE$ which satisfy a ``current-like'' property.

\begin{definition}\label{Def - Currents}
	\begin{altenumerate}
		\item 
		A cochain $\varphi$ in $ C(\bA,\BZ)$ (resp.\ in $ C(\bA^{(\prime)}_{\leq n}, \BZ)$) is called a \emph{current} (or \emph{harmonic cochain}) if
		\begin{altenumeratelevel2}
			\item 
			$\varphi( (v,w) ) = -\varphi( (w,v))$, for every directed edge $(v,w) \in \bA$  (resp.\ for all $(v,w) \in \bA^{(\prime)}_{\leq n}$ with $\{v,w\} \in \bE^{(\prime)}_n$), and
			
			\item 
			$\sum_{(v,w)\in \bA} \varphi( (v,w) ) = 0$, for all vertices $v\in \bV$ (resp.\ for all $v\in \bV^{(\prime)}_n$), where one sums over all directed edges originating from $v$.
		\end{altenumeratelevel2}
		The currents on $\CT$ form a $G$-submodule of $C(\bA,\BZ)$ which we denote by $\Cur(\bA,\BZ)$.
		Likewise we denote the $G_0$-submodule of currents on $\CT_n$ by $\Cur(\bA_{\leq n}, \BZ)$, and the $I$-submodule of currents on $\CT'_n$ by $\Cur(\bA'_{\leq n}, \BZ)$.
		
		\item 
			A function $\varphi \in C(\bE,\BZ)$ (resp.\ $\varphi \in C(\bE^{(\prime)}_{n+1}, \BZ)$) is called a \emph{current} (or \emph{harmonic cochain}) if
		\begin{equation*}
			\textstyle \sum_{ \{v,w\} \in \bE} \varphi(\{v,w\}) = 0 \,,\qquad\text{for all $v\in \bV$ (resp.\ for all $v\in \bV^{(\prime)}_{n}$).}
		\end{equation*}
		Here the sum is taken over all edges which have $v$ as one of their endpoints.
		We let $\Cur(\bE, \BZ)$, $\Cur(\bE_{n+1}, \BZ)$ and $\Cur(\bE'_{n+1}, \BZ)$ denote the corresponding $G$-, $G_0$- and $I$-submodules respectively.
				
	\end{altenumerate}
\end{definition}

\begin{remark}\label{Rmk - Inverse limit description for currents}
	Being closed subspaces, all these submodules of currents are Polish.
	Moreover, the isomorphisms from \Cref{Prop - Inverse limit description for cochains} induce isomorphisms
	\begin{alignat*}{3}
		\Cur(\bA, \BZ) &\cong \textstyle \varprojlim_{n\in \BN} \Cur(\bA_{\leq n}, \BZ)  &&\cong \textstyle\varprojlim_{n\in \BN} \Cur(\bA'_{\leq n}, \BZ) , \\
		\Cur(\bE, \BZ) &\cong \textstyle \varprojlim_{n\in \BN} \Cur(\bE_{ n}, \BZ)  &&\cong \textstyle\varprojlim_{n\in \BN} \Cur(\bE'_{ n}, \BZ)
	\end{alignat*}
	of topological $G_0$- respectively $I$-modules.
\end{remark}

\begin{notation}
	The vertices $\bV$ can be partitioned into two classes such that the distance between vertices of the same class is even.
	We say that $v\in \bV$ is \emph{even} if $d(v,v_0)$ is even, and \emph{odd} otherwise, i.e.\ to call the class even which contains $v_0$.
	The subgroup $G^0$ preserves this parity of vertices \cite[II.1.2 Cor.\ to Prop.\ 1]{Serre80Trees}.

	Furthermore, given an edge $e\in \bE$ when we write $e = \{ v_+, v_- \}$, we mean that $v_+$ is the even vertex of $e$ and $v_-$ the odd one.
\end{notation}

\begin{lemma}\label{Lemma - Isomorphism between currents on directed and undirected edges}
	\begin{altenumerate}
		\item 
		There is an isomorphism of topological $G^0$-modules
		\begin{equation*}
			\Cur(\bA,\BZ) \overset{\sim}{\lra} \Cur(\bE, \BZ) \,,\quad \varphi \lto \big[ e= \{v_+,v_-\} \mto \varphi((v_+,v_-))  \big] .
		\end{equation*}
		
		\item 
		For $n\in \BN$, there is an isomorphism of $G_0$-modules (resp.\ $I$-modules)
		\begin{equation*}
			\Cur \big(\bA^{(\prime)}_{\leq n},\BZ \big) \overset{\sim}{\lra} \Cur \big(\bE^{(\prime)}_{n+1}, \BZ \big) \,,\quad \varphi \lto \Bigg[ e = \{v_+,v_-\} \mto \begin{cases}	\varphi((v_+,v_-)) & \text{, if $v_+\in \bV^{(\prime)}_n$,} \\
				-\varphi((v_-,v_+)) & \text{, if $v_+\notin \bV^{(\prime)}_n$,}
			\end{cases} \Bigg] .
		\end{equation*}
		
	\end{altenumerate}
\end{lemma}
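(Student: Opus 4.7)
The plan is to construct the inverse maps explicitly and verify well-definedness, bijectivity, equivariance, and (where nontrivial) continuity. The common key observation is that $G^0$ preserves the parity of vertices, so the assignment $e = \{v_+, v_-\} \mapsto (v_+, v_-)$ provides a $G^0$-equivariant way to orient every undirected edge by its even endpoint. Bijectivity and equivariance of both stated maps will follow from this orientation; the only subtle point is how to handle the boundary edges in part (ii), where the even endpoint might lie outside $\bV^{(\prime)}_n$.

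For part (i), I would first verify that the forward map $\varphi \mapsto \bigl[\psi : e \mapsto \varphi((v_+, v_-))\bigr]$ actually lands in $\Cur(\bE, \BZ)$: at an even vertex $v$, each incident edge $e = \{v, w\}$ has $v_+ = v$, so $\sum_{e \ni v} \psi(e) = \sum_{(v, w) \in \bA} \varphi((v, w)) = 0$; at an odd vertex $v$, the antisymmetry of $\varphi$ gives $\psi(\{v, w\}) = -\varphi((v, w))$, so the sum again vanishes. For the inverse, set $\varphi((v, w)) := \epsilon_v \, \psi(\{v, w\})$ with $\epsilon_v = +1$ if $v$ is even and $\epsilon_v = -1$ if $v$ is odd; antisymmetry of $\varphi$ is built in by construction, and the current condition follows from that of $\psi$ by the same parity computation. $G^0$-equivariance is immediate from parity preservation, and continuity holds because, in the product-topology description inherited from $C(\bA, \BZ)$ and $C(\bE, \BZ)$, each coordinate of the output depends only on a single coordinate of the input.

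For part (ii) the same scheme applies, but an edge $e = \{v_+, v_-\} \in \bE^{(\prime)}_{n+1}$ can be a boundary edge whose even endpoint $v_+$ lies outside $\bV^{(\prime)}_n$; then $(v_+, v_-) \notin \bA^{(\prime)}_{\leq n}$, so we must transport its value via antisymmetry by setting $\psi(e) := -\varphi((v_-, v_+))$, which is precisely the case distinction in the statement. Note that since $e$ connects vertices at distance $\leq n+1$ from the base vertex, at least one of $v_+, v_-$ must lie in $\bV^{(\prime)}_n$, so the definition is exhaustive. The sum condition at each $v \in \bV^{(\prime)}_n$ is checked as before, taking care that the antisymmetry hypothesis on $\varphi$ applies only to edges in $\bE^{(\prime)}_n$; the parity case analysis again collapses each term to $\pm \varphi((v, w))$ with matching signs, so the total is zero. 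Continuity is automatic because, by \Cref{Lemma - Cochains for finite trees are topological modules}, all modules in sight are discrete. As a cross-check, one could alternatively deduce (i) from (ii) by taking inverse limits along the towers $\CT_n \subset \CT_{n+1}$ and $\CT'_n \subset \CT'_{n+1}$, invoking \Cref{Rmk - Inverse limit description for currents}. I expect no major obstacle --- the only real subtlety is the correct bookkeeping of signs at the boundary in (ii).
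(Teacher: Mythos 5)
Your proof is correct and follows essentially the same strategy as the paper: construct the inverse map via the sign $\epsilon_v = (-1)^{\text{parity of }v}$ and verify everything by the parity computation (which is what the paper does for part (ii)). The one organizational difference is that you prove (i) directly — including continuity via the observation that each coordinate of the output depends on a single input coordinate in the product topology on $C(\bA,\BZ)$ and $C(\bE,\BZ)$ — whereas the paper first establishes (ii) and then obtains (i), including the topological isomorphism, by passing to the inverse limit along $\CT_n$; you mention this inverse-limit route yourself as a cross-check. Both routes are valid, and the content is the same.
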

\begin{proof}
	For (ii) we consider the homomorphism
	\begin{equation*}
		\Cur(\bE_{n+1},\BZ) \lra \Cur (\bA_{\leq n}, \BZ) \,,\quad \psi \lto \Bigg[  (v,w) \mto \begin{cases}	\psi(\{v,w\}) & \text{, if $v$ is even,} \\
			-\psi(\{v,w\}) & \text{, if $v$ is odd,}
		\end{cases} \Bigg] .
	\end{equation*}
	One verifies that this homomorphism and the one from (ii) are well-defined and inverse to each other. 
	That they are $G_0$-equivariant is a direct computation which uses that $G_0$ preserves the parity of vertices. 
	For $\Cur(\bE'_{n+1},\BZ) \cong \Cur (\bA'_{\leq n}, \BZ)$ one argues analogously.

	The homomorphism in (i) then is the inverse limit of the ones in (ii). 
	Hence it is a topological isomorphism. 
	Using that $G^0$ preserves parity, one computes that it is $G^0$-equivariant. 
\end{proof}

\begin{remark}\label{Rmk - conjugation action on currents on undirected edges}
	The topological $G$-module $\Cur(\bA, \BZ)$ canonically carries an action by the element $\conjelt$.
	On $\Cur(\bE, \BZ)$ we prescribe an $\conjelt$-action via 
	\begin{equation*}
		\conjelt \cdot \varphi \defeq \big[ e \mto - \varphi( \conjelt^{-1} \cdot e ) \big] \,,\qquad\text{for $\varphi \in \Cur(\bE, \BZ)$.}
	\end{equation*}
	Then, the $G^0$-equivariant isomorphism of \Cref{Lemma - Isomorphism between currents on directed and undirected edges} (i) becomes $\conjelt$-equivariant since $\conjelt$ interchanges the parity of vertices.
	Analogous statements hold for an $\conjelt$-action on $\Cur(\bE'_n, \BZ)$ and an isomorphism $\conjelt \colon \Cur(\bE_n, \BZ) \ra \Cur(\conjelt\bE_n, \BZ)$.
\end{remark}

We can realise $\Cur(\bE,\BZ)$ as the kernel of the following homomorphism of $G^0$-modules:
\begin{equation*}
	\Sigma  \colon C(\bE, \BZ) \lra  C(\bV, \BZ) \,,\quad \varphi \lto  \bigg[ v \mto \sum_{ \{v,w\} \in \bE} \varphi(\{v,w\}) \bigg] .
\end{equation*}
Similarly, we define the homomorphism $\Sigma_n \colon C(\bE_{n+1}, \BZ) \ra C(\bV_{n}, \BZ)$ of $G_0$-modules and the homomorphism $\Sigma_n^{\prime} \colon C(\bE_{n+1}^{\prime}, \BZ) \ra C(\bV_{n}^{\prime}, \BZ)$ of $I$-modules so that $\Ker(\Sigma_n) = \Cur(\bE_{n+1}, \BZ)$ (resp.\ $\Ker(\Sigma'_n) = \Cur(\bE'_{n+1}, \BZ)$).
Then $\Sigma$ is the inverse limit of the $\Sigma_n$ and therefore continuous.

\begin{proposition}\label{Prop - Short exact sequence for currents}
	\begin{altenumerate}	
		\item 
		There is a short strictly exact sequence of topological $G^0$-modules
		\begin{equation*}
			0 \lra \Cur(\bE,\BZ) \lra C(\bE,\BZ) \overset{\Sigma}{\lra}  C(\bV,\BZ) \lra 0 .
		\end{equation*}

		\item 
		For $n\in \BN$, there is a short strictly exact sequence of discrete $G_0$-modules (resp.\ $I$-modules)
		\begin{equation*}
			0 \lra \Cur \big(\bE^{(\prime)}_{n+1},\BZ \big) \lra C \big(\bE^{(\prime)}_{n+1},\BZ \big) \overset{ \Sigma^{(\prime)}_n}{\lra} C \big(\bV^{(\prime)}_n,\BZ \big) \lra 0 .
		\end{equation*}

	\end{altenumerate}
\end{proposition}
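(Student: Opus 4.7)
The plan is to reduce (i) to (ii) by passing to inverse limits, and to prove (ii) directly by induction on $n$.

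For (ii), exactness at the left and middle terms holds by the very definition of $\Cur(\bE^{(\prime)}_{n+1}, \BZ)$ as the kernel of $\Sigma^{(\prime)}_n$. The nontrivial step is to verify that $\Sigma^{(\prime)}_n$ is surjective. I would argue by induction on $n$. For $\CT_n$ the base case $n=0$ reduces to splitting $f(v_0) \in \BZ$ across the $q+1$ edges at $v_0$, which is trivial. For the inductive step, given $f \in C(\bV_n, \BZ)$, apply the inductive hypothesis to $f\res{\bV_{n-1}}$ to obtain $\varphi' \in C(\bE_n, \BZ)$ with $\Sigma_{n-1} \varphi' = f\res{\bV_{n-1}}$. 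Extend $\varphi'$ to $\varphi \in C(\bE_{n+1}, \BZ)$ as follows: for each vertex $v \in \bV_n \setminus \bV_{n-1}$, the parent edge $e_v \in \bE_n$ carries the prescribed value $\varphi'(e_v)$, while the $q$ child edges of $v$ (connecting to $\bV_{n+1} \setminus \bV_n$) are unconstrained; assign one of them the value $f(v) - \varphi'(e_v)$ and the rest the value $0$. This forces $\Sigma_n \varphi = f$. The analogous induction for $\CT'_n$ proceeds from the base case $\CT'_0$, which consists of $v_0, v_1$ joined by $e_0$, and uses the same ``leaf-edge freedom'' argument. Strictness of the maps at each finite level is automatic since all groups involved carry the discrete topology by \Cref{Lemma - Cochains for finite trees are topological modules}.

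For (i), I would take the inverse limit along the restriction maps of the short strictly exact sequences from (ii), using the identifications from \Cref{Prop - Inverse limit description for cochains} and \Cref{Rmk - Inverse limit description for currents}. The transition maps in each of the three towers are surjective (any cochain on a subtree extends to a cochain on a larger subtree; the same holds for currents by the leaf-edge trick above), so the Mittag--Leffler condition is satisfied and the inverse limit is algebraically exact. The inclusion $\Cur(\bE, \BZ) \hookrightarrow C(\bE, \BZ)$ is strict by definition of the subspace topology, and the surjection $\Sigma$ is strict by the open mapping theorem, since all three topological groups involved are Polish. The $G^0$-equivariance of the maps is clear since $\Sigma$ is the inverse limit of the $G_0$-equivariant maps $\Sigma_n$ (equivalently, $\Sigma$ is $G$-equivariant by the compatibility of the $G$-actions on $\bE$ and $\bV$) and $\Cur(\bE, \BZ)$ is $G^0$-stable in $C(\bE, \BZ)$.

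The main obstacle is the inductive construction for surjectivity of $\Sigma^{(\prime)}_n$, which requires exploiting the freedom of the ``outer'' edges emanating from the leaves of $\CT^{(\prime)}_n$; the ordering of the induction (processing from the root outward and using newly added leaf-to-outer-shell edges to absorb the discrepancy at each leaf) is the crucial point. Once this is in hand, the passage from (ii) to (i) is a routine application of Mittag--Leffler together with the open mapping theorem for Polish groups.
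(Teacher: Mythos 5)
Your proof is correct, but you run the reduction in the opposite direction from the paper. The paper first constructs a preimage of an arbitrary $\eta \in C(\bV,\BZ)$ in the \emph{infinite} tree directly (building outward from $v_0$: set the value on one edge of the outer shell to absorb the deficit at each vertex, and zero on the rest), obtaining (i), and then deduces surjectivity of $\Sigma_n^{(\prime)}$ in (ii) simply by composing with the surjective restriction $C(\bV,\BZ) \twoheadrightarrow C(\bV_n^{(\prime)},\BZ)$; the Mittag--Leffler property of $\big(\Cur(\bE_{n+1},\BZ)\big)_n$ is then derived as a separate corollary. You instead prove (ii) first by a finitary induction and pass to $\varprojlim$ for (i), invoking Mittag--Leffler to keep the inverse limit exact. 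The underlying combinatorial idea — exploit the freedom of the ``outer'' edges at each newly-added leaf — is identical in both arguments. What each buys: the paper's direct construction on the infinite tree avoids the detour through inverse limits and yields (ii) essentially for free, whereas your route makes the finite-level statement primary, which is perhaps more natural if one's real interest is in $\Cur(\bE_{n+1},\BZ)$; but you then need the Mittag--Leffler machinery (and the identifications from \Cref{Prop - Inverse limit description for cochains} and \Cref{Rmk - Inverse limit description for currents}) to recover (i), which the paper needs anyway later but does not front-load. Both are sound; your argument is slightly longer in aggregate but equally valid.
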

\begin{proof}
	Once we show that $\Sigma$ is surjective, strictness of $\Sigma$ follows from the open mapping theorem.
	Because the projections from $C(\bV, \BZ)$ to $C(\bV_n, \BZ)$ (resp.\ to $C(\bV'_n, \BZ)$) are surjective, this also implies surjectivity of $\Sigma_n$ (resp.\ of $\Sigma'_n$).
	The remaining assertions are clear then.

	Given $ \eta \in  C(\bV,\BZ)$, we will inductively construct a preimage $\varphi$ under $\Sigma$.
	For the initial step set $\varphi(\{v_0,v_1\})\defeq \eta(v_0)$ and $\varphi(\{v_0,w\}) \defeq 0$, for all other vertices $w$ neighbouring $v_0$, so that $\varphi$ is defined on $\bE_1$.

	For the induction step, we assume that $\varphi$ is defined on $\bE_{n}$ and satisfies 
	\begin{equation*}
		(\ast_n) \qquad\quad	\sum_{ \{v,w\} \in \bE} \varphi(\{v,w\}) = \eta(v)\,,\quad\text{for all $v \in \bV_{n-1}$.}
	\end{equation*}
	Let $v \in \bV_{n} \setminus \bV_{n-1}$. 
	We write $w_0$ for the unique vertex neighbouring $v$ with $w_0 \in \bV_{n-1}$ and $w_1,\ldots,w_q$ for the other neighbouring vertices which necessarily are elements of $\bV_{n+1}\setminus \bV_{n}$. 
	We set
	\begin{equation*}
		\varphi( \{v, w_i \}) \defeq \begin{cases}		\eta(v) - 	 \varphi(\{v, w_0\} )  &\text{, for $i=1$,} \\
			0												&\text{, for $i=2,\ldots,q$.}
			
		\end{cases}
	\end{equation*}
	This procedure applied to all $v\in \bV_{n} \setminus \bV_{n-1}$ extends $\varphi$ to $\bE_{n+1}$ such that $(\ast_{n+1})$ is fulfilled.	
\end{proof}

\begin{corollary}\label{Cor - Mittag-Leffler property for currents}
	For $n\in \BN$, the restriction maps 
	\begin{align*}
		\Cur(\bE_{n+1}, \BZ) \lra  \Cur(\bE_n, \BZ) \qquad\text{and}\qquad \Cur(\bE'_{n+1}, \BZ) \lra \Cur(\bE'_n, \BZ)
	\end{align*} 
	are surjective.
	In particular, the inverse systems $\big( \Cur(\bE_{n+1}, \BZ) \big)_{n\in \BN}$ and $\big( \Cur(\bE'_{n+1}, \BZ) \big)_{n\in \BN}$ satisfy the Mittag--Leffler condition.
\end{corollary}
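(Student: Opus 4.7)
The plan is to prove surjectivity of each restriction map directly by re-using the single inductive step that already appeared inside the proof of \Cref{Prop - Short exact sequence for currents}. Given $\varphi \in \Cur(\bE_n, \BZ)$, I want to construct $\tilde\varphi \in \Cur(\bE_{n+1}, \BZ)$ with $\tilde\varphi\res{\bE_n} = \varphi$. Since $\bE_{n+1} \setminus \bE_n$ consists precisely of the edges joining a vertex $v \in \bV_n \setminus \bV_{n-1}$ to one of its neighbours in $\bV_{n+1} \setminus \bV_n$, extending $\varphi$ amounts to freely prescribing $\tilde\varphi$ on such edges, subject to the harmonicity condition at every $v \in \bV_n$.

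The first observation is that the harmonicity condition at a vertex $v \in \bV_{n-1}$ is automatic: every edge incident to such $v$ already lies in $\bE_n$, so the sum at $v$ is unchanged by the extension and equals zero because $\varphi \in \Cur(\bE_n, \BZ)$. Hence the only constraints come from vertices $v \in \bV_n \setminus \bV_{n-1}$. Fix such a $v$; since $\CT$ is a $(q+1)$-regular tree, $v$ has a unique neighbour $w_0 \in \bV_{n-1}$ (its parent) and exactly $q$ further neighbours $w_1, \ldots, w_q \in \bV_{n+1} \setminus \bV_n$. I would then set
\[
	\tilde\varphi(\{v,w_1\}) \defeq -\varphi(\{v,w_0\}) \,,\qquad \tilde\varphi(\{v,w_i\}) \defeq 0 \quad \text{for } i=2,\ldots,q,
\]
which forces the sum at $v$ to vanish. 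Carrying this out independently for each leaf-level vertex $v \in \bV_n \setminus \bV_{n-1}$ produces $\tilde\varphi \in \Cur(\bE_{n+1}, \BZ)$ restricting to $\varphi$.

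The primed version is verbatim the same: the vertices in $\bV'_n \setminus \bV'_{n-1}$ again each have a unique parent in $\bV'_{n-1}$ and $q$ children in $\bV'_{n+1} \setminus \bV'_n$, so the identical recipe applies. Finally, the Mittag--Leffler assertion is a formal consequence of each transition map being surjective. I do not anticipate any genuine obstacle here; the only thing to double-check is the combinatorial description of $\bV_n \setminus \bV_{n-1}$ (unique parent, $q$ children), which is immediate from the $(q+1)$-regularity of $\CT$ recalled at the start of the section. An alternative would be to invoke the snake lemma on the exact sequences of \Cref{Prop - Short exact sequence for currents} for indices $n$ and $n+1$, but the direct construction is more transparent.
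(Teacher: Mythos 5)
Your direct construction is correct and goes through: the combinatorial facts (each $v\in \bV_n\setminus\bV_{n-1}$ has a unique neighbour in $\bV_{n-1}$ and $q$ neighbours in $\bV_{n+1}\setminus\bV_n$; the harmonicity constraint at a vertex of $\bV_{n-1}$ only involves edges of $\bE_n$) are exactly right, and the assignment you prescribe on the new edges makes the sum at each new leaf vanish while leaving all previously imposed constraints untouched. The primed case is the same once one notes that every $v \in \bV'_n \setminus \bV'_{n-1}$ satisfies $\min(d(v,v_0),d(v,v_1)) = n$, and so again has one neighbour in $\bV'_{n-1}$ and $q$ in $\bV'_{n+1}\setminus\bV'_n$. (The only degenerate instance is $n=0$, where the map lands in $\Cur(\bE_0,\BZ)=\{0\}$ and surjectivity is vacuous — worth saying explicitly, since $v_0$ has no parent.)

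The paper instead recycles the already-established exact sequences from \Cref{Prop - Short exact sequence for currents}: it observes that the inductive step of that proof lifts $\Ker\big(C(\bV_n,\BZ)\to C(\bV_{n-1},\BZ)\big)$ along $\Sigma_n$ into $\Ker\big(C(\bE_{n+1},\BZ)\to C(\bE_n,\BZ)\big)$, and then applies the snake lemma to the two short exact sequences for $\Sigma_n$ and $\Sigma_{n-1}$ to kill the cokernel on currents. You mention this alternative yourself; the underlying local recipe (assign the parent contribution with a sign to one child, zero to the rest) is identical in both, and the difference is purely one of packaging. Your version is shorter and more transparent because it lifts currents directly rather than arguing by exactness; the paper's version has the minor virtue of not re-deriving any combinatorics. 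Either is acceptable.
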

\begin{proof}
	The construction in the proof of the above proposition shows that for $\eta$ in the kernel of $C(\bV_n, \BZ) \ra C(\bV_{n-1}, \BZ)$, we can find a preimage under $\Sigma_n$ contained in the kernel of $C(\bE_{n+1}, \BZ) \ra C(\bE_{n}, \BZ)$. 
	Since the latter homomorphism also is surjective, the snake lemma applied to the short exact sequences for $\Sigma_n$ and $\Sigma_{n-1}$ shows that the cokernel of $ \Cur(\bE_{n+1}, \BZ) \ra  \Cur(\bE_n, \BZ) $ vanishes.
	For $\Cur(\bE'_{n+1}, \BZ) \ra \Cur(\bE'_n, \BZ)$ the same reasoning is valid.
\end{proof}

\subsection{Group Cohomology of Currents}

We now use the strictly short exact sequences of \Cref{Prop - Short exact sequence for currents} to compute the zeroth and first condensed group cohomology groups with coefficients in $\Cur(\bE, \BZ)$.

\begin{proposition}\label{Prop - Vanishing of group cohomology for cochains}
	For $n\in \BN$, we have
	\begin{align*}
		\ul{H}^1  \big( {G^0}, C(\bE ,\BZ) \big)  = \{0\} \,,\qquad \ul{H}^1  \big( {G_0}, C(\bE_{n+1},\BZ) \big)  = \{0\}  \,,\qquad \ul{H}^1  \big( {I}, C(\bE'_n ,\BZ) \big)  = \{0\}.
	\end{align*}
\end{proposition}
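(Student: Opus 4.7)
The plan is to identify each of the cochain modules with a coinduced module (or finite direct sum thereof) from a compact open stabiliser, and then to apply Shapiro's lemma. By \Cref{Lemma - Action on finite subtree}~(i), the set $\bE$ is a single $G^0$-orbit with $\mathrm{Stab}_{G^0}(e_0) = I$, whence there is an isomorphism of topological $G^0$-modules
\begin{equation*}
C(\bE, \BZ) \;\cong\; C(G^0/I, \BZ) \;=\; \mathrm{CoInd}_I^{G^0}(\BZ),
\end{equation*}
with $\BZ$ carrying the trivial $I$-action. Parts (iii)--(iv) of the same lemma show that $\bE_{n+1}$ splits into the finite $G_0$-orbits $G_0 \cdot e_0, \ldots, G_0 \cdot e_n$ whose stabilisers $H_i$ contain $G_{n+2}$, and analogously that $\bE'_n$ splits into the finite $I$-orbits $I \cdot e_{-n}, \ldots, I \cdot e_n$ with stabilisers $J_i$. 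This gives $G_0$- (resp.\ $I$-) equivariant decompositions
\begin{equation*}
C(\bE_{n+1}, \BZ) \;\cong\; \bigoplus_{i=0}^{n} C(G_0/H_i, \BZ), \qquad C(\bE'_n, \BZ) \;\cong\; \bigoplus_{i=-n}^{n} C(I/J_i, \BZ),
\end{equation*}
with each summand again a coinduced module from an open subgroup, which in these two (finite-index) cases simultaneously is an induced module.

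Shapiro's lemma in the condensed setting would then reduce the three claimed vanishings to $\ul{H}^1(H, \BZ) = 0$ for $H \in \{I, H_i, J_i\}$. Each such $H$ is a closed subgroup of the profinite group $G_0$, hence itself profinite. Because $\BZ$ has the trivial action, $\ul{H}^1(H, \BZ)$ equals $\ul{\Hom}(H, \BZ)$, whose underlying abelian group is $\Hom_\cts(H, \BZ) = 0$: the image of a continuous homomorphism from a compact group into the discrete torsion-free group $\BZ$ is finite and hence trivial. One checks the same vanishing at the level of the condensed structure by testing on profinite sets and running the analogous argument.

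The main obstacle lies in justifying Shapiro's lemma in the condensed framework. For $\bE_{n+1}$ and $\bE'_n$ this is the finite-index version, where $C(G_0/H_i, \BZ) = \BZ[G_0/H_i] = \mathrm{Ind}_{H_i}^{G_0}(\BZ)$ and the classical restriction--induction adjunction suffices. For $\bE$, where $I$ has infinite index in $G^0$, one instead needs that $\mathrm{CoInd}_I^{G^0}$ preserves injective condensed $I$-modules (equivalently, that restriction $\mathrm{Res}^{G^0}_I$ on condensed $G^0$-modules is exact), which yields the derived isomorphism $R\Gamma(G^0, \mathrm{CoInd}_I^{G^0}(\BZ)) \simeq R\Gamma(I, \BZ)$ and thereby completes the argument.
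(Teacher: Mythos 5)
Your proposal follows essentially the same route as the paper: decompose $\bE$, $\bE_{n+1}$, $\bE'_n$ into orbits, identify each orbit cochain module with a coinduced module via \Cref{Lemma - Coinduction and condensation}, apply Shapiro's lemma, and conclude by observing that a continuous homomorphism from a compact group to discrete torsion-free $\BZ$ is trivial. The outline and the reductions are correct and match the paper's proof.

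However, your justification of Shapiro's lemma for the infinite-index case ($\bE$ over $G^0$) is the one place that does not work as stated. You propose arguing that $\mathrm{coind}_I^{G^0}$ preserves injective condensed $I$-modules because restriction is exact, and then taking injective resolutions. But the category $\mathrm{Cond(Ab)}$ does not have enough injectives, which is exactly why the paper defines condensed group cohomology via $R\ul{\Hom}_{\BZ[\ul{G}]}(\BZ,\,-)$ using \emph{projective} resolutions of $\BZ$, and why special care is needed in the appendix (e.g.\ for the Hochschild--Serre spectral sequence, where the paper must pass to $\kappa$-condensed modules to get enough injectives). The correct hypothesis for the condensed Shapiro's lemma (\Cref{Prop - Shapiro's lemma}, after Zou) is that $\BZ[\ul{G}]$ be projective over $\BZ[\ul{H}]$, and this holds precisely because $H' = I$ (resp.\ $\mathrm{Stab}_{G_0}(e_i)$, $\mathrm{Stab}_I(e_i)$) is \emph{open} in $H$, so $H/H'$ is discrete. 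With this criterion the finite- and infinite-index cases are handled uniformly; the dichotomy in your proposal is unnecessary and its infinite-index half needs to be replaced by the projectivity argument.
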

\begin{proof}
	The decompositions into orbits from \Cref{Lemma - Action on finite subtree} (i) and (iii) together with the orbit-stabiliser theorem yield equivariant isomorphisms
	\begin{alignat*}{3}
		C(\bE ,\BZ) \,&= \hspace{7pt}C(G^0 \cdot e_0 , \BZ)\,  &&\cong \hspace{7pt}C \big( G^0 / I, \BZ \big) ,\\
		C(\bE_{n+1}, \BZ) \,&= \hspace{7pt} \bigoplus_{i=0}^{n} \hspace{7pt} C( G_0 \cdot e_i, \BZ) \, &&\cong \hspace{7pt} \bigoplus_{i=0}^{n} \hspace{7pt} C \big( G_0 / \mathrm{Stab}_{G_0}(e_i) , \BZ \big) , \\
		C(\bE'_n, \BZ) \,&=\bigoplus_{i=-n+1}^{n} C(I \cdot e_i , \BZ) \, &&\cong \bigoplus_{i=-n+1}^n C \big( I / \mathrm{Stab}_I (e_i) , \BZ \big) .
	\end{alignat*}
	It thus suffices to consider a topological $H$-modules of the form $C \big(H/ H', \BZ \big)$ where $H$ is any of $G^0$, $G_0$ or $I$, and $H' \subset H$ is a compact open subgroup.
	Indeed, for example $\mathrm{Stab}_{G_0}(e_i) = G_0 \cap \mathrm{Stab}_{G^0}(e_i)$ is compact open in $G_0$ because $\mathrm{Stab}_{G^0}(e_i)$ is a $G^0$-conjugate of $I$.

	\Cref{Lemma - Coinduction and condensation} then implies that there is an isomorphism $\ul{C \big(H/H', \BZ \big) } \cong \mathrm{coind}_{\ul{H'}}^\ul{H} (\ul{\BZ})$ of condensed $\ul{H}$-modules where $H'$ acts trivially on $\BZ$.
	With Shapiro's lemma (\Cref{Prop - Shapiro's lemma}) and \Cref{Rmk - Notation for condensed group cohomology} this gives
	\begin{equation*}
		\ul{H}^1 \big( {H} , C \big(H/H', \BZ \big) \big) \cong \ul{H}^1 ( {H'} , \BZ )  = \ul{\Hom ( H', \BZ )} .
	\end{equation*}
	But since $H'$ is compact, $\Hom ( H', \BZ )$ vanishes. 
\end{proof}

The short exact sequence of \Cref{Prop - Short exact sequence for currents} (i) induces a long exact sequence of condensed group cohomology.
Using \Cref{Prop - Vanishing of group cohomology for cochains} we extract the following exact sequence of solid abelian groups.
\begin{equation}\label{Eq - Exact sequence for group cohomology of currents}
	0 \lra \Cur(\bE,\BZ)^\ul{G^0} \lra C(\bE,\BZ)^\ul{G^0} \xrightarrow{\Sigma^\ul{G^0}} C(\bV,\BZ)^\ul{G^0}  \overset{\delta}{\lra} \ul{H}^1  \big( {G^0}, \Cur(\bE, \BZ) \big) \lra 0. 
\end{equation}
Using \Cref{Lemma - Action on finite subtree} (i) we find that $C(\bV,\BZ)^{G^0} $ is a discrete group and equal to $\BZ \,\1_{G^0 v_0} \oplus \BZ \, \1_{G^0 v_1}$ where $\1_{G^0 v_i}$ denotes the characteristic function of the respective $G^0$-orbit.
We also deduce that $C(\bE, \BZ)^{G^0} = \BZ \, \1_{G^0 e_0}$.
Since every $v\in \bV$ is the endpoint of precisely $(q+1)$ edges, we find that 
\begin{equation*}\label{Eq - Description of Sigma}
	\Sigma(\1_{G^0 e_0}) = (q+1) \big( \1_{G^0 v_0} +\1_{G^0 v_1} \big) .
\end{equation*}
It follows that $F(\bE, \BZ)^{G^0} = \Ker \big( \Sigma^{G^0}  \big) = \{0\}$.
Moreover, we have
\begin{align*}
	\Coker \big( \Sigma^{G^0} \big) \cong  \Big( \BZ \,\1_{G^0 v_0} \oplus \BZ \, \1_{G^0 v_1} \Big) \Big/  (q+1) \big( \1_{G^0 v_0} + \1_{G^0 v_1} \big) \, \BZ &\overset{\sim}{\lra } \textstyle\frac{1}{q-1} \BZ \, \oplus \, \BZ/(q+1)\,\BZ 
\end{align*}
where $ \1_{G^0 v_0} $ is mapped to $\big(\textstyle \frac{-1}{q-1}, 1 \mod (q+1)  \big)$ and $\1_{G^0 v_1} $ to $ \big(\textstyle \frac{1}{q-1}, 0 \mod (q+1)   \big)$ under the second isomorphism.
Since $\Coker \big( \Sigma^{G^0} \big) $ is Polish, \Cref{Lemma - Strictly exact sequence gives exact sequence of condensed modules} and \eqref{Eq - Exact sequence for group cohomology of currents} then yield
\begin{equation*}
	\ul{\Coker \big( \Sigma^{G^0} \big) } \cong C(\bV,\BZ)^\ul{G^0} \big/ \Im \big( \Sigma^\ul{G^0} \big) \overset{\delta}{\overset{\sim}{\lra}} \ul{H}^1  \big( {G^0}, \Cur(\bE, \BZ) \big) .
\end{equation*}
Furthermore, the exact sequence \eqref{Eq - Exact sequence for group cohomology of currents} becomes $\conjelt$-equivariant when $\ul{H}^i \big( {G^0}, \Cur(\bE, \BZ) \big)$ carries the $\conjelt$-action induced from \Cref{Rmk - conjugation action on currents on undirected edges} and $\conjelt$ acts on $C(\bE, \BZ)$ and $C(\bV,\BZ)$ by
\begin{equation}\label{Eq - Modified conjugation action}
	\conjelt \cdot \varphi \defeq \big[ e \mto - \varphi( \conjelt^{-1} \cdot e ) \big]\,, \qquad
	\conjelt \cdot \eta \defeq \big[ v \mto - \eta( \conjelt^{-1} \cdot v ) \big] ,
\end{equation}
for $\varphi \in C(\bE, \BZ)$ and $\eta \in C(\bV, \BZ)$.
This $\conjelt$-action permutes $\1_{G^0 v_0}$ with $- \1_{G^0 v_1}$.
In total we have shown:

\begin{proposition}\label{Prop - Cohomology of currents}
	We have $\ul{H}^0 \big( {G^0 } , \Cur(\bE,\BZ) \big) = \{0\}$ and $\ul{H}^1 \big( {G^0 } , \Cur(\bE,\BZ) \big) $ is a discrete condensed abelian group with
	\begin{align*}
		 \ul{H}^1 \big( {G^0}, \Cur(\bE,\BZ) \big) &\overset{\sim}{\lra} \textstyle\frac{1}{q-1} \BZ \oplus \BZ/(q+1)\,\BZ \qquad\text{where} \\
		  \delta \big(\1_{G^0 v_0} \big) &\lto \big(\textstyle \frac{-1}{q-1}, 1 \mod (q+1) \big) , \\
			\delta \big( \1_{G^0 v_1} \big) &\lto \big(\textstyle \frac{1}{q-1}, 0  \mod (q+1) \big) 
	\end{align*}
	on the underlying abelian groups.
	Under this isomorphism the automorphism $\conjelt^\ast$ becomes $\big(\textstyle \frac{1}{q-1}, 0 \big) \mto \big(\textstyle \frac{1}{q-1}, -1 \big)$, $\big(0, 1 \big) \mto \big( 0, -1 \big)$ on the right hand side.
\end{proposition}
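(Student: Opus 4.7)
The plan is to derive the computation from the long exact sequence of condensed group cohomology attached to the short strictly exact sequence of \Cref{Prop - Short exact sequence for currents}~(i), combined with the vanishing result of \Cref{Prop - Vanishing of group cohomology for cochains} for the middle term. Strictness ensures (via \Cref{Lemma - Strictly exact sequence gives exact sequence of condensed modules}) that passing to condensed cohomology is well-behaved, so I get the exact sequence of solid abelian groups
\begin{equation*}
0 \lra \Cur(\bE,\BZ)^{\ul{G^0}} \lra C(\bE,\BZ)^{\ul{G^0}} \xrightarrow{\Sigma^{\ul{G^0}}} C(\bV,\BZ)^{\ul{G^0}} \overset{\delta}{\lra} \ul{H}^1\big(G^0, \Cur(\bE,\BZ)\big) \lra 0.
\end{equation*}
Everything then reduces to identifying the map $\Sigma^{\ul{G^0}}$ on invariants.

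The next step is to make the two groups of invariants completely explicit using \Cref{Lemma - Action on finite subtree}~(i). Since $G^0$ acts transitively on $\bE$ with stabiliser (conjugate to) $I$, the module $C(\bE,\BZ)^{G^0}$ is generated by the characteristic function $\1_{G^0 e_0}$; similarly, the decomposition $\bV = G^0 v_0 \sqcup G^0 v_1$ yields $C(\bV,\BZ)^{G^0} = \BZ\,\1_{G^0 v_0} \oplus \BZ\,\1_{G^0 v_1}$. Since each vertex has exactly $q+1$ incident edges, $\Sigma(\1_{G^0 e_0}) = (q+1)(\1_{G^0 v_0} + \1_{G^0 v_1})$. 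In particular $\Sigma^{G^0}$ is injective, giving $\ul{H}^0\big(G^0, \Cur(\bE,\BZ)\big) = \{0\}$.

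The crux of the computation is then identifying the cokernel. Writing any $G^0$-invariant element of $C(\bV,\BZ)$ as $a\,\1_{G^0 v_0} + b\,\1_{G^0 v_1}$, I would use the change of basis $(a,b) \mapsto \big(\tfrac{b-a}{q-1},\, a \bmod (q+1)\big)$, which identifies the quotient by $(q+1)(\1_{G^0 v_0}+\1_{G^0 v_1})\BZ$ with $\tfrac{1}{q-1}\BZ \oplus \BZ/(q+1)\BZ$ and gives exactly the claimed images for $\delta(\1_{G^0 v_0})$ and $\delta(\1_{G^0 v_1})$. Since the cokernel is discrete (hence Polish), \Cref{Lemma - Strictly exact sequence gives exact sequence of condensed modules} shows that its condensation agrees with $\ul{H}^1$, so the identification passes to condensed abelian groups.

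The remaining point is the $\conjelt$-equivariance. Here I would use that the whole construction of $\Sigma$ is compatible with the modified $\conjelt$-action of \eqref{Eq - Modified conjugation action} (cf.\ \Cref{Rmk - conjugation action on currents on undirected edges}), so the long exact sequence is $\conjelt$-equivariant. Because $\conjelt$ swaps the two $G^0$-orbits on $\bV$ and introduces a sign, it sends $\1_{G^0 v_0} \mapsto -\1_{G^0 v_1}$ and $\1_{G^0 v_1} \mapsto -\1_{G^0 v_0}$; tracing this through the chosen identification of the cokernel gives the stated formulas $(\tfrac{1}{q-1},0) \mapsto (\tfrac{1}{q-1},-1)$ and $(0,1) \mapsto (0,-1)$. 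The main (mild) obstacle is bookkeeping: picking the basis of the cokernel so that the images of $\delta(\1_{G^0 v_i})$ match the stated values while keeping the $\conjelt$-action transparent; once this is fixed, all verifications are direct.
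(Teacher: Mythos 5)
Your proposal is correct and follows essentially the same route as the paper's own proof: extract the four-term exact sequence of solid abelian groups from the long exact sequence attached to Proposition~\ref{Prop - Short exact sequence for currents}~(i) together with the vanishing from Proposition~\ref{Prop - Vanishing of group cohomology for cochains}, identify the invariants via Lemma~\ref{Lemma - Action on finite subtree}, compute $\Sigma(\1_{G^0 e_0}) = (q+1)(\1_{G^0 v_0} + \1_{G^0 v_1})$, and read off kernel and cokernel, invoking Lemma~\ref{Lemma - Strictly exact sequence gives exact sequence of condensed modules} to pass to the condensed level, and Remark~\ref{Rmk - conjugation action on currents on undirected edges} together with \eqref{Eq - Modified conjugation action} for the $\conjelt$-action. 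The only cosmetic difference is that you make the isomorphism $\Coker(\Sigma^{G^0}) \cong \frac{1}{q-1}\BZ \oplus \BZ/(q+1)\BZ$ explicit as $(a,b) \mapsto \bigl(\tfrac{b-a}{q-1}, a \bmod (q+1)\bigr)$, which is exactly the map the paper specifies implicitly via the images of $\1_{G^0 v_0}$ and $\1_{G^0 v_1}$.
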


To compute the zeroth and first condensed group cohomology of the $G_0$-module $\Cur(\bE_{n+1}, \BZ)$ we can proceed similarly.

\begin{proposition}\label{Prop - Cohomology of currents on finite subtree}
	For $n\in \BN$, we have $\ul{H}^0 \big( {G_0} , \Cur(\bE_{n+1},\BZ) \big)  = \{0\}$ and $\ul{H}^1 \big( {G_0} , \Cur(\bE_{n+1},\BZ) \big)$ is a discrete condensed abelian group with
	\begin{align*}
		 \ul{H}^1 \big( {G_0}, \Cur(\bE_{n+1},\BZ) \big)	\overset{\sim}{\lra} \BZ/ q^n (q+1) \, \BZ \,,\quad \delta_n(\1_{G_0 v_0}) \lto 1 \mod q^n(q+1)  .
	\end{align*}
	Together with the canonical maps these isomorphisms fit into commutative diagrams
	\begin{equation*}
		\begin{tikzcd}[row sep=scriptsize]
			\ul{H}^1 \big(  {G_0}, \Cur(\bE_{n+2},\BZ) \big) \ar[r, "\sim"] \ar[d] &  \BZ/ q^{n+1} (q+1) \, \BZ \ar[d, two heads] \\
			\ul{H}^1 \big(  {G_0}, \Cur(\bE_{n+1},\BZ) \big) \ar[r, "\sim"] &	 \BZ/ q^{n} (q+1) \, \BZ  .
		\end{tikzcd}
	\end{equation*}
\end{proposition}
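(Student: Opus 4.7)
The plan is to follow the same strategy as for \Cref{Prop - Cohomology of currents}, but now applied to the short strictly exact sequence for the finite subtree from \Cref{Prop - Short exact sequence for currents} (ii). First I would pass to the long exact sequence of condensed group cohomology; combined with the vanishing $\ul{H}^1 \big( G_0 , C(\bE_{n+1},\BZ) \big) = 0$ from \Cref{Prop - Vanishing of group cohomology for cochains}, this yields an exact sequence of discrete condensed abelian groups
\begin{equation*}
	0 \lra \Cur(\bE_{n+1},\BZ)^\ul{G_0} \lra C(\bE_{n+1},\BZ)^\ul{G_0} \xrightarrow{\Sigma_n^\ul{G_0}} C(\bV_n,\BZ)^\ul{G_0} \xrightarrow{\delta_n} \ul{H}^1 \big( G_0, \Cur(\bE_{n+1},\BZ) \big) \lra 0 .
\end{equation*}

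Next I would identify the two middle groups via the $G_0$-orbit decompositions of \Cref{Lemma - Action on finite subtree} (iii): both $C(\bE_{n+1},\BZ)^{G_0}$ and $C(\bV_n,\BZ)^{G_0}$ are free abelian of rank $n+1$ with bases $\1_{G_0 e_0}, \ldots, \1_{G_0 e_n}$ respectively $\1_{G_0 v_0}, \ldots, \1_{G_0 v_n}$. A direct edge count in the tree then gives
\begin{align*}
	\Sigma_n \big( \1_{G_0 e_0} \big) & = (q+1) \, \1_{G_0 v_0} + \1_{G_0 v_1} , \\
	\Sigma_n \big( \1_{G_0 e_j} \big) & = q \, \1_{G_0 v_j} + \1_{G_0 v_{j+1}} \,,\quad\text{for $1 \leq j \leq n-1$,} \\
	\Sigma_n \big( \1_{G_0 e_n} \big) & = q \, \1_{G_0 v_n} ,
\end{align*}
because all $q+1$ edges incident to $v_0$ lie in $G_0 \cdot e_0$, while for $j\geq 1$ each vertex at distance $j$ from $v_0$ has exactly one edge pointing inward (in the $G_0$-orbit of $e_{j-1}$) and $q$ edges pointing outward (in the orbit of $e_j$; the outward edges from $v_n$ still belong to $\bE_{n+1}$). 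The matrix of $\Sigma_n^{G_0}$ in these bases is thus lower bidiagonal with diagonal $(q+1, q, \ldots, q)$ and subdiagonal $(1, \ldots, 1)$, in particular injective. This forces $\ul{H}^0 \big( G_0 , \Cur(\bE_{n+1}, \BZ) \big) = 0$.

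To identify the cokernel I would solve the relations inductively: $\1_{G_0 v_1} = -(q+1) \1_{G_0 v_0}$ together with $\1_{G_0 v_{j+1}} = -q \, \1_{G_0 v_j}$ for $j\geq 1$ give $\1_{G_0 v_j} = (-1)^j q^{j-1} (q+1) \, \1_{G_0 v_0}$ for all $j\geq 1$, and the last relation $q \, \1_{G_0 v_n} = 0$ then yields $q^n (q+1) \, \1_{G_0 v_0} = 0$. Hence $\Coker \big( \Sigma_n^{G_0} \big)$ is cyclic of order $q^n (q+1)$, generated by the image of $\1_{G_0 v_0}$, which matches the claimed isomorphism; since everything in sight is discrete this identification is automatic on the level of condensed abelian groups.

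For the compatibility diagram I would invoke naturality of the connecting homomorphism: the inclusions $\bE_{n+1} \subset \bE_{n+2}$ and $\bV_n \subset \bV_{n+1}$ induce a commutative ladder between the corresponding sequences from \Cref{Prop - Short exact sequence for currents} (ii), and the restriction $C(\bV_{n+1},\BZ)^{G_0} \ra C(\bV_n,\BZ)^{G_0}$ sends $\1_{G_0 v_0}$ to $\1_{G_0 v_0}$. Under the identifications obtained above this translates precisely into the canonical surjection $\BZ/q^{n+1}(q+1)\BZ \twoheadrightarrow \BZ/q^n(q+1)\BZ$. The main bit of actual computation is verifying the edge-count formula for $\Sigma_n$ at the boundary vertex $v_n$; after that everything is bookkeeping in the $\BZ$-cokernel of a bidiagonal matrix.
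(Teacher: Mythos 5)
Your proposal is correct and follows essentially the same route as the paper: you use the long exact sequence from Proposition \ref{Prop - Short exact sequence for currents} (ii) together with the vanishing from Proposition \ref{Prop - Vanishing of group cohomology for cochains}, identify the $G_0$-invariants via the orbit decomposition, compute $\Sigma_n^{G_0}$ on the characteristic-function basis, and read off the kernel and cokernel; these are exactly the formulas \eqref{Eq - Relations module the image of Sigma_n} in the paper. You are in fact slightly more explicit than the paper on the final compatibility diagram (the paper simply says the claims follow "as before"), but the naturality-of-$\delta$ argument you sketch is precisely what is intended.
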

\begin{proof}
	Here we use the short strictly exact sequence of $G_0$-modules from \Cref{Prop - Short exact sequence for currents} (ii).
	It gives rise to the exact sequence of solid abelian groups (using \Cref{Prop - Vanishing of group cohomology for cochains})
	\begin{equation*}\label{Eq - Long exact sequence for currents on finite subtree}
		0 \lra \Cur(\bE_{n+1}, \BZ)^\ul{G_0} \lra C (\bE_{n+1}, \BZ)^\ul{G_0} \overset{\Sigma_n^\ul{G_0}}{\lra} C (\bV_{n}, \BZ)^\ul{G_0} \overset{\delta_n}{\lra} \ul{H}^1 \big( {G_0}, \Cur(\bE_{n+1}, \BZ) \big) \lra 0 .
	\end{equation*}
	Moreover, we have
	\begin{align*}
		C (\bE_{n+1}, \BZ)^{G_0} \cong \bigoplus_{i=0}^n \BZ \, \1_{G_0 e_i} \qquad\text{and}\qquad C (\bV_{n}, \BZ)^{G_0} \cong \bigoplus_{i=0}^n \BZ \, \1_{G_0 v_i} .
	\end{align*}
	With respect to these bases, $\Sigma_n^{G_0}$ is given by
	\begin{equation*}\label{Eq - Description of Sigma_n}
		\Sigma_n^{G_0} (\1_{G_0 e_i}) 
		= \begin{cases}		(q+1) \, \1_{G_0 v_0} + \1_{G_0 v_1} &		 \text{, for $i=0$,}				\\
			q \, \1_{G_0 v_i} + \1_{G_0 v_{i+1}} & 	\text{, for $i=1,\ldots,n-1$,}	\\
			q \, \1_{G_0 v_n}									&		\text{, for $i=n$.}
		\end{cases}
	\end{equation*}
	This implies that modulo $\Im\big(\Sigma_n^{G_0} \big)$
	\begin{equation}\label{Eq - Relations module the image of Sigma_n}
		\1_{G_0 v_i} \equiv (-1)^i q^{i-1} (q+1) \, \1_{G_0 v_0} \quad\text{, for $i=1,\ldots,n$,}\quad\text{and}\qquad (-1)^n q^n (q+1) \, \1_{G_0 v_0} \equiv 0 .
	\end{equation}
	From this we deduce the claims as before.
\end{proof}

\begin{corollary}\label{Cor - Cohomology of currents for maximal compact subgroup}
	We have $\ul{H}^0 \big( {G_0} , \Cur(\bE,\BZ) \big) = \{0\}$ and an isomorphism of solid abelian groups
	\begin{align*}
		\ul{H}^1 \big( {G_0} , \Cur(\bE,\BZ) \big)	&\overset{\sim}{\lra} \BZ_p \oplus \BZ / (q+1)\,\BZ 
	\end{align*}
	whose underlying homomorphism of abelian groups maps $\delta(\1_{G_0 v_0})$ to $ \big(1, 1 \mod (q+1) \big)$.
	Together with the isomorphisms from \Cref{Prop - Cohomology of currents} and \ref{Prop - Cohomology of currents on finite subtree}, the isomorphism \eqref{Eq - Isomorphism with p-adic numbers} below and the canonical horizontal maps we obtain commutative diagrams
	\begin{equation}\label{Eq - Commutative diagram for isomorphisms for group cohomology of currents}
		\begin{tikzcd}[row sep= scriptsize, column sep= scriptsize]
			\ul{H}^1 \big( {G^0} , \Cur(\bE,\BZ) \big) \ar[r]\ar[d, "{\cong}"] & \ul{H}^1 \big( {G_0} , \Cur(\bE,\BZ) \big) \ar[r] \ar[d, "\cong"] & \ul{H}^1 \big( {G_0}, \Cur(\bE_{n+1}, \BZ) \big) \ar[d, "\cong"] \\
			\frac{1}{q-1} \BZ  \oplus \BZ/ (q+1) \,\BZ \ar[r, hook ]  & \BZ_p  \oplus \BZ/ (q+1) \,\BZ	 \ar[r, two heads] & \BZ/ q^n (q+1) \, \BZ .																				
		\end{tikzcd}
	\end{equation}
\end{corollary}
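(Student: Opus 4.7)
The plan is to pass to the inverse limit, using $\Cur(\bE,\BZ) \cong \varprojlim_n \Cur(\bE_{n+1},\BZ)$ as topological $G_0$-modules (\Cref{Rmk - Inverse limit description for currents}) together with the Mittag--Leffler property from \Cref{Cor - Mittag-Leffler property for currents}. First I would argue that condensed group cohomology commutes with this particular inverse limit. Since the transition maps of the inverse system $(\Cur(\bE_{n+1},\BZ))_n$ of Polish $G_0$-modules are surjective and strict, the associated continuous bar complexes form a termwise surjective, hence Mittag--Leffler, inverse system whose limit is the continuous bar complex of $\Cur(\bE,\BZ)$. Taking cohomology commutes with this inverse limit because the finite-level cohomology groups themselves form a surjective inverse system by the commutative square in \Cref{Prop - Cohomology of currents on finite subtree}, so the relevant $\varprojlim^1$ vanishes. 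Combining with \Cref{Prop - Comparison condensed and continuous group cohomology} to identify the underlying abelian groups, I obtain
\begin{equation*}
\ul{H}^i\bigl(G_0, \Cur(\bE,\BZ)\bigr) \,\cong\, \varprojlim_n \ul{H}^i\bigl(G_0, \Cur(\bE_{n+1},\BZ)\bigr) \qquad (i = 0, 1).
\end{equation*}

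The vanishing of $\ul{H}^0$ is then immediate from \Cref{Prop - Cohomology of currents on finite subtree}. For $i=1$, since $\gcd(q,q+1) = 1$ and $q = p^f$, the Chinese Remainder Theorem yields
\begin{equation*}
\varprojlim_n \BZ/q^n(q+1)\BZ \,\cong\, \varprojlim_n \BZ/q^n\BZ \,\oplus\, \BZ/(q+1)\BZ \,=\, \BZ_p \oplus \BZ/(q+1)\,\BZ .
\end{equation*}
Tracking $\delta(\1_{G_0 v_0})$ through this identification --- it is sent to $1 \mod q^n(q+1)$ at each finite level by \Cref{Prop - Cohomology of currents on finite subtree} --- shows that $\delta(\1_{G_0 v_0})$ corresponds to $(1, 1 \mod (q+1))$, as claimed. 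The right-hand square of \eqref{Eq - Commutative diagram for isomorphisms for group cohomology of currents} commutes by construction, since its right vertical map is the projection from the inverse limit onto the $n$-th term.

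For the left-hand square of \eqref{Eq - Commutative diagram for isomorphisms for group cohomology of currents}, I would appeal to naturality of the connecting homomorphism under the restriction $G_0 \hookrightarrow G^0$: the class $\delta(\1_{G^0 v_0})$ maps to the class of $\1_{G^0 v_0}$ inside $C(\bV,\BZ)^{\ul{G_0}}/\Im(\Sigma^{\ul{G_0}})$. As an element of $C(\bV,\BZ)^{G_0}$ we have $\1_{G^0 v_0} = \sum_{k\geq 0} \1_{G_0 v_{2k}}$, and \eqref{Eq - Relations module the image of Sigma_n} gives $\1_{G_0 v_{2k}} \equiv q^{2k-1}(q+1) \1_{G_0 v_0}$ modulo the image for $k \geq 1$. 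Summing the resulting geometric series in $\BZ_p$, I compute
\begin{equation*}
1 + (q+1) \sum_{k \geq 1} q^{2k-1} \,=\, 1 - \tfrac{q}{q-1} \,=\, -\tfrac{1}{q-1} \quad \text{in } \BZ_p ,
\end{equation*}
which matches the embedding $\tfrac{1}{q-1}\BZ \hookrightarrow \BZ_p$ (valid since $q-1$ is a unit in $\BZ_p$). Reducing modulo $q+1$ instead collapses all terms with $k \geq 1$, so $\1_{G^0 v_0} \equiv \1_{G_0 v_0} \pmod{q+1}$, compatibly with the identity on the $\BZ/(q+1)\BZ$-component.

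The main technical obstacle is the first step: rigorously justifying that $\ul{H}^i(G_0,-)$ commutes with the specific ML inverse limit at hand. The interplay between the continuous bar complex, strict surjectivity of the transition maps, and the comparison of \Cref{Prop - Comparison condensed and continuous group cohomology} is essentially standard, but needs to be checked carefully in the solid/condensed framework in order to ensure compatibility with $R\varprojlim$ and to keep track of the condensed structure on the limit.
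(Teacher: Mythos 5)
Your overall strategy matches the paper's: pass to the finite levels via Mittag--Leffler, identify $\varprojlim_n \BZ/q^n(q+1)\,\BZ$ with $\BZ_p \oplus \BZ/(q+1)\,\BZ$, and trace the generators. However, two points in your write-up need attention.

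First, the opening step. The bar-complex argument you sketch would need a surjection of Polish $G_0$-modules to induce a surjection on $C(G_0^n, -)$; this is not automatic, since $G_0^n$ is not extremally disconnected and continuous lifts need not exist. You flagged this difficulty yourself. The paper circumvents it by working with condensed objects from the start: it shows the inverse system $\big(\ul{\Cur(\bE_{n+1},\BZ)}\big)_n$ is acyclic (combining \Cref{Cor - Mittag-Leffler property for currents} with \Cref{Lemma - Strictly exact sequence gives exact sequence of condensed modules}), and then applies \Cref{Lemma - Group cohomology and inverse system}, which is stated at the level of condensed group cohomology and cleanly commutes $\ul{H}^i(G_0,-)$ with $\varprojlim_n$.

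Second, the left square. Your direct geometric-series computation in $\BZ_p$ gives the right answer for $\delta(\1_{G^0 v_0})$, but it silently invokes the infinite-level identification $\ul{H}^1\big(G_0,\Cur(\bE,\BZ)\big) \cong \Coker\big(\Sigma^{\ul{G_0}}\big)$, which requires $\ul{H}^1\big(G_0, C(\bE,\BZ)\big)=0$; \Cref{Prop - Vanishing of group cohomology for cochains} states that vanishing only for $G^0$ at the infinite level and for $G_0$ at finite levels. (It does hold for $G_0$ by the same Shapiro-type argument, but it is an unstated ingredient in your route.) The paper avoids this by computing the finite-level images $b_n$ and $c_n$ in $\BZ/q^n(q+1)\,\BZ$ and only then passing to the inverse limit. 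Finally, to actually establish commutativity of the left square you must trace both generators of $\ul{H}^1\big(G^0,\Cur(\bE,\BZ)\big)$; you compute $\delta(\1_{G^0 v_0})$ but omit $\delta(\1_{G^0 v_1})$, whose image the paper finds to be $\big(\tfrac{1}{q-1}, 0\big)$.
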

\begin{proof}
	We have seen in \Cref{Cor - Mittag-Leffler property for currents} that the inverse system $\big( \Cur(\bE_{n+1}, \BZ) \big)_{n\in \BN}$ satisfies the Mittag--Leffler condition.
	It is therefore acyclic, i.e.\ the homomorphism\footnote{The kernel of this homomorphism is $\varprojlim_{n\in \BN} \Cur(\bE_{n+1}, \BZ) \cong \Cur(\bE, \BZ)$.}
	\begin{equation*}
		\prod_{n\in \BN} \Cur(\bE_{n+1}, \BZ) \lra \prod_{n\in \BN} \Cur(\bE_{n+1}, \BZ) \,,\quad (\varphi_n)_{n\in \BN} \lto \big( \varphi_n - \varphi_{n+1}\res{\bE_{n+1}} \big)_{n\in \BN} ,
	\end{equation*}
	is surjective. 
	Because $\prod_{n\in \BN} \Cur(\bE_{n+1}, \BZ)$ is Polish, the inverse system $\big( \ul{ \Cur(\bE_{n+1}, \BZ) } \big)_{n\in \BN}$ is acyclic as well by \Cref{Lemma - Strictly exact sequence gives exact sequence of condensed modules}. 
	We may therefore apply \Cref{Lemma - Group cohomology and inverse system} and deduce from \Cref{Prop - Cohomology of currents on finite subtree} that
	\begin{align*}
		\ul{H}^0 \big( {G_0} , \Cur(\bE,\BZ) \big) &\cong \textstyle \varprojlim_{n\in \BN} \ul{H}^0 \big( {G_0} , \Cur(\bE_{n+1} ,\BZ) \big) = \{0\} ,\\
		\ul{H}^1 \big( {G_0} , \Cur(\bE,\BZ) \big) &\cong \textstyle \varprojlim_{n\in \BN}  \ul{H}^1 \big(  {G_0} , \Cur(\bE_{n+1} ,\BZ) \big) \overset{\sim}{\lra} \varprojlim_{n\in \BN}  \BZ/ q^n (q+1) \, \BZ .
	\end{align*}
	Here the latter isomorphism maps $\delta(\1_{G_0 v_0} )$ to $\big( 1 \mod q^n (q+1)  \big)_{n\in \BN}$.
	Furthermore, we have
	\begin{align}\label{Eq - Isomorphism with p-adic numbers}
		\textstyle \varprojlim_{n\in \BN} \BZ / q^n (q+1)\,\BZ &\overset{\sim}{\lra} \BZ_p \oplus \BZ/ (q+1)\,\BZ \,,\quad \\
		\big( a_n \mod q^n(q+1) \big)_{n\in \BN} &\lto  \Big( \big( a_n \mod q^n \big)_{n\in \BN} \,,\, a_0 \mod (q+1) \Big) , \nonumber
	\end{align}
	which also yields an isomorphism for the associated condensed abelian groups.
	This shows the claimed isomorphism and that the right hand side square of \eqref{Eq - Commutative diagram for isomorphisms for group cohomology of currents} is commutative.

	To verify the commutativity of the left hand side square it suffices to show that the outer square is commutative for all $n\in \BN$. 
	Through
	\begin{equation*}
		( G^0 \cdot v_0 ) \cap \bV_n = \bigcup_{i=0}^{ \lfloor \frac{n}{2} \rfloor } G_0 \cdot v_{2i} \qquad\text{and}\qquad ( G^0 \cdot v_1 ) \cap \bV_n = \bigcup_{i=0}^{ \lfloor \frac{n-1}{2} \rfloor } G_0 \cdot v_{2i+1}
	\end{equation*}
	together with the relations \eqref{Eq - Relations module the image of Sigma_n}, we see that 
	\begin{alignat*}{4}
		\ul{H}^1 \big( G^0 , \Cur(\bE,\BZ) \big) &\lra \ul{H}^1 \big( {G_0}, \Cur(\bE_{n+1}, \BZ) \big) &&\overset{\sim}{\lra}  \BZ/ q^n (q+1) \, \BZ && \\
		\delta( \1_{G^0 v_0} ) &\lto \sum_{i=0}^{ \lfloor \frac{n}{2} \rfloor } \delta_n( \1_{G_0 v_{2i}} )  &&\lto  \sum_{j=0}^{2 \lfloor \frac{n}{2} \rfloor } q^{j}  \mod q^n(q+1) &&\eqdef b_n , \\
		\delta( \1_{G^0 v_1} ) &\lto\sum_{i=0}^{ \lfloor \frac{n-1}{2} \rfloor } \delta_n( \1_{G_0 v_{2i+1}} ) &&\lto  - \sum_{j=0}^{2 \lfloor \frac{n-1}{2} \rfloor +1 } q^{j}  \mod q^n (q+1) &&\eqdef c_n.
	\end{alignat*}
	Using that $\sum_{j = 0}^{\infty} q^j = \frac{-1}{q-1}$ in $\BZ_p$ as well as $b_0 \equiv 1 $ and $c_0 \equiv 0 \mod (q+1)$, one shows that under the isomorphism \eqref{Eq - Isomorphism with p-adic numbers}
	\begin{equation*}
		\big( b_n \big)_{n\in \BN} \lto \big( \textstyle \frac{-1}{q-1} , 1 \mod (q+1) \big)  \quad\text{and}\quad
		\big( c_n \big)_{n\in \BN} \lto \big( \textstyle \frac{1}{q-1} , 0 \mod (q+1) \big) .
	\end{equation*}
	This proves the sought commutativity of \eqref{Eq - Commutative diagram for isomorphisms for group cohomology of currents}.
\end{proof}

\begin{proposition}\label{Prop - Cohomology of currents on Iwahori finite subtree}
	For $n\in \BN$, the solid abelian group $\ul{H}^0 \big( {I} , \Cur(\bE'_{n+1}, \BZ) \big)$ is discrete and its underlying abelian group contains an element $\psi_n$ (defined in \eqref{Eq - Definition of cochain in the kernel of Sigma_n} below) which induces an isomorphism
	\begin{equation*}
		\ul{H}^0 \big( {I} , \Cur(\bE'_{n+1}, \BZ) \big) \overset{\sim}{\lra} \BZ \,,\quad \psi_{n} \lto 1 .
	\end{equation*}
	Moreover, this element satisfies $\conjelt^\ast \psi_n = - \psi_n$, and the above isomorphisms fit into commutative diagrams
	\begin{equation*}
	\begin{tikzcd}[row sep= scriptsize]
		\ul{H}^0 \big( {I} , \Cur(\bE'_{n+2}, \BZ) \big) \ar[r , "\sim"] \ar[d] & \BZ \ar[d, "\cdot q"] \\
		\ul{H}^0 \big( {I} , \Cur(\bE'_{n+1}, \BZ) \big) \ar[r, "\sim"] &	\BZ 
	\end{tikzcd}
	\end{equation*}
	where the left map is the canonical one and the right map is multiplication with $q$.
\end{proposition}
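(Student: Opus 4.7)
My plan is to adapt the proof of \Cref{Prop - Cohomology of currents on finite subtree} to the Iwahori subgroup $I$ and the finite subtree $\CT'_{n+1}$. I start from the short strictly exact sequence of discrete $I$-modules in \Cref{Prop - Short exact sequence for currents}~(ii),
\begin{equation*}
0 \lra \Cur(\bE'_{n+1}, \BZ) \lra C(\bE'_{n+1}, \BZ) \overset{\Sigma'_n}{\lra} C(\bV'_n, \BZ) \lra 0,
\end{equation*}
and apply condensed $I$-invariants. Since all three modules are discrete, their invariants are again discrete, and left exactness identifies $\ul{H}^0 \big( I, \Cur(\bE'_{n+1}, \BZ) \big)$ with $\Ker \big( \Sigma'_n{}^\ul{I} \big)$, proving discreteness.

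The next step is to make both invariant groups explicit. By the $I$-orbit decompositions in \Cref{Lemma - Action on finite subtree}~(iii),
\begin{equation*}
C(\bE'_{n+1}, \BZ)^I = \bigoplus_{i=-n-1}^{n+1} \BZ \, \1_{I \cdot e_i}, \qquad C(\bV'_n, \BZ)^I = \bigoplus_{j=-n}^{n+1} \BZ \, \1_{I \cdot v_j}.
\end{equation*}
A direct calculation of the $I$-orbit sizes along the standard apartment, using that the stabiliser of $v_i$ in $G_0$ consists of those $\big( \begin{smallmatrix} a & b \\ c & d \end{smallmatrix} \big) \in G_0$ with $c \in \unif^i \CO_F$ for $i \geq 0$ (respectively $b \in \unif^{-i} \CO_F$ for $i \leq 0$), yields $|I \cdot v_j| = q^{j-1}$ for $j \geq 1$, $|I \cdot v_j| = q^{-j}$ for $j \leq 0$, and analogously $|I \cdot e_i| = q^i$ for $i \geq 0$, $|I \cdot e_i| = q^{-i}$ for $i \leq -1$. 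By double counting, the number $n(i,j) = |I \cdot e_i|/|I \cdot v_j|$ of edges in $I \cdot e_i$ incident to a fixed vertex $v_j$ (which vanishes unless $j \in \{i, i+1\}$) is then either $q$ or $1$ according as $e_i$ points ``outward'' or ``inward'' at $v_j$ relative to the central edge $e_0$.

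Consequently the matrix of $\Sigma'_n{}^\ul{I}$ in these bases is tridiagonal with entries in $\{1, q\}$. Solving its kernel equations by the obvious recursion starting from the boundary index $i = -n-1$ yields a one-dimensional kernel whose generator has palindromic coefficients of the form $\pm q^k$. This generator is $\psi_n$, up to an overall sign $(-1)^n$ that I introduce so that the factor $-q$ that appears from truncating the coefficient sequence from level $n+1$ to level $n$ becomes the claimed multiplication by $q$. The identity $\conjelt^\ast \psi_n = -\psi_n$ reduces, via the modified $\conjelt$-action from \Cref{Rmk - conjugation action on currents on undirected edges} and the relation $\conjelt \cdot e_i = e_{-i}$, to the palindromic symmetry $c_i = c_{-i}$ of the coefficients, which is immediate from the recursion.

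The main technical difficulty is the bookkeeping of $I$-orbit sizes and the sign conventions in the recursion for $\psi_n$. Once these are pinned down, verifying the isomorphism and both compatibilities reduces to routine linear algebra.
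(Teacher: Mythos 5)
Your proposal is correct and follows essentially the same route as the paper: both identify $\ul{H}^0\big(I,\Cur(\bE'_{n+1},\BZ)\big)$ with $\Ker\big(\Sigma_n^{\prime\,I}\big)$ via the short exact sequence from \Cref{Prop - Short exact sequence for currents}~(ii), compute $\Sigma_n^{\prime\,I}$ on the basis $\{\1_{Ie_i}\}$ (the paper states the formula directly, you derive it via orbit sizes and double counting, which amounts to the same computation), solve the recursion to get the rank-one kernel, and read off the $\conjelt^\ast$-antiinvariance from $\conjelt^\ast\1_{Ie_i}=-\1_{Ie_{-i}}$ and the palindromic symmetry. The only cosmetic difference is that you normalise the kernel generator at the boundary coefficient $c_{-(n+1)}$ and insert a sign $(-1)^n$ to absorb the $-q$, whereas the paper normalises at $c_0 = q^{n+1}$, which yields the factor $q$ directly; both choices are valid generators.
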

\begin{proof}
	Again, the starting point is the short strictly exact sequence of $I$-modules form \Cref{Prop - Short exact sequence for currents} (ii).
	From it we obtain the exact sequence
	\begin{equation*}
		0 \lra \Cur(\bE'_{n+1}, \BZ)^\ul{I} \lra C(\bE'_{n+1}, \BZ)^\ul{I} \overset{\Sigma_n^{\prime \ul{I}}}{\lra} C( \bV'_n , \BZ)^\ul{I} \overset{\delta'_n}{\lra} \ul{H}^1 \big( {I} , \Cur(\bE'_{n+1}, \BZ) \big) \lra 0 
	\end{equation*}
	of solid abelian groups. 
	Furthermore, we deduce that
	\begin{equation*}
		C(\bE'_{n+1}, \BZ)^I \cong \bigoplus_{i=-(n+1)}^{n+1} \BZ\, \1_{Ie_i} \qquad\text{and}\qquad C(\bV'_{n}, \BZ)^I \cong \bigoplus_{i=-n}^{n+1} \BZ\, \1_{Iv_i}  ,
	\end{equation*}
	from \Cref{Lemma - Action on finite subtree} (iii).
	Then $\Sigma_n^{\prime I}$ is given by
	\begin{equation*}\label{Eq - Relations for the image of Iwahori Sigma_n}
		\Sigma_n^{\prime I} ( \1_{Ie_i} ) = \begin{cases}
			q \,\1_{Iv_{-n}} & \text{, for $i=-(n+1)$,} \\
			\1_{Iv_i} + q \, \1_{Iv_{i+1}} & \text{, for $i=-n,\ldots,-1$,} \\
			\1_{Iv_0} + \1_{Iv_1} & \text{, for $i=0$,} \\
			q\, \1_{Iv_i} + \1_{Iv_{i+1}} & \text{, for $i=1,\ldots,n$,} \\
			q\, \1_{Iv_{n+1}} & \text{, for $i=n+1$.}
		\end{cases}
	\end{equation*}
	We now define the element
	\begin{equation}\label{Eq - Definition of cochain in the kernel of Sigma_n}
		\psi_n \defeq q^{n+1} \, \1_{I e_0} + \sum_{i=1}^{n+1} (-1)^i q^{(n+1)-i}  ( \1_{I e_i} + \1_{I e_{-i}})
	\end{equation}
	of $C(\bE'_{n+1}, \BZ)^I$.
	One directly verifies that $\psi_n$ lies in $\Ker\big(\Sigma_n^{\prime I} \big)$.
	This kernel is a free $\BZ$-submodule of rank $1$.
	Because the coefficient of $\psi_n$ for the basis element $\1_{I e_{n+1}}$ of $C(\bE'_{n+1}, \BZ)^I$ is $(-1)^{n+1} \in \BZ^\times$, it follows that $\Ker\big(\Sigma_n^{\prime I} \big) = \BZ \,\psi_n$.

	Since $\conjelt^\ast \1_{Ie_i} = -\1_{I e_{-i}}$ under \eqref{Eq - Modified conjugation action}, we deduce that $\conjelt^\ast \psi_n = - \psi_n$.
	Finally, the map induced by restriction from $C(\bE'_{n+2}, \BZ)$ to $C(\bE'_{n+1}, \BZ)$ is given by
	\begin{equation*}
		C(\bE'_{n+2}, \BZ)^I \lra C(\bE'_{n+1}, \BZ)^I \,,\quad \1_{Ie_i} \lto \begin{cases}	\1_{Ie_i} & \text{, for $i=-(n+1),\ldots,0, \ldots, n+1$,} \\
			0			&\text{, for $i=-(n+2)$ or $i=n+2$.}
			
		\end{cases}
	\end{equation*}
	Hence, this map sends $\psi_{n+1}$ to $q \, \psi_{n}$ which shows the claimed commutative square.
\end{proof}

\begin{corollary}\label{Cor - Cohomology of currents for Iwahori}
	We have $\ul{H}^0 \big( {I}, \Cur(\bE, \BZ) \big) = \{0\}$.
\end{corollary}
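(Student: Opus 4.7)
The plan is to pass to the inverse limit. By \Cref{Rmk - Inverse limit description for currents}, we have an isomorphism of topological $I$-modules
\begin{equation*}
	\Cur(\bE, \BZ) \cong \varprojlim_{n\in \BN} \Cur(\bE'_{n+1}, \BZ) .
\end{equation*}
Since taking $I$-invariants is a limit (equalizer) and therefore commutes with the inverse limit, we obtain
\begin{equation*}
	\ul{H}^0 \big( I, \Cur(\bE, \BZ) \big) \cong \varprojlim_{n\in \BN} \ul{H}^0 \big( I, \Cur(\bE'_{n+1}, \BZ) \big) .
\end{equation*}
For the condensed version, I would invoke the analogue of \Cref{Lemma - Group cohomology and inverse system} in degree zero (or simply note that the underlying abelian group suffices, since the relevant objects are solid); the Mittag--Leffler property for the relevant system is automatic in degree zero.

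By \Cref{Prop - Cohomology of currents on Iwahori finite subtree}, each $\ul{H}^0 \big( I, \Cur(\bE'_{n+1}, \BZ) \big)$ is isomorphic to $\BZ$, generated by $\psi_n$, and the transition maps correspond to multiplication by $q$ on $\BZ$. Thus the corollary reduces to the statement that
\begin{equation*}
	\varprojlim \big( \, \cdots \overset{\cdot q}{\lra} \BZ \overset{\cdot q}{\lra} \BZ \overset{\cdot q}{\lra} \BZ \,\big) = \{0\} ,
\end{equation*}
which is immediate: a compatible family $(a_n)_{n\in \BN}$ satisfies $a_n = q^k a_{n+k}$ for all $k\in \BN$, so each $a_n$ is divisible by arbitrary powers of $q$ in $\BZ$ and must vanish.

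There is no real obstacle here; the main content was absorbed into the previous proposition. If one wants to avoid invoking a condensed version of \Cref{Lemma - Group cohomology and inverse system} at all, one can work directly on underlying sets: an $I$-invariant element $\varphi \in \Cur(\bE, \BZ)$ restricts, for every $n\in \BN$, to an $I$-invariant current on $\CT'_{n+1}$, hence equals $a_n \psi_n$ for some $a_n \in \BZ$. Compatibility under the restriction maps forces $a_n = q \cdot a_{n+1}$, and the divisibility argument above gives $a_n=0$ for all $n$, so $\varphi=0$.
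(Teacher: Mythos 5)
Your proposal is correct and follows essentially the same route as the paper: pass to the inverse limit $\Cur(\bE,\BZ) \cong \varprojlim_n \Cur(\bE'_{n+1},\BZ)$, commute $\ul{H}^0(I,-)$ with the limit, invoke \Cref{Prop - Cohomology of currents on Iwahori finite subtree} to identify the terms with $\BZ$ and the transition maps with multiplication by $q$, and conclude by divisibility. The paper just invokes \Cref{Lemma - Group cohomology and inverse system} (whose acyclicity hypothesis has already been secured via \Cref{Cor - Mittag-Leffler property for currents}); your observation that acyclicity is not actually needed in degree zero is correct but a minor streamlining, and your alternative direct argument on underlying sets is valid because $\ul{H}^0(I,M) = \ul{M^I}$ by \Cref{Rmk - Notation for condensed group cohomology}, so vanishing of the underlying group suffices.
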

\begin{proof}
	Similarly to the proof of \Cref{Cor - Cohomology of currents for maximal compact subgroup}, we may apply \Cref{Lemma - Group cohomology and inverse system} to the inverse system $\big( \ul{\Cur(\bE'_{n+1}, \BZ)} \big)_{n\in \BN}$ of $\ul{I}$-modules.
	This yields an isomorphism
	\begin{equation*}
		\ul{H}^0 \big( {I}, \Cur(\bE, \BZ) \big) \overset{\sim}{\lra} \varprojlim_{n\in \BN} \ul{H}^0 \big( {I}, \Cur(\bE'_{n+1}, \BZ) \big) .
	\end{equation*}
	By \Cref{Prop - Cohomology of currents on Iwahori finite subtree} this inverse limit is isomorphic to $\varprojlim_{n\in \BN} \BZ$ with transition maps given by multiplication with $q$.
	It therefore vanishes.
\end{proof}

\begin{remark}
	With the same methods as for $G_0$ one can also prove an isomorphism $\ul{H}^1 \big( I, \Cur(\bE'_{n+1}, \BZ) \big)  \cong \BZ/ q^{n+1} \, \BZ$, for $n\in \BN$. 
	Furthermore, there is a short exact sequence
	\begin{equation*}
		0 \lra \ul{\BZ_p} \big/ \ul{\BZ} \lra \ul{H}^1 \big( {I}, \Cur(\bE, \BZ) \big) \lra \ul{\BZ_p} \lra 0 
	\end{equation*}
	of solid abelian groups.
	It is $\conjelt$-equivariant when $\conjelt$ acts by inversion on the first term and trivially on the last.
\end{remark}

\section{The Van der Put Transform}

Before specialising to the Drinfeld upper half plane, we record some notions concerning (continuous) equivariant line bundles on rigid $K$-analytic spaces more generally.
We follow Ardakov and Wadsley \cite{ArdakovWadsley23EquivLineBun}.

\subsection{Equivariant Line Bundles}

In this section, let $X$ be a rigid $K$-analytic space and $G$ a topological group which acts on $X$ by $K$-linear automorphisms, i.e.\ via a group homomorphism $\rho \colon G \to  \Aut_K(X,\CO_X)$.
By abuse of notation we let $g\colon X \ra X$ also denote the automorphism $\rho(g)$ induced by $g\in G$.
We assume that this action is continuous in the following sense.

\begin{definition}[{\cite[Def.\ 3.1.8]{Ardakov21EquivDmod}}]
	The action $\rho$ of $G$ on $X$ is \emph{continuous} if, for every quasi-compact, quasi-separated admissible open subset $U$ of $X$
	\begin{altenumeratelevel2}
		\item 
		the stabiliser $G_U$ of $U$ in $G$ is open in $G$,
		
		\item 
		the induced group homomorphism $\rho_U \colon G_U \to \Aut_K(U, \CO_U)$ is continuous with respect to the subspace topology on $G_U$ and a certain topology on $\Aut_K(U, \CO_U)$, see \cite[Thm.\ 3.1.5]{Ardakov21EquivDmod}.
	\end{altenumeratelevel2}
\end{definition}

Condition (2) can be illustrated as follows:
Let $U$ be an affinoid subdomain of $X$.
For any coherent $\CO_X$-module $\CM$, the space of sections $\CM(U)$ carries a canonical $K$-Banach space topology. 
Moreover, the algebra $\CB(\CM(U))$ of continuous $K$-linear endomorphisms is a $K$-Banach algebra via the operator norm so that the group of automorphisms $\CB(\CM(U))^\times$ becomes a topological group.

When $\CM$ is the structure sheaf $\CO_X$, the group $\CB(\CO_X(U))^\times$ is canonically identified with $\Aut_K(U,\CO_U)$.
Then the topology on $\Aut_K(U, \CO_U)$ in (2) is finer than the topology on $\CB(\CO_X(U))^\times$, see the proof of \cite[Lemma 3.2.4]{ArdakovWadsley23EquivLineBun}.
In particular, for a continuous group action $\rho$ and affinoid $U$ the induced homomorphism $\rho_U \colon G_U \to \CB(\CO_X(U))^\times$ is continuous.

\begin{definition}[{cf.\ \cite[Def.\ 2.4.1]{ArdakovWadsley23EquivLineBun}}]
	A \emph{$G$-equivariant structure} on an $\CO_X$-module $\CM$ is a collection of $\CO_X$-module homomorphisms
	\begin{align*}
		g^\CM \colon \CM \lra g^\ast \CM \,,\quad\text{for all $g\in G$,}
	\end{align*}
	such that
	\begin{align*}
		(gh)^\CM = h^\ast (g^\CM) \circ h^\CM \,,\quad\text{for all $g,h \in G$,}\qquad \text{and} \qquad 1^\CM = \id_\CM .
	\end{align*}
\end{definition}

For a coherent $\CO_X$-module $\CM$, a $G$-equivariant structure $(g^\CM)_{g\in G}$ yields $K$-linear maps
\begin{equation*}
	g^\CM(U) \colon \CM(U) \lra \CM( g(U) )\,, \quad\text{for all $g\in G$ and affinoid subdomains $U \subset X$.}
\end{equation*}
With respect to the canonical $K$-Banach space topologies on domain and codomain, these maps are continuous and therefore induce group homomorphisms $G_U \ra \CB(\CM(U))^\times$ \cite[Lemma 3.2.1]{ArdakovWadsley23EquivLineBun}.

\begin{definition}\label{Def - Continuous equivariant module}
	A \emph{continuous $G$-equivariant coherent $\CO_X$-module} is a coherent $\CO_X$-module $\CM$ together with a $G$-equivariant structure $(g^\CM)_{g\in G}$ such that, for all affinoid subdomains $U$ of $X$, the induced homomorphism $G_U \ra \CB(\CM(U))^\times$ is continuous.
\end{definition}

If $\CL$ is a continuous $G$-equivariant coherent $\CO_X$-module such that the underlying $\CO_X$-module is invertible, $\CL$ is also called a \emph{$G$-equivariant line bundle on $X$} \cite[Def.\ 3.2.3]{ArdakovWadsley23EquivLineBun}.

\begin{example}[{\cite[Def.\ 3.2.9]{ArdakovWadsley23EquivLineBun}}]
	For any continuous character $\chi \colon G \ra K^\times$, we can equip the trivial line bundle $\CO$ with a new $G$-equivariant structure
	\begin{equation*}
		g^{\CO_\chi}(U) : \CO(U ) \lra \CO(g(U)) \,,\quad f \lto \chi(g) \, g^{\CO}(U)(f) \,, \qquad\text{for $g\in G$.}
	\end{equation*}
	This defines a $G$-equivariant line bundle on $X$ which we denote by $\CO_\chi$.
\end{example}

\begin{definition}
	The \emph{$G$-equivariant Picard group $\Pic^G(X)$ of $X$} is the abelian group of isomorphism classes of $G$-equivariant line bundles on $X$.
	Its group law is induced by the tensor product of $G$-equivariant line bundles, and its unit element is (the class of) the structure sheaf $\CO$ with the $G$-equivariant structure induced from the action of $G$ on $X$.
\end{definition}

Note that there is a natural group homomorphism $\Pic^G(X) \ra \Pic(X)$ where $\Pic(X)$ denotes the \emph{Picard group of $X$}, i.e.\ the group of isomorphism classes of invertible $\CO_X$-modules on $X$.
This homomorphism is induced by forgetting the $G$-equivariant structure of a $G$-equivariant line bundle.
\\

Ardakov and Wadsley describe the isomorphism classes of $G$-equivariant structures in terms of continuous cocycles of $G$ acting on $\CO^\times(X)$ when $X$ is connected affinoid \cite[Lemma 3.3.1]{ArdakovWadsley23EquivLineBun}.
We need a slight generalisation of their result.

\begin{setting}\label{Set - At most countable affinoid covering}
	We consider a connected, reduced and quasi-separated rigid $K$-analytic space $X$ on which a topological group acts continuously via $K$-linear automorphisms.
	Additionally, we assume that $X$ affords an at most countable, admissible affinoid covering $\CU= (U_i)_{i\in I}$ such that the intersection of the stabilisers $G_0 \defeq \bigcap_{i\in I} G_{U_i}$ is an open subgroup of $G$.

	We consider $\CO^\times(X)$ endowed with the subspace topology induced from the canonical inclusions $\CO^\times(X) \hookrightarrow \CO(X) \hookrightarrow \prod_{i \in I} \CO(U_i)$.
	As the following lemma shows, $\CO^\times(X)$ then becomes a topological $G$-module.
\end{setting}

\begin{lemma}\label{Lemma - Group of invertible functions is Polish}
	The action of $G$ on $\CO^\times (X)$ is continuous and $\CO^\times (X)$ is a Polish abelian group.
	Moreover, the topology of $\CO^\times (X)$ does not depend on the choice of the at most countable, admissible affinoid covering.
\end{lemma}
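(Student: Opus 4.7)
My plan is to handle the three assertions in the order \emph{Polish}, \emph{covering-independence}, \emph{continuity of the action}, since proving independence first simplifies the continuity argument. For the Polish structure, each $\CO(U_i)$ is a $K$-Banach algebra, hence a separable, complete, metrizable abelian group, i.e.\ Polish. Since $I$ is at most countable, $\prod_{i\in I}\CO(U_i)$ is Polish, and by the sheaf property on $\CU$ (after admissibly refining each intersection $U_i\cap U_j$ by affinoids if necessary) $\CO(X)$ embeds as a closed subspace, hence remains Polish. The unit group in a $K$-Banach algebra is open, so $\CO^\times(U_i)\subset \CO(U_i)$ is open; since $X$ is reduced, a global section is invertible if and only if each restriction is, so $\CO^\times(X) = \bigcap_{i\in I} r_i^{-1}\bigl(\CO^\times(U_i)\bigr)$ is a countable intersection of open subsets of $\CO(X)$, hence a $G_\delta$ in a Polish space, hence Polish.

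To prove independence of the covering I would show that the subspace topology coincides with the initial topology on $\CO(X)$ induced by all restrictions $\CO(X)\to \CO(V)$ with $V$ an affinoid subdomain of $X$; since the latter topology is intrinsic to $X$, independence follows. One inclusion is immediate as each $U_i$ is such a $V$. For the other, given an affinoid $V\subset X$ I would cover it admissibly by finitely many affinoid subdomains $W_1,\ldots,W_m$, each sitting inside some $U_{i_k}$ as an affinoid subdomain (using quasi-compactness of $V$, admissibility of $\CU$, and quasi-separatedness of $X$). The closed embedding $\CO(V)\hookrightarrow \prod_k\CO(W_k)$ coming from the sheaf property, together with the continuity of the Banach restrictions $\CO(U_{i_k})\to \CO(W_k)$, then exhibits $\CO(X)\to \CO(V)$ as a continuous map in the $\CU$-topology.

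For continuity of the action, I would first verify that the self-map $g\cdot\_$ on $\CO(X)$ is a homeomorphism for every $g\in G$: by the initial-topology characterisation just established, it suffices that $f\mapsto (g\cdot f)\res{V}$ be continuous for each affinoid subdomain $V$, and this factors as the continuous restriction $\CO(X)\to \CO(g^{-1}V)$ followed by the Banach topological isomorphism $g^\CO\colon \CO(g^{-1}V)\to \CO(V)$ induced by $g$. Joint continuity of $G\times \CO(X)\to \CO(X)$ then reduces, by translation by individual group elements, to joint continuity on $G_0\times \CO(X)$. Since $G_0$ preserves every $U_i$, the continuity hypothesis on the rigid action provides continuous homomorphisms $G_0\to \CB(\CO(U_i))^\times$, and combining these with the joint continuity of the evaluation pairing $\CB(\CO(U_i))^\times\times \CO(U_i)\to \CO(U_i)$ yields continuity of $G_0\times \CO(U_i)\to \CO(U_i)$; assembling over $i$ via the initial-topology description gives joint continuity of $G_0\times \CO(X)\to \CO(X)$, and passing to the subspace $\CO^\times(X)$ is then immediate.

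The main technical subtlety I foresee is matching the finer topology on $\Aut_K(U,\CO_U)$ from \cite[Thm.~3.1.5]{Ardakov21EquivDmod} appearing in the continuity hypothesis with the operator-norm topology on $\CB(\CO(U))^\times$ needed for joint continuity of the evaluation map. Fortunately this domination is already recorded in the proof of \cite[Lemma~3.2.4]{ArdakovWadsley23EquivLineBun}, so it can be cited rather than reproved.
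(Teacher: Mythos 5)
Your $G_\delta$ argument establishes that $\CO^\times(X)$, with the subspace topology from $\CO(X)$, is a Polish \emph{space}, but the lemma asserts it is a Polish abelian \emph{group}: you still need to check that inversion $f \mto f^{-1}$ is continuous, and this is not automatic from the $G_\delta$ property (the complete metric witnessing it is generally not group-compatible). The verification is short --- for each $i$ the map $f \mto (f\res{U_i})^{-1}$ is continuous because inversion is continuous in the Banach algebra $\CO(U_i)$, so $f \mto f^{-1}$ is continuous into $\prod_{i \in I}\CO(U_i)$ and hence into $\CO^\times(X)$ --- but it is genuinely needed. The paper avoids the issue by first making each $\CO^\times(U_i)$ a Polish group (inversion is continuous on the open subgroup $\CO^{\times\times}(U_i)$ via the geometric series) and then realizing $\CO^\times(X)$ as the kernel of the continuous homomorphism $\prod_{i}\CO^\times(U_i)\ra\prod_{i,j}\CO^\times(U_i\cap U_j)$, obtaining a closed Polish subgroup in one stroke. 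A smaller point: separability of $\CO(U_i)$ is not formal from being a $K$-Banach algebra; one should note that $K$ is separable (as a closed subfield of $C$) so that $\CO(U_i)$, a quotient of a Tate algebra, is separable.

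For covering-independence your route differs genuinely from the paper's and is correct: you show that the $\CU$-topology on $\CO(X)$ is the initial topology induced by all restriction maps $\CO(X)\ra\CO(V)$ with $V\subset X$ affinoid, using quasi-separatedness to cover $V$ by finitely many affinoids contained in the $U_i$ and the sheaf property to identify $\CO(V)$ with a closed subspace of $\prod_k\CO(W_k)$. This topology is manifestly intrinsic, so independence follows. The paper instead compares the two topologies by passing to a common countable refinement and applying the open mapping theorem for Polish groups. Your characterization also feeds more directly into the continuity argument, which you carry out exactly as the paper does: reduce to $G_0$, combine the continuity of $\rho_i \colon G_0 \ra \CB(\CO(U_i))^\times$ with joint continuity of evaluation, and cite \cite[Lemma 3.2.4]{ArdakovWadsley23EquivLineBun} for comparing the topology on $\Aut_K(U_i,\CO_{U_i})$ with the operator-norm topology.
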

\begin{proof}
	For an affinoid subdomain $U$ of $X$, the supremum seminorm makes $\CO(U)$ into a $K$-Banach algebra since $U$ is reduced. 
	Therefore, the multiplication map of $\CO^\times(U)$ is continuous with respect to the subspace topology.
	Since on $\CO^{\times \times}(U)$ inversion is given by the geometric series, it follows that $\CO^\times(U)$ is a topological group.

	On the other hand, we may also consider $\CO^\times(U)$ with the topology $\tau$ derived from the inclusion $\Lambda \colon \CO^\times(U) \hookrightarrow \CO(U)^2$, $f \mto (f, f^{-1})$.
	Because $(\CO^\times(U), \tau)$ is the preimage of $\{1\}$ under the continuous map $\CO(U)^{2} \ra \CO(U)$, $(f,f') \mto ff'$, it is complete with respect to the induced metric.
	Moreover, $\CO(U)$ is the quotient of some Tate algebra which is homeomorphic to the space of null sequences with values in $K$ and thus separable.
	Hence, $\CO(U)$ and in turn $( \CO^\times(U), \tau)$ are Polish spaces.

	Clearly, $\tau$ is finer than the subspace topology $\CO^\times(U) \subset \CO(U)$.
	But since the inversion map is continuous with respect to the subspace topology, it follows that $\tau$ agrees with the latter.

	Having seen that $\CO^\times(U)$ is a Polish group, for every affinoid subdomain $U$ of $X$, it follows that the countable product $\prod_{i \in I} \CO^\times(U_i)$ is Polish as well.
	When canonically identifying $\CO(X)$ with the kernel of of $\prod_{i \in I} \CO(U_i) \ra \prod_{i,j \in I} \CO(U_i\cap U_j)$, the subspace $\CO^\times(X)$ is identified with the kernel of $\prod_{i \in I} \CO^\times(U_i) \ra \prod_{i,j \in I} \CO^\times(U_i\cap U_j)$.
	Therefore, $\CO^\times(X)$ is a Polish group when endowed with this choice of topology.

	We want to compare this topology on $\CO^\times (X)$ to the one induced by another at most countable admissible covering $\CV = (V_j)_{j\in J}$ by affinoid subdomains.
	Because $X$ is quasi-separated, there exists an at most countable, admissible affinoid covering which refines the one given by $U_i \cap V_j$, for $(i,j)\in I\times J$.
	We may thus assume that $\CV$ is a refinement of $\CU$.
	It follows that the topology on $\CO^\times (X)$ induced by $\CU$ is finer than the one induced by $\CV$.
	The open mapping theorem then implies that both topologies agree.

	By \cite[Lemma 3.2.1]{ArdakovWadsley23EquivLineBun} every fixed $g \in G$ acts by continuous $K$-algebra homomorphisms $\CO(U) \ra \CO(g(U))$, for all affinoid subdomains $U$ of $X$, and hence by a continuous automorphism on $\CO^\times(X)$. 
	To show that the action of $G$ on $\CO^\times(X)$ is continuous, it therefore suffices to show that its restriction to $G_0$ is.
	It is a consequence of the proof of \cite[Lemma 3.2.4]{ArdakovWadsley23EquivLineBun} that the map $\rho_i \colon G_0 \ra \CB(\CO(U_i))$ is continuous, for all $i\in I$.
	Therefore the action
	\begin{align*}
		G_0 \times \CO(U_i) \xrightarrow{\rho_i \times \id} \CB(\CO(U_i)) \times \CO(U_i) &\lra \CO(U_i) , \\
		(\varphi, f) &\lto \varphi(f) ,
	\end{align*}
	is continuous.
	From this we deduce that indeed $G_0 \times \CO^\times(X) \ra \CO^\times(X)$ is continuous. 	
\end{proof}

\begin{proposition}\label{Prop - Equivariant structures and cocycles}
	\begin{altenumerate}
		\item 
		In the above setting, there is a natural bijection between the set of continuous $G$-equivariant structures on a given trivial line bundle and the set of continuous $1$-cocycles $Z^1 \big(G, \CO^\times(X) \big)$. 
		Concretely, a continuous $G$-equivariant structure $(g^\CL)_{g\in G}$ on a trivial line bundle $\CL = \CO \cdot v$ with generating global section $v$ is mapped to the function $\alpha \colon G \ra \CO^\times(X)$ determined by
		\begin{align*}
			g^\CL(v) = \alpha(g) \, v \,, \quad\text{for all $g\in G$.}
		\end{align*}
		
		\item 
		The bijection from (i) induces an isomorphism of abelian groups
		\begin{align*}
			\Ker\big( \Pic^G(X) \ra \Pic(X) \big)\overset{\sim}{\lra} H^1 \big(G, \CO^\times(X) \big) .
		\end{align*}
	\end{altenumerate}
\end{proposition}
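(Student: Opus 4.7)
The plan is to adapt the argument of \cite[Lemma 3.3.1]{ArdakovWadsley23EquivLineBun} from the connected affinoid case to the present \Cref{Set - At most countable affinoid covering}. The decisive new ingredient is \Cref{Lemma - Group of invertible functions is Polish}: the topology on $\CO^\times(X)$ is characterised by the admissible covering $\CU = (U_i)_{i\in I}$, which allows continuity to be checked componentwise on each $\CO^\times(U_i)$.

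For part (i), I would fix a generator $v$ of the trivial line bundle $\CL = \CO\cdot v$. Given a continuous $G$-equivariant structure $(g^\CL)_{g\in G}$, I would use the canonical identification of global sections of $g^\ast\CL$ with those of $\CL$ to write $g^\CL(v) = \alpha(g)\cdot v$, noting that $\alpha(g) \in \CO^\times(X)$ because $g^\CL$ is an isomorphism of $\CO_X$-modules. The cocycle identity $\alpha(gh) = \alpha(g)\cdot (g\cdot \alpha(h))$ then falls out from $(gh)^\CL = h^\ast(g^\CL) \circ h^\CL$ applied to $v$, taking into account the twist of the module structure under pullback. Continuity of $\alpha \colon G \to \CO^\times(X)$ would be verified after restriction to the open subgroup $G_0 = \bigcap_{i\in I} G_{U_i}$: for each $i \in I$, continuity of the composition $G_0 \to \CO^\times(U_i)$ sending $g \mapsto \alpha(g)\res{U_i}$ follows from the continuity of $\rho_{U_i}\colon G_{U_i} \to \CB(\CO(U_i))^\times$ together with the formula $\alpha(g)\res{U_i} \cdot v\res{U_i} = \rho_{U_i}(g)(v\res{U_i})$ up to the canonical identification. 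For the inverse construction, given a continuous $1$-cocycle $\alpha$, I would define $g^\CL$ by $g^\CL(fv) \defeq (g\cdot f)\cdot \alpha(g)\cdot v$ on global sections and extend sheafwise; the cocycle relation yields the equivariance axioms, and continuity of $\alpha$ combined with the continuity of the $G$-action on $\CO_X$ gives the continuity required by \Cref{Def - Continuous equivariant module}.

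For part (ii), I would observe that every class in $\Ker(\Pic^G(X) \to \Pic(X))$ is represented by a $G$-equivariant line bundle whose underlying sheaf is trivial, and a choice of generator produces a continuous $1$-cocycle via (i). Two representatives $(\CL = \CO v, \alpha)$ and $(\CL' = \CO v', \alpha')$ are isomorphic as $G$-equivariant line bundles precisely when there exists an $\CO_X$-module isomorphism $\varphi\colon \CL \to \CL'$ commuting with the equivariant structures; such $\varphi$ is given by $\varphi(v) = \beta v'$ for some $\beta \in \CO^\times(X)$, and the compatibility condition unwinds to $\alpha(g) = (g\cdot \beta)\cdot \alpha'(g) \cdot \beta^{-1}$, i.e., $\alpha$ and $\alpha'$ differ by the coboundary of $\beta$. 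This yields a bijection with $H^1(G, \CO^\times(X))$; compatibility with the group law follows from $g^{\CL_1 \otimes \CL_2}(v_1 \otimes v_2) = \alpha_1(g)\alpha_2(g)(v_1 \otimes v_2)$.

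The main technical obstacle will be the continuity bookkeeping for $\alpha$ and its inverse construction: while the algebraic content is standard, one must carefully align the topology of $\CO^\times(X)$ from \Cref{Set - At most countable affinoid covering} with the continuity demanded of the equivariant structure at each affinoid $U_i$. The at most countable character of $\CU$ and \Cref{Lemma - Group of invertible functions is Polish} reduce this to a componentwise check, bringing the argument back to the affinoid setting treated by \cite[Lemma 3.3.1]{ArdakovWadsley23EquivLineBun}.
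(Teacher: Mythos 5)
Your proposal follows the same route as the paper: defer the algebraic content to \cite[Lemma~3.3.1]{ArdakovWadsley23EquivLineBun} and adapt only the continuity equivalence by restricting to the open subgroup $G_0 = \bigcap_{i\in I} G_{U_i}$ and checking componentwise over the countable covering $(U_i)_{i\in I}$, exactly as the paper does.

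One formula is off as written. You claim $\alpha(g)\res{U_i}\cdot v\res{U_i}=\rho_{U_i}(g)(v\res{U_i})$, but $\rho_{U_i}(g)$ is the ring automorphism $g^\CO(U_i)$, which fixes $v\res{U_i}\leftrightarrow 1$ under the trivialisation; so this identity would force $\alpha\equiv 1$. The defining relation is $g^\CL(U_i)(v\res{U_i})=\alpha(g)\res{U_i}\,v\res{U_i}$, and more usefully one should note that under the trivialisation $\CL(U_i)\cong\CO(U_i)$ the map $g^\CL(U_i)$ corresponds to $g^\CO(U_i)$ followed by multiplication by $\alpha(g)\res{U_i}$. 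Continuity of $g\mapsto\alpha(g)\res{U_i}$ then follows by combining continuity of $g\mapsto g^\CL(U_i)$ (from the equivariant structure) with continuity of $g\mapsto g^\CO(U_i)$ (from the assumed continuous $G$-action on $X$), and the converse direction goes the same way; with this correction, your argument aligns with the paper's proof.
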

\begin{proof}
	All but one arguments in the proof of \cite[Lemma 3.3.1]{ArdakovWadsley23EquivLineBun} hold true verbatim in our more general setting.
	The exception is the reasoning for the statement that a given $G$-equivariant structure $(g^\CL)_{g\in G}$ is continuous if and only if the associated $1$-cocycle $\alpha$ is. 
	We therefore adapt this argument.
	
	Let $U$ be an affinoid subdomain of $X$.
	The trivialisation $\CL \cong \CO \cdot v$ induces an isomorphism $\CB(\CO(U))^\times \cong \CB(\CL(U))^\times$ of topological groups.
	Since the $g^\CL$ are $\CO_X$-linear, we have
	\begin{equation*}
		g^\CL(f v\res{U}) = g^\CO(f) \, g^\CL(v\res{U}) = g^\CO(f) \, \alpha(g)\res{U} \, v\res{U} \,, \quad\text{for all $g\in G_U$, $f\in \CO(U)$.}
	\end{equation*}
	Hence, the homomorphism $G_U \ra \CB(\CL(U))^\times $, $g\mto g^\CL(U)$, is equal to the composition of $G_U \ra \CB(\CO(U))^\times$, $g \mto  g^\CO(U) \, \alpha(g)\res{U} $, and the above isomorphism. 
	We recall that $g \mto g^\CO(U)$ is continuous by the assumption that $G$ acts continuously on $X$. 
	This shows that $g \mto g^\CL(U)$ is continuous whenever $\alpha$ is.
	
	Conversely assume that $(g^\CL)_{g\in G}$ is continuous.
	From the above reasoning for $U_i$ it follows that the homomorphisms $G_{U_i} \ra \CO^\times(U_i)$, $g \mto \alpha(g)\res{U_i}$, are continuous, for all $i \in I$. 
	This implies that $G_0  \ra \CO^\times(X)$, $g \mto \alpha(g)$, is continuous with respect to the topology prescribed on $\CO^\times(X)$.
	Because $G_0$ was assumed to be open in $G$, the claim for $\alpha$ follows. 
\end{proof}

\subsection{The Drinfeld Upper Half Plane}

We now consider the projective line $\BP_F^1$ with the (left) group action\footnote{Even an action of the algebraic group $\GL_{2,F}$ on $\BP_F^1$.} of $G\defeq \GL_2(F)$ which is given on $C$-valued points by
\begin{equation}\label{Eq - Action on projective line}
	\left( \begin{matrix} a & b \\ c & d \end{matrix} \right) \cdot [ z_0 : z_1 ] = [a z_0 + b z_1 : c z_0 + dz_1 ] . 
\end{equation}
We identify $\BP_F^1$ with $\Proj \, \mathrm{Sym} \big( (F^2)^\ast \big)$ and fix on the dual space $(F^2)^\ast$ the $G$-action 
\begin{equation*}
	g \cdot \ell \defeq \ell( g^\ad \cdot \blank ) \,,\quad\text{for $g \in G$, $\ell \in (F^2)^\ast$,}
\end{equation*}
where $g^\ad = \det(g) \, g^{-1}$ denotes the adjunct of $g$.
Then the induced $G$-action on $\mathrm{Sym}\big( (F^2)^\ast \big)$ gives rise\footnote{We remark that twists of the above $G$-action on $(F^2)^\ast$ by integer powers of $\det$ induce \eqref{Eq - Action on projective line} as well, but lead to a different choice of $G$-equivariant structure on $\CO(1)$ in \Cref{Example - Cohomology class of twisting sheaf}.} to the action \eqref{Eq - Action on projective line} on $\BP_F^1$.

The \emph{Drinfeld upper half plane} over $F$ is defined as the (rigid $F$-analytic) projective line $\BP_F^1 \defeq \big(\BP^1_F \big)^\rig$ with all $F$-rational points removed
\begin{equation*}
	\Omega_F \defeq \BP^1_F \setminus \BP^1(F) .
\end{equation*}
We use the simplified notation $\Omega \defeq \Omega_F \times_F K \subset \BP_K^1$ to denote its base change to $K$.
Then $\Omega$ comes with an action of $G$ induced from \eqref{Eq - Action on projective line}.

In \Cref{Def - Admissible covering of DHP}, we will define an admissible affinoid covering $\Omega_0 \subset \Omega_1 \subset \ldots$ which exhibits $\Omega$ as a (separated) admissible open subset of $\BP_K^1$.
Moreover, $\Omega$ is connected \cite[Thm.\ 2.4]{Kohlhaase11LubinTateDrinfeldBun}, and the $\Omega_n$ are all stabilised by $G_0$.

\begin{lemma}
	The action of $G$ on $\Omega$ is continuous.
\end{lemma}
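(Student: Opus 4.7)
The plan is to verify the two conditions in the definition of continuity by reducing to the $G_0$-stable affinoid pieces $\Omega_n$ that cover $\Omega$, and then exploiting the fact that the action of $G$ is inherited from the algebraic action of $\GL_{2,F}$ on $\BP_F^1$.

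First, let $U$ be any quasi-compact, quasi-separated admissible open subset of $\Omega$. Since $(\Omega_n)_{n \in \BN}$ is an admissible covering of $\Omega$ and $U$ is quasi-compact, there exists some $n$ with $U \subset \Omega_n$. The set $U$ is then also a quasi-compact, quasi-separated admissible open of the affinoid $\Omega_n$, so it suffices to establish the two conditions for each $\Omega_n$ and then deduce the conditions for $U$ from them. Openness of the stabiliser $G_{\Omega_n}$ is immediate from the hypothesis, recorded just before the Lemma, that $G_0 \subset G_{\Omega_n}$.

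It remains to show that the induced homomorphism $\rho_{\Omega_n} \colon G_{\Omega_n} \to \Aut_K(\Omega_n, \CO_{\Omega_n})$ is continuous. Here the key point is that the action of an element $g = \bigl( \begin{smallmatrix} a & b \\ c & d \end{smallmatrix} \bigr)$ on $\BP^1_K$ is given by the Möbius transformation $[z_0 : z_1] \mto [a z_0 + b z_1 : c z_0 + d z_1]$, whose pullback on $\CO(\Omega_n)$ depends continuously on the matrix entries of $g$. To make this precise, I would use that $\Omega_n$ corresponds to a finite subtree of the Bruhat--Tits tree: the denominators arising from the Möbius action are bounded below uniformly on $\Omega_n$, and by \Cref{Lemma - Action on finite subtree}~(iv) the congruence subgroup $G_m$ acts trivially on $\CT_n$ for $m \geq n$, from which one shows that $G_m$ acts on $\Omega_n$ by automorphisms that converge to the identity as $m \to \infty$ in the relevant topology. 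This would give continuity of the action map $G_0 \times \CO(\Omega_n) \to \CO(\Omega_n)$ with respect to the Banach structure on $\CO(\Omega_n)$.

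The main obstacle is matching this explicit continuity against the precise topology on $\Aut_K(\Omega_n, \CO_{\Omega_n})$ from \cite[Thm.\ 3.1.5]{Ardakov21EquivDmod}, which is finer than the operator norm topology on $\CB(\CO(\Omega_n))^\times$. The strategy is to invoke the characterisation in that theorem, which reduces continuity in the stronger topology to the continuity of the action on $\CO(\Omega_n)$ together with a compatibility condition with the integral structure on $\Omega_n$; both ingredients are available by the uniform bounds above. Once this compatibility is verified, continuity of $\rho_{\Omega_n}$ follows, and hence so does the continuity of the $G$-action on $\Omega$.
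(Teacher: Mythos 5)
The paper's proof is much shorter and takes a different route: it observes that $\GL_{2,\CO_K}$ acts on the formal model $\BP^1_{\CO_K}$, so the restriction along $\GL_2(\CO_F) \hookrightarrow \GL_2(\CO_K)$ gives an action of $G_0$ on $\BP^1_K$ that is continuous by \cite[Prop.\ 3.1.12(b)]{Ardakov21EquivDmod} (which handles actions arising from flat formal models), and then passes to the admissible open $\Omega$ by \cite[Lemma 3.1.9]{Ardakov21EquivDmod}. All the work of matching the action against the finer topology of \cite[Thm.\ 3.1.5]{Ardakov21EquivDmod} is absorbed into those citations, precisely because the action is induced by an integral model.

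Your proposal has a genuine gap exactly at the point you flag as the ``main obstacle.'' You correctly identify that showing continuity of $\rho_{\Omega_n}$ into $\Aut_K(\Omega_n,\CO_{\Omega_n})$ --- rather than merely into $\CB(\CO(\Omega_n))^\times$ with the operator norm --- is the whole content of the lemma; but your resolution consists of guessing that \cite[Thm.\ 3.1.5]{Ardakov21EquivDmod} reduces this to ``continuity of the action on $\CO(\Omega_n)$ together with a compatibility condition with the integral structure,'' without stating what that condition is, and then asserting it is ``available by the uniform bounds above.'' Those bounds are themselves only sketched: that $G_m$ fixes the finite subtree $\CT_n$ pointwise (Lemma 2.3(iv)) says nothing directly about how $G_m$ acts on the Banach algebra $\CO(\Omega_n)$ --- it only means $G_m$ stabilises each component affinoid, not that the induced algebra automorphisms are close to the identity. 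Making that last claim precise in the right topology is essentially what \cite[Prop.\ 3.1.12]{Ardakov21EquivDmod} does by working with the formal $\CO_K$-scheme $\BP^1_{\CO_K}$; without invoking that (or reproving it), the argument does not close. A secondary, smaller gap is the claim that verifying the two conditions for each $\Omega_n$ implies them for an arbitrary qcqs admissible open $U \subset \Omega_n$: this does hold, but the deduction (that $G_U$ is open and that $\rho_U$ is continuous, given continuity of $\rho_{\Omega_n}$) should be spelled out rather than asserted, and is again something \cite[Lemma 3.1.9]{Ardakov21EquivDmod} is designed to package.
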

\begin{proof}
	This follows from Proposition 3.1.12 (b) and Lemma 3.1.9 of \cite{Ardakov21EquivDmod} by considering the natural action of $\GL_{2,\CO_K}$ on $\BP^1_{\CO_K}=\BP^1_K$, and the inclusion $\GL_2(\CO_F) \hookrightarrow \GL_2(\CO_K)$.
\end{proof}

It is a classical result\footnote{In fact, this has been generalised by Junger to Drinfeld symmetric spaces of any dimension \cite[Thm.\ A]{Junger23CohomAnaArrHyper}.} that $\Pic(\Omega) \cong H^1(\Omega,\CO^\times)$ is trivial \cite[Thm.\ 2.7.6]{FresnelvanderPut04RigidAnGeom}. 
We thus deduce from \Cref{Prop - Equivariant structures and cocycles}:

\begin{corollary}[{cf.\ \cite[Cor.\ 2.11]{Junger23CohModpFibresEquivDrinfeld}}]\label{Cor - Equivariant Picard group and group cohomology on DHP}
	Let $H$ be a closed subgroup of $G$. 
	There is a natural isomorphism of abelian groups
	\begin{align*}
		\Pic^H(\Omega) \overset{\sim}{\lra} H^1 \big( H, \CO^\times(\Omega) \big) .
	\end{align*}
\end{corollary}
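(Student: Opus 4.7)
The plan is to deduce this directly from \Cref{Prop - Equivariant structures and cocycles} applied to $X=\Omega$ and to the closed subgroup $H$ in place of $G$. The two ingredients are the vanishing of $\Pic(\Omega)$ recalled from \cite{FresnelvanderPut04RigidAnGeom}, and the verification that the pair $(\Omega, H)$ fits into \Cref{Set - At most countable affinoid covering}.

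First I would check the hypotheses of \Cref{Set - At most countable affinoid covering} for $H$ acting on $\Omega$. The space $\Omega$ is connected, reduced, and quasi-separated as a rigid $K$-analytic space; the continuity of the $H$-action on $\Omega$ follows immediately from that of the $G$-action, since $H$ carries the subspace topology. For the admissible affinoid covering one takes the countable exhaustion $\Omega_0 \subset \Omega_1 \subset \cdots$ from \Cref{Def - Admissible covering of DHP}. Since every $\Omega_n$ is stabilised by the open subgroup $G_0 = \GL_2(\CO_F)$ of $G$, the intersection of stabilisers in $H$ contains $H \cap G_0$, which is open in $H$ because $G_0$ is open in $G$. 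Hence the setting is in force, and in particular $\CO^\times(\Omega)$ is a Polish topological $H$-module (\Cref{Lemma - Group of invertible functions is Polish}).

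Next I would invoke \Cref{Prop - Equivariant structures and cocycles} (ii), which in this situation produces an isomorphism of abelian groups
\begin{equation*}
	\Ker \big( \Pic^H(\Omega) \lra \Pic(\Omega) \big) \overset{\sim}{\lra} H^1 \big(H, \CO^\times(\Omega) \big) .
\end{equation*}
By the classical triviality $\Pic(\Omega) = \{0\}$ \cite[Thm.\ 2.7.6]{FresnelvanderPut04RigidAnGeom}, the kernel on the left is all of $\Pic^H(\Omega)$, and this yields the desired isomorphism.

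The only subtle point — and it is really the reason \Cref{Prop - Equivariant structures and cocycles} is stated in our slightly more general form than \cite[Lemma 3.3.1]{ArdakovWadsley23EquivLineBun} — is to confirm that working with the countable covering $(\Omega_n)_{n\in\BN}$ rather than a single affinoid gives the correct topology on $\CO^\times(\Omega)$, so that the cocycle condition coming from a continuous equivariant structure corresponds to continuity into $\CO^\times(\Omega)$. This has already been handled in \Cref{Lemma - Group of invertible functions is Polish} (independence of the topology from the choice of covering, via the open mapping theorem) and in the proof of \Cref{Prop - Equivariant structures and cocycles}. Naturality in $H$ is automatic from the explicit description of the bijection by cocycles.
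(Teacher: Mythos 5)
Your proposal is correct and follows essentially the same route as the paper: combine \Cref{Prop - Equivariant structures and cocycles}~(ii) with the classical vanishing $\Pic(\Omega)=0$ from \cite[Thm.\ 2.7.6]{FresnelvanderPut04RigidAnGeom}. The paper states this deduction without elaboration; your additional verification that $(\Omega,H)$ satisfies \Cref{Set - At most countable affinoid covering} (using the $G_0$-stable exhaustion $(\Omega_n)_{n\in\BN}$ and the openness of $H\cap G_0$ in $H$) is exactly the omitted routine check.
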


\begin{example}\label{Example - Cohomology class of twisting sheaf}
	We say that a non-trivial $K$-linear form $\ell \colon K^2 \ra K$ is \emph{$F$-rational} if there is a $1$-dimensional subspace $U\subset F^2$ such that $\Ker(\ell )= U_K$.
	For two such forms $\ell$ and $ \ell'$ the quotient $\frac{\ell}{\ell'}$ yields a well-defined function in $\CO^\times(\Omega)$ via $[z_0 : z_1] \mto \frac{\ell(z_0, z_1)}{\ell'(z_0,z_1)} $.

	In particular, we obtain a map
	\begin{align*}
		j_\ell \colon G \lra \CO^\times(\Omega) \,,\quad g \lto \frac{g \cdot \ell}{\ell} ,
	\end{align*}
	which one verifies to be a $1$-cocycle.
	For a different choice $\ell'$ instead of $\ell$ the resulting $1$-cocycles only differ by the $1$-coboundary $g \mto  \big( g \cdot \frac{\ell}{\ell'} \big) \big(\frac{\ell'}{\ell} \big)$.

	We now consider the restriction $\CO(1)$ of the twisting sheaf on $\BP_K^1$ to $\Omega$.
	Then $\CO(1)$ acquires a $G$-equivariant structure via the above $G$-action on $\mathrm{Sym}( (F^2)^\ast )$. 
	For any choice of trivialisation $\CO(1) = \CO \cdot \ell$ with a non-trivial, $F$-rational linear form $\ell$, this structure is given by $g^{\CO(1)}(\ell) = g \cdot \ell$.
	In particular, the $1$-cocycle associated to this trivialisation in \Cref{Prop - Equivariant structures and cocycles} (i) is equal to $\frac{g^{\CO(1)}(\ell)}{\ell} = j_\ell$ which thus is continuous.
	We conclude that the isomorphism class $[\CO(1)]$ is mapped to $[j_\ell]$ under the isomorphism of \Cref{Cor - Equivariant Picard group and group cohomology on DHP}.
\end{example}

\begin{definition}
	For any closed subgroup $H \subset G$ and some non-trivial, $F$-rational linear form $\ell \colon K^2 \ra K$, we let $[j]$ denote the class of $j_\ell$ in $H^1 \big( H, \CO^\times (\Omega) \big)$ (or even its image in $H^1 \big( H, \CO^\times (U) \big)$, for affinoid subdomains $U\subset \Omega$ when this is valid.
	We remark that under the conjugation action $\conjelt^\ast [ j] = [j]$ whenever this makes sense.
\end{definition}

Using the Bruhat--Tits tree $\CT$, there is an explicit construction of a formal model of $\Omega_F$ (see also \cite[Sect.\ I.3]{BoutotCarayol91ThmCerednikDrinfeld}).
However, it is mostly the special fibre $\ov{\Omega}$ of this formal model that is relevant to us. 
This special fibre comes with a specialisation (or reduction) map
\begin{equation*}
	r \colon \Omega_F \lra \ov{\Omega} \,,
\end{equation*}
and is an $\BF_q$-scheme locally of finite type.
The irreducible components $\ov{\Omega}_v$ of $\ov{\Omega}$ are all copies of $\BP_{\BF_q}^1$ indexed by the vertices $v \in \bV$. 
Two components $\ov{\Omega}_v$ and $\ov{\Omega}_{v'}$ intersect if and only if $v$ and $v'$ are joined by an edge of $\CT$.
These intersections are ordinary double points and occur at the $\BF_q$-rational points of $\ov{\Omega}_v$.
In other words, the $\BF_q$-rational points of $\ov{\Omega}_v$ correspond to the edges originating from $v$.

The union $\ov{\Omega}_v(\BF_q)$ of the $\BF_q$-rational points is a closed subscheme of $\ov{\Omega}_v$, and the preimage under $r$ of its complement
\begin{equation*}
	\Omega_{F, v} \defeq r^{-1} \big( \ov{\Omega}_v \setminus \ov{\Omega}_v(\BF_q) \big)\,, \quad\text{for $v\in \bV$,}
\end{equation*}
is an affinoid subdomain of $\Omega_F$.
The subdomain $\Omega_{F,v}$ is in fact isomorphic to a closed unit disc centred in $0$ with the $q$ open unit discs centred in the $F$-rational points removed.
Moreover, for any edge $e=\{v,w\} \in \bE$ one defines an affinoid subdomain $\Omega_{F,e}$ of $\Omega_F$ which contains $\Omega_{F,v}$ and $\Omega_{F,w}$, and which specialises under $r$ to $\ov{\Omega}_v \cup \ov{\Omega}_w$ with all $\BF_q$-rational points removed except the one corresponding to $e$.
The $(\Omega_{F,e})_{e\in \bE}$ constitute an admissible affinoid covering of $\Omega_F$.

When passing to the base change $\Omega \defeq \Omega_K$, we obtain specialisation maps
\begin{equation*}
	\Omega_v \defeq ( \Omega_{F,v})_K \lra \ov{\Omega}_{v,\kappa}  \setminus \ov{\Omega}_v(\BF_q)_\kappa \,, \quad\text{for $v\in \bV$,}
\end{equation*}
where $\kappa$ is the residue field of $K$. 
Under the identification $\ov{\Omega}_{v,\kappa} = \BP_{\kappa}^1$, the closed subscheme $ \ov{\Omega}_v(\BF_q)_\kappa$ is the scheme theoretic union of the closed points of $\BP_{\kappa}^1$ corresponding to the $\BF_q$-rational lines in $\kappa^2 $.
In particular the points of $ \ov{\Omega}_v(\BF_q)_\kappa$ canonically correspond to the edges originating from $v$.

Associated with the exhaustions of $\CT$ by the finite subtrees $\CT_n$ and $\CT'_n$, we also obtain an admissible coverings of $\Omega$.

\begin{definition}\label{Def - Admissible covering of DHP}
	For $e\in \bE$, we write $\Omega_e \defeq (\Omega_{F,e})_K$. 
	We set
	\begin{equation}\label{Eq - Definition of affinoid}
		\Omega_{F,n} \defeq \bigcup_{e \in \bE_n} \Omega_{F,e}	\qquad\text{and}\qquad	\Omega_n \defeq (\Omega_{F,n})_K = \bigcup_{e \in \bE_n} \Omega_{e} \,,\qquad\text{for $n \in \BN$,}
	\end{equation}
	so that $\Omega_0 \subset \Omega_1 \subset \ldots$ (resp.\ $\Omega_{F,0} \subset \Omega_{F,1} \subset \ldots$) is a $G_0$-stable, admissible affinoid covering of $\Omega$ (resp.\ $\Omega_F$).
	Likewise, we have $I$-stable affinoid subdomains $\Omega'_n \defeq \bigcup_{e \in \bE'_n} \Omega_e$ of $\Omega$ which satisfy $\Omega'_n = \Omega_{n+1} \cap \conjelt \Omega_{n+1} =  \Omega_n \cup \conjelt \Omega_n $.	
\end{definition}

\begin{remark}\label{Rmk - Inverse limit description for invertible functions}
	With these admissible coverings of $\Omega$ we find ourselves in the \Cref{Set - At most countable affinoid covering}.
	It follows form \Cref{Lemma - Group of invertible functions is Polish} that $\CO^\times (\Omega)$ is a Polish $G$-module topologised via the isomorphism
	\begin{equation*}
		\CO^\times(\Omega) \overset{\sim}{\lra} \textstyle \varprojlim_{n\in \BN} \CO^\times(\Omega_n) \,,\quad f \lto ( f\res{\Omega_n} )_{n\in \BN} , 
	\end{equation*}
	of $G_0$-modules.
	Moreover, we also have an isomorphism $\CO^\times(\Omega) \cong \varprojlim_{n\in \BN} \CO^\times(\Omega'_n)$ of topological $I$-modules.
\end{remark}

\begin{lemma}\label{Lemma - Invariant function is constant}
	If a function $f$ in $\CO(\Omega)$, $\CO(\Omega_n)$ or $\CO(\Omega'_n)$ is invariant under a family of elements $\big\{ \big ( \begin{smallmatrix} 1 & x_i \\ 0 & 1 \end{smallmatrix} \big) \big\}_{i\in \BN} $ for $x_i \in \CO_F$ such that $x_i \ra 0$, then $f$ is constant.
	For example, this condition is satisfied if $f$ is invariant under $I$.
\end{lemma}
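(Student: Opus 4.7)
The plan is to reduce to a local statement about Taylor expansions and then invoke connectedness. First, I would identify the action of the upper-unipotent element $u_x \defeq \big( \begin{smallmatrix} 1 & x \\ 0 & 1 \end{smallmatrix} \big)$ on $\BP^1_K$: by \eqref{Eq - Action on projective line} we have $u_x \cdot [z_0:z_1] = [z_0 + x z_1 : z_1]$, i.e.\ $u_x$ acts on the affine coordinate $z = z_0/z_1$ by the translation $z \mapsto z + x$. By \Cref{Lemma - Action on finite subtree} (iv) the congruence subgroup $G_{n+2}$ acts trivially on $\CT_n$ and $\CT'_{n-1}$, so for $i$ sufficiently large (namely $v_\unif(x_i) \geq n+2$) the element $u_{x_i}$ preserves $\Omega_n$ (resp.\ $\Omega'_n$), and the invariance $f(z + x_i) = f(z)$ is well-defined on all of $U$.

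Next, I would fix a point $z_\ast \in U$ and choose an affinoid disc $D \defeq \{z : |z - z_\ast| \leq r\} \subset U$. On $D$, the function $f$ admits a convergent power series expansion $f(z) = \sum_{k \geq 0} a_k (z - z_\ast)^k$ with $a_k \in K$. Pick $i$ large enough that $|x_i| < r$ and that $u_{x_i}$ stabilises $U$; then translation by $x_i$ preserves (a slightly smaller) subdisc of $D$, and expanding both sides of $f(z + x_i) = f(z)$ yields
\begin{equation*}
    \sum_{k \geq 1} a_k \sum_{j=0}^{k-1} \binom{k}{j} (z - z_\ast)^j \, x_i^{k-j} = 0 .
\end{equation*}
Extracting the coefficient of $(z - z_\ast)^j$ gives $\sum_{k > j} a_k \binom{k}{j} x_i^{k-j} = 0$ for every $j \geq 0$. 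Assuming without loss of generality that $x_i \neq 0$ (discarding the finitely many zero terms), divide by $x_i$ and let $i \to \infty$: every term of degree $\geq 2$ in $x_i$ vanishes in the limit, leaving $a_{j+1} = 0$. This shows $f$ is locally constant at $z_\ast$.

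Since $z_\ast \in U$ was arbitrary, $f$ is locally constant on $U$, hence constant on each connected component. It therefore remains to note that $\Omega$, $\Omega_n$ and $\Omega'_n$ are connected: $\Omega$ is connected by \cite[Thm.\ 2.4]{Kohlhaase11LubinTateDrinfeldBun}, and for $\Omega_n = \bigcup_{e \in \bE_n} \Omega_e$ (resp.\ $\Omega'_n$) connectedness follows from the fact that each $\Omega_e$ is connected, adjacent edges $e, e'$ sharing a vertex $v$ satisfy $\Omega_v \subset \Omega_e \cap \Omega_{e'}$, and the underlying subgraph $\CT_n$ (resp.\ $\CT'_n$) of $\CT$ is connected. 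Finally, the supplementary claim for $I$-invariant $f$ is immediate: $I$ contains the one-parameter family $\{u_x : x \in \CO_F\}$, and for instance $x_i \defeq \unif^i$ provides the required sequence converging to $0$. The only mildly delicate point in this argument is verifying that the Taylor series manipulation above is legitimate, which amounts to choosing $r$ small enough that translation by $x_i$ preserves a common subdisc — a standard matter in non-archimedean analysis once one works on a sufficiently small polydisc around $z_\ast$.
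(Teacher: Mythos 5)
Your approach differs from the paper's. The paper works globally: it cites Prop.\ 4.2.2 of \cite{ArdakovWadsley23EquivLineBun} to see that $\Omega_n$ and $\Omega'_n$ are complements of finitely many open discs in $\BP^1_K$, then uses \cite[Thm.\ 2.2.9(1)]{FresnelvanderPut04RigidAnGeom} to conclude that a non-zero analytic function there has only finitely many zeros, and derives a contradiction because the translates $[z-x_i:1]$ give infinitely many zeros of $f - f([z:1])$. You instead argue locally via a power series expansion around a point and then patch via connectedness.

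There is, however, a genuine gap in your Taylor-coefficient extraction. After dividing the identity $\sum_{k>j} a_k \binom{k}{j} x_i^{k-j} = 0$ by $x_i$ and letting $i\to\infty$, the surviving term is not $a_{j+1}$ but $\binom{j+1}{j}\,a_{j+1} = (j+1)\,a_{j+1}$. The paper allows $F$ (and hence $K$) to have positive characteristic $p$, in which case $j+1$ can vanish in $K$ and your limit gives $0 = 0$ rather than $a_{j+1}=0$. As written, the argument establishes $a_{j+1}=0$ only when $p\nmid j+1$, so the conclusion ``$f$ is locally constant'' does not follow. (A minor point: you do not need \Cref{Lemma - Action on finite subtree}(iv) at all — $\Omega_n$ is $G_0$-stable and $\Omega'_n$ is $I$-stable, and every $u_x$ with $x\in\CO_F$ lies in $I\subset G_0$, so translation always preserves the relevant affinoid.)

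The gap is easily closed and in fact simplifies the proof: taking $j=0$ in your identity gives $\sum_{k\geq 1} a_k\,x_i^k = 0$, i.e.\ the convergent power series $g(t) = f(z_\ast+t) - f(z_\ast)$ vanishes at every $x_i$. Since these are (after discarding repetitions) infinitely many zeros accumulating at $0$, the identity theorem for power series forces $g\equiv 0$, hence $a_k = 0$ for all $k\geq 1$ regardless of characteristic. With this replacement your local-plus-connectedness route is correct and is a valid alternative to the paper's global finiteness-of-zeros argument, each being about equally short.
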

\begin{proof}
	Since $\CO(\Omega) = \varprojlim_{n\in \BN} \CO(\Omega_n) $, it suffices to consider $f \in \CO(\Omega_n)$ or $f\in \CO(\Omega'_n)$. 
	By \cite[Prop.\ 4.2.2]{ArdakovWadsley23EquivLineBun}, both $\Omega_n$ and $\Omega'_n$ are complements of a (non-empty) finite union of open discs in $\BP_K^1$. 
	Hence by \cite[Thm.\ 2.2.9 (1)]{FresnelvanderPut04RigidAnGeom}, $f$ is either the zero function or has only finitely many zeros on this complement .

	Let $[z:1] \in \Omega_n(C) \subset \Omega'_n(C)$ so that we may assume $f([z:1])= 0$. 
	Since $f$ is invariant under the $\big ( \begin{smallmatrix} 1 & x_i \\ 0 & 1 \end{smallmatrix} \big)$ we obtain infinitely many points $\big ( \begin{smallmatrix} 1 & x_i \\ 0 & 1 \end{smallmatrix} \big)^{-1} \cdot [z: 1] = [z -x_i : 1]$ where $f$ vanishes.
	Therefore, $f$ must be the zero function.
\end{proof}

\subsection{The Van der Put Transform}

We can now recapitulate a construction by Van der Put \cite[Sect.\ 2]{vanderPut92DiscGrpsMumfordCurv} which is crucial to our strategy.
To an invertible function $f \in \CO^\times (\Omega)$ he associates a cochain $P(f) \in C(\bA, \BZ)$ as follows:
Let $(v,w) \in \bA$ be a directed edge corresponding to the closed point $x$ of $ \ov{\Omega}_v(\BF_q)_\kappa \subset \ov{\Omega}_{v,\kappa}$. 
By scaling\footnote{The supremum norm on $\Omega_{v}$ in fact takes values in $\abs{K}$, see \cite[Prop.\ 2.4.8 (a)]{Lütkebohmert16RigidGeo}.} one may assume that the supremum of $f$ restricted to $\Omega_{v}$ is equal to $1$. 
Then the reduction $\ov{f}$ of $f$ is a regular function on $\ov{\Omega}_{v,\kappa} \setminus \ov{\Omega}_v(\BF_q)_\kappa$ and it uniquely extends to a rational function on $\ov{\Omega}_{v,\kappa}$.
One defines $P(f) ((v,w))$ as the order of the zero of $\ov{f}$ at $x$:
\begin{equation}\label{Eq - Definition of van der Put transform}
	P(f)( (v,w) ) \defeq \mathrm{ord}_{x} (\ov{f}) . 
\end{equation}

\begin{theorem}[{Van der Put, \cite[Thm.\ 2.1]{vanderPut92DiscGrpsMumfordCurv}}]\label{Thm - Van der Put sequence}
	The transform $P$ is a $G$-equivariant group homomorphism and fits into a short exact sequence of $G$-modules
	\begin{equation*}
		1 \lra K^\times \lra \CO^\times(\Omega) \overset{P}{\lra} \Cur(\bA,\BZ) \lra 0 .
	\end{equation*}
\end{theorem}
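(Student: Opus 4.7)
My plan follows Van der Put's original approach in four steps: (i) $P$ is well-defined and takes values in $\Cur(\bA, \BZ)$; (ii) $P$ is a $G$-equivariant homomorphism; (iii) $\Ker(P) = K^\times$; and (iv) $P$ is surjective.

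For step (i), the order $\mathrm{ord}_x(\ov{f})$ is unaffected by the rescaling ambiguity, since any two admissible rescalings differ by an element of $\CO_K^\times$ with reduction in $\kappa^\times$. The harmonic condition (2) of \Cref{Def - Currents} at a vertex $v$ follows because $\ov{f}$ is a non-zero rational function on $\ov\Omega_{v,\kappa} = \BP_\kappa^1$ whose divisor is supported on $\ov\Omega_v(\BF_q)_\kappa$ (as $f$ is invertible on the complement) and has total degree zero; the rational points correspond bijectively to the arrows out of $v$. Antisymmetry (1) is a local computation at the ordinary double point between $\ov\Omega_v$ and $\ov\Omega_w$: étale-locally the formal model is $\Spec \CO_K\langle s, t\rangle / (st - \unif)$, and the zero order of $\ov{f}$ on one branch equals the pole order on the other. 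Step (ii) is immediate: multiplicativity follows from additivity of $\mathrm{ord}$, and $G$-equivariance from functoriality of the reduction map under the $G$-action on the formal model.

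For step (iii), the inclusion $K^\times \subseteq \Ker(P)$ is clear. Conversely, assume $P(f) = 0$. Then for each $v \in \bV$, $\ov{f}\res{\ov\Omega_v}$ is nowhere vanishing on $\BP_\kappa^1$, hence equals a constant $\ov{c}_v \in \kappa^\times$. Compatibility of reductions on the overlaps $\Omega_e$ together with connectedness of $\CT$ forces all $\ov{c}_v$ to equal a single $\ov{c} \in \kappa^\times$. Dividing $f$ by a lift $c \in \CO_K^\times$ of $\ov{c}$, we may assume $\|f/c - 1\|_{\Omega_v} < 1$ for every $v$, so by the ultrametric property both $f/c$ and its inverse are power-bounded on each $\Omega_v$; passing to the inverse limit via \Cref{Rmk - Inverse limit description for invertible functions} they are globally power-bounded on $\Omega$. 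The classical fact that every bounded analytic function on $\Omega$ is constant then yields $f/c \in \CO_K^\times$, whence $f \in K^\times$.

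The main obstacle is step (iv), surjectivity. The basic building block is that for any distinct $a, b \in \BP^1(F)$ the function $z \mapsto \frac{z-a}{z-b}$ (with the convention $z - \infty \defeq 1$) lies in $\CO^\times(\Omega)$, and a direct local computation identifies its Van der Put transform as the ``path current'' taking value $+1$ on every arrow along the unique bi-infinite geodesic in $\CT$ connecting the ends corresponding to $a$ and $b$ (oriented from $b$ to $a$), and $-1$ on the reverse arrows. A finite linear algebra argument using the harmonic and antisymmetry conditions shows that such path currents $\BZ$-linearly generate every current supported on a finite subtree of $\CT$. For general $\varphi \in \Cur(\bA, \BZ)$, I would use the exhaustion $\Omega = \bigcup_n \Omega_n$: exploiting the Mittag--Leffler property from \Cref{Cor - Mittag-Leffler property for currents}, one inductively constructs a sequence $f_n \in \CO^\times(\Omega)$, each a finite product of building blocks, such that $P(f_n)\res{\bA_{\leq n}} = \varphi\res{\bA_{\leq n}}$ and $f_{n+1}/f_n \in \CO^{\times\times}(\Omega_n)$. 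The latter condition ensures convergence in $\CO^\times(\Omega) \cong \varprojlim_n \CO^\times(\Omega_n)$, producing a preimage $f = \lim_n f_n$ with $P(f) = \varphi$. The delicate technical step is arranging the normalisations so that these compatibilities actually hold consistently along the inverse system.
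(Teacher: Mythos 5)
The paper itself gives no proof of this theorem: it is stated with a citation to \cite[Thm.\ 2.1]{vanderPut92DiscGrpsMumfordCurv}, and the cited surjectivity is even used as input in the proof of \Cref{Prop - Van der Put sequence for finite subtree} (``That they are equal follows from $\Im(P) = \Cur(\bA,\BZ)$\ldots''). Your proposal must therefore be judged on its own terms, against Van der Put's original argument.

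Steps (i)--(iii) are sound and recover Van der Put's reasoning. Two small points: in (i) the compatibility of the two rescalings on adjacent components should be made explicit, namely the two rescalings differ by a unit of $\CO_K$ because the suprema are both attained on the connecting annulus of $\Omega_e$ — this is what makes the zero/pole orders on the two branches well defined and antisymmetric under the local model $st=\unif$; in (iii) the $\Omega_v$ are pairwise disjoint, so the constants $\ov c_v$ are compared edge by edge inside $\Omega_e$, not on genuine overlaps — the conclusion by connectedness of $\CT$ is unchanged. Your appeal to $\CO(\Omega)^\circ = K^\times \cup \{0\}$ is legitimate and independent of the sequence being established.

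Step (iv) contains a genuine gap in the convergence claim. Knowing that $f_{n+1}/f_n \in \CO^{\times\times}(\Omega_n)$ does \emph{not} ensure that $(f_n)_n$ converges in $\CO^\times(\Omega) \cong \varprojlim_m \CO^\times(\Omega_m)$. For the product to converge on a fixed $\Omega_m$ one needs $\lVert f_{n+1}/f_n - 1\rVert_{\Omega_m} \to 0$ as $n\to\infty$, whereas $f_{n+1}/f_n \in \CO^{\times\times}(\Omega_n)$ only guarantees that this quantity is $<1$; an ultrametric infinite product $\prod_n(1+h_n)$ with $\lVert h_n\rVert$ bounded below does not converge. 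The delicate point is not the $K^\times$-normalisation (which is harmless, as scalars lie in $\Ker P$) but precisely this decay. It can be arranged quantitatively: choose the building blocks $\frac{z-a}{z-b}$ appearing in $f_{n+1}/f_n$ so that the geodesic between the ends $a$ and $b$ avoids $\CT_n$. Then $\lvert a-b\rvert$ is forced to be small (on the order of $q^{-n}$) while $\lvert z-a\rvert,\lvert z-b\rvert$ are bounded below on $\Omega_m$ for $m<n$ (on the order of $q^{-m}$), so
\begin{equation*}
	\Big\lVert \tfrac{z-a}{z-b}-1\Big\rVert_{\Omega_m} = \Big\lVert \tfrac{b-a}{z-b}\Big\rVert_{\Omega_m} \lesssim q^{m-n}\lra 0 ,
\end{equation*}
and the ultrametric estimate passes to the finite product $f_{n+1}/f_n$. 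With this refinement the inductive construction produces a convergent sequence and the surjectivity proof closes. Finally, the assertion that restrictions of path currents $\BZ$-linearly span $\Cur(\bA_{\leq n},\BZ)$ is correct (and needed), but it is not ``a finite linear algebra argument'' — it requires an induction along $\CT_n$, e.g.\ peeling off leaves; as stated it should be flagged as a lemma rather than an aside.
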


\begin{example}[{cf.\ \cite[Prop.\ 2.11]{Gekeler20InvFunctNASymmSpa}}]\label{Example - Van der Put transform of quotient of linear forms}
	Let $\ell$ and $\ell'$ be two non-trivial, $F$-rational linear forms $K^2 \ra K$ with $\Ker(\ell) = U_K$ and $\Ker(\ell') = U'_K$, for subspaces $U, U' \subset F^2$. 
	Then $\frac{\ell}{\ell'}$ defines a function in $\CO^\times(\Omega)$, and we want to compute its Van der Put transform.
	Since $\frac{\ell}{\ell'} \in K^\times$ if $U=U'$, we assume that $U \neq U'$.

	Let $v \in \bV$ correspond to a lattice $L$ of $F^2$.
	We may scale $\ell$ in such a way that the reduction $\ov{\ell} \colon L/\unif L \ra \BF_q$ is a non-zero linear form with $\Ker(\ov{\ell})= (L\cap U)/\unif L$.
	Let $x$ and $x'$ denote the points of $\BP^1(L/\unif L)_\kappa$ corresponding to $(L\cap U)/\unif L$ and $(L\cap U')/\unif L$ respectively.
	Then the reduction $\ov{ \big( \frac{\ell}{\ell'} \big) }$ on $\ov{\Omega}_{v,\kappa}$
	\begin{altenumeratelevel2}
		\item 
		either has a simple zero at $x$ and a simple pole at $x'$ if $x \neq x'$,
		
		\item 
		or is a regular function on $\BP^1(L/\unif L)_\kappa$ if $x= x'$.
	\end{altenumeratelevel2}

	Furthermore, $U$ and $U'$ can be regarded as ends of the tree $\CT$, see \cite[Sect.\ II.1.1]{Serre80Trees}.
	Hence there exist unique non-backtracking paths $(v, v_1, v_2,\ldots)$ and $(v, v'_1, v'_2, \ldots)$ of adjacent vertices towards $U$ and $U'$ respectively. 
	Concretely, $v_1$ is represented by the lattice $L_1 = \unif L + (L \cap U)$.
	Therefore, $x$ corresponds to the directed edge $(v,v_1)$, and similarly $x'$ corresponds to $(v,v'_1)$. 
	We conclude that, for $a \in \bA$,
	\begin{equation*}
		P \bigg(\frac{\ell}{\ell'} \bigg) (a) = \begin{cases} 				1& \text{, if $a$ lies on $\CS$ and points towards $U$,}	\\
			-1 & \text{, if $a$ lies on $\CS$ and points towards $U'$,} \\
			0 & \text{, if $a$ does not lie on $\CS$, } 
		\end{cases}
	\end{equation*}
	where $\CS$ is the unique straight path in $\CT$ between the ends $U$ and $U'$.
	\qed
\end{example}

To extract information about the continuous or condensed group cohomology of $\CO^\times(\Omega_K)$ out of the short exact sequence of \Cref{Thm - Van der Put sequence}, we have to regard it on the level of topological $G$-modules.

We note that in the definition of the Van der Put transform, $P(f)((v,w))$ only depends on the restriction of the invertible function $f$ to $\Omega_{v}$. 
Therefore, $P$ gives rise to a well-defined group homomorphism
\begin{equation*}
	P_n \colon \CO^\times (\Omega_n) \lra C(\bA_{\leq n}, \BZ) \,,\quad f \lto \big[ (v,w) \mto P(f)((v,w)) \big] ,
\end{equation*}
for all $n\in \BN$.
Analogously, we obtain a group homomorphism $P'_n \colon  \CO^\times (\Omega'_n) \ra C(\bA'_{\leq n}, \BZ)$.
Together with the canonical restriction maps, we obtain commutative diagrams
\begin{equation*}
	\begin{tikzcd}[row sep = small]
		\CO^\times(\Omega) \ar[r, "P"] \ar[d] & C(\bA, \BZ) \ar[d] \\
		\CO^\times(\Omega_n) \ar[r, "P_n"] & C(\bA_{\leq n}, \BZ) 
	\end{tikzcd}
\end{equation*}
so that $P = \varprojlim_{n\in \BN} P_n$.

\begin{proposition}\label{Prop - Van der Put sequence for finite subtree}
	For $n\in \BN$, there are short stricly exact sequences of topological $G_0$-modules (resp.\ $I$-modules)
	\begin{equation*}
		1 \lra K^\times \CO^{\times \times}(\Omega^{(\prime)}_n) \lra \CO^\times(\Omega^{(\prime)}_n) \overset{P^{(\prime)}_n}{\lra} \Cur \big(\bA^{(\prime)}_{\leq n}, \BZ \big) \lra 0 .
	\end{equation*}
\end{proposition}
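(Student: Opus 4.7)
The strategy is to verify the four ingredients separately: the image lands in the currents, the kernel equals $K^\times \CO^{\times\times}(\Omega_n^{(\prime)})$, the map $P_n^{(\prime)}$ is surjective, and the sequence is strict. The topology and continuity of $P_n^{(\prime)}$ follow from the fact that $|K^\times| \subset \BR_{>0}$ is discrete, so small perturbations of $f$ in sup-norm on $\Omega_n^{(\prime)}$ preserve the sup-norm on each $\Omega_v$ and hence the reduction and its orders at $\BF_q$-rational points. In particular $P_n^{(\prime)}$ is locally constant into the discrete finite-rank free abelian group $\Cur(\bA_{\leq n}^{(\prime)}, \BZ) \subset \BZ^{\bA_{\leq n}^{(\prime)}}$. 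That the image lands in currents is inherited from \Cref{Thm - Van der Put sequence}: for any $v \in \bV_n^{(\prime)}$, all directed edges from $v$ already lie in $\bA_{\leq n}^{(\prime)}$, so the sum- and antisymmetry-conditions at $v$ are directly implied by those for $P$.

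For the kernel, the inclusion $K^\times \CO^{\times\times}(\Omega_n^{(\prime)}) \subseteq \Ker(P_n^{(\prime)})$ is immediate. Conversely, suppose $f \in \Ker(P_n^{(\prime)})$. For each edge $e = \{v,w\} \in \bE_n^{(\prime)}$, choose $c_e \in K^\times$ with $\lVert c_e f \rVert_{\Omega_e} = 1$. The reduction $\overline{c_e f}$ on $\ov{\Omega}_e = \ov{\Omega}_{v,\kappa} \cup \ov{\Omega}_{w,\kappa}$ extends to rational functions on each $\BP^1_\kappa$-component; because $P_n^{(\prime)}(f)$ vanishes on all directed edges originating from $v$ or $w$, these rational functions have no zeros or poles at the $\BF_q$-rational points, so they are nonzero constants that must agree at the intersection point $x_{vw}$. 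Lifting this common value $\bar{\mu}_e$ to $\mu_e \in \CO_K^\times$, one obtains $f = \lambda_e \, u_e$ on $\Omega_e$ with $\lambda_e \defeq \mu_e/c_e \in K^\times$ and $u_e \in \CO^{\times\times}(\Omega_e)$. For adjacent edges $e, e'$ sharing a vertex $w$, both $\Omega_e$ and $\Omega_{e'}$ contain $\Omega_w$, so $\lambda_{e'}/\lambda_e \in K^\times \cap \CO^{\times\times}(\Omega_w) = \CO_K^{\times\times}$. By connectedness of the finite subtree $\CT_n^{(\prime)}$, one can then choose a single $\lambda \in K^\times$ so that $f/\lambda \in \CO^{\times\times}(\Omega_e)$ for every $e \in \bE_n^{(\prime)}$; since $\CO^{\times\times}$ is a sheaf, $f \in \lambda \cdot \CO^{\times\times}(\Omega_n^{(\prime)})$.

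For surjectivity of $P_n^{(\prime)}$, note that by \Cref{Lemma - Isomorphism between currents on directed and undirected edges}~(ii) the inverse system $\big(\Cur(\bA_{\leq n}^{(\prime)}, \BZ)\big)_n$ is isomorphic to $\big(\Cur(\bE_{n+1}^{(\prime)}, \BZ)\big)_n$ (compatibly with restrictions), and by \Cref{Cor - Mittag-Leffler property for currents} the latter has surjective transition maps. Therefore the canonical map $\Cur(\bA, \BZ) \twoheadrightarrow \Cur(\bA_{\leq n}^{(\prime)}, \BZ)$ is surjective. Given $\varphi_n \in \Cur(\bA_{\leq n}^{(\prime)}, \BZ)$, lift to $\varphi \in \Cur(\bA, \BZ)$, apply \Cref{Thm - Van der Put sequence} to obtain $f \in \CO^\times(\Omega)$ with $P(f) = \varphi$, and restrict to $\Omega_n^{(\prime)}$.

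Finally, strictness follows from the open mapping theorem: all three terms are Polish (for $\CO^\times(\Omega_n^{(\prime)})$ by \Cref{Lemma - Group of invertible functions is Polish}; for $\Cur(\bA_{\leq n}^{(\prime)}, \BZ)$ because it is a discrete subgroup of a finite-rank free abelian group), so the continuous surjection $P_n^{(\prime)}$ is strict, while the inclusion of the kernel is strict by definition (carrying the subspace topology). I expect the main obstacle to be the gluing argument in the kernel computation -- tracking how the locally defined constants $\lambda_e$ fit together compatibly across the connected finite subtree and verifying their agreement modulo $\CO_K^{\times\times}$.
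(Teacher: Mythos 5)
Your overall architecture mirrors the paper's: kernel $=K^\times\CO^{\times\times}$, surjectivity via surjectivity of $P$ plus Mittag--Leffler for currents, and strictness via openness of the kernel and the open mapping theorem for Polish groups. The surjectivity and strictness parts are fine, and your explicit gluing of the local constants $\lambda_e$ across the connected tree $\CT^{(\prime)}_n$ correctly fills in a step the paper leaves implicit ("it suffices to consider the covering\dots").

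However, the kernel computation has a genuine gap. You conclude that the extended reduction on each $\BP^1_\kappa$-component is a nonzero constant from the fact that $P_n^{(\prime)}(f)$ vanishes, i.e.\ from the absence of zeros and poles \emph{at the $\BF_q$-rational points}. That implication is only valid if you first know that the divisor of the reduced rational function is a priori supported at the $\BF_q$-rational points. This is precisely where the paper invokes the explicit decomposition $f|_{\Omega_e}=c\,(1+h)\prod_i(z-a_i)^{m_i}\prod_j(z-b_j)^{n_j}$ of Lütkebohmert (cited as \cite[Prop.\ 2.4.8 (b)]{Lütkebohmert16RigidGeo}): from this form one reads off that the reduction on $\ov{\Omega}_{v,\kappa}$ is a monomial in $(z-\bar a_i)$ and $z$, so its zeros and poles lie \emph{only} at the $\BF_q$-points, and vanishing of the orders there forces all exponents to be zero. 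Without that structural input, a regular function on $\ov\Omega_{v,\kappa}\setminus\ov\Omega_v(\BF_q)_\kappa$ could in principle vanish at a non-rational closed point; your argument does not rule this out. So the reduction-theoretic gluing is a nice reorganization, but it does not bypass the Lütkebohmert decomposition --- it presupposes its conclusion.

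A smaller imprecision: you scale by $c_e$ with $\lVert c_ef\rVert_{\Omega_e}=1$, whereas the Van der Put transform at the arrows out of $v$ is defined with the scaling $\lVert c_vf\rVert_{\Omega_v}=1$. To equate the two you need that the Shilov boundary of $\Omega_e$ lies in $\Omega_v\cup\Omega_w$, so the edge sup-norm is attained on (at least) one of the two vertex affinoids, and then that regularity of $\ov{c_ef}$ at the ordinary double point $x_{vw}$ forces it to be nonzero (in fact the same constant) on both components. This can be repaired along those lines, but as written the two scalings are being used interchangeably without comment.
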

\begin{proof}
	We first show that the kernel of $P_n$ equals $K^\times \CO^{\times \times} (\Omega_n)$.
	It is clear that $K^\times$ is contained in $\Ker(P_n)$.
	Moreover, any $f \in \CO^{\times \times} (\Omega_n)$ is of the form $f = (1+h)$, for some $h \in \CO(\Omega_n)$ with supremum norm $\lVert h \rVert_{\Omega_n} < 1$.
	Therefore, the reduction of $f$ restricted to $\Omega_{v}$ is the constant function $1$, for any $(v,w) \in \bA_{\leq n}$, and hence $P_n(f)((v,w)) = 0$.

	Conversely, let $f \in \Ker(P_n)$.
	It suffices to consider the covering \eqref{Eq - Definition of affinoid} and to show that the restriction of $f$ is contained in $K^\times \CO^{\times \times}(\Omega_{e})$, for all $e=\{v,w\} \in \bE_n$.
	There is a concrete description of $\Omega_{e}$ as follows, see \cite[Sect.\ I.2.3]{BoutotCarayol91ThmCerednikDrinfeld}:
	Let
	\begin{align*}
		D(a,r) \defeq \big\{ z \in C \,\big\vert\, \abs{z-a} \leq r \big\} \quad\text{and}\quad D^-(a,r) \defeq \big\{ z \in C \,\big\vert\, \abs{z-a} < r \big\}
	\end{align*}
	denote the closed, respectively open disc of radius $r\in \BR_{>0}$ centred in $a\in C$.
	Then, there is an identification
	\begin{equation}\label{Eq - Identification for affinoid over edge}
		\Omega_{e} \cong D(0,1) - \bigg( \displaystyle\bigcup_{i=1}^{q-1} D^-(a_i,1) \cup D^- \big(0, \textstyle\frac{1}{q} \big) \cup \displaystyle \bigcup_{i=1}^{q-1} D^- \big(b_i,\textstyle\frac{1}{q} \big) \bigg)
	\end{equation}
	where $a_1,\ldots,a_{q-1}$ is a full system of representatives of $\CO_F \!\setminus \! (\unif)$ modulo $(\unif)$, and $b_0 \defeq 0$, $b_1,\ldots,b_{q-1}$ is one of $(\unif)$ modulo $(\unif^2)$.
	By \cite[Prop.\ 2.4.8 (b)]{Lütkebohmert16RigidGeo}, on $\Omega_{e}$ the function $f$ can be uniquely expressed as
	\begin{equation*}
		f= c\, (1+h) \,  (z-a_1)^{m_1} \cdots (z-a_{q-1})^{m_{q-1}} \, z^{n_0} \, (z-b_1)^{n_1} \cdots (z-b_{q-1})^{n_{q-1}} \,,
	\end{equation*}
	for $c\in K^\times$, $h \in \CO(\Omega_{e})$ with $\lVert h \rVert_{\Omega_{e}} <1$, and $m_1,\ldots,m_{q-1}, n_0,\ldots,n_{q-1} \in \BZ$.
	Since under the identification \eqref{Eq - Identification for affinoid over edge}
	\begin{equation*}
		\Omega_{v} \cong D(0,1) - \bigg( \displaystyle\bigcup_{i=1}^{q-1} D^-(a_i,1) \cup D^- (0, 1) \bigg) ,
	\end{equation*}
	we obtain that the reduction of the scaled restriction of $f$ to $\Omega_{v}$ is equal to 
	\begin{equation*}
		(z- \ov{a_1})^{m_1} \cdots (z- \ov{a_{q-1}} )^{m_{q-1}} \, z^{\sum_{i=0}^{q-1} n_i } .
	\end{equation*}
	However, by the assumption that $f\in \Ker(P_n)$, this reduction has neither zeroes nor poles on $\ov{\Omega}_{v,\kappa}$.
	We thus conclude that in particular $m_1 = \ldots = m_{q-1} = 0$.
	Using that under \eqref{Eq - Identification for affinoid over edge} 
	\begin{equation*}
		\Omega_{w} \cong D\big( 0, \textstyle\frac{1}{q} \big)  - \bigg( D^- \big(0, \textstyle\frac{1}{q} \big)  \cup \displaystyle\bigcup_{i=1}^{q-1} D^- \big(b_i, \textstyle\frac{1}{q} \big)  \bigg) 
	\end{equation*}
	a similar argument shows that $n_0 = \ldots = n_{q-1} = 0$.
	This proves that the restriction of $f$ is contained in $K^\times \CO^{\times \times}(\Omega_e)$.

	Reasoning analogously as for the transform $P$, one finds that the image of $P_n$ is contained in $\Cur(\bA_{\leq n}, \BZ)$. 
	That they are equal follows from $\Im(P) = \Cur(\bA,\BZ)$ and the compatibility between $P$ and $P_n$.

	We remark that $\CO^{\times \times} (\Omega_n)$ and hence $\Ker(P_n)$ is an open subgroup of $\CO^\times (\Omega_n)$. 
	It follows that $P_n$ is continuous when $\Cur(\bA_{\leq n}, \BZ)$ carries the discrete topology. 
	Moreover, $P_n$ certainly is an open map then, and therefore strict.

	The proof for the short strictly exact sequence associated with $P'_n$ is analogous.	
\end{proof}

\begin{corollary}\label{Cor - Solid Van der Put sequence}
	The Van der Put sequence from \Cref{Thm - Van der Put sequence} gives rise to a short strictly exact sequence of topological $G$-modules
	\begin{equation*}
		1 \lra K^\times \lra \CO^\times(\Omega) \overset{P}{\lra} \Cur(\bA,\BZ) \lra 0 .
	\end{equation*}
\end{corollary}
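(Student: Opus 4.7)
The plan is to upgrade the strictly exact sequences at the finite levels (\Cref{Prop - Van der Put sequence for finite subtree}) to the global statement by passing to the inverse limit along the admissible covering $(\Omega_n)_{n\in \BN}$. Algebraic exactness and surjectivity of $P$ are already supplied by \Cref{Thm - Van der Put sequence}, so what I would still need to establish is purely topological: continuity of $P$, and strictness of both the inclusion $K^\times \hookrightarrow \CO^\times(\Omega)$ and of $P$ itself.

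For continuity of $P$, I would use that the topology on $\CO^\times(\Omega)$ is the inverse limit topology along $\CO^\times(\Omega) \cong \varprojlim_{n\in \BN} \CO^\times(\Omega_n)$ (\Cref{Rmk - Inverse limit description for invertible functions}) and likewise $\Cur(\bA,\BZ) \cong \varprojlim_{n\in \BN} \Cur(\bA_{\leq n}, \BZ)$ is topologised as an inverse limit (\Cref{Rmk - Inverse limit description for currents}). Since the $P_n$ are continuous by \Cref{Prop - Van der Put sequence for finite subtree} and are compatible with the transition maps, the map $P = \varprojlim_{n\in \BN} P_n$ is continuous. Strictness of the inclusion $K^\times \hookrightarrow \CO^\times(\Omega)$ then follows from the observation that for $c\in K^\times$ the constant function on $\Omega_n$ has supremum norm $\norm{c}_{\Omega_n} = \abs{c}$, so the subspace topology on $K^\times \subset \CO^\times(\Omega_n)$ agrees with the one induced by $\abs{\,\blank\,}$, and this persists in the inverse limit.

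The remaining and key step---strictness of $P$---would be obtained from the open mapping theorem. By \Cref{Lemma - Group of invertible functions is Polish}, the group $\CO^\times(\Omega)$ is Polish; and $\Cur(\bA,\BZ)$ is Polish as well since it is a closed subgroup of the Polish group $C(\bA,\BZ)$ (indeed an inverse limit of countable discrete groups, see \Cref{Prop - Inverse limit description for cochains} and \Cref{Rmk - Inverse limit description for currents}). As noted in the notation section of the introduction, any continuous surjective group homomorphism between Polish groups is automatically strict, which concludes the argument.

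I do not anticipate a serious obstacle: the hard analytic work lives at the finite level in \Cref{Prop - Van der Put sequence for finite subtree} and in the Polishness statement for $\CO^\times(\Omega)$, and the open mapping theorem takes care of the rest. The only point requiring some care is the verification that the various natural topologies (on $\CO^\times(\Omega)$, on $K^\times \subset \CO^\times(\Omega)$, and on $\Cur(\bA,\BZ)$) are the intended ones and are compatible with the inverse-limit descriptions, which is essentially bookkeeping.
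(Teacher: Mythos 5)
Your proposal is correct and follows essentially the same route as the paper: continuity of $P$ via $P = \varprojlim_n P_n$ using \Cref{Prop - Van der Put sequence for finite subtree}, and then the open mapping theorem for Polish groups (with Polishness supplied by \Cref{Lemma - Group of invertible functions is Polish} and the inverse-limit descriptions) to get that $P$ is open, hence strict. You spell out the strictness of the inclusion $K^\times \hookrightarrow \CO^\times(\Omega)$, which the paper leaves implicit; this is a harmless and correct addition.
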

\begin{proof}
	Since the $P_n$ are continuous homomorphisms by the preceding \Cref{Prop - Van der Put sequence for finite subtree}, the transform $P = \varprojlim_{n\in \BN} P_n$ is continuous.
	Moreover, it follows from the open mapping theorem for Polish groups that $P \colon \CO^\times(\Omega) \ra \Cur(\bA,\BZ)$ is open (cf.\ Remarks \ref{Rmk - Inverse limit description for currents} and \ref{Rmk - Inverse limit description for invertible functions}).
\end{proof}

\begin{remark}
	When considering this sequence associated with the Van der Put transform just on the level of topological $G^0$-modules (or for closed subgroups of $G^0$) we will tacitly make the ($G^0$-equivariant) identification $\Cur(\bA, \BZ) \cong \Cur(\bE, \BZ)$ from \Cref{Lemma - Isomorphism between currents on directed and undirected edges}.
\end{remark}

\subsection{First Consequences for Group Cohomology}

\begin{notation}
	We let $P_\ast$ (resp.\ $P_{n,\ast}$, $P'_{n,\ast}$) denote the homomorphism induced by $P$ (resp.\ $P_n$, $P'_n$) between the first continuous or condensed group cohomology groups; for example
	\begin{equation*}
		P_\ast \colon \ul{H}^1 \big( {G^0} , \CO^\times(\Omega) \big) \lra \ul{H}^1 \big(  {G^0} , \Cur(\bE, \BZ) \big) .
	\end{equation*}
	Moreover, we can compose $P_\ast$ with the isomorphisms in \Cref{Cor - Cohomology of currents for maximal compact subgroup} which describe the first group cohomology groups of $\Cur(\bE, \BZ)$ (resp.\ $\Cur(\bE_{n+1}, \BZ)$).
	We abbreviate this composition by $\widetilde{P_\ast}$ (resp.\ $\widetilde{P_{\ast,n}}$).
\end{notation}

\begin{theorem}\label{Thm - Left exact sequences for group cohomology}
	There are compatible exact sequences of solid abelian groups
	\begin{equation}\label{Eq - Short exact sequences for group cohomology of G^0 and G_0}
	\begin{tikzcd}[ row sep=scriptsize , column sep=scriptsize ,
		]
		1 \ar[r] & \ul{\Hom} (G^0 , K^\times ) \ar[r]\ar[d] & \ul{H}^1 \big({G^0} , \CO^\times (\Omega) \big) \ar[r, "\widetilde{P_\ast}"] \ar[d] &\frac{1}{q-1} \BZ \oplus \BZ / (q+1)\,\BZ \ar[d, hook] & \\
		1 \ar[r] & \ul{\Hom} (G_0 , K^\times ) \ar[r]		 & \ul{H}^1 \big({G_0} , \CO^\times (\Omega) \big) \ar[r, "\widetilde{P_\ast}"] &  \BZ_p \oplus \BZ / (q+1)\,\BZ \ar[r] & 0
	\end{tikzcd}
	\end{equation}
	where on the underlying abelian groups $\widetilde{P_\ast}[j] = \big( 1, 1 \mod (q+1) \big)$.
	The lower sequence is the inverse limit of the short exact sequences, for $n\in \BN$,
	\begin{equation}\label{Eq - Short exact sequence for group cohomology for affinoids}
		1 \lra \ul{H}^1 \big( {G_0} , K^\times \CO^{\times \times}(\Omega_n) \big) \lra \ul{H}^1 \big( {G_0} , \CO^\times (\Omega_n) \big) \xrightarrow{\widetilde{P_{n,\ast}}} \BZ / q^n(q+1)\,\BZ \lra 0,
	\end{equation}
	with $\widetilde{P_{n,\ast}}[j] = 1 \mod (q^n(q+1)  )$ on the underlying abelian groups.
\end{theorem}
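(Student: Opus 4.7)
The plan is to derive both exact sequences from the long exact sequence of condensed group cohomology applied to the Van der Put sequence (\Cref{Cor - Solid Van der Put sequence} at infinite level, \Cref{Prop - Van der Put sequence for finite subtree} at finite level) and then to compute the image of the distinguished class $[j]$ by an explicit tree-theoretic calculation.

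For $H\in\{G^0, G_0\}$, applying $\ul{H}^\ast(H, -)$ to the short strictly exact sequence $1\to K^\times \to \CO^\times(\Omega) \xrightarrow{P} \Cur(\bE, \BZ) \to 0$ produces a long exact sequence. By \Cref{Lemma - Invariant function is constant} every $H$-invariant element of $\CO^\times(\Omega)$ is constant, so $\CO^\times(\Omega)^H = K^\times$; by \Cref{Prop - Cohomology of currents} and \Cref{Cor - Cohomology of currents for maximal compact subgroup} we have $\Cur(\bE, \BZ)^H = 0$; and since $K^\times$ carries the trivial $H$-action, $\ul{H}^1(H, K^\times) = \ul{\Hom}(H, K^\times)$. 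What remains of the long exact sequence is precisely
\begin{equation*}
0 \to \ul{\Hom}(H, K^\times) \to \ul{H}^1\bigl(H, \CO^\times(\Omega)\bigr) \xrightarrow{P_\ast} \ul{H}^1\bigl(H, \Cur(\bE, \BZ)\bigr),
\end{equation*}
and composing $P_\ast$ with the isomorphisms of \Cref{Prop - Cohomology of currents} and \Cref{Cor - Cohomology of currents for maximal compact subgroup} produces the two rows of \eqref{Eq - Short exact sequences for group cohomology of G^0 and G_0}, up to the asserted surjectivity in the $G_0$-case. The vertical compatibility of the two rows together with the right-hand injection is part of the commutative diagram in \Cref{Cor - Cohomology of currents for maximal compact subgroup}. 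The analogous argument applied to \Cref{Prop - Van der Put sequence for finite subtree} yields \eqref{Eq - Short exact sequence for group cohomology for affinoids} once one notes that $\Cur(\bE_{n+1}, \BZ)^{G_0} = \Ker(\Sigma_n^{G_0}) = 0$ (visible from the explicit basis in the proof of \Cref{Prop - Cohomology of currents on finite subtree}). The $G_0$-row at infinite level is then obtained as the inverse limit of the finite-level sequences: by \Cref{Rmk - Inverse limit description for invertible functions} and \Cref{Cor - Mittag-Leffler property for currents} all relevant inverse systems are Mittag--Leffler, so that \Cref{Lemma - Group cohomology and inverse system} allows one to take $\varprojlim_n$ while preserving exactness.

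The heart of the argument is the identification $\widetilde{P_\ast}[j] = \bigl(1, 1 \bmod (q+1)\bigr)$. I would fix an $F$-rational linear form $\ell\colon K^2\to K$ with $\Ker(\ell) = U$, viewed as an end of $\CT$, and introduce the cochain
\begin{equation*}
\xi_\bE \in C(\bE, \BZ), \qquad \xi_\bE(\{v_+, v_-\}) = \begin{cases} 1, & \text{if the arrow } (v_+, v_-) \text{ points towards } U, \\ 0, & \text{otherwise.} \end{cases}
\end{equation*}
Using \Cref{Example - Van der Put transform of quotient of linear forms}, an edge-by-edge inspection shows that $g\cdot \xi_\bE - \xi_\bE = P(j_\ell(g))$ for every $g\in G$, since this difference is supported precisely on the geodesic between $U$ and $gU$ with the correct signs. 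Counting the arrows from a fixed vertex (one towards $U$, $q$ away from it) gives $\Sigma(\xi_\bE)(v) = 1$ if $v$ is even and $\Sigma(\xi_\bE)(v) = q$ if $v$ is odd, so $\Sigma(\xi_\bE) = \1_{G^0 v_0} + q\,\1_{G^0 v_1}$. By the construction of the connecting map $\delta$ one concludes $P_\ast[j] = \delta(\1_{G^0 v_0} + q\,\1_{G^0 v_1})$ in $\ul{H}^1\bigl(G^0, \Cur(\bE, \BZ)\bigr)$, and the isomorphism of \Cref{Prop - Cohomology of currents} sends this to $\bigl(-\tfrac{1}{q-1}, 1\bigr) + q\,\bigl(\tfrac{1}{q-1}, 0\bigr) = \bigl(1, 1 \bmod (q+1)\bigr)$; the $G_0$-version follows by naturality via the commutative diagram of \Cref{Cor - Cohomology of currents for maximal compact subgroup}. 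At finite level the same $\xi_\bE$ restricted to $\bE_{n+1}$, combined with a telescoping computation using the relations \eqref{Eq - Relations module the image of Sigma_n}, yields $\widetilde{P_{n,\ast}}[j] = 1 \bmod q^n(q+1)$, which generates the target and thereby establishes the surjectivity of $\widetilde{P_{n,\ast}}$ and, in the limit, of $\widetilde{P_\ast}$ for $G_0$.

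The principal obstacle is the construction and verification of $\xi_\bE$: one has to make the identification $\Cur(\bA, \BZ) \cong \Cur(\bE, \BZ)$ of \Cref{Lemma - Isomorphism between currents on directed and undirected edges} interact cleanly with the even/odd parity convention, tracking the case distinctions carefully to match signs both in the cocycle identity $g\cdot \xi_\bE - \xi_\bE = P(j_\ell(g))$ and in the count $\Sigma(\xi_\bE) = \1_{G^0 v_0} + q\,\1_{G^0 v_1}$. Once this geometric identity on the tree is in hand, the rest is a formal assembly of the long exact sequences and their inverse limits.
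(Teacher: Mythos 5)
Your overall strategy is exactly the paper's: apply condensed group cohomology to the strictly exact Van der Put sequence, use the vanishing of $H^0$ of the current modules to extract a left-exact sequence, identify $P_\ast[j]$ by an explicit cochain on the tree, and assemble the finite-level sequences by an inverse limit. The core computation $\Sigma(\xi_\bE) = \1_{G^0 v_0} + q\,\1_{G^0 v_1}$ and the resulting identification $\widetilde{P_\ast}[j] = (1,1)$ are correct and match the paper. Your choice to also use $\xi_\bE\res{\bE_{n+1}}$ at finite level is a harmless variation: the paper uses a different cochain $\psi$ supported only on the half-line from $v_0$ to the end $U$, for which $\Sigma_n(\psi) = \1_{G_0 v_0}$ directly, whereas your choice produces $\sum_{i\ \text{even}}\1_{G_0 v_i} + q\sum_{i\ \text{odd}}\1_{G_0 v_i}$ and requires the telescoping reduction via \eqref{Eq - Relations module the image of Sigma_n}; you correctly anticipate this.

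There is, however, a genuine gap in the passage to the inverse limit. You assert that ``all relevant inverse systems are Mittag--Leffler'' by appealing to \Cref{Rmk - Inverse limit description for invertible functions} and \Cref{Cor - Mittag-Leffler property for currents}, but the latter establishes Mittag--Leffler only for the currents $\big(\Cur(\bE_{n+1},\BZ)\big)_n$, and the former merely records the topological inverse-limit description of $\CO^\times(\Omega)$. Nothing in the paper (and nothing obvious) gives Mittag--Leffler for the systems $\big(\CO^\times(\Omega_n)\big)_n$ or $\big(K^\times\CO^{\times\times}(\Omega_n)\big)_n$ -- the restriction maps $\CO^\times(\Omega_{n+1})\to\CO^\times(\Omega_n)$ need not be surjective. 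What \Cref{Lemma - Group cohomology and inverse system} requires is acyclicity of the condensed inverse system, and the paper obtains this by citing Prop.\ 4.5 and Thm.\ 7.1 of Junger's \emph{Cohomologie analytique des arrangements d'hyperplans}; these are non-trivial external results, not consequences of Mittag--Leffler. (Even where Mittag--Leffler \emph{is} available, as for currents, one must still upgrade it to condensed acyclicity via the Polish-group argument of \Cref{Lemma - Strictly exact sequence gives exact sequence of condensed modules}, as in the proof of \Cref{Cor - Cohomology of currents for maximal compact subgroup}.) You also need the observation, which the paper makes explicitly, that the $R^1\!\varprojlim$ of the zeroth-cohomology systems vanishes because those systems are constant by \Cref{Lemma - Invariant function is constant}; otherwise \Cref{Lemma - Group cohomology and inverse system} does not directly give an isomorphism on $\ul{H}^1$.
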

\begin{proof}
	We consider the condensation of the short strictly exact sequence associated with the Van der Put transform in \Cref{Cor - Solid Van der Put sequence}.
	By \Cref{Lemma - Strictly exact sequence gives exact sequence of condensed modules} this is a short exact sequence of solid $\ul{G^0}$-modules, and thus gives rise to a long exact sequence of condensed group cohomology
	\begin{align*}
		\ldots \ra \ul{H}^0 \big( {G^0} , \Cur(\bE,\BZ) \big) \ra \ul{H}^1 ( G^0 , K^\times ) \ra \ul{H}^1 \big( {G^0} , \CO^\times (\Omega) \big) \ra \ul{H}^1 \big( {G^0} , \Cur(\bE, \BZ) \big) \ra \ldots .
	\end{align*}
	Since the $G^0$-action on $K^\times$ is trivial, we have $\ul{H}^1 ( {G^0} , K^\times ) = \ul{\Hom}(G^0 , K^\times )$.
	Together with \Cref{Prop - Cohomology of currents} the first claimed exact sequence follows.

	For the $G_0$-group cohomology we resort to \Cref{Prop - Cohomology of currents on finite subtree} and \Cref{Cor - Cohomology of currents for maximal compact subgroup}.
	The same reasoning as above then implies the remaining two exact sequences, except for exactness at the respectively last (non-trivial) terms.

	Assuming the statement about $\widetilde{P_{n,\ast}}[j]$ for the moment, we see that $[j]$ is mapped to a generator of $\BZ / q^n(q+1)\,\BZ$ so that $\widetilde{P_{n,\ast}}$ is surjective on the underlying abelian groups.
	Since $\BZ / q^n(q+1)\,\BZ$ is discrete, we conclude that $\widetilde{P_{n,\ast}}$ is an epimorphism also on the level of condensed abelian groups.
	This shows that \eqref{Eq - Short exact sequence for group cohomology for affinoids} is exact.

	Concerning the second sequence, the inverse systems\footnote{We remark that $\varprojlim_{n\in \BN} K^\times \CO^{\times \times}(\Omega_n) = K^\times$.} of $G_0$-modules $\big( K^\times \CO^{\times \times}(\Omega_n) \big)_{n\in \BN}$ and $\big(  \CO^{\times }(\Omega_n) \big)_{n\in \BN}$ are acyclic by Prop.\ 4.5 and Thm.\ 7.1 of \cite{Junger23CohomAnaArrHyper} respectively.
	Like in the proof of \Cref{Cor - Cohomology of currents for maximal compact subgroup} we may apply \Cref{Lemma - Group cohomology and inverse system} to these systems. 
	For both, the inverse systems of their zeroth $\ul{G_0}$-cohomology groups are in fact constant by \Cref{Lemma - Invariant function is constant}.
	In particular, their $R^1 \!\varprojlim$'s vanish.
	Hence we deduce that the canonical maps
	\begin{align*}
		\ul{H}^1 ( {G_0} , K^\times ) &\overset{\sim}{\lra}  \textstyle \varprojlim_{n\in \BN} \ul{H}^1 \big( {G_0} , K^\times \CO^{\times \times}(\Omega_n) \big) \,, \\
		\ul{H}^1 \big( {G_0} , \CO^\times(\Omega) \big) &\overset{\sim}{\lra}  \textstyle \varprojlim_{n\in \BN} \ul{H}^1 \big( {G_0} , \CO^{ \times}(\Omega_n) \big)
	\end{align*}
	are isomorphisms.
	Similarly, it follows that the inverse system $\big( \BZ / q^n(q+1)\,\BZ \big)_{n\in \BN}$ of discrete condensed abelian groups is acyclic. 
	Therefore, the inverse limit of the last short exact sequences is a short exact sequence again.
	As seen above, it is isomorphic to the second sequence of the theorem.
	\\

	It remains to compute the image of $[j]$ under $\widetilde{P_{\ast}}$ and $\widetilde{P_{n, \ast}}$.
	To this end, let $\ell\colon K^2 \ra K$ be a non-trivial, $F$-rational linear form with $U \subset F^2$ such that $\Ker(\ell) = U_K$.
	To describe the $1$-cocycle 
	\begin{equation*}
		P_\ast ( j_\ell ) \colon G^0 \lra \Cur(\bE, \BZ) \,,\quad g \lto (P \circ j_\ell) (g) \eqdef \varphi_g ,
	\end{equation*}
	we let $\CS_g$ denote the straight path in $\CT$ between the ends $U$ and $g(U)$, for $g\in G^0$.
	Since $(P \circ j_\ell)(g) = P \big( \frac{g \cdot \ell}{\ell} \big)$ and $\Ker(g \cdot \ell) =  g(U)_K$, \Cref{Example - Van der Put transform of quotient of linear forms} implies	that, for $\{v_+, v_-\} \in \bE$,
	\begin{align*}
		\varphi_g ( \{ v_+, v_- \} ) =  \begin{cases}
			1 & \text{, if $(v_+,v_-)$ lies on $\CS_g$ and points towards $g(U)$,} \\
			-1&\text{, if $(v_+,v_-)$ lies on $\CS_g$ and points towards $U$,} \\
			0& \text{, else.}
		\end{cases}
	\end{align*}

	On the other hand, we consider $\varphi \in C(\bE, \BZ)$ defined by
	\begin{equation*}
		\varphi(\{v_+,v_-\} ) \defeq
		\begin{cases}
			1 & \text{, if $(v_+,v_-)$ points towards $U$,} \\
			0 & \text{, else.}
		\end{cases}
	\end{equation*}
	A direct computation shows that $\Sigma (\varphi) = \1_{G^0 v_0} + q\, \1_{G^0 v_1}$.
	By the definition of the boundary map $\delta \colon C( \bV, \BZ)^{G^0} \ra H^1 \big( G^0, \Cur(\bE, \BZ) \big) $, the cohomology class $\delta(  \1_{G^0 v_0} + q \, \1_{G^0 v_1} )$ is represented by the $1$-cocycle $g \mto g\cdot \varphi - \varphi$.
	But we find that $g\cdot \varphi - \varphi$ is equal to $\varphi_g$ which proves that $P_\ast[j_\ell] = \delta ( \1_{G^0 v_0} + q\,\1_{G^0 v_1} )$.

	To determine $P_{\ast,n} [j_\ell] \in H^1 \big( G_0 , \Cur(\bE_{n+1}, \BZ) \big)$, we let $\CP$ denote the path from $v_0$ to the end in $\CT$ corresponding to $U$.
	We define $\psi \in C(\bE_{n+1}, \BZ)$ by setting
	\begin{equation*}
		\psi( \{v_+, v_-\}) \defeq 
		\begin{cases}
			1 & \text{, if $(v_+,v_-)$ lies on $\CP$ and points towards $U$,} \\
			-1&\text{, if $(v_+,v_-)$ lies on $\CP$ and points away from $U$,} \\
			0 & \text{, else.}
		\end{cases}
	\end{equation*}
	Then $\Sigma_n(\psi) = \1_{G_0 v_0}$, and hence $\delta_n(\1_{G_0 v_0}) $ is equal to the class of the $1$-cocycle $g \mto g\cdot \psi - \psi$.
	Again, we have $g \cdot \psi - \psi = \varphi_g$, for $g\in G_0$, so that $P_{n,\ast}[j_\ell] = \delta_n ( \1_{G_0 v_0}) $.

	Finding the images of $[j]$ under $\widetilde{P_\ast}$ and $\widetilde{P_{n, \ast}}$ then is a straightforward computation involving the isomorphisms of \Cref{Prop - Cohomology of currents}, \ref{Prop - Cohomology of currents on finite subtree} and \Cref{Cor - Cohomology of currents for maximal compact subgroup}.
\end{proof}

\section{Group Cohomology for $\GL_2(\CO_F)$}

As part of \Cref{Thm - Left exact sequences for group cohomology} we could already describe $\ul{H}^1 \big( {G_0}, \CO^\times(\Omega) \big)$ as an extension of solid abelian groups. 
In this section, we want to determine the precise structure first of $H^1 \big( G_0, \CO^\times(\Omega_n) \big)[p']$ and then of $\ul{H}^1 \big( G_0, \CO^\times(\Omega) \big)$.

\subsection{Principal Units and Characters}

We begin by collecting some generalities about the sections of the sheaf $\CO^{\times \times}$ of principal units.
Let $U$ be a reduced affinoid $K$-space so that $\CO(U)$ is a $K$-Banach algebra with respect to the supremum norm $\lVert \blank \rVert_U$ and $\CO^\times(U) \subset \CO(U)$ is a Polish group, see \Cref{Lemma - Group of invertible functions is Polish}.
We define
\begin{equation}\label{Eq - Zp-module structure}
	(1+f)^\lambda \defeq \sum_{k \geq 0 } \binom{\lambda}{k} f^k \,, \quad\text{for $1+ f \in \CO^{\times \times}(U)$ and $\lambda \in \BZ_p$.}
\end{equation}

\begin{lemma}\label{Lemma - Zp-module structure on principal units}
	Let $U$ be a reduced affinoid $K$-space.
	Then \eqref{Eq - Zp-module structure} extends the topological abelian group structure on $\CO^{\times \times}(U)$ to a topological $\BZ_p$-module structure.
\end{lemma}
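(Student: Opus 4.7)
The plan is to (a) show that the series in \eqref{Eq - Zp-module structure} converges in $\CO(U)$ to an element of $\CO^{\times\times}(U)$, (b) verify that the resulting operation satisfies the $\BZ_p$-module axioms and extends the underlying integer action, and (c) establish joint continuity in the pair $(\lambda, 1+f)$. Throughout, each coefficient $\binom{\lambda}{k} \in \BZ_p$ is to be regarded as an element of $\CO(U)$ via the natural ring map $\BZ_p \ra \CO_K \subset \CO(U)$, which is injective when $\mathrm{char}(K) = 0$ and factors as $\BZ_p \twoheadrightarrow \BF_p \hookrightarrow \CO_K$ when $\mathrm{char}(K) = p$; in either case its image has supremum norm at most $1$.

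For the convergence step, the estimate $\lVert \binom{\lambda}{k} f^k \rVert_U \leq \lVert f \rVert_U^k$ together with $\lVert f \rVert_U < 1$ yields convergence in the Banach algebra $\CO(U)$. The sum takes the form $1 + g$ with $\lVert g \rVert_U \leq \lVert f \rVert_U < 1$, so it lies in $\CO^{\times\times}(U)$, and more generally the same estimate will later feed the uniform tail bound underpinning continuity.

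Next I would verify the module axioms. The identities $(1+f)^0 = 1$ and $(1+f)^1 = 1+f$ are immediate. The key additive law $(1+f)^{\lambda + \mu} = (1+f)^\lambda (1+f)^\mu$ reduces, after expanding and comparing coefficients of $f^k$, to the Vandermonde identity $\binom{\lambda+\mu}{k} = \sum_{i+j=k} \binom{\lambda}{i} \binom{\mu}{j}$; this is a polynomial identity in $\lambda$ and $\mu$ which holds on $\BZ \times \BZ$ and therefore in $\BZ_p$ by polynomial interpolation. Compatibility with the existing $\BZ$-action on $\CO^{\times\times}(U)$ is the ordinary binomial theorem applied to non-negative integers, combined with $(1+f)^{-n} = \big((1+f)^n\big)^{-1}$, which itself follows from Vandermonde. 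Finally, the multiplicativity $\big((1+f)(1+g)\big)^\lambda = (1+f)^\lambda (1+g)^\lambda$ holds for $\lambda \in \BZ$ by commutativity of $\CO(U)$ and extends to $\lambda \in \BZ_p$ via continuity in $\lambda$ and the density of $\BZ$ in $\BZ_p$.

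For continuity, I would prove joint continuity of the action $\BZ_p \times \CO^{\times\times}(U) \ra \CO^{\times\times}(U)$ at an arbitrary point $(\lambda_0, 1+f_0)$. Given $\varepsilon > 0$, one chooses $N$ so large that $\lVert f \rVert_U^{N+1} < \varepsilon$ uniformly for $1+f$ in a small neighbourhood of $1+f_0$; this controls the tail $\sum_{k > N} \binom{\lambda}{k} f^k$ uniformly in $\lambda \in \BZ_p$. The truncation $\sum_{k \leq N} \binom{\lambda}{k} f^k$ is a finite sum whose terms depend continuously on $(\lambda, f)$ via the standard continuity of $\lambda \mapsto \binom{\lambda}{k}$ on $\BZ_p$ and of $f \mapsto f^k$ on $\CO(U)$, hence the truncation is jointly continuous. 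The main obstacle, I expect, is this continuity step in the case $\mathrm{char}(K) = p$: the map $\BZ_p \ra \CO_K$ collapses through $\BF_p$, so one must be careful to use the $\BZ_p$-topology only through the (locally constant) functions $\lambda \mapsto \binom{\lambda}{k} \bmod p$ for each individual $k$, and then assemble joint continuity from these via the uniform tail bound. Crucially, it is precisely the validity of Vandermonde in $\BZ_p$ \emph{before} reduction modulo $p$ that ensures a genuine $\BZ_p$-module (rather than merely $\BF_p$-module) structure in this case.
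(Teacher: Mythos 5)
Your proposal is correct, and it genuinely diverges from the paper's proof in two respects. For the additive law $(1+f)^{\lambda+\mu}=(1+f)^\lambda(1+f)^\mu$, the paper establishes separate continuity in $\lambda$ first and then appeals to density of $\BN^2$ in $\BZ_p^2$; you instead invoke the Vandermonde identity $\binom{\lambda+\mu}{k}=\sum_{i+j=k}\binom{\lambda}{i}\binom{\mu}{j}$, which is a genuine polynomial identity over $\BQ$ valid on $\BZ_p^2$, giving a term-by-term algebraic verification that does not rely on a topological density argument. More significantly, for joint continuity the paper proves only \emph{separate} continuity in each variable and then cites a non-trivial result of Pombo (a Baire-category type theorem that separately continuous $\BZ$-bilinear maps between Polish abelian groups are jointly continuous), whereas you prove joint continuity directly: the tail $\sum_{k>N}\binom{\lambda}{k}f^k$ is bounded uniformly in $\lambda$ and in a neighbourhood of $f_0$ by $\max(\lVert f_0\rVert_U,\lVert f-f_0\rVert_U)^{N+1}$, and each finite truncation is a finite sum of functions $(\lambda,f)\mapsto\binom{\lambda}{k}f^k$ that are visibly jointly continuous. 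Your route is more elementary and self-contained (no external citation), at the mild cost of a slightly longer $\varepsilon$-estimate; the paper's route modularises the algebraic and topological parts more cleanly. Your remark on $\mathrm{char}(K)=p$ is well taken and correctly resolved: the structure map $\BZ_p\to\CO_K$ factors through $\BF_p$, but since $\binom{\lambda}{k}\bmod p$ for varying $k$ sees all the $p$-adic digits of $\lambda$ (Lucas), the maps $\lambda\mapsto\binom{\lambda}{k}$ are locally constant and jointly assemble to a faithful, continuous $\BZ_p$-action; the paper does not comment on this case but its argument covers it implicitly for the same reason.
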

\begin{proof}
	Because of $\big\lVert \binom{\lambda}{k} f^k \big\rVert_U \leq \lVert f \rVert_U^k \ra 0$, the definition \eqref{Eq - Zp-module structure} yields a well defined element $(1+f)^\lambda \in \CO^{\times \times} (U)$.
	We consider the map
	\begin{equation}\label{Eq - Zp-module multiplication map}
		\BZ_p \times \CO^{\times \times}(U) \lra \CO^{\times \times}(U) \,,\quad \big(\lambda, 1+f \big) \lto (1+f)^\lambda .
	\end{equation}
	First, for fixed $1+f \in \CO^{\times \times}(U)$, the sequence of partial sums $\big( \sum_{k = 0 }^n \binom{\lambda}{k} f^k \big)_{n\in \BN}$ is uniformly Cauchy. 
	Since the functions $\lambda \mto \binom{\lambda}{k} f^k$, are continuous, these partial sums converge to the continuous function $\BZ_p \ra \CO^{\times \times}(U)$, $\lambda \mto (1+f)^\lambda$.

	Next, we show that \eqref{Eq - Zp-module multiplication map} is $\BZ$-bilinear.
	Indeed, for fixed $1+f \in \CO^{\times \times}$, we consider the two maps $(\lambda, \mu) \mto (1+f)^\lambda (1+f)^\mu$ and $(\lambda, \mu) \mto (1+f)^{\lambda + \mu}$ on $\BZ_p^2$.
	Since they are continuous and agree on the dense subset $\BN^2 \subset \BZ_p^2$, they agree for all $(\lambda, \mu)\in \BZ_p^2$.
	One verifies the other properties analogously.

	To prove continuity of \eqref{Eq - Zp-module multiplication map} for fixed $\lambda \in \BZ_p$, it thus suffices to show that the homomorphism $\CO^{\times \times}(U) \ra \CO^{\times \times}(U)$, $1+f \mto (1+f)^\lambda$, is continuous at $1$.
	But there we have
	\begin{equation*}
		\big\lVert (1+f)^\lambda -1 \big\rVert_U \leq \max_{k \geq 1} \bigg\lvert \binom{\lambda}{k} \bigg\rvert \lVert f \rVert^k_U \xrightarrow{ \lVert f \rVert_U \ra 0} 0 .
	\end{equation*}

	In total, we have seen that \eqref{Eq - Zp-module multiplication map} is a separately continuous $\BZ$-bilinear map between Polish abelian groups.
	Therefore, \cite[Cor.\ 3]{Pombo00SepContMap} implies that this map is (jointly) continuous.		
\end{proof}

\begin{lemma}\label{Lemma - Zp-module structure on group cohomology of principal units}
	For a closed subgroup $H \subset G_0$ and an $H$-stable affinoid subdomain $U \subset \Omega$, the solid abelian groups $\ul{H}^n \big( H , \CO^{\times \times}(U) \big)$ canonically are $\ul{\BZ_p}$-modules.
	Moreover, the maps $\conjelt^\ast \colon  \ul{H}^n \big( H , \CO^{\times \times}(U) \big) \ra \ul{H}^n \big( {}^\conjelt H , \CO^{\times \times}(\conjelt U) \big)$ are isomorphisms of condensed $\ul{\BZ_p}$-modules.
\end{lemma}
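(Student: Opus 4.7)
The plan is to pull the topological $\BZ_p$-module structure on $\CO^{\times\times}(U)$ provided by \Cref{Lemma - Zp-module structure on principal units} through condensation, and then let functoriality of condensed group cohomology transport it to $\ul{H}^n\big(H, \CO^{\times\times}(U)\big)$.

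First, I would observe that the jointly continuous scalar multiplication $\BZ_p \times \CO^{\times\times}(U) \lra \CO^{\times\times}(U)$ from \Cref{Lemma - Zp-module structure on principal units} passes through the condensation functor to yield a morphism of condensed abelian groups $\ul{\BZ_p} \times \ul{\CO^{\times\times}(U)} \lra \ul{\CO^{\times\times}(U)}$; since $\ul{\BZ_p}$ is a solid ring and the relevant ring axioms are the condensations of identities already valid on the level of topological $\BZ_p$-modules, this endows $\ul{\CO^{\times\times}(U)}$ with the structure of a solid $\ul{\BZ_p}$-module.

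Next, I would verify that this $\BZ_p$-action commutes with the $\ul{H}$-action, i.e.\ that for all $h \in H$, $\lambda \in \BZ_p$ and $1+f \in \CO^{\times\times}(U)$,
\begin{equation*}
	h \cdot (1+f)^\lambda = \big( h \cdot (1+f) \big)^\lambda .
\end{equation*}
For $\lambda \in \BN$ this is immediate because $h$ acts on $\CO(U)$ by a continuous $K$-algebra automorphism and thus commutes with finite products. For general $\lambda$, both sides depend continuously on $\lambda$ (the left hand side by \Cref{Lemma - Zp-module structure on principal units}; the right hand side because $h\cdot (1+f) \in \CO^{\times\times}(U)$ and the same lemma applies), and $\BN$ is dense in $\BZ_p$, so the identity extends.

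With these two ingredients in place, the complex of (condensed) inhomogeneous cochains computing $\ul{H}^\bullet\big(H, \CO^{\times\times}(U)\big)$ is a complex of solid $\ul{\BZ_p}$-modules with $\ul{\BZ_p}$-linear differentials, and the cohomology acquires a canonical solid $\ul{\BZ_p}$-module structure by functoriality. For the second assertion, $\conjelt^\ast$ is induced on cochains by the $K$-algebra isomorphism $\CO(U) \overset{\sim}{\lra} \CO(\conjelt U)$ coming from the action of $\conjelt$; the same density-of-$\BN$ argument shows this isomorphism is $\BZ_p$-equivariant, so together with \Cref{Def - Conjugation action on cohomology} it is an isomorphism of solid $\ul{\BZ_p}$-modules. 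I do not anticipate any substantial obstacle: the only genuine content is the $\BZ_p$-equivariance of the $H$- and $\conjelt$-actions, which reduces by continuity and density to the obvious integral case.
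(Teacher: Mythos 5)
Your proposal is correct and takes essentially the same approach as the paper: both build on \Cref{Lemma - Zp-module structure on principal units} and invoke density of the integers in $\BZ_p$ together with continuity. The only stylistic difference is where the density argument is applied — you verify $\BZ_p$-equivariance of the $H$-action (and of $\conjelt^\ast$) on the coefficients and then appeal to functoriality, whereas the paper equips the continuous cochain groups $C^n\big(H, \CO^{\times\times}(U)\big)$ with the pointwise $\BZ_p$-module structure, observes that the differentials (being continuous group homomorphisms, hence $\BZ$-linear) are automatically $\BZ_p$-linear by density, and then condenses the whole complex via \Cref{Prop - Comparison condensed and continuous group cohomology}; these are two formulations of the same argument.
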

\begin{proof}
	By \Cref{Lemma - Zp-module structure on principal units} the multiplication map $\BZ_p \times \CO^{\times \times}(U) \ra \CO^{\times \times}(U) $ is continuous.
	As $C(\blank, \blank)$ is an internal Hom-functor in the category of compactly generated topological spaces, the induced map
	\begin{equation*}
		C( H^n , \BZ_p) \times C \big( H^n , \CO^{\times \times}(U) \big) \cong C\big( H^n , \BZ_p \times \CO^{\times \times}(U) \big) \lra C \big(H^n , \CO^{\times \times}(U) \big)
	\end{equation*}
	is continuous as well. 
	Precomposing with the embedding $\BZ_p \hookrightarrow C( H^n , \BZ_p)$ of constant functions then yields the multiplication for the $\BZ_p$-module structure on $C^n \big( H, \CO^{\times \times}(U) \big)$.
	Thus, the latter is a topological $\BZ_p$-module.

	Because the differentials $d^n \colon C^n \big( H, \CO^{\times \times}(U) \big) \ra C^{n+1} \big( H, \CO^{\times \times}(U) \big)$ are continuous homomorphisms between Hausdorff topological groups and $\BZ \subset \BZ_p$ is dense, we deduce that they are $\BZ_p$-linear.
	Their condensations $\ul{d^n}$ consequently are homomorphisms of condensed $\ul{\BZ_p}$-modules.
	Therefore, the subquotients $\ul{H}^n \big( H , \CO^{\times \times}(U) \big)$ inherit a condensed $\ul{\BZ_p}$-module structure via \Cref{Prop - Comparison condensed and continuous group cohomology} (ii).

	Similarly, we deduce that the isomorphisms $s^\ast\colon C^n \big( H , \CO^{\times \times}(U) \big) \ra C^n \big( {}^\conjelt H , \CO^{\times \times}(\conjelt U) \big)$ are $\BZ_p$-linear.
	Again, it follows that the induced isomorphisms $\conjelt^\ast$ of condensed group cohomology are isomorphisms of condensed $\ul{\BZ_p}$-modules.
\end{proof}

We now turn to $K^\times$-valued continuous characters of $G^0$, $G_0$ and $I$. 
Recall that $x\mto \widehat{x}$ denotes the projection $F^\times \twoheadrightarrow \mu_{q-1}(F)$ to the $(q-1)$-st roots of unity and $x\mto \langle x \rangle$ the projection $F^\times \twoheadrightarrow \CO^{\times \times}_F$ to the principal units of $F$.

\begin{lemma}\label{Lemma - Prime to p torsion characters}
	\begin{altenumerate}
		\item 	
			Every element of $\Hom( G^0 , K^\times) $ factors over $\det \colon G^0 \ra \CO_F$.
		
		\item 
			The elements of $\Hom(G_0, K^\times )[p']$ precisely are the characters
			\begin{equation*}
				\widehat{\det} {}^k \colon G_0 \lra K^\times \,,\quad g \lto \widehat{\det(g)}^k \,,\quad\text{for $k \in \BZ/ (q-1)\,\BZ$.}
			\end{equation*}
			
		\item 
			The elements of $\Hom(I, K^\times)[p'] $ precisely are the characters
			\begin{equation*}
				\chi_{k,l} \colon I \lra K^\times \,,\quad \left( \begin{matrix} a & b \\ \unif c & d \end{matrix} \right) \lto \widehat{a}^k \, \widehat{d}^l \,,\quad\text{for $k, l \in \BZ/ (q-1) \,\BZ$. }
			\end{equation*}
			These characters satisfy $\conjelt^\ast \chi_{k,l} = \chi_{l,k}$ and $\chi_{k,k} = \widehat{\det}{}^k$.
			
	\end{altenumerate}
\end{lemma}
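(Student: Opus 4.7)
The plan is to reduce each character to one on a finite quotient and then identify it explicitly.

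For (i), I would use that the kernel of $\det \colon G^0 \ra \CO_F^\times$ is $\mathrm{SL}_2(F)$, which is perfect since $F$ is infinite. Therefore any homomorphism to the abelian group $K^\times$ trivialises on $\mathrm{SL}_2(F)$ and factors through $G^0/\mathrm{SL}_2(F) \cong \CO_F^\times$; the induced character is automatically continuous because $\det$ is a split open surjection with section $u \mto \mathrm{diag}(u,1)$.

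For (ii) and (iii), the key observation will be that $G_1 \defeq 1 + \unif M_2(\CO_F)$ is pro-$p$ (as the inverse limit of the finite $p$-groups $G_1/G_n$, with $G_n$ as in the excerpt) and is contained in both $G_0$ and $I$. Any continuous $p'$-torsion character $\chi$ has finite image of order coprime to $p$, so it must kill $G_1$ and descend to the finite quotient $G_0/G_1 \cong \GL_2(\BF_q)$ in case (ii), respectively $I/G_1 \cong B(\BF_q)$ (the upper triangular Borel) in case (iii). For $q \geq 3$, the commutator subgroup of $\GL_2(\BF_q)$ is $\mathrm{SL}_2(\BF_q)$ (which is then perfect), giving abelianisation $\BF_q^\times$ via $\det$; the commutator identity $[\mathrm{diag}(a,d), \big( \begin{smallmatrix} 1 & x \\ 0 & 1 \end{smallmatrix} \big)] = \big( \begin{smallmatrix} 1 & ((a/d)-1)x \\ 0 & 1 \end{smallmatrix} \big)$ shows that the commutator of $B(\BF_q)$ equals its unipotent radical, producing abelianisation $\BF_q^\times \times \BF_q^\times$ via the two diagonal entries $(a,d)$. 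Identifying $\BF_q^\times$ with $\mu_{q-1}(F)$ via Teichm\"uller yields the characters $\widehat{\det}{}^k$ of (ii) and $\chi_{k,l}$ of (iii). The small case $q = 2$ is consistent, since the relevant abelianisations then have trivial $p'$-part, matching $\BZ/(q-1)\BZ = 0$.

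For the remaining assertions of (iii), I would note that on $I$ one has $\det(g) = ad - \unif bc \equiv ad \pmod{\unif}$, so $\widehat{\det}{}^k = \chi_{k,k}$, and that a direct calculation $\conjelt^{-1} \big( \begin{smallmatrix} a & b \\ \unif c & d \end{smallmatrix} \big) \conjelt = \big( \begin{smallmatrix} d & c \\ \unif b & a \end{smallmatrix} \big)$ interchanges $a \leftrightarrow d$, giving $\conjelt^\ast \chi_{k,l} = \chi_{l,k}$. The only non-formal input is the perfectness of $\mathrm{SL}_2$ over an infinite field for (i) and of $\mathrm{SL}_2(\BF_q)$ for $q \geq 3$ for (ii); I expect no further obstacle, as everything else is a mechanical identification of finite abelianisations.
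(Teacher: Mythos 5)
Your proof is correct and follows essentially the same route the paper takes and cites: for (i) you use that $\mathrm{SL}_2(F)$ is perfect, exactly as the paper does, and for (ii) and (iii) the paper simply defers to Ardakov--Wadsley, whose argument — a $p'$-torsion character has finite prime-to-$p$ image, hence kills the pro-$p$ group $G_1$ and descends to $\GL_2(\BF_q)$ resp.\ $B(\BF_q)$, from which one reads off the characters via the abelianisation — is precisely what you spell out (and the paper itself recalls this reduction to $\widehat{\det}$ resp.\ $(\widehat{a},\widehat{d})$ in the proof of \Cref{Lemma - Characters of quotient by principal units}). One small inaccuracy worth correcting: $\mathrm{SL}_2(\BF_3)$ is \emph{not} perfect (its abelianisation is $\BZ/3\BZ$); the equality $[\GL_2(\BF_q),\GL_2(\BF_q)] = \mathrm{SL}_2(\BF_q)$ does still hold for $q\geq 3$, but if you prefer a uniform argument that avoids this case distinction, note directly that $\mathrm{SL}_2(\BF_q)$ is generated by unipotent elements of $p$-power order, which every prime-to-$p$ character must annihilate.
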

\begin{proof}
	For (i) one uses that $G^0$ contains $SL_2(F)$ which is its own derived subgroup.
	The assertions (ii) and (iii) are \cite[Lemma 2.2.3]{ArdakovWadsley23EquivLineBun}.
\end{proof}

\begin{lemma}\label{Lemma - Embedding of characters}
	For $n\in \BN$, the canonical homomorphisms
	\begin{align*}
		\ul{\Hom} ( G_0 , K^\times ) [p'] &\lra \ul{H}^1 \big({G_0}, \CO^\times(\Omega_n ) \big)\,, \\
		\ul{\Hom} ( I , K^\times ) [p'] 		&\lra \ul{H}^1 \big( {I}, \CO^\times(\Omega'_n ) \big) 
	\end{align*}
	induced by $K^\times \hookrightarrow \CO^\times(\Omega_n ) $ (resp.\ by $K^\times \hookrightarrow \CO^\times(\Omega'_n ) $) are injective.
\end{lemma}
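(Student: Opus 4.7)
The plan is to reduce the injectivity to the following concrete statement: every $\chi\in\Hom(H,K^\times)[p']$ (with $H=G_0$ or $H=I$) whose associated cocycle becomes a coboundary in $\CO^\times(U)$ (for $U=\Omega_n$ or $\Omega'_n$ respectively) is already trivial. By \Cref{Lemma - Prime to p torsion characters} the source $\Hom(H,K^\times)[p']$ is a finite abelian group (cyclic of order $q-1$ if $H=G_0$; isomorphic to $(\BZ/(q-1)\,\BZ)^2$ if $H=I$), so its condensation is discrete; since a map of condensed abelian groups with discrete source is injective if and only if it is injective on underlying abelian groups, this reduction is valid. So suppose $\chi$ has prime-to-$p$ order $m$ and $f\in\CO^\times(U)$ satisfies $g\cdot f=\chi(g)\,f$ for every $g\in H$; the goal is to deduce $\chi=1$.

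First, the function $f^m$ is $H$-invariant. Since $H$ contains $I$ and hence all unipotents $\bigl(\begin{smallmatrix}1 & x\\0 & 1\end{smallmatrix}\bigr)$ with $x\in\CO_F$, \Cref{Lemma - Invariant function is constant} yields $f^m\in K^\times$. Applying the Van der Put transform then gives $m\,P^{(\prime)}_n(f)=P^{(\prime)}_n(f^m)=0$ inside the $\BZ$-valued, hence torsion-free, group $\Cur\bigl(\bA^{(\prime)}_{\leq n},\BZ\bigr)$, so $P^{(\prime)}_n(f)=0$. By \Cref{Prop - Van der Put sequence for finite subtree} this forces $f\in K^\times\CO^{\times\times}(U)$, and I may write $f=\alpha(1+h)$ with $\alpha\in K^\times$ and $h\in\CO(U)$ of supremum norm $\|h\|_U<1$.

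The cocycle relation $g\cdot f=\chi(g)\,f$ then simplifies (after dividing by $\alpha$) to $\chi(g)-1=g\cdot h-\chi(g)\,h$. Because $|\chi(g)|=1$ (it is a root of unity) and the action of $g$ preserves the supremum norm on the reduced affinoid algebra $\CO(U)$, the right-hand side has norm strictly less than $1$; consequently $\chi(g)\in\CO_K^{\times\times}$. But $\CO_K^{\times\times}$ is a pro-$p$ group and therefore contains no non-trivial prime-to-$p$ torsion, while $\chi(g)$ is itself prime-to-$p$ torsion. Hence $\chi(g)=1$ for every $g\in H$, i.e.\ $\chi$ is trivial.

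The crux of the argument is the passage $f\in K^\times\CO^{\times\times}(U)$: here \Cref{Prop - Van der Put sequence for finite subtree} combined with the torsion-freeness of $\Cur\bigl(\bA^{(\prime)}_{\leq n},\BZ\bigr)$ does all the work, converting the $m$-torsion identity $m\,P^{(\prime)}_n(f)=0$ into the actual vanishing $P^{(\prime)}_n(f)=0$. Once this reduction is in hand the remaining estimate is an elementary ultrametric calculation exploiting that the residue characteristic $p$ is coprime to $\mathrm{ord}(\chi)$; in particular the argument is uniform across both cases $(H,U)=(G_0,\Omega_n)$ and $(I,\Omega'_n)$.
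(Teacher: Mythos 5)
Your proof is correct, but it takes a genuinely different route from the paper's. Both begin the same way, reducing to injectivity on underlying abelian groups via discreteness of the source. The paper's own argument is then very short: by \Cref{Lemma - Prime to p torsion characters} every $\chi\in\Hom(H,K^\times)[p']$ is of the form $\widehat{\det}{}^k$ (for $H=G_0$) or $\chi_{k,l}$ (for $H=I$) and in particular is trivial on the upper unipotent matrices $\left(\begin{smallmatrix}1&x\\0&1\end{smallmatrix}\right)$, $x\in\CO_F$. Hence if $\chi(g)=(g\cdot f)/f$ for some $f\in\CO^\times(U)$, then $f$ itself is invariant under those unipotents, so $f$ is constant by \Cref{Lemma - Invariant function is constant}, and $\chi=1$ immediately. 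Your argument deliberately avoids the explicit description of the prime-to-$p$ characters (you only use \Cref{Lemma - Prime to p torsion characters} for finiteness and the root-of-unity property). Instead you show that $f^m$ is $H$-invariant hence constant, then use torsion-freeness of $\Cur\bigl(\bA^{(\prime)}_{\leq n},\BZ\bigr)$ together with \Cref{Prop - Van der Put sequence for finite subtree} to place $f$ in $K^\times\CO^{\times\times}(U)$, and finish with the ultrametric estimate $|\chi(g)-1|\leq\|h\|_U<1$, which shows $\chi$ takes values in $\CO_K^{\times\times}$ and so must be trivial. What your version buys is independence from the classification of characters, so the argument would apply essentially verbatim to other compact open subgroups containing the unipotents; the cost is an extra dependency on the Van der Put sequence and a few more steps. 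One minor point of precision: describing $\CO_K^{\times\times}$ as a pro-$p$ group is not quite accurate when $K$ is not discretely valued (e.g.\ $K=C$), but the property you actually need --- that $\CO_K^{\times\times}$ contains no nontrivial roots of unity of order prime to $p$ --- holds for any complete nonarchimedean $K$ of residue characteristic $p$ by the standard ultrametric argument with $(1+u)^m-1 = mu + \cdots$.
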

\begin{proof}
	Let $\iota$ denote the first homomorphism and $\iota(\ast)$ the induced homomorphism between the underlying abelian groups.
	Since $\ul{\Hom} ( G_0 , K^\times ) [p']$ is discrete, \cite[Lemma 4.3]{Tang24OpenCondSubgrMackeysFormula} implies that $\Ker(\iota)$ is discrete as well, i.e.\ equal to $\ul{\Ker(\iota)(\ast)}$.
	Because the global sections functor is (left) exact, the latter then is equal to $\ul{\Ker(\iota(\ast))}$.

	Therefore, it suffices to show that $\iota(\ast)$ is injective.
	We assume that $\widehat{\det}{}^k \in  \Hom ( G_0 , K^\times ) [p'] $ lies in $\Ker(\iota(\ast))$ i.e.\ that there exists $f\in \CO^\times(\Omega_n) $ such that $\widehat{\det}{}^k (g) = \frac{g \cdot f}{f}$, for all $g\in G_0$. 
	In particular, for $g = \left( \begin{smallmatrix} 1 & x \\ 0 & 1 \end{smallmatrix} \right)$ with $ x\in \CO_F$, we have $g \cdot f = \widehat{\det}{}^k (g) f = f$.
	It follows from \Cref{Lemma - Invariant function is constant} that $f$ is a constant function, and hence $\widehat{\det}{}^k = 1$.

	The argument for characters of $I$ is analogous.
\end{proof}

\begin{lemma}\label{Lemma - Characters of quotient by principal units}
	The natural map $\Hom (G_0, K^\times )[p'] \ra \Hom \big( G_0 , K^\times / \CO^{\times \times}_K \big) $ induced by the quotient map $K^\times \twoheadrightarrow K^\times / \CO^{\times \times}_K$ is an isomorphism of topological groups.
	It gives rise to an isomorphism
	\begin{equation*}
		\ul{\Hom} (G_0, K^\times ) \cong  \ul{\Hom} \big( G_0 ,\CO^{\times \times}_K \big) \oplus \ul{\Hom}(G_0, K^\times)[p'] 
	\end{equation*}
	of solid abelian groups. 
	The analogous assertions hold for ${}^\conjelt G_0$ and $I$.
\end{lemma}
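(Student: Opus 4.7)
The plan is to prove the topological isomorphism in the first assertion directly, then derive the direct sum decomposition by applying $\ul{\Hom}(G_0, -)$ to the short strictly exact sequence $1 \to \CO_K^{\times\times} \to K^\times \to K^\times/\CO_K^{\times\times} \to 1$ of Polish topological groups. Two observations are key: $\CO_K^{\times\times}$ is open in $K^\times$ so the quotient is discrete; and since $K\subseteq C$, the residue field $\kappa$ embeds into the residue field $\ov{\BF_q}$ of $C$, so $\kappa^\times$ is a torsion group all of whose elements have order coprime to $p$.

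Injectivity of the first map is immediate: by \Cref{Lemma - Prime to p torsion characters}(ii), characters in $\Hom(G_0, K^\times)[p']$ take values in $\mu_{q-1}(F)$, which reduces injectively to $\kappa^\times$ (equivalently, $\mu_{q-1}(F) \cap \CO_K^{\times\times} = \{1\}$ by Hensel). For surjectivity, let $\psi \colon G_0 \to K^\times/\CO_K^{\times\times}$ be continuous. Composing with the projection onto $K^\times/\CO_K^\times \subseteq \BR$ sends the compact group $G_0$ into a torsion-free group, hence is trivial; so $\psi$ factors through $\CO_K^\times/\CO_K^{\times\times} \cong \kappa^\times$. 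Compactness of $G_0$ in the discrete target forces the image of $\psi$ to be finite, and its containment in $\kappa^\times$ forces its order $n$ to be coprime to $p$. Hensel's lemma then yields $\mu_n(K) \xrightarrow{\sim} \mu_n(\kappa)$, and we may lift $\psi$ uniquely and continuously to $\tilde\psi \colon G_0 \to \mu_n(K) \subseteq K^\times$, which lies in $\Hom(G_0, K^\times)[p']$. Both source and target of the map are discrete (the source is finite by \Cref{Lemma - Prime to p torsion characters}; continuous homomorphisms from a compact group into a discrete one form a discrete space in the compact-open topology), so the bijection is a topological isomorphism. The argument for ${}^\conjelt G_0$ (by conjugation) and for $I$ (using \Cref{Lemma - Prime to p torsion characters}(iii) in place of (ii)) is identical.

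For the direct sum decomposition, the above short strictly exact sequence condenses to a short exact sequence of solid abelian groups by the corresponding lemma on strict exactness and condensation. Its long exact sequence of condensed group cohomology for the trivial $G_0$-action is left-exact at $\ul{\Hom}(G_0, -)$ because the connecting morphism $H^0 \to H^1$ vanishes (any lift is automatically $G_0$-fixed). The first claim then identifies the discrete subgroup $\ul{\Hom}(G_0, K^\times)[p'] \subseteq \ul{\Hom}(G_0, K^\times)$ isomorphically with $\ul{\Hom}(G_0, K^\times/\CO_K^{\times\times})$ via the quotient map, simultaneously establishing surjectivity of the right-hand map and furnishing a canonical splitting, which yields the claimed decomposition of solid abelian groups. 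The main technical step throughout is the surjectivity in the first claim: it is here that we crucially use the discreteness of $K^\times/\CO_K^{\times\times}$ together with $\kappa \subseteq \ov{\BF_q}$ and Hensel's lemma to identify the image of an arbitrary continuous character and lift it back to $K^\times$.
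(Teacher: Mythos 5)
Your proof is correct, and for the key surjectivity step it takes a genuinely different route from the paper.

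The paper's proof first observes that $\Hom\big(G_0,K^\times/\CO_K^{\times\times}\big)$ consists entirely of prime-to-$p$ torsion (compact source, discrete $p$-torsion-free target), then invokes the factorization result from the proof of \cite[Lemma 2.2.3]{ArdakovWadsley23EquivLineBun} to reduce the comparison to a square over $\mu_{q-1}(F)$:
\begin{equation*}
	\begin{tikzcd}[row sep= scriptsize]
		\Hom(G_0,K^\times)[p'] \ar[r,"\sim"] \ar[d] & \Hom\big(\mu_{q-1}(F),K^\times\big) \ar[d] \\
		\Hom\big(G_0,K^\times/\CO_K^{\times\times}\big) \ar[r,"\sim"] & \Hom\big(\mu_{q-1}(F),K^\times/\CO_K^{\times\times}\big) ,
	\end{tikzcd}
\end{equation*}
where the right vertical map is an isomorphism because $(\blank)^{q-1}$ is an automorphism of $\CO_K^{\times\times}$. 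You instead factor a given $\psi\colon G_0\to K^\times/\CO_K^{\times\times}$ through $\CO_K^\times/\CO_K^{\times\times}\cong\kappa^\times$ (by killing the torsion-free value group), identify the image as $\mu_n(\kappa)$ for some $n$ coprime to $p$, and lift it back to $\mu_n(K)\subseteq K^\times$ via Hensel's lemma. The underlying arithmetic input is the same — unique $n$-divisibility of $\CO_K^{\times\times}$ for $n$ prime to $p$ — but your argument is more self-contained and works for an arbitrary compact group in place of $G_0$, whereas the paper's leans on the $\widehat{\det}$-factorization specific to $\GL_2(\CO_F)$ (and its Iwahori analogue for $I$). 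The paper's route is a bit shorter given that \Cref{Lemma - Prime to p torsion characters} is already available; yours avoids using the structure of the torsion characters for surjectivity (you only cite \Cref{Lemma - Prime to p torsion characters}(ii) for injectivity, which could in fact also be dispensed with, since a prime-to-$p$ torsion character landing in $\CO_K^{\times\times}$ must be trivial). Your derivation of the direct-sum decomposition via the long exact sequence and the section furnished by the inverse isomorphism matches the paper's argument.
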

\begin{proof}
	We first observe that the image of every $\chi \in \Hom \big( G_0 , K^\times / \CO_K^{\times \times} \big)$ is finite because $G_0$ is compact and $K^\times / \CO_K^{\times \times}$ discrete. 
	Since $K^\times / \CO^{\times \times}_K$ is $p$-torsion-free, it follows that $\Hom \big( G_0 , K^\times / \CO_K^{\times \times} \big) = \Hom \big( G_0 , K^\times / \CO_K^{\times \times} \big)[p']$.

	In the proof of \cite[Lemma 2.2.3]{ArdakovWadsley23EquivLineBun}, it is shown that every $\chi \in \Hom (G_0 , A)[p']$, for any abelian group $A$, factors over $\widehat{\det} \colon G_0 \ra \mu_{q-1}(F)$.
	In particular, the horizontal maps in the commutative square
	\begin{equation*}
		\begin{tikzcd}[row sep= scriptsize]
			\Hom (G_0 , K^\times) [p'] \ar[r, "\sim"] \ar[d] & \Hom ( \mu_{q-1}(F), K^\times ) \ar[d] \\
			\Hom \big( G_0 , K^\times / \CO_K^{\times \times} \big) \ar[r, "\sim"] & \Hom\big( \mu_{q-1} (F), K^\times / \CO_K^{\times \times} \big)
		\end{tikzcd}
	\end{equation*}
	are isomorphisms.
	But the vertical map on the right hand side is an isomorphism, as $(\blank)^{q-1}$ is an automorphism of $\CO^{\times \times}_K$.

	We also have the canonical exact sequence of abelian groups
	\begin{equation*}
		1 \lra \Hom \big(G_0 , \CO^{\times \times}_K \big) \lra \Hom ( G_0 , K^\times ) \lra \Hom \big( G_0, K^\times / \CO_K^{\times \times} \big) .
	\end{equation*}
	The isomorphism $\Hom \big( G_0 , K^\times / \CO_K^{\times \times} \big) \overset{\sim}{\ra} \Hom (G_0 , K^\times) [p'] $ then yields a section to the rightmost map.
	The latter therefore is a (necessarily strict) epimorphism and the above sequence splits.
	With respect to the subspace topologies this yields the claimed decomposition.

	For ${}^\conjelt G_0$ and $I$ one argues analogously.
	Here one uses that every $\chi \in \Hom (I , A)[p']$, for any abelian group $A$, factors over $I \ra \mu_{q-1}(F)^2$, $\big( \begin{smallmatrix} a & b \\ \unif c & d  \end{smallmatrix} \big) \mto (\widehat{a}, \widehat{d})$, see \textit{loc.\ cit.}.
\end{proof}

\begin{proposition}\label{Prop - Decomposition of principal units and constants}
	For $n\in\BN$, the natural inclusions induce isomorphisms of solid abelian groups
	\begin{equation*}
		\arraycolsep=1.4pt
		\def\arraystretch{1.2}
	\begin{array}{rcrcl}
		\ul{H}^1 \big( {G_0} , K^\times \CO^{\times \times}(\Omega_n) \big) &\cong &\ul{H}^1 \big( {G_0} , \CO^{\times \times}(\Omega_n) \big) &\oplus& \ul{\Hom} ( G_0 , K^\times )[p'] \\
		\ul{H}^1 \big( {I} , K^\times \CO^{\times \times}(\Omega'_n) \big) &\cong &\ul{H}^1 \big( {I} , \CO^{\times \times}(\Omega'_n) \big) &\oplus& \ul{\Hom} ( I , K^\times )[p'] .
	\end{array}
	\end{equation*}
\end{proposition}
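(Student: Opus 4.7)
The plan is to deduce both decompositions in parallel from the short strictly exact sequence of topological $H$-modules
\begin{equation*}
1 \lra \CO^{\times\times}(U) \lra K^\times \CO^{\times\times}(U) \lra K^\times / \CO_K^{\times\times} \lra 1 ,
\end{equation*}
with trivial $H$-action on the quotient, where $(H,U) \in \{(G_0,\Omega_n),(I,\Omega_n')\}$. Strictness comes for free: $\CO^{\times\times}(U)$ is open in $\CO^\times(U)$, and $\CO_K^{\times\times}$ is open in $K^\times$, so both possible quotient topologies are discrete and the canonical algebraic identification $K^\times \CO^{\times\times}(U)/\CO^{\times\times}(U) \cong K^\times/\CO_K^{\times\times}$ is automatically a topological isomorphism.

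I would then condense via \Cref{Lemma - Strictly exact sequence gives exact sequence of condensed modules} and take the associated long exact sequence in condensed $\ul{H}$-cohomology. For the $\ul{H}^0$ terms, \Cref{Lemma - Invariant function is constant} forces any $H$-invariant element of $K^\times \CO^{\times\times}(U)$ to be a constant function and hence to lie in $K^\times$; so $\ul{H}^0\big(H, K^\times \CO^{\times\times}(U)\big) = \ul{K^\times}$ and the induced map to $\ul{K^\times/\CO_K^{\times\times}}$ is the canonical strict projection. The connecting morphism is therefore zero, and I obtain the exact sequence
\begin{equation*}
0 \lra \ul{H}^1\big(H, \CO^{\times\times}(U)\big) \lra \ul{H}^1\big(H, K^\times \CO^{\times\times}(U)\big) \lra \ul{\Hom}\big(H, K^\times/\CO_K^{\times\times}\big) ,
\end{equation*}
the last term being identified via triviality of the $H$-action.

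For the splitting, \Cref{Lemma - Characters of quotient by principal units} supplies an isomorphism $\ul{\Hom}(H, K^\times)[p'] \overset{\sim}{\lra} \ul{\Hom}(H, K^\times/\CO_K^{\times\times})$. Composing its inverse with the map on $\ul{H}^1$ induced by the inclusion $K^\times \hookrightarrow K^\times \CO^{\times\times}(U)$ produces a section of the rightmost arrow above; this both forces the sequence to be short exact on the right and furnishes the desired splitting, yielding the stated direct sum decomposition.

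The main obstacle I anticipate is not any single computation but the condensed bookkeeping: checking that the strictly exact sequence, its long exact cohomology sequence, and the splitting all genuinely live in the category of solid condensed abelian groups rather than merely at the level of underlying abelian groups. These technicalities should nonetheless reduce to the already-established comparison results between condensed and continuous cohomology together with the open mapping theorem for Polish groups invoked throughout the paper.
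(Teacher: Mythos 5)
Your proposal is correct and takes essentially the same approach as the paper: the paper also considers the short strictly exact sequence $1 \to \CO^{\times\times}(U) \to K^\times \CO^{\times\times}(U) \to K^\times/\CO_K^{\times\times} \to 1$, kills the connecting map using Lemma \ref{Lemma - Invariant function is constant}, and builds the splitting from Lemma \ref{Lemma - Characters of quotient by principal units} precomposed with the inclusion $K^\times \hookrightarrow K^\times\CO^{\times\times}(U)$.
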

\begin{proof}
	We consider the short strictly exact sequence
	\begin{equation*}
		1 \lra \CO^{\times \times} (\Omega_n) \lra K^\times \CO^{\times \times}(\Omega_n) \overset{\tau}{\lra} K^\times / \CO^{\times \times}_K \lra 1 
	\end{equation*}
	of topological $G_0$-modules.
	It gives rise to a long exact sequence of solid abelian groups
	\begin{align*}
		1 \lra &\, \CO^{\times \times} (\Omega_n)^\ul{G_0} \lra  \big( K^\times \CO^{\times \times}(\Omega_n) \big)^\ul{G_0} \xrightarrow{\tau^\ul{G_0}}  \big( K^\times / \CO^{\times \times}_K \big)^\ul{G_0} \lra \ul{H}^1 \big( {G_0} , \CO^{\times \times} (\Omega_n) \big)  \\
			&\lra \ul{H}^1 \big( {G_0} , K^\times \CO^{\times \times} (\Omega_n) \big) \xrightarrow{\ul{H}^1({G_0}, \tau)} \ul{H}^1 \big( {G_0}, K^\times / \CO^{\times \times}_K \big) \lra \ldots .
	\end{align*}
	By \Cref{Lemma - Invariant function is constant} we have $\CO^{\times \times} (\Omega_n)^{G_0} = \CO^{\times \times}_K$ and $ \big( K^\times \CO^{\times \times}(\Omega_n) \big)^{G_0}  = K^\times$.
	Therefore, the homomorphism $\tau^{G_0} \colon \big( K^\times \CO^{\times \times}(\Omega_n) \big)^{G_0} \ra \big( K^\times / \CO^{\times \times}_K \big)^{G_0} = K^\times / \CO^{\times \times}_K$ is a quotient map.
	We deduce the exact sequence of solid abelian groups
	\begin{equation*}
		1 \lra \ul{H}^1 \big( {G_0} , \CO^{\times \times} (\Omega_n) \big) \lra \ul{H}^1 \big( {G_0} , K^\times \CO^{\times \times} (\Omega_n) \big) \xrightarrow{\ul{H}^1({G_0}, \tau)} \ul{H}^1 \big( {G_0}, K^\times / \CO^{\times \times}_K \big) .
	\end{equation*}

	Using \Cref{Lemma - Characters of quotient by principal units} we have a homomorphism of solid abelian groups
	\begin{equation*}
		\ul{H}^1 \big( {G_0}, K^\times/ \CO^{\times \times}_K \big) \cong \ul{\Hom} (G_0 , K^\times)[p'] \longhookrightarrow \ul{H}^1 ({G_0}, K^\times ) \lra \ul{H}^1 \big( {G_0} , K^\times \CO^{\times \times} (\Omega_n) \big) 
	\end{equation*}
	which is a section to $\ul{H}^1(G_0, \tau)$. 
	This shows that the latter is an epimorphism and we obtain the split exact sequence of solid abelian groups
	\begin{equation*}
		1 \lra \ul{H}^1 \big( {G_0} , \CO^{\times \times} (\Omega_n) \big) \lra \ul{H}^1 \big( {G_0} , K^\times \CO^{\times \times} (\Omega_n) \big) \lra \ul{\Hom} (G_0, K^\times ) [p'] \lra 1.
	\end{equation*}

	The argument for $\ul{H}^1 \big( {I} , K^\times \CO^{\times \times}(\Omega'_n) \big) $ is completely analogous.
\end{proof}

\subsection{Prime-to-$p$-Torsion Classes}

Our method to describe $H^1 \big( G_0, \CO^\times(\Omega_n) \big)[p']$ is essentially the one used by Ardakov and Wadsley in the proof of Thm.\ 4.4.1 in \cite{ArdakovWadsley23EquivLineBun}.
However, to introduce our notation and for the convenience of the reader we present the complete argument.

\begin{setting}\label{Set - Unramified quadratic extension valued point}
	Let $L$ denote the unramified quadratic extension of $F$ in $C$, and let $z\in L$ such that $[z: 1]$ is an $L$-valued point of $\Omega_{F,n}$\footnote{For example, take $z$ to be the  Teichmüller lift of an element of $\BF_{q^2}^\times \setminus \BF_q^\times$.}.
	We consider $[z:1]$ as a $K(z)$-valued point of $\Omega_n$ so that evaluating at $[z:1]$ yields a continuous group homomorphism $\ev_{[z:1]} \colon \CO^\times (\Omega_n) \ra K(z)^\times$. 
\end{setting}

In this situation, we consider the map
\begin{equation*}
	\iota_z \colon \CO_L^\times \lra G_0 \,,\quad a+ c z \lto M_{a,c} \,,\qquad\text{where $M_{a,c} \defeq \left( \begin{matrix} a& c\, \mathrm{N}(z) \\ -c & a + c\, \Tr(z) \end{matrix} \right)$},
\end{equation*}
and $\mathrm{N}$ and $\Tr$ denote the norm and trace maps of $L/F$ respectively.
Then $\iota_z$ is a continuous group homomorphism whose image stabilizes $[z:1]$, see Lemma 2.2.5 of \textit{loc.\ cit.}\footnote{In fact, $\iota_z$ induces an isomorphism between $\CO_L^\times$ and the stabiliser of $[z:1]$ in $G^0$.}.
By functoriality the pair $(\iota_z, \ev_{[z:1]})$ induces a homomorphism of solid abelian groups
\begin{equation*}
	\ul{H}^1 \big(G_0, \CO^\times (\Omega_n) \big) \lra  \ul{\Hom} \big( \CO^\times_L , K(z)^\times \big) \quad\text{with}\quad [\gamma] \lto \big[ x \mto  \gamma\big(\iota_z(x) \big)([z:1]) \big] 
\end{equation*}
on the underlying abelian groups.
Composing it with restriction to the group of $(q^2 -1)$-st roots of unity $\mu_{q^2 - 1} (L)$ of $L$, we obtain a homomorphism of solid abelian groups
\begin{equation}\label{Eq - Evaluation at quadratic point}
	\rho_{z,n} \colon \ul{H}^1 \big(G_0, \CO^\times (\Omega_n) \big)  \lra \ul{\Hom} \big( \mu_{q^2 - 1} (L) , K(z)^\times \big).
\end{equation}

\begin{lemma}[{Cf.\ \cite[Lemma 2.2.8]{ArdakovWadsley23EquivLineBun}}]\label{Lemma - Image of j under evaluation at quadratic point}
	\begin{altenumerate}
		\item
			The solid abelian group $\ul{\Hom} \big( \mu_{q^2 - 1} (L) , K(z)^\times \big)$ is discrete and cyclic of order $q^2 -1$. 
			Every element of its underlying abelian group is of the form
			\begin{equation*}
				\sigma_k \colon \mu_{q^2 - 1}(L) \lra K(z)^\times \,,\quad \zeta \lto \zeta^k \,,\quad\text{for some $k \in \BZ/ (q^2 -1)\, \BZ$.}
			\end{equation*}

		\item 
			We have $\rho_{z,n} ( [j] ) =  \sigma_1$.
		
	\end{altenumerate}
\end{lemma}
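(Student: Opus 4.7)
The plan is to dispatch part (i) by a short roots-of-unity calculation and to prove part (ii) by picking a convenient representative cocycle for $[j]$ and evaluating it directly at the fixed point $[z:1]$.

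For (i), the key point is that $\mu_{q^2-1}(L)$ already exhausts the $(q^2-1)$-st roots of unity available inside $K(z)^\times$. Since $L/F$ is the unramified quadratic extension with residue field $\BF_{q^2}$, the Teichm\"uller lift identifies $\mu_{q^2-1}(L)$ with $\BF_{q^2}^\times$, so it is cyclic of order exactly $q^2-1$. Because $z\in L\setminus F$ we have $F(z)=L$, hence $L\subset K(z)$ and therefore $\mu_{q^2-1}(L)\subset\mu_{q^2-1}(K(z))$; but in any field the $(q^2-1)$-st roots of unity form a cyclic group of order dividing $q^2-1$, so this inclusion is an equality. Any group homomorphism $\mu_{q^2-1}(L)\to K(z)^\times$ then factors through $\mu_{q^2-1}(K(z))=\mu_{q^2-1}(L)$ and is of the form $\sigma_k$ for a unique $k\in\BZ/(q^2-1)\BZ$. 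Discreteness of $\ul{\Hom}(\mu_{q^2-1}(L),K(z)^\times)$ as a solid abelian group is then immediate since its domain is finite discrete, so the hom-presheaf is represented by the finite set of these $\sigma_k$.

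For (ii), I would fix the $F$-rational linear form $\ell(X,Y)=Y$ and compute directly. From $g\cdot\ell=\ell\circ g^{\ad}$ with $g^{\ad}=\bigl(\begin{smallmatrix}d&-b\\-c&a\end{smallmatrix}\bigr)$, the cocycle takes the explicit form
\[
j_\ell(g)([z_0:z_1])\;=\;\frac{-cz_0+az_1}{z_1}\;=\;a-c\,\frac{z_0}{z_1},\qquad g=\bigl(\begin{smallmatrix}a&b\\c&d\end{smallmatrix}\bigr).
\]
Plugging in $g=\iota_z(\zeta)=M_{a,c}$, whose $(2,1)$-entry is $-c$, and evaluating at $[z_0:z_1]=[z:1]$ (so $z_0/z_1=z$), the formula collapses to $a-(-c)\cdot z=a+cz=\zeta$. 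Hence $\rho_{z,n}([j])(\zeta)=\zeta=\sigma_1(\zeta)$ for every $\zeta\in\mu_{q^2-1}(L)$, which is the claim. I do not expect any genuine obstacle; the only subtlety is that $j_\ell$ depends on the choice of $\ell$, but different choices yield cohomologous cocycles (as remarked after \Cref{Example - Cohomology class of twisting sheaf}), so the class $[j]$ and its image under $\rho_{z,n}$ are independent of this choice, and the convenient form $\ell(X,Y)=Y$ legitimately computes it.
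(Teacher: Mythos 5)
Your proof is correct and follows essentially the same route as the paper: part (i) is the same roots-of-unity observation the paper dismisses as clear, and for part (ii) you make the same choice $\ell(X,Y)=Y$ and perform the same evaluation of $j_\ell(M_{a,c})$ at $[z:1]$, arriving at $a+cz=\zeta$ exactly as in the paper's proof.
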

\begin{proof}
	It is clear that the topological abelian group ${\Hom} \big( \mu_{q^2 - 1} (L) , K(z)^\times \big)$ is cyclic of the claimed shape and discrete.
	Thus, (i) follows.

	For (ii), let the linear form $\ell\colon K^2 \ra K$, $(x,y) \mto y$, represent $[j]$. 
	One computes that 
	\begin{equation*}
		\rho_{z,n}([j]) (a +cz) = \frac{\ell \big( M_{a,c}^\ad \cdot ( \begin{smallmatrix} z \\ 1\end{smallmatrix} ) \big)}{ \ell (z,1) } = \frac{a + cz}{1} \,,
	\end{equation*}
	for all $a+cz \in \CO_L^\times$.
	Therefore, $\rho_{z,n}([j])$ indeed is equal to $\sigma_1$.
\end{proof}

\begin{proposition}[{Cf.\ \cite[Thm.\ 4.4.1]{ArdakovWadsley23EquivLineBun}}]\label{Prop - Prime to p torsion of group cohomology for affinoid}
	The group $H^1 \big(G_0, \CO^\times(\Omega_n) \big)[p']$ is cyclic of order $q^2-1$. 
	More concretely, it possesses a unique generator $[\alpha_n] $ such that the homomorphism of solid abelian groups
	\begin{equation*}
		\ul{\Hom} \big( \mu_{q^2 - 1} (L) , K(z)^\times \big)  \lra \ul{H}^1 \big(G_0, \CO^\times(\Omega_n) \big) 
	\end{equation*}
	induced by mapping $\sigma_{q^n} \mto [\alpha_n]$ on the underlying abelian groups is a section to $\rho_{z,n}$.
	Furthermore, we have $\widetilde{P_{n,\ast}}[\alpha_n] = q^n \mod (q^n (q+1))$.
\end{proposition}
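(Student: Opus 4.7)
My plan is to show that the evaluation homomorphism $\rho_{z,n}$ restricts to an isomorphism
\[
H^1\big(G_0, \CO^\times(\Omega_n)\big)[p'] \overset{\sim}{\lra} \Hom\big(\mu_{q^2-1}(L), K(z)^\times\big),
\]
and then to define $[\alpha_n]$ as the preimage of $\sigma_{q^n}$. Throughout, I abbreviate $F \defeq H^1\big(G_0, K^\times \CO^{\times\times}(\Omega_n)\big) = \ker \widetilde{P_{n,\ast}}$ from \eqref{Eq - Short exact sequence for group cohomology for affinoids}. Taking prime-to-$p$ torsion in \eqref{Eq - Short exact sequence for group cohomology for affinoids}, and combining \Cref{Prop - Decomposition of principal units and constants} with the fact that $H^1\big(G_0, \CO^{\times\times}(\Omega_n)\big)$ is a $\BZ_p$-module (hence has no $p'$-torsion) by \Cref{Lemma - Zp-module structure on group cohomology of principal units}, yields a left-exact sequence
\[
0 \lra \Hom(G_0, K^\times)[p'] \lra H^1\big(G_0, \CO^\times(\Omega_n)\big)[p'] \lra \BZ/(q+1)\BZ,
\]
whose first term is cyclic of order $q-1$ by \Cref{Lemma - Prime to p torsion characters} (ii); so $|H^1[p']| \le q^2-1$.

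For injectivity of $\rho_{z,n}|_{H^1[p']}$, I first compute $\rho_{z,n}(\widehat{\det}) = \sigma_{q+1}$ using $\det(\iota_z(\zeta)) = \Nm(\zeta) = \zeta^{q+1}$ for $\zeta \in \mu_{q^2-1}(L)$, and note that $\rho_{z,n}$ vanishes on $H^1\big(G_0, \CO^{\times\times}(\Omega_n)\big)$ since $\CO^{\times\times}_{K(z)}$ contains no nontrivial root of unity of order prime to $p$. Combined with $\rho_{z,n}[j] = \sigma_1$ (\Cref{Lemma - Image of j under evaluation at quadratic point} (ii)) and $H^1 = \BZ\cdot[j] + F$ (because $[j]$ reduces to a generator of $H^1/F \cong \BZ/q^n(q+1)\BZ$), the compositions $\rho_{z,n} \bmod \langle \sigma_{q+1}\rangle$ and $\widetilde{P_{n,\ast}} \bmod (q+1)$ define the same homomorphism $H^1 \to \BZ/(q+1)\BZ$. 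If $[\gamma] \in H^1[p']$ with $\rho_{z,n}[\gamma] = 0$, this compatibility forces $\widetilde{P_{n,\ast}}[\gamma]$ to be simultaneously $p'$-torsion and divisible by $q+1$ in $\BZ/q^n(q+1)\BZ$, hence zero; so $[\gamma] \in F[p'] = \langle \widehat{\det}\rangle$, on which $\rho_{z,n}$ is manifestly injective.

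To construct $[\alpha_n]$, observe that $(q+1)q^n[j] \in F$ since its $\widetilde{P_{n,\ast}}$-image vanishes. Writing $(q+1)q^n[j] = \widehat{\det}^s + \beta$ via \Cref{Prop - Decomposition of principal units and constants} with $\beta \in H^1\big(G_0, \CO^{\times\times}(\Omega_n)\big)$, and applying $\rho_{z,n}$, gives $s(q+1) \equiv (q+1)q^n \bmod (q^2-1)$, so $s \equiv q^n \equiv 1 \bmod (q-1)$, whence $s = 1$. Since $H^1\big(G_0, \CO^{\times\times}(\Omega_n)\big)$ is a $\BZ_p$-module and $q+1 \in \BZ_p^\times$, there exists a unique $\tilde\beta$ with $(q+1)\tilde\beta = \beta$. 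I define
\[
[\alpha_n] \defeq q^n[j] - \tilde\beta.
\]
A direct verification then gives $(q+1)[\alpha_n] = \widehat{\det}$, $\widetilde{P_{n,\ast}}[\alpha_n] = q^n \bmod q^n(q+1)$ (using $\tilde\beta \in F$), and $\rho_{z,n}[\alpha_n] = \sigma_{q^n}$. The first two equalities force $[\alpha_n]$ to have order exactly $(q-1)(q+1) = q^2-1$; the third, together with $\gcd(q^n, q^2-1) = 1$ and the injectivity above, shows that $\rho_{z,n}|_{H^1[p']}$ is an isomorphism onto $\Hom(\mu_{q^2-1}(L), K(z)^\times)$, a cyclic group of order $q^2-1$. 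Both the cyclicity of $H^1[p']$ and the uniqueness of $[\alpha_n]$ follow.

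The main technical obstacle is the inversion of $(q+1)$ inside $H^1\big(G_0, \CO^{\times\times}(\Omega_n)\big)$: this is precisely what transforms the non-torsion class $q^n[j]$ into a torsion class of the prescribed order, and it crucially depends on the $\BZ_p$-module structure provided by \Cref{Lemma - Zp-module structure on group cohomology of principal units} via the condensed-cohomology framework.
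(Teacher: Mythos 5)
Your proof is correct, but it takes a genuinely different route from the paper's. The paper constructs an \emph{explicit cocycle} $\alpha_n$: it builds a ``telescope'' cocycle $\beta_n(g) = \prod_i \frac{g_i \cdot \ell}{g g_i \cdot \ell}$ over a transversal of $G_{n+1} B_0$ in $G_0$ (with $B_0$ the upper-triangular Borel), verifies via the Van der Put transform that $\beta_n$ lands in $\Ker(P_n)$, shows $\beta_n^{q-1}$ takes values in $\CO^{\times\times}(\Omega_n)$, extracts a $(q^2-1)$-st root $\gamma_n$ at the \emph{cochain} level using the $\BZ_p$-structure of \Cref{Lemma - Zp-module structure on principal units}, and sets $\alpha_n \defeq \gamma_n j^{q^n}$. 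You instead work entirely at the level of cohomology classes: you note that $(q+1)q^n[j] \in \Ker(\widetilde{P_{n,\ast}})$, pin down its $\Hom[p']$-component as $\widehat{\det}$ via $\rho_{z,n}$, and divide the $\CO^{\times\times}$-component by $q+1$ using the $\BZ_p$-structure on the \emph{cohomology} group (\Cref{Lemma - Zp-module structure on group cohomology of principal units}). Both routes funnel through the same decomposition (\Cref{Prop - Decomposition of principal units and constants}) and the same upper bound $\lvert H^1[p'] \rvert \mid q^2-1$, and both use $\rho_{z,n}$ to pin down the order and uniqueness.

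Trade-offs: your approach is structurally cleaner — the observation that $\rho_{z,n} \bmod \langle\sigma_{q+1}\rangle$ coincides with $\widetilde{P_{n,\ast}} \bmod (q+1)$ gives a crisp injectivity argument, and the corollary $[\alpha_n]^{q+1} = \widehat{\det}$ drops out for free from the construction rather than needing the separate argument the paper gives afterwards. What the paper's more laborious route buys is the \emph{explicit cochain-level representative}: the identity $[\alpha]_{G_0,n} = \big[\gamma_n^{-q^n} j^{1-q^{2n}}\big]$ with $\gamma_n$ valued in $\CO^{\times\times}(\Omega_n)$ is used downstream in \Cref{Lemma - Power of certain element is principal unit group cohomology class} and \Cref{Prop - Shape of prime to p torsion class restricted to affinoid over edge}. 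Your class-level construction would still supply what those arguments actually need (a decomposition of $[\alpha]_{G_0,n}$ into a power of $[j]$ plus an $\CO^{\times\times}$-class), but that has to be read off \Cref{Prop - Decomposition of principal units and constants} rather than being handed an explicit cocycle.
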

\begin{proof}
	Again, we consider $\ell\colon K^2 \ra K$, $(x,y) \mto y$, and $U \defeq \{(x,0) \mid x \in F \}$.
	Then
	\begin{equation*}
		B_0 \defeq \left\{ \bigg( \begin{matrix} a & b \\ 0 & d		\end{matrix} \bigg) \middle{|} a,d \in \CO_F^\times, b \in \CO_F \right\} \subset G_0
	\end{equation*}
	stabilises $U$.
	Moreover, for $G_{n+1} \defeq 1 + \unif^{n+1} M_2(\CO_F)$, the subgroup $G_{n+1} B_0$ is of index $q^n (q+1)$ in $G_0$.
	Let $g_1,\ldots, g_{q^n (q+1)}$ be a full system of representatives of $G_0$ modulo $G_{n+1} B_0$.
	We consider the continuous $1$-cocycle
	\begin{equation*}
		\beta_n \colon G_0 \lra \CO^\times (\Omega_n) \,,\quad g \lto \frac{g \cdot f}{f} \,\, j(g)^{-q^n (q+1)} = \prod_{i=1}^{q^n (q+1)} \frac{g_i \cdot \ell}{gg_i \cdot \ell} ,
	\end{equation*}
	where $f \defeq \prod_{i=1}^{q^n (q+1)} \frac{\ell}{g_i \cdot \ell}$.
	Our claim is that $\beta_n$ takes values in $\Ker(P_n) = K^\times \CO^{\times \times}(\Omega_n)$.
	We have $gg_i = g_{\sigma(i)} h_i  $, for some permutation $\sigma$ of $ \{1,\ldots ,q^n(q+1)\}$ and  $h_i \in G_{n+1}  B_0$.
	Since elements of $B_0$ change $\ell$ only by a scalar, it suffices to show that $\frac{\ell}{h \cdot \ell} \in \Ker(P_n)$, for all $h\in G_{n+1}$.
	But this holds because for such $h$ the straight path between $U$ and $h(U)$ does not intersect $\CT_n$, together with \Cref{Example - Van der Put transform of quotient of linear forms}.

	Composing $\beta_n$ with the quotient map to $K^\times \CO^{\times \times} (\Omega_n) /  \CO^{\times \times} (\Omega_n) $, we obtain a continuous group homomorphism $\ov{\beta_n} \colon G_0 \ra K^\times / \CO_K^{\times \times}$. 
	As seen in the proof of \Cref{Lemma - Characters of quotient by principal units}, $\ov{\beta_n}^{q-1}$ then is trivial.
	In other words, the image of $\beta_n^{q-1}$ is contained in $\CO^{\times \times}(\Omega_n)$.

	By \Cref{Lemma - Zp-module structure on principal units} there exists a unique $(q^2 -1)$-st root of $\beta_n^{q-1}(g)$, for every $g\in G_0$.
	In this way, we obtain a continuous $1$-cocycle $\gamma_n \colon G_0 \ra \CO^{\times \times}(\Omega_n)$ such that $\gamma_n^{q^2 - 1} = \beta_n^{q-1}$ and define
	\begin{equation*}
		\alpha_n \defeq \gamma_n \, j^{q^n} .
	\end{equation*}
	As $\beta_n^{q-1} \, j^{q^n (q^2 -1)} $ is a $1$-coboundary, the class $[\alpha_n] \in H^1 \big( G_0, \CO^\times(\Omega_n) \big)$ is $(q^2 - 1)$-torsion.

	Because $\gamma_n$ takes values in $\CO^{\times \times}(\Omega_n)$, the homomorphism $\rho_{z,n}([\gamma_n])$ takes values in $\CO_{K(z)}^{\times \times}$.
	\Cref{Lemma - Image of j under evaluation at quadratic point} (i) then implies that $[\gamma_n] \in \Ker(\rho_{z,n})$ so that $\rho_{z,n}([\alpha_n]) = \rho_{z,n}([j]^{q^n}) = \sigma_{q^n}$. 
	But since $q^n$ and $q^2 - 1$ are coprime, $\sigma_{q^n}$ is a generator of $\Hom \big( \mu_{q^2 - 1}(L), K(z)^\times \big)$.
	Hence, the order of $[\alpha_n]$ is a multiple of $q^2 - 1$.

	On the other hand, we may pass to the prime-to-$p$ torsion of the short exact sequence of underlying abelian groups of \eqref{Eq - Short exact sequence for group cohomology for affinoids}.
	\Cref{Prop - Decomposition of principal units and constants} together with \Cref{Lemma - Prime to p torsion characters} (ii) then show that the order of $H^1 \big(G_0, \CO^\times(\Omega_n) \big)[p']$ divides $q^2-1$. 
	In total, we deduce that $H^1 \big(G_0, \CO^\times(\Omega_n) \big)[p']$ is cyclic of order $q^2-1$ with $[\alpha_n]$ as a generator.

	We obtain a homomorphism of solid abelian groups
	\begin{equation*}
		\ul{\Hom} \big( \mu_{q^2 - 1} (L) , K(z)^\times \big)  \lra \ul{H}^1 \big(G_0, \CO^\times(\Omega_n) \big) 
	\end{equation*}
	which is defined via mapping the generator $\sigma_{q^n}$ to $[\alpha_n]$ on global sections.
	This homomorphism is a section to $\rho_{z,n}$.

	The uniqueness of $[\alpha_n]$ then follows because $\rho_{z,n}$ restricts to an isomorphism between $H^1 \big(G_0, \CO^\times(\Omega_n) \big)[p']$ and $ {\Hom} \big( \mu_{q^2 - 1} (L) , K(z)^\times \big) $.
	Finally, as $\gamma_n$ takes values in the kernel of $P_n$, we also have 
	\begin{equation*}
		\widetilde{P_{n, \ast}}[\alpha_n] = \widetilde{P_{n, \ast}}\big( [j]^{q^n}\big) = q^n \mod ( q^n(q+1) ) .
	\end{equation*}
\end{proof}

\begin{corollary}
	We have $[\alpha_n]^{q+1} = \widehat{\det}$.
\end{corollary}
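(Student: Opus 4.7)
The plan is to reduce the claim to a single comparison in the target of the evaluation homomorphism $\rho_{z,n}$ from \eqref{Eq - Evaluation at quadratic point}. Both $[\alpha_n]^{q+1}$ and the class of the character $\widehat{\det}$ (viewed in $H^1$ via the inclusion $K^\times \hookrightarrow \CO^\times(\Omega_n)$ and the non-triviality given by \Cref{Lemma - Embedding of characters}) are of order dividing $q-1$ and therefore lie in $H^1(G_0, \CO^\times(\Omega_n))[p']$. By \Cref{Prop - Prime to p torsion of group cohomology for affinoid}, the latter is cyclic of order $q^2-1$ with generator $[\alpha_n]$, and $\rho_{z,n}$ sends $[\alpha_n]$ to the generator $\sigma_{q^n}$ of the cyclic group $\Hom(\mu_{q^2-1}(L), K(z)^\times)$ of the same order (since $\gcd(q^n, q^2-1) = 1$). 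Hence $\rho_{z,n}$ restricts to an isomorphism on the prime-to-$p$-torsion, and it will suffice to verify $\rho_{z,n}([\alpha_n]^{q+1}) = \rho_{z,n}(\widehat{\det})$.

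The first image is immediately $\sigma_{q^n}^{q+1} = \sigma_{q^n(q+1)}$. For the second, I will use the explicit form of $\iota_z$: for $\zeta = a + cz \in \mu_{q^2-1}(L) \subset \CO_L^\times$, the matrix $\iota_z(\zeta) = M_{a,c}$ has determinant $a^2 + ac\Tr(z) + c^2 \mathrm{N}(z) = \mathrm{N}_{L/F}(a+cz) = \mathrm{N}_{L/F}(\zeta)$. Since the non-trivial element of $\Gal(L/F)$ acts on the Teichmüller lifts in $\mu_{q^2-1}(L)$ as $\zeta \mapsto \zeta^q$, one has $\mathrm{N}_{L/F}(\zeta) = \zeta^{q+1}$, which already lies in $\mu_{q-1}(F)$ (on which $\widehat{\cdot}$ is the identity). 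This yields $\rho_{z,n}(\widehat{\det})(\zeta) = \zeta^{q+1}$, i.e.\ $\rho_{z,n}(\widehat{\det}) = \sigma_{q+1}$.

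It will then only remain to observe that $\sigma_{q^n(q+1)} = \sigma_{q+1}$ in $\Hom(\mu_{q^2-1}(L), K(z)^\times)$, which is the congruence $(q+1)(q^n - 1) \equiv 0 \pmod{q^2-1}$, equivalently $(q-1) \mid (q^n-1)$. The latter is clear. I do not anticipate any substantive obstacle: everything reduces to the fact that the evaluation $\rho_{z,n}$ detects $p'$-torsion faithfully, together with the classical norm identity $\mathrm{N}_{L/F}(\zeta) = \zeta^{q+1}$ for Teichmüller lifts in the unramified quadratic extension $L/F$.
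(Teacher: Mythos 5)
Your proof is correct and follows essentially the same route as the paper: reduce to comparing images under $\rho_{z,n}$ (using that it is faithful on $p'$-torsion, by \Cref{Prop - Prime to p torsion of group cohomology for affinoid}), then compute $\rho_{z,n}([\alpha_n]^{q+1})=\sigma_{q^n(q+1)}$ and $\rho_{z,n}(\widehat{\det})=\sigma_{q+1}$ via $\det(M_{a,c})=\mathrm{N}_{L/F}(a+cz)=\zeta^{q+1}$, and finish with $q^n(q+1)\equiv q+1\pmod{q^2-1}$. The only minor difference is that the paper first pins down $[\alpha_n]^{q+1}=\widehat{\det}{}^k$ via the exact sequence \eqref{Eq - Short exact sequence for group cohomology for affinoids} and the decomposition of \Cref{Prop - Decomposition of principal units and constants} before evaluating $\rho_{z,n}$, whereas you bypass this by noting directly that both classes are $p'$-torsion.
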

\begin{proof}
	It follows from \eqref{Eq - Short exact sequence for group cohomology for affinoids} that $[\alpha_n]^{q+1}$ is contained in $H^1 \big(G_0, K^\times \CO^{\times \times}(\Omega_n) \big)[p']$, and therefore equal to $\widehat{\det}^k$, for some $k \in \BZ/ (q-1)\BZ$.

	Since $\det( M_{a,c}) = \mathrm{N}(a+cz)$ and $\mathrm{N}(\zeta) = \zeta^{q+1}$ for any $\zeta \in \mu_{q^2 - 1}(L)$, we find that $\rho_{z,n} \big(\widehat{\det} \big)= \sigma_{q+1}$.
	On the other hand, \Cref{Prop - Prime to p torsion of group cohomology for affinoid} implies that $\rho_{z,n} \big( [\alpha_n]^{q+1} \big) = \sigma_{q^n(q+1)}$.
	It follows that $k=1$ because $\zeta^{q^n (q+1)} = \zeta^{q+1}$, for every $\zeta \in \mu_{q^2 -1}(L)$.
\end{proof}

We now assemble the $(q^2-1)$-torsion classes for $\CO^\times(\Omega_n)$ constructed above to a $(q^2-1)$-torsion class for $\CO^\times (\Omega)$.

\begin{lemma}
	The family $\big( [\alpha_n]^{q^n} \big)_{n\in \BN}$ is compatible under the canonical transition maps 
	\begin{equation*}
		 H^1 \big( G_0, \CO^\times (\Omega_{n+1} ) \big) \lra H^1 \big( G_0, \CO^\times (\Omega_{n}) \big) \,,\quad c \lto c\res{G_0,n} .
	\end{equation*}
\end{lemma}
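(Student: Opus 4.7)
The plan is to exploit the uniqueness characterisation of $[\alpha_n]$ from \Cref{Prop - Prime to p torsion of group cohomology for affinoid}: the prime-to-$p$ torsion subgroup $H^1 \big(G_0, \CO^\times(\Omega_n) \big)[p']$ is cyclic of order $q^2-1$, and $\rho_{z,n}$ restricts to an isomorphism from this subgroup onto $\Hom \big(\mu_{q^2-1}(L), K(z)^\times \big)$. Indeed, the section constructed in the proof of \emph{loc.\ cit.}\ has image in the $[p']$-torsion and both groups have the same cardinality. It thus suffices to check that the two classes in question have the same image under $\rho_{z,n}$.

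First, I would observe that both $[\alpha_{n+1}]^{q^{n+1}}\res{G_0,n}$ and $[\alpha_n]^{q^n}$ lie in $H^1 \big(G_0, \CO^\times(\Omega_n) \big)[p']$, as $[\alpha_{n+1}]$ is $(q^2-1)$-torsion by \Cref{Prop - Prime to p torsion of group cohomology for affinoid} and both restriction and taking powers preserve torsion orders coprime to $p$.

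Next, since $[z:1]$ is an $L$-rational point of $\Omega_{F,n} \subset \Omega_{F,n+1}$, the evaluation map $\ev_{[z:1]}\colon \CO^\times(\Omega_{n+1}) \ra K(z)^\times$ factors through the canonical restriction $\CO^\times(\Omega_{n+1}) \ra \CO^\times(\Omega_n)$. Functoriality of group cohomology therefore yields the identity $\rho_{z,n+1} = \rho_{z,n} \circ (\blank\res{G_0,n})$. Combining this with $\rho_{z,n+1}([\alpha_{n+1}]) = \sigma_{q^{n+1}}$ from \Cref{Prop - Prime to p torsion of group cohomology for affinoid} and the elementary congruence $q^2 \equiv 1 \mod (q^2-1)$, I would compute
\[
\rho_{z,n}\big([\alpha_{n+1}]^{q^{n+1}}\res{G_0,n}\big) = \sigma_{q^{n+1}}^{q^{n+1}} = \sigma_{q^{2(n+1)}} = \sigma_{q^{2n}} = \sigma_{q^n}^{q^n} = \rho_{z,n}\big([\alpha_n]^{q^n}\big),
\]
and injectivity of $\rho_{z,n}$ on $H^1 \big(G_0, \CO^\times(\Omega_n) \big)[p']$ then concludes the argument. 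I do not expect any genuine obstacle here: the whole argument reduces to functoriality of $\rho_{z,n}$ in $n$ together with the congruence $q^{2(n+1)} \equiv q^{2n} \mod (q^2-1)$.
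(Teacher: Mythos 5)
Your proof is correct and follows essentially the same route as the paper's: both exploit the functoriality relation $\rho_{z,n+1} = \rho_{z,n} \circ (\blank\res{G_0,n})$, the congruence $q^{2n+2} \equiv q^{2n} \mod (q^2-1)$, and the fact that $\rho_{z,n}$ is injective (indeed an isomorphism) when restricted to the prime-to-$p$-torsion subgroup of $H^1\big(G_0, \CO^\times(\Omega_n)\big)$. The only difference is that you spell out a couple of preparatory observations — that both classes lie in the $[p']$-torsion and why $\rho_{z,n}$ restricts to an isomorphism there — which the paper leaves implicit.
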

\begin{proof}
	We consider $[z:1] \in \Omega_0(K(z))$ as in \Cref{Set - Unramified quadratic extension valued point}.
	Then the homomorphisms from \eqref{Eq - Evaluation at quadratic point} satisfy $\rho_{z,n+1} = \rho_{z,n} \circ \big(\blank\res{G_0,n}\big)$, for all $n\in \BN$.
	Since $\zeta^{q^{2n+2}} = \zeta^{q^{2n}}$, for every $\zeta \in \mu_{q^2 - 1}(L)$, and by \Cref{Prop - Prime to p torsion of group cohomology for affinoid} we have 
	\begin{equation*}
		\rho_{z,n+1} \big([\alpha_{n+1}]^{q^{n+1}} \big) = \sigma_{q^{2n+2}} = \sigma_{q^{2n}} = \rho_{z,n} \big([\alpha_{n}]^{q^{n}} \big) .
	\end{equation*}
	The compatibility now follows because $\rho_{z,n}$ is an isomorphism when restricted to the prime-to-$p$-torsion of $H^1 \big( G_0, \CO^\times (\Omega_{n}) \big)$.
\end{proof}

\begin{definition}\label{Def - Certain torsion cocycle}
	We write $[\alpha]$ for the \emph{inverse}\footnote{We choose to work with the inverse here because of notational convenience later on.} of the class defined by the family $\big( [\alpha_n]^{q^n} \big)_{n\in \BN}$ in $H^1 \big( G_0, \CO^\times (\Omega) \big) = \varprojlim_{n\in \BN} H^1 \big( G_0, \CO^\times (\Omega_n) \big)$.
	This class satisfies $[\alpha]^{q+1} = \widehat{\det}{}^{-1}$ and $\widetilde{P_\ast}[\alpha] = \big( 0, -1 \mod (q+1) \big)$.
\end{definition}

\subsection{Non-Torsion Classes}

\begin{notation}
	In the following, we let a subscript indicate the restriction of group cohomology classes with respect to the subgroup of $G$ and the ``level'', for example
	\begin{alignat*}{3}
	 	H^1 \big( G^0 , \CO^\times (\Omega) \big) &\lra H^1 \big( G_0, \CO^\times(\Omega) \big) &&\lra H^1 \big( G_0, \CO^\times(\Omega_n) \big)  , \\
	 	 c &\lto c\res{G_0} &&\lto c\res{G_0,n} .
	\end{alignat*}
	We further abbreviate $[\gamma]_{G_0,n} \defeq \big( [\gamma] \big)\res{G_0,n}$ et cetera.
\end{notation}

\begin{lemma}\label{Lemma - Power of certain element is principal unit group cohomology class}
	For $n \in \BN$, the class $[j\alpha]^{q^n}_{G_0,n}$ is contained in $H^1 \big( G_0, \CO^{\times \times}(\Omega_n) \big)$.
\end{lemma}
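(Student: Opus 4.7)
The plan is to exhibit $[j\alpha]^{q^n}_{G_0,n}$ as lying inside successively smaller subgroups of $H^1(G_0, \CO^\times(\Omega_n))$: first the kernel of the Van der Put transform, then the principal-unit part of that kernel.

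First, I would compute $\widetilde{P_{n,\ast}} [j\alpha]^{q^n}_{G_0,n}$. By \Cref{Thm - Left exact sequences for group cohomology} we have $\widetilde{P_{n,\ast}} [j]_{G_0,n} = 1 \mod q^n(q+1)$. Since $[\alpha]_{G_0,n} = [\alpha_n]^{-q^n}$ by \Cref{Def - Certain torsion cocycle} and $\widetilde{P_{n,\ast}} [\alpha_n] = q^n \mod q^n(q+1)$ by \Cref{Prop - Prime to p torsion of group cohomology for affinoid}, additivity gives $\widetilde{P_{n,\ast}} [j\alpha]^{q^n}_{G_0,n} = q^n(1 - q^{2n}) \mod q^n(q+1)$. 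This vanishes modulo $q^n$ trivially and modulo $q+1$ because $q \equiv -1 \mod (q+1)$ forces $q^{2n} \equiv 1$. By exactness of \eqref{Eq - Short exact sequence for group cohomology for affinoids}, therefore $[j\alpha]^{q^n}_{G_0,n} \in \ul{H}^1 \big( G_0 , K^\times \CO^{\times\times}(\Omega_n) \big)$.

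Next I would show that under the splitting of \Cref{Prop - Decomposition of principal units and constants},
\[
\ul{H}^1 \big( G_0 , K^\times \CO^{\times\times}(\Omega_n) \big) \cong \ul{H}^1 \big( G_0, \CO^{\times\times}(\Omega_n) \big) \oplus \ul{\Hom}(G_0, K^\times)[p'],
\]
the character component of $[j\alpha]^{q^n}_{G_0,n}$ vanishes. The detection tool is the evaluation map $\rho_{z,n}$ from \eqref{Eq - Evaluation at quadratic point}. On the first summand $\rho_{z,n}$ is trivial: any class coming from $\CO^{\times\times}(\Omega_n)$ is represented by a cocycle whose values at any $\iota_z(\zeta)$ evaluate into $\CO_{K(z)}^{\times\times}$, and the latter has no nontrivial $(q^2-1)$-torsion (as $q^2-1$ is coprime to $p$). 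On the second summand, by \Cref{Lemma - Prime to p torsion characters}(ii) every character is of the form $\widehat{\det}^k$ and one computes $\rho_{z,n}(\widehat{\det}^k) = \sigma_{(q+1)k}$ using $\det \iota_z(\zeta) = \Nm(\zeta) = \zeta^{q+1}$; the map $k \mapsto (q+1)k$ from $\BZ/(q-1)\BZ$ into $\BZ/(q^2-1)\BZ$ is injective, so $\rho_{z,n}$ restricted to the character summand is injective.

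Now I would evaluate $\rho_{z,n}$ on $[j\alpha]^{q^n}_{G_0,n}$ directly. By \Cref{Lemma - Image of j under evaluation at quadratic point} we have $\rho_{z,n}[j]_{G_0,n} = \sigma_1$, and by \Cref{Prop - Prime to p torsion of group cohomology for affinoid} together with $[\alpha]_{G_0,n} = [\alpha_n]^{-q^n}$ we get $\rho_{z,n}[\alpha]_{G_0,n} = \sigma_{-q^{2n}} = \sigma_{-1}$ (since $\zeta^{q^2-1} = 1$). Hence $\rho_{z,n}[j\alpha]^{q^n}_{G_0,n} = \sigma_0 = 1$. Combined with the previous paragraph, the character component of $[j\alpha]^{q^n}_{G_0,n}$ must be trivial, so the class lies in $\ul{H}^1 \big( G_0, \CO^{\times\times}(\Omega_n) \big)$, which on underlying abelian groups gives the claim.

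The only step that requires any genuine care is the first: once the arithmetic identity $q^{2n} \equiv 1 \mod (q+1)$ is in hand and the splitting of \Cref{Prop - Decomposition of principal units and constants} is combined with the injectivity of $\rho_{z,n}$ on the character summand, the rest is assembly. Nothing in the argument is obstructed; it is essentially a compatibility check between the Van der Put transform and the evaluation at a quadratic point that were set up earlier.
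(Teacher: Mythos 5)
Your proof is correct, but it follows a genuinely different route from the paper. The paper's argument unwinds the explicit cocycle: from the construction in \Cref{Prop - Prime to p torsion of group cohomology for affinoid}, $[\alpha]_{G_0,n}=[\gamma_n^{-q^n}j^{-q^{2n}}]$ with $\gamma_n$ valued in $\CO^{\times\times}(\Omega_n)$, so $[j\alpha]^{q^n}_{G_0,n}=[\gamma_n^{-q^{2n}}]\,[j]^{q^n(1-q^{2n})}$. The first factor is visibly in $H^1\big(G_0,\CO^{\times\times}(\Omega_n)\big)$; for the second, the short exact sequence \eqref{Eq - Short exact sequence for group cohomology for affinoids} gives $[j]^{q^n(q+1)}\in H^1\big(G_0,K^\times\CO^{\times\times}(\Omega_n)\big)$, and since the character summand of \Cref{Prop - Decomposition of principal units and constants} has exponent $q-1$, raising to the $(q-1)$-th power yields $[j]^{q^n(q^2-1)}\in H^1\big(G_0,\CO^{\times\times}(\Omega_n)\big)$. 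Divisibility of $q^{2n}-1$ by $q^2-1$ then finishes. Your argument instead treats $[j\alpha]^{q^n}_{G_0,n}$ as a black box: you compute $\widetilde{P_{n,\ast}}$ of it directly to get into $H^1\big(G_0,K^\times\CO^{\times\times}(\Omega_n)\big)$, and then you kill the character component via $\rho_{z,n}$, using that $\rho_{z,n}$ vanishes on the principal-unit summand (since $\CO_{K(z)}^{\times\times}$ has no $(q^2-1)$-torsion) and is injective on the character summand ($k\mapsto(q+1)k$ is injective $\BZ/(q-1)\BZ\hookrightarrow\BZ/(q^2-1)\BZ$). Both approaches are sound; the paper's is slightly shorter because the explicit form of $\gamma_n$ from the construction of $\alpha_n$ is already at hand, whereas yours requires the extra lemma that $\rho_{z,n}$ is injective on $\ul{\Hom}(G_0,K^\times)[p']$, but in exchange avoids reopening the cocycle-level construction and reuses only the numerical invariants $\widetilde{P_{n,\ast}}$ and $\rho_{z,n}$.
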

\begin{proof}
	By the construction of $[\alpha]$, we have $[j \alpha]_{G_0,n} = \big[ \gamma_n^{-q^n} j^{1-q^{2n}}\big] $ where the $1$-cocycle $\gamma_n$ takes values in $\CO^{\times \times}(\Omega_n)$.
	The short exact sequence
	\begin{equation*}
		1 \lra H^1 \big( G_0, K^\times \CO^{\times \times}(\Omega_n) \big) \lra H^1 \big( G_0 , \CO^\times(\Omega_n) \big) \xrightarrow{ \widetilde{P_{n, \ast}} } \BZ / q^n (q+1) \, \BZ \lra 0 
	\end{equation*}
	shows that $[j]^{q^n(q+1)} \in H^1 \big( G_0, K^\times \CO^{\times \times}(\Omega_n) \big)$. 
	In view of the decomposition \Cref{Prop - Decomposition of principal units and constants} it follows that $[j]^{q^n(q^2-1)} \in H^1 \big( G_0, \CO^{\times \times}(\Omega_n) \big) $. 
	This shows the claim because $q^{2n}-1$ is divisible by $q^2 -1$.
\end{proof}

\begin{proposition}\label{Prop - Zp-family in group cohomology of affinoid}
	There exists a homomorphism of solid abelian groups
	\begin{equation*}
		\BZ_p \lra \ul{H}^1 \big( G_0, \CO^\times(\Omega_n) \big) \qquad\text{with}\qquad 1 \lto [j\alpha]_{G_0,n}
	\end{equation*}
	on the underlying abelian groups.
	Its composition with
	\begin{equation*}
		\ul{H}^1 \big( G_0, \CO^\times(\Omega_n) \big) \xrightarrow{\widetilde{P_{n, \ast}}} \BZ/ q^n(q+1)\, \BZ \longtwoheadrightarrow \BZ/ q^n \, \BZ
	\end{equation*}
	is equal to the canonical projection $\BZ_p \twoheadrightarrow \BZ/ q^n \, \BZ$.
\end{proposition}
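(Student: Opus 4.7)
The plan is to extend the map $k \mapsto [j\alpha]^k_{G_0,n}$ from $\BZ$ to $\BZ_p$ by exploiting the $\BZ_p$-module structure on $\CO^{\times \times}(\Omega_n)$ from \Cref{Lemma - Zp-module structure on principal units}. The crucial input is the preceding \Cref{Lemma - Power of certain element is principal unit group cohomology class}: the class $[j\alpha]^{q^n}_{G_0,n}$ lies in the image of $H^1 \big( G_0, \CO^{\times \times}(\Omega_n) \big) \to H^1 \big( G_0, \CO^\times(\Omega_n) \big)$, and by \Cref{Lemma - Zp-module structure on group cohomology of principal units} the source carries a continuous $\BZ_p$-action.

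Concretely, I would fix continuous $1$-cocycles $\eta \in Z^1 \big( G_0, \CO^\times(\Omega_n) \big)$ representing $[j\alpha]_{G_0,n}$ and $\xi \in Z^1 \big( G_0, \CO^{\times \times}(\Omega_n) \big)$ with $[\xi] = [\eta^{q^n}]$ in $H^1 \big( G_0, \CO^\times(\Omega_n) \big)$. Since $q^n = p^{nf}$ is a power of $p$, every $\lambda \in \BZ_p$ admits a unique decomposition $\lambda = a + q^n \mu$ with $a \in \{0,1,\ldots,q^n - 1\}$ and $\mu \in \BZ_p$, and I set
\[
	\phi(\lambda) \defeq \big[\, \eta^a \cdot \xi^\mu \,\big] \in H^1 \big( G_0, \CO^\times(\Omega_n) \big) ,
\]
where $\xi^\mu \in Z^1 \big( G_0, \CO^{\times \times}(\Omega_n) \big)$ comes from the $\BZ_p$-module structure on $\CO^{\times \times}(\Omega_n)$. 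One checks directly that this is independent of the choice of $\xi$ in its cohomology class.

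The homomorphism property reduces to the carry identity: writing $a_1 + a_2 = a' + q^n \varepsilon$ with $\varepsilon \in \{0,1\}$, one has $\eta^{a_1 + a_2} \xi^{\mu_1 + \mu_2} = \eta^{a'} \xi^{\mu_1 + \mu_2} \cdot \eta^{q^n \varepsilon}$, and in cohomology the relation $[\eta^{q^n}] = [\xi]$ absorbs the excess factor into $\xi^{\mu_1 + \mu_2 + \varepsilon}$. Continuity of $\phi$ at a point $\lambda_0$ amounts, after fixing $a = a_0$ on the neighbourhood $\lambda_0 + q^n \BZ_p$, to the continuity of $\mu \mapsto \xi^\mu$, which holds because the $\BZ_p$-scalar multiplication on $\CO^{\times \times}(\Omega_n)$ is jointly continuous and hence induces a continuous scalar multiplication on $Z^1 \big( G_0, \CO^{\times \times}(\Omega_n) \big)$. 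Since $\BZ_p$ is Polish and the target is solid, this continuous group homomorphism automatically upgrades to a morphism of solid abelian groups.

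For the compatibility with $\widetilde{P_{n,\ast}}$, I would use that the image of $H^1 \big( G_0, \CO^{\times \times}(\Omega_n) \big) \to H^1 \big( G_0, \CO^\times(\Omega_n) \big)$ factors through $H^1 \big( G_0, K^\times \CO^{\times \times}(\Omega_n) \big) = \Ker \big( \widetilde{P_{n,\ast}} \big)$, so $\widetilde{P_{n,\ast}} [\xi^\mu] = 0$ for every $\mu \in \BZ_p$. Consequently $\widetilde{P_{n,\ast}} \phi(\lambda) = a \cdot \widetilde{P_{n,\ast}} [j\alpha]_{G_0,n}$, and combining \Cref{Thm - Left exact sequences for group cohomology} with \Cref{Def - Certain torsion cocycle} shows that this class reduces modulo $q^n$ to $a = \lambda \bmod q^n$, as required. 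The only genuinely delicate point in the argument is the homomorphism property with carries, and this is immediate once one has arranged $[\eta^{q^n}] = [\xi]$.
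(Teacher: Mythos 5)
Your proof follows the same strategy as the paper's — the key ingredients (the $\BZ_p$-module structure on $H^1(G_0, \CO^{\times\times}(\Omega_n))$, the fact that $[j\alpha]^{q^n}_{G_0,n}$ lies in the principal-units cohomology, and the final computation via $\widetilde{P_{n,\ast}}$) are identical. The difference is purely in how the extension from $\BZ$ to $\BZ_p$ is packaged. The paper stays entirely inside the category of solid abelian groups: it forms the two legitimate solid homomorphisms $\BZ \to \ul{H}^1$ (via $1 \mapsto [j\alpha]_{G_0,n}$) and $\BZ_p \to \ul{H}^1$ (via the $\ul{\BZ_p}$-module structure applied to $[j\alpha]_{G_0,n}^{q^n}$), observes that they kill the image of $\BZ \to \BZ \oplus \BZ_p$, $m \mapsto (q^n m, -m)$, and descends to the cokernel $\BZ_p$. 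Your version instead builds the map piecewise on the cosets $a + q^n\BZ_p$ at the cochain level and checks the homomorphism property by hand.

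The one step you gloss over is the final assertion that ``this continuous group homomorphism automatically upgrades to a morphism of solid abelian groups.'' This is not automatic and is where the paper's cokernel formulation earns its keep. Your cochain-level map $\psi\colon \BZ_p \to Z^1(G_0, \CO^\times(\Omega_n))$, $\lambda \mapsto \eta^a\xi^\mu$, is continuous but is \emph{not} a group homomorphism; it only becomes one after composing with the quotient to cohomology. In the condensed setting, $\ul{H}^1$ is the cokernel $\Ker(\ul{d^1})/\Im(\ul{d^0})$ in $\mathrm{Cond(Ab)}$, and for $\ul{\phi}\colon \ul{\BZ_p} \to \ul{H}^1$ to be a condensed group homomorphism you must verify additivity on $S$-valued points for all $S \in \mathrm{Prof}$, not merely on global sections (\Cref{Prop - Comparison condensed and continuous group cohomology} identifies only the underlying abelian group, not the condensed structure). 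This does work, but you need to observe explicitly that the defect
\begin{equation*}
	\psi(\lambda_1)\,\psi(\lambda_2)\,\psi(\lambda_1+\lambda_2)^{-1} \;=\; \big(\eta^{q^n}\xi^{-1}\big)^{\varepsilon(\lambda_1,\lambda_2)}
\end{equation*}
is a power of a \emph{fixed} coboundary $\eta^{q^n}\xi^{-1} = d^0(u)$ raised to a locally constant function $\varepsilon \colon \BZ_p^2 \to \{0,1\}$, so that for $f,g \in C(S,\BZ_p)$ the map $s \mapsto d^0\big(u^{\varepsilon(f(s),g(s))}\big)$ is continuous into $\CO^\times(\Omega_n)$ and hence lands in $\Im(\ul{d^0})(S)$. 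With that observation spelled out, your construction is sound; the paper's pushout argument simply sidesteps this verification entirely by never leaving the category of solid abelian groups.
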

\begin{proof}
	First, there is a homomorphism $\BZ \ra \ul{H}^1 \big( G_0, \CO^\times(\Omega_n) \big)$ of solid abelian groups induced by the map $1 \mto [j \alpha]_{G_0,n}$ on global sections, i.e.\ for the underlying abelian groups. 
	Because of \Cref{Lemma - Power of certain element is principal unit group cohomology class} and \Cref{Lemma - Zp-module structure on group cohomology of principal units}, we also have a homomorphism 
	\begin{equation*}
		\BZ_p \lra \ul{H}^1 \big( G_0, \CO^{\times \times}(\Omega_n) \big) \longhookrightarrow \ul{H}^1 \big( G_0, \CO^\times(\Omega_n) \big)
	\end{equation*}
	of solid abelian groups defined by mapping $1 \mto [j\alpha]_{G_0,n}^{q^n}$ on global sections.

	The cokernel of $\BZ \ra \BZ \oplus \BZ_p$, $m \mto (q^n m, -m)$, is given by the homomorphism $\BZ \oplus \BZ_p \ra \BZ_p$, $(k,\lambda) \mto k + q^n \lambda$, (on the level of topological abelian groups and of condensed abelian groups by \Cref{Lemma - Strictly exact sequence gives exact sequence of condensed modules}).
	Therefore, the above maps together induce the sought homomorphism
	\begin{equation*}
		\BZ_p \lra \ul{H}^1 \big( G_0, \CO^\times(\Omega_n) \big) \,, \qquad\text{with}\qquad \lambda = k + q^n \mu \lto [j\alpha]_{G_0,n}^k \, [j\alpha]_{G_0,n}^{q^n \mu} ,
	\end{equation*}
	where $k \in \BZ$ and $\mu \in \BZ_p$.
	Since $\widetilde{P_{n,\ast}}[j\alpha]_{G_0,n} = \widetilde{P_{n, \ast}}[j \alpha_n^{-q^n}]  = 1 - q^{2n} \mod ( q^n(q+1) )$, the last claim is obvious.	
\end{proof}

\begin{corollary}\label{Cor - Zp-family in group cohomology}
	There exists a homomorphism of solid abelian groups
	\begin{equation*}
		\BZ_p \lra \ul{H}^1 \big( G_0 , \CO^\times(\Omega) \big) \qquad\text{with}\qquad 1 \lto [ j \alpha] 
	\end{equation*}
	on the underlying abelian groups, which is a section to 
	\begin{equation*} 
		\ul{H}^1 \big( G_0, \CO^\times(\Omega) \big) \overset{\widetilde{P_{\ast}}}{\lra} \BZ_p \times \BZ/ (q+1)\,\BZ \longtwoheadrightarrow \BZ_p .
	\end{equation*}
\end{corollary}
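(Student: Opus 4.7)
The plan is to obtain the sought homomorphism by passing to the inverse limit over $n$ of the homomorphisms constructed in \Cref{Prop - Zp-family in group cohomology of affinoid}. As a preliminary I would verify that the restriction homomorphisms $\CO^{\times\times}(\Omega_{n+1}) \ra \CO^{\times\times}(\Omega_n)$ are $\BZ_p$-linear; this is immediate from the binomial series \eqref{Eq - Zp-module structure} together with the fact that restriction is norm-decreasing. The argument of \Cref{Lemma - Zp-module structure on group cohomology of principal units} then shows that the induced restriction maps on $\ul{H}^1 \big( G_0, \CO^{\times\times}(\Omega_n) \big)$ are $\BZ_p$-linear as well.

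Next I would check that the homomorphisms $\BZ_p \ra \ul{H}^1 \big( G_0, \CO^\times(\Omega_n) \big)$ from \Cref{Prop - Zp-family in group cohomology of affinoid} are compatible under restriction. Unravelling the construction, an element $\lambda \in \BZ_p$ written as $\lambda = k + q^n \mu$ is sent to $[j\alpha]_{G_0,n}^k \cdot [j\alpha]_{G_0,n}^{q^n\mu}$, with the second factor interpreted via the $\BZ_p$-module structure on principal-units cohomology (valid thanks to \Cref{Lemma - Power of certain element is principal unit group cohomology class}). Rewriting the same $\lambda$ at level $n+1$, using $\BZ_p$-linearity of restriction together with the fact that $[j\alpha]_{G_0,n+1}$ restricts to $[j\alpha]_{G_0,n}$, one verifies that the two descriptions agree after restriction. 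Having established compatibility, I would invoke that the canonical map $\ul{H}^1 \big( G_0, \CO^\times(\Omega) \big) \xrightarrow{\sim} \varprojlim_n \ul{H}^1 \big( G_0, \CO^\times(\Omega_n) \big)$ is an isomorphism, as established in the proof of \Cref{Thm - Left exact sequences for group cohomology} via \Cref{Lemma - Group cohomology and inverse system} and the acyclicity of the inverse system $\big(\CO^\times(\Omega_n)\big)_n$. Passing to the inverse limit then yields a homomorphism $\BZ_p \ra \ul{H}^1 \big( G_0, \CO^\times(\Omega) \big)$ which sends $1$ to $[j\alpha]$ on global sections.

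For the section property I would use the right-hand square of \eqref{Eq - Commutative diagram for isomorphisms for group cohomology of currents} together with \eqref{Eq - Isomorphism with p-adic numbers}: the composition of $\widetilde{P_\ast}$ with the projection onto $\BZ_p$ corresponds under $\varprojlim$ to the compatible system of compositions of $\widetilde{P_{n,\ast}}$ with $\BZ/q^n(q+1)\,\BZ \twoheadrightarrow \BZ/q^n\,\BZ$. By the last assertion of \Cref{Prop - Zp-family in group cohomology of affinoid}, each such composition is the canonical projection $\BZ_p \twoheadrightarrow \BZ/q^n\,\BZ$, so passing to the limit yields the identity on $\BZ_p$. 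I expect the only genuinely delicate point to be the bookkeeping in the middle step, where one must carefully track how the $\BZ_p$-module structure on principal units interacts with restriction and with the two-parameter decomposition $\lambda = k + q^n\mu$; everything else is a formal consequence of the affinoid-level statement.
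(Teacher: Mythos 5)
Your proposal is correct and takes essentially the same approach as the paper: the paper's proof simply says to pass to the inverse limit of the homomorphisms from \Cref{Prop - Zp-family in group cohomology of affinoid} and deduce the section property from $\widetilde{P_\ast} = \varprojlim_n \widetilde{P_{n,\ast}}$. You spell out the compatibility check (which the paper leaves implicit), and your bookkeeping with the decomposition $\lambda = k + q^n\mu$ and the $\BZ_p$-linearity of the restriction maps is sound.
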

\begin{proof}
	We take the inverse limit of the homomorphisms constructed in \Cref{Prop - Zp-family in group cohomology of affinoid}.
	The second assertion follows since $\widetilde{P_\ast} = \varprojlim_{n\in \BN} \widetilde{P_{n,\ast}}$, see \Cref{Thm - Left exact sequences for group cohomology}.
\end{proof}

\begin{theorem}\label{Thm - Group cohomology for G_0}
	There is an isomorphism of solid abelian groups
	\begin{align*}
		\BZ_p \,\oplus\, \BZ/ (q^2- 1)\, \BZ \,\oplus\, \ul{\Hom} \big( G_0, \CO_K^{\times \times} \big) &\overset{\sim}{\lra} \ul{H}^1 \big( G_0 , \CO^\times(\Omega) \big)	  \qquad\text{with} \\
		 (\lambda , k , \psi ) &\lto		[ j \alpha]^\lambda \, [\alpha]^k \, \psi 
	\end{align*}
	on the underlying abelian groups.
\end{theorem}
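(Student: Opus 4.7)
The strategy is to build a homomorphism $\Phi$ of solid abelian groups realising the prescribed map and then to deduce that it is an isomorphism via the five-lemma, by comparing an explicit short exact sequence on the source with the lower short exact sequence of Theorem~\ref{Thm - Left exact sequences for group cohomology}. I assemble $\Phi$ from three pieces: the $\BZ_p$-component supplied by Corollary~\ref{Cor - Zp-family in group cohomology} (whose underlying map sends $1 \lto [j\alpha]$); the map $1 \lto [\alpha]$ from $\BZ/(q^2-1)\BZ$, well defined on solid abelian groups because the source is discrete and $[\alpha]$ has order dividing $q^2-1$ (Definition~\ref{Def - Certain torsion cocycle}); and the canonical map $\ul{\Hom}(G_0, \CO_K^{\times\times}) \longhookrightarrow \ul{\Hom}(G_0, K^\times) \lra \ul{H}^1\big(G_0, \CO^\times(\Omega)\big)$ provided by Lemma~\ref{Lemma - Characters of quotient by principal units} and Theorem~\ref{Thm - Left exact sequences for group cohomology}.

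Using $\widetilde{P_\ast}[j] = (1,1 \bmod (q+1))$, $\widetilde{P_\ast}[\alpha] = (0,-1 \bmod (q+1))$ (from Theorem~\ref{Thm - Left exact sequences for group cohomology} and Definition~\ref{Def - Certain torsion cocycle}), and the fact that the third summand lies in $\ker \widetilde{P_\ast}$, one computes that $\widetilde{P_\ast} \circ \Phi$ agrees with the projection
\begin{equation*}
\pi \colon \BZ_p \oplus \BZ/(q^2-1)\BZ \oplus \ul{\Hom}\big(G_0, \CO_K^{\times\times}\big) \longtwoheadrightarrow \BZ_p \oplus \BZ/(q+1)\BZ, \qquad (\lambda, k, \psi) \lto \big(\lambda,\, -k \bmod (q+1)\big).
\end{equation*}
The kernel of $\pi$ is canonically $\BZ/(q-1)\BZ \oplus \ul{\Hom}(G_0, \CO_K^{\times\times})$, via the identification $m \lto (q+1)m$ of $\BZ/(q-1)\BZ$ with $(q+1)\BZ/(q^2-1)\BZ$. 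On this kernel $\Phi$ sends $(m, \psi)$ to $[\alpha]^{(q+1)m}\,\psi = \widehat{\det}{}^{-m}\,\psi$ by the relation $[\alpha]^{q+1} = \widehat{\det}{}^{-1}$ from Definition~\ref{Def - Certain torsion cocycle}. In view of the decomposition $\ul{\Hom}(G_0, K^\times) \cong \ul{\Hom}(G_0, \CO_K^{\times\times}) \oplus \BZ/(q-1)\BZ\cdot\widehat{\det}$ of Lemma~\ref{Lemma - Characters of quotient by principal units} and Lemma~\ref{Lemma - Prime to p torsion characters}, this restriction is visibly an isomorphism of solid abelian groups onto $\ker \widetilde{P_\ast}$.

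Consequently, $\Phi$ fits into a commutative diagram of short exact sequences of solid abelian groups whose left and right vertical maps are isomorphisms; the five-lemma in the abelian category of condensed abelian groups then yields that $\Phi$ itself is an isomorphism, which is the claim. The only substantive obstacle is the bookkeeping of the torsion classes $[\alpha]$ and $\widehat{\det}$ and confirming that each ingredient of $\Phi$ is a morphism of condensed abelian groups (and not merely of the underlying abelian groups) — both supplied directly by the preceding sections.
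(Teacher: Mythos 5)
Your proof is correct, and it takes a modest but genuine variant of the route in the paper. The paper's argument exhibits the two ``non-character'' pieces as a direct summand $A$: it uses the retraction $\rho_z$ from \eqref{Eq - Evaluation at quadratic point} to split off $[\alpha]^{\BZ/(q^2-1)\BZ}$, the first component of $\widetilde{P_\ast}$ together with \Cref{Cor - Zp-family in group cohomology} to split off $[j\alpha]^{\BZ_p}$, and then computes the quotient $\ul{H}^1\big(G_0,\CO^\times(\Omega)\big)/A$ by pushing the short exact sequence of \Cref{Thm - Left exact sequences for group cohomology} through $A$ and invoking \Cref{Lemma - Characters of quotient by principal units} to identify it with $\ul{\Hom}\big(G_0,\CO_K^{\times\times}\big)$. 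You instead compare the full short exact sequence from \Cref{Thm - Left exact sequences for group cohomology} against an explicit short exact sequence on the source and apply the five lemma (which is valid in the abelian category of condensed, or solid, abelian groups): after checking $\widetilde{P_\ast}\circ\Phi=\pi$, the only substantive point is that $\Phi$ restricted to $\Ker\pi \cong \BZ/(q-1)\,\BZ\oplus\ul{\Hom}\big(G_0,\CO_K^{\times\times}\big)$ is an isomorphism onto $\ul{\Hom}(G_0,K^\times)$, and this follows from $[\alpha]^{q+1}=\widehat{\det}{}^{-1}$ together with \Cref{Lemma - Characters of quotient by principal units} and \Cref{Lemma - Prime to p torsion characters}~(ii). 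Your packaging has the minor advantage of not invoking $\rho_z$ at this stage and of making the kernel comparison a purely formal step rather than computing $A$ and $\ul{H}^1/A$ separately; of course $\rho_z$ is still needed upstream, in \Cref{Prop - Prime to p torsion of group cohomology for affinoid} and \Cref{Def - Certain torsion cocycle}, to produce $[\alpha]$ and the identities $\widetilde{P_\ast}[\alpha]=(0,-1)$ and $[\alpha]^{q+1}=\widehat{\det}{}^{-1}$ that your diagram chase relies on. One small remark: your closing sentence that the solidity checks are ``supplied directly by the preceding sections'' is accurate but worth spelling out in a final write-up — the $\BZ/(q^2-1)\,\BZ$ piece works because the source is a discrete condensed abelian group, so a morphism out of it is determined by the image of the generator in $\ul{H}^1(\ast)$ being killed by $q^2-1$, which is exactly what \Cref{Def - Certain torsion cocycle} provides.
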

\begin{proof}
	Let $[z:1] $ be a $K(z)$-valued point of $ \Omega_0$ as in \Cref{Set - Unramified quadratic extension valued point}.
	By \Cref{Prop - Prime to p torsion of group cohomology for affinoid} the mapping $\sigma_{-1} \mto [\alpha]$ induces a homomorphism of solid abelian groups
	\begin{equation*} 
		\BZ / (q^2 -1) \, \BZ \cong \ul{\Hom} \big( \mu_{q^2 - 1}(L), K(z)^\times \big) \lra \ul{H}^1 \big( G_0, \CO^\times (\Omega) \big) .
	\end{equation*}
	We write $[\alpha]^{\BZ / (q^2 -1) \, \BZ}$ for its image.
	Moreover, the homomorphisms $\rho_{z,n}$ from \eqref{Eq - Evaluation at quadratic point} for varying $n\in \BN$ are compatible and thus yield $\rho_z \colon \ul{H}^1 \big( G_0, \CO^\times (\Omega) \big) \ra \ul{\Hom} \big( \mu_{q^2 - 1}(L), K(z)^\times \big)$.
	The above homomorphism is a section to $\rho_z$ and therefore $[\alpha]^{\BZ / (q^2 -1) \, \BZ}$ is a direct summand of $\ul{H}^1 \big( G_0, \CO^\times (\Omega) \big)$.
	Likewise, the image $[j\alpha]^{\BZ_p}$ of the homomorphism from \Cref{Cor - Zp-family in group cohomology} is a direct summand as well.
	We abbreviate the sum of these two direct summands by $A$.

	Now, recall the second short exact sequence of solid abelian groups from \eqref{Eq - Short exact sequences for group cohomology of G^0 and G_0}.
	We obtain from it the short exact sequence
	\begin{align*}
		1 \lra \ul{\Hom} ( G_0, K^\times ) \Big/ \ul{\Hom} ( G_0, K^\times ) \cap A
			 \lra &\, \, \ul{H}^1 \big( G_0, \CO^\times (\Omega) \big) \Big/ A \\
			 &\overset{\widetilde{P_\ast}}{\lra} \big( \BZ_p \oplus \BZ/(q+1)\, \BZ \big) \Big/ \widetilde{P_\ast}(A) 
			 \lra 0 .
	\end{align*}
	Since $\widetilde{P_\ast}[\alpha] = \big( 0, -1 \mod (q+1) \big)$ and $\widetilde{P_\ast}[j\alpha] = \big( 1, 0 \mod (q+1) \big)$, we see that the last term vanishes.
	Moreover, we have $\ul{\Hom} ( G_0, K^\times ) \cap A = \ul{\Hom} (G_0, K^\times )[p']$ because of \Cref{Lemma - Characters of quotient by principal units} and $[\alpha]^{q+1} = \widehat{\det}{}^{-1}$.
	Hence the above short exact sequence simplifies to an isomorphism 
	\begin{equation*}
		\ul{\Hom} \big( G_0, \CO_K^{\times\times} \big) \overset{\sim}{\lra} \ul{H}^1 \big( G_0, \CO^\times (\Omega) \big) \Big/ A  
	\end{equation*}
	of solid abelian groups.
	In total, this proves the claimed decomposition.
\end{proof}

\begin{remark}
	For $q>2$, every $\chi \in \Hom( G_0, K^\times) $ factors over $\det\colon G_0 \ra F^\times$ so that $\Hom \big( G_0, \CO_K^{\times \times} \big) \cong \Hom \big( \CO_F^{\times \times}, \CO_K^{\times \times} \big)$ in this case, see \cite[Ch.\ 16, Thm.\ 1.7 (iii)]{Karpilovsky93GrpRepVol2}. 
\end{remark}

\section{Lifting of Cohomology Classes}

\subsection{Prime-to-$p$-Torsion Classes for $G^0$}

Recall that $G^0$ is equal to the free amalgamated sum $G_0 \ast_I {}^\conjelt G_0$ as an abstract group \cite[II.1.4, Thm.\ 3]{Serre80Trees}.
Since $G_0 \subset G^0$ is an open subgroup, it follows that $G^0$ is the pushout of $G_0$ and ${}^\conjelt G_0$ along $I$ in the category of topological groups as well. 
\Cref{Prop - Condensation of pushout} therefore implies the equality $\ul{G^0} = \ul{G_0} \ast_\ul{I} \ul{ {}^\conjelt G_0}$ for the associated condensed groups.

We may thus apply \Cref{Cor - Mayer-Vietoris sequence} to the short strictly exact sequence of $G^0$-modules induced by the Van der Put transform $P$.
We obtain the commutative diagram
\begin{equation}\label{Eq - Diagram induced by Mayer-Vietoris}
	\begin{tikzcd}[row sep= scriptsize]
				& 1 \ar[d]	& 1 \ar[d]	& 1 \ar[d]	& \\
			1 \ar[r]	& \ul{\Hom} ( {G^0} , K^\times ) \ar[r]\ar[d] 	& \ul{H}^1 \big( {G^0} , \CO^\times (\Omega) \big) \ar[r, "P_\ast"] \ar[d] 	& \ul{H}^1 \big({G^0}, \Cur(\bE, \BZ) \big) \ar[d] & \\
				&\ul{\Hom} ( {G_0} , K^\times )& \ul{H}^1 \big({G_0} , \CO^\times (\Omega) \big)& \ul{H}^1 \big( {G_0}, \Cur(\bE, \BZ) \big)& \\[-19pt]
			1 \ar[r, shorten >= 28pt] &\oplus \ar[r, shorten <= 28pt, shorten >= 32pt] & \oplus \ar[r, shorten <= 32pt, shorten >= 34pt] & \oplus \ar[r,shorten <= 34] & 0 \\[-19pt]
			&\ul{\Hom} (  {{}^\conjelt G_0} , K^\times ) \ar[d, "\Xi"]& \ul{H}^1 \big( {{}^\conjelt G_0} , \CO^\times (\Omega) \big) \ar[d]& \ul{H}^1 \big( {{}^\conjelt G_0}, \Cur(\bE, \BZ) \big) \ar[d]& \\
			1 \ar[r]	& \ul{\Hom} (  {I} , K^\times ) \ar[r]	& \ul{H}^1 \big( {I} , \CO^\times (\Omega) \big) \ar[r] 	& \ul{H}^1 \big( {I}, \Cur(\bE, \BZ) \big)  & .
	\end{tikzcd}
\end{equation}

\begin{lemma}
	All rows and columns of \eqref{Eq - Diagram induced by Mayer-Vietoris} are exact.
\end{lemma}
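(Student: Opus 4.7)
For the rows I would identify them as the initial segments of the long exact sequences of condensed group cohomology attached to the short strictly exact sequence
\begin{equation*}
	1 \lra K^\times \lra \CO^\times(\Omega) \overset{P}{\lra} \Cur(\bE, \BZ) \lra 0
\end{equation*}
from \Cref{Cor - Solid Van der Put sequence}, applied to $G^0$, $G_0$, ${}^\conjelt G_0$ and $I$ respectively, using $\ul{H}^1(H, K^\times) = \ul{\Hom}(H, K^\times)$ since $K^\times$ carries the trivial action. The top row is then the content of the first short exact sequence of \Cref{Thm - Left exact sequences for group cohomology}, whose left exactness uses $\ul{H}^0(G^0, \Cur(\bE, \BZ)) = 0$ from \Cref{Prop - Cohomology of currents}. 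The middle row I would obtain as the direct sum of the second sequence in \Cref{Thm - Left exact sequences for group cohomology} with its ${}^\conjelt G_0$-analogue, transported along $\conjelt^\ast$ (\Cref{Def - Conjugation action on cohomology}). The bottom row is the analogous left-exact initial segment for $I$, using $\ul{H}^0(I, \Cur(\bE, \BZ)) = 0$ from \Cref{Cor - Cohomology of currents for Iwahori}.

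For the columns the plan is to apply the Mayer--Vietoris long exact sequence of \Cref{Thm - Mayer-Vietoris sequence} for the decomposition $G^0 = G_0 \ast_I {}^\conjelt G_0$, successively with coefficients $M = K^\times$, $\CO^\times(\Omega)$ and $\Cur(\bE, \BZ)$. Exactness at the middle (direct-sum) term of each column is then immediate from the Mayer--Vietoris sequence. For the top injectivity it suffices to check that the preceding Mayer--Vietoris connecting morphism $\ul{H}^0(I, M) \to \ul{H}^1(G^0, M)$ vanishes; by exactness one step earlier, this in turn reduces to surjectivity of the restriction-difference map $\ul{H}^0(G_0, M) \oplus \ul{H}^0({}^\conjelt G_0, M) \to \ul{H}^0(I, M)$.

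I would then verify this surjectivity case by case. For $M = K^\times$ the action is trivial, all three $\ul{H}^0$-terms equal $K^\times$, and the map is $(a,b) \mto a b^{-1}$, which is surjective. For $M = \CO^\times(\Omega)$, \Cref{Lemma - Invariant function is constant} identifies each $\ul{H}^0$ with $K^\times$ (since $I$, and hence each of the three groups, contains a suitable sequence of unipotent translations), and the same argument applies. For $M = \Cur(\bE, \BZ)$ all three $\ul{H}^0$-terms already vanish by \Cref{Cor - Cohomology of currents for maximal compact subgroup} (applied to $G_0$ and, via $\conjelt^\ast$, to ${}^\conjelt G_0$) and \Cref{Cor - Cohomology of currents for Iwahori}, so the required surjectivity is automatic. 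No serious obstacle is anticipated: the lemma is a routine assembly of the long exact sequences together with the $\ul{H}^0$-vanishing results for $\Cur(\bE, \BZ)$ already established.
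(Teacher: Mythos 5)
Your proof is correct and follows essentially the same strategy as the paper: the rows are the initial segments of the long exact sequences attached to the Van der Put short exact sequence for $G^0$, $G_0$, ${}^\conjelt G_0$ and $I$, and the columns follow from the Mayer--Vietoris sequence by observing that the preceding connecting morphism out of $\ul{H}^0(I,M)$ vanishes, which you correctly reduce to surjectivity of the restriction-difference map on $\ul{H}^0$'s and verify case by case (the paper does this for $\CO^\times(\Omega)$ using \Cref{Lemma - Invariant function is constant} and for $\Cur(\bE,\BZ)$ via \Cref{Cor - Cohomology of currents for Iwahori}, exactly as you propose).
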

\begin{proof}
	The exactness of the first two rows is part of \Cref{Thm - Left exact sequences for group cohomology} (resp.\ is so after applying $\conjelt^\ast$ to pass from $G_0$ to ${}^\conjelt G_0$).
	The last row is extracted from the long exact sequence associated to the Van der Put transform using $\ul{H}^0 \big( I, \Cur ( \bE, \BZ )\big) = \{0\}$ from \Cref{Cor - Cohomology of currents for Iwahori} analogously to before.

	Concerning the columns, the exact sequence from \Cref{Cor - Mayer-Vietoris sequence} for $\CO^\times(\Omega)$ reads as follows
	\begin{align*}
		1 \lra \ul{H}^0 \big( G^0 , \CO^\times(\Omega) \big) \lra \ul{H}^0 \big( G_0 , &\,\CO^\times(\Omega) \big)  \oplus \ul{H}^0 \big( {}^\conjelt G_0 , \CO^\times(\Omega) \big) \\
			&\overset{\Lambda}{\lra} \ul{H}^0 \big( I, \CO^\times(\Omega) \big) \lra \ul{H}^1 \big( G^0 , \CO^\times(\Omega) \big) \lra \ldots .
	\end{align*}
	Then \Cref{Lemma - Invariant function is constant} implies that $\Lambda$ is equal to (the condensation of) the map $K^\times \oplus K^\times \ra K^\times$, $(x,y) \mto xy^{-1}$, and therefore an epimorphism.
	This shows the exactness of the middle and the first column.
	For the last column we again use that $\ul{H}^0 \big( I, \Cur (\bE, \BZ )\big) = \{0\}$.
\end{proof}

We now consider the short strictly exact sequence of topological $I$-modules induced from $P'_n$.
Via \Cref{Prop - Cohomology of currents on Iwahori finite subtree} we obtain from the associated long exact sequence of condensed group cohomology
\begin{equation}\label{Eq - Exact sequence for Iwahori on affinoid}
	\BZ\, \psi_n \overset{ \partial }{\lra} \ul{H}^1 \big( {I}, K^\times \CO^{\times \times}(\Omega'_n ) \big) \overset{\iota_\ast}{\lra} \ul{H}^1 \big({I}, \CO^{\times }(\Omega'_n ) \big) \overset{P'_{n,\ast}}{\lra} \ul{H}^1 \big( {I}, \Cur(\bE'_{n+1}, \BZ)  \big) .
\end{equation}

\begin{lemma}\label{Lemma - Image of edge map}
	The image of $\partial$ is contained in the condensed subgroup $\ul{H}^1 \big( {I}, \CO^{\times \times}(\Omega'_n ) \big)$ of $\ul{H}^1 \big( {I},K^\times \CO^{\times \times}(\Omega'_n ) \big)$.
\end{lemma}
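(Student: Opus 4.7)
By \Cref{Prop - Decomposition of principal units and constants}, there is a direct sum decomposition
\begin{equation*}
	\ul{H}^1 \big( I , K^\times \CO^{\times\times}(\Omega'_n) \big) \cong \ul{H}^1 \big( I , \CO^{\times\times}(\Omega'_n) \big) \oplus \ul{\Hom}(I, K^\times)[p'] ,
\end{equation*}
and the claim of the lemma is equivalent to the vanishing of the composition of $\partial$ with the projection onto the character summand. Identifying the latter with $\ul{\Hom}(I, K^\times/\CO_K^{\times\times})$ via \Cref{Lemma - Characters of quotient by principal units}, this composition is naturally identified with the connecting homomorphism for the short strictly exact sequence of topological $I$-modules
\begin{equation*}
	0 \lra K^\times/\CO_K^{\times\times} \lra \CO^\times(\Omega'_n)/\CO^{\times\times}(\Omega'_n) \lra \Cur(\bE'_{n+1}, \BZ) \lra 0
\end{equation*}
obtained by pushing the Van der Put sequence out along the continuous surjection $K^\times \CO^{\times\times}(\Omega'_n) \twoheadrightarrow K^\times/\CO_K^{\times\times}$. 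Hence the lemma reduces to exhibiting an $I$-invariant preimage of $\psi_n$ in $\CO^\times(\Omega'_n)/\CO^{\times\times}(\Omega'_n)$ --- equivalently, to constructing a function $f_n \in \CO^\times(\Omega'_n)$ with $P'_n(f_n) = \psi_n$ such that $g \cdot f_n / f_n \in \CO^{\times\times}(\Omega'_n)$ for every $g \in I$.

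To construct such an $f_n$, my plan is to take a suitable product of ratios of $F$-rational linear forms, whose Van der Put transforms were computed in \Cref{Example - Van der Put transform of quotient of linear forms}, with exponents chosen to reproduce the explicit coefficients $(-1)^i q^{n+1-i}$ appearing in the expression for $\psi_n$ from \Cref{Prop - Cohomology of currents on Iwahori finite subtree}. The antisymmetry $\conjelt^\ast \psi_n = -\psi_n$, together with the $\conjelt$-equivariance of $\partial$ and the relation $\conjelt^\ast \chi_{k,l} = \chi_{l,k}$ from \Cref{Lemma - Prime to p torsion characters} (iii), forces the associated character in $\Hom(I, K^\times/\CO_K^{\times\times})$ to lie in the one-parameter family $\{\chi_{k,-k}\}_{k \in \BZ/(q-1)\,\BZ}$, so the construction need only be arranged to kill the remaining parameter.

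The main obstacle lies in verifying, via the reductions of $f_n$ on each irreducible component $\bar{\Omega}_{v,\kappa} \cong \BP^1_\kappa$ of the special fibre of $\Omega'_n$, that the resulting character is in fact the trivial one. On the $I$-fixed components $\bar{\Omega}_{v_0}$ and $\bar{\Omega}_{v_1}$ the induced $I$-action factors through the Borel $B(\BF_q) \subset \GL_2(\BF_q)$, and one must compute the scaling characters by which $B(\BF_q)$ acts on the (essentially unique) rational function of prescribed divisor on each of these two components, and show that these cancel when taken together; the antisymmetric pairing $\1_{Ie_i} + \1_{Ie_{-i}}$ of coefficients in $\psi_n$ is what makes this cancellation possible. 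On the remaining components $I$ permutes them within their orbits, so no additional constraint arises beyond those already controlled by the $I$-invariance of $\psi_n$ itself.
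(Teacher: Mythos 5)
There is a genuine gap. Your reduction is correct — the lemma is equivalent to showing $\partial(\psi_n)$ projects trivially onto the character summand $\ul{\Hom}(I,K^\times)[p']$, and your $\conjelt$-equivariance observation correctly constrains the candidate character to the one-parameter family $\chi_{k,-k}$. But what follows is a plan, not a proof: you never actually construct the function $f_n$ with $P'_n(f_n)=\psi_n$, and the verification that the residual scaling characters on the components $\bar{\Omega}_{v_0}$ and $\bar{\Omega}_{v_1}$ cancel is deferred entirely. You name this as "the main obstacle" yourself. Without carrying out that computation (which, to be done uniformly in $n$, would be at least as involved as the argument in Proposition \ref{Prop - Shape of prime to p torsion class restricted to affinoid over edge}), the lemma is not established.

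The route is also genuinely different from the paper's, and you miss its key leverage. The exact sequence \eqref{Eq - Exact sequence for Iwahori on affinoid} already gives $\iota_\ast(\partial(\psi_n)) = 1$ for free, since $\partial$ and $\iota_\ast$ are consecutive maps in the long exact sequence for the Van der Put transform. The paper exploits this together with the commutative diagram built from the splitting of Proposition \ref{Prop - Decomposition of principal units and constants} and the injectivity statement of Lemma \ref{Lemma - Embedding of characters} to deduce $\pr(\partial(\psi_n)) = 1$ directly, with no explicit lift of $\psi_n$ to $\CO^\times(\Omega'_n)$ required. Your proposal nowhere uses the vanishing of $\iota_\ast \circ \partial$; had you noticed it, you could likely have avoided the explicit construction entirely and arrived at the paper's much shorter argument. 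As it stands, your approach is a legitimate but far more laborious alternative, and the hard step is still open.
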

\begin{proof}
	We have $\Hom_{\mathrm{Cond(Ab)}} ( \BZ \, \psi_n , A ) \cong A\big(\{\psi_n\} \big) \cong A(\ast)$ for any condensed abelian group $A$.
	Therefore, the homomorphism $\partial$ is uniquely determined by the image of $\psi_n$ under its underlying homomorphism of abelian groups.

	We now consider the commutative diagram
	\begin{equation*}
	\begin{tikzcd}[row sep= scriptsize]
		H^1 \big( I , K^\times \CO^{\times \times}(\Omega'_n ) \big) \ar[r, "\iota_\ast"] \ar[d, two heads, "\pr"'] & H^1 \big( I , \CO^{\times }(\Omega'_n ) \big) \\
		H^1 \big( I , K / \CO^{\times \times}_K \big) \ar[r, "\sim"] & \Hom \big( I , K^{\times } \big)[p'] \ar[u, hook] \ar[ul, hook'] .
	\end{tikzcd}
	\end{equation*}
	Here, the vertical maps are the canonical projections respectively inclusions (see \Cref{Prop - Decomposition of principal units and constants} and \Cref{Lemma - Embedding of characters}) and the bottom isomorphism is the one from \Cref{Lemma - Characters of quotient by principal units}.
	Because of $\iota_\ast (\partial( \psi_n) ) = 1$, this diagram implies that $\pr(\partial(\psi_n))=1$.
	We conclude via the decomposition from \Cref{Prop - Decomposition of principal units and constants}.
\end{proof}

\begin{proposition}\label{Prop - Shape of prime to p torsion class restricted to affinoid over edge}
	There exists $c \in H^1 \big( I , \CO^{\times \times}(\Omega'_0 ) \big)$ such that in $H^1 \big( I, \CO^\times(\Omega'_0) \big)$ we have $[\alpha]_{I, 0} =\chi_{-1,0} \, \iota_\ast ( c )$.
\end{proposition}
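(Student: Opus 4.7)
The approach is to exploit the exact sequence \eqref{Eq - Exact sequence for Iwahori on affinoid} at level $n=0$ combined with the splitting from \Cref{Prop - Decomposition of principal units and constants}. The task splits into two parts: first show that $[\alpha]_{I, 0}$ lies in the image of $\iota_\ast$ (so that the decomposition applies), and then identify its character part as $\chi_{-1, 0}$.

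For the first part, note that characters in $\Hom(I, K^\times)[p']$ automatically have trivial image under $P'_{0, \ast}$ (their values lie in $K^\times \subset \Ker(P'_0)$), so it suffices to verify $P'_{0, \ast}([\alpha]_{I, 0}) = 0$ in $H^1(I, \Cur(\bE'_1, \BZ))$. By \Cref{Def - Certain torsion cocycle}, $[\alpha]_{G_0, 0}$ is represented by $\gamma_0^{-1} j^{-1}$, where $\gamma_0 \in \CO^{\times \times}(\Omega_0)$ lies in the kernel of $P'_0$; thus the computation reduces to understanding $P'_{0, \ast}([j]_{I, 0})$. By \Cref{Example - Van der Put transform of quotient of linear forms}, for a suitably chosen linear form $\ell$ the current $\varphi_g = P(j_\ell)(g)$ is supported on the geodesic $\CS_g$ between the ends $U$ and $g(U)$ in the tree. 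A combinatorial analysis of how these geodesics traverse $\bE'_1$ for $g \in I$ (using that upper-triangular elements fix $U$ while the remaining ones cross $e_0$ coherently), together with \Cref{Prop - Cohomology of currents on Iwahori finite subtree} and its explicit generator $\psi_0$, then exhibits the resulting cocycle as a coboundary.

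For the second part, once $[\alpha]_{I, 0}$ has been lifted to $H^1(I, K^\times \CO^{\times\times}(\Omega'_0))$, the splitting of \Cref{Prop - Decomposition of principal units and constants} yields a character part $\chi \in \Hom(I, K^\times)[p']$, well-defined by \Cref{Lemma - Image of edge map} (which places the kernel of $\iota_\ast$ in the principal-unit summand). To pin down $\chi = \chi_{-1, 0}$, I would use the evaluation map $\rho_z$ from \Cref{Set - Unramified quadratic extension valued point}: restricted to the scalar subgroup $\iota_z(\mu_{q-1}(F)) \subset I$, the class $[\alpha]_{I, 0}$ evaluates to $\sigma_{-1}|_{\mu_{q-1}(F)}$ by \Cref{Prop - Prime to p torsion of group cohomology for affinoid} and \Cref{Lemma - Image of j under evaluation at quadratic point}, which matches $\chi_{-1, 0}(aI_2) = \widehat{a}^{-1}$. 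The extension of this identification to non-scalar Iwahori elements uses the explicit shape of the cocycle $\beta_0$ (built from representatives of $G_0 / G_1 B_0 \cong \BP^1(\BF_q)$) and tracks the $I$-action on these cosets to single out the character $\chi_{-1, 0}$ among candidates with the same restriction to $\mu_{q-1}(F)$.

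The principal obstacle is Step~1, namely the verification $P'_{0, \ast}([\alpha]_{I, 0}) = 0$. This is a delicate combinatorial argument: one must describe the support of the Van der Put currents $\varphi_g$ on the $I$-stable subtree $\CT'_1$ for every $g \in I$, and then check that the resulting $1$-cocycle with values in $\Cur(\bE'_1, \BZ)$ is cohomologically trivial modulo $\BZ \psi_0$. Once this is in hand, the identification of the character in Step~2 is essentially bookkeeping.
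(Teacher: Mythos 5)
Your route — first show $[\alpha]_{I,0}$ lands in the image of $\iota_*$, then pin down the character part via evaluation at a quadratic point — is genuinely different from the paper's. The paper never argues via vanishing of $P'_{0,\ast}$; instead it works with the lift $[\alpha]_{G_0,1} = [\gamma_1^{-q}\,j^{-q^2}]$, replaces $[j]^q$ by an explicit cocycle $\theta$ built from $q$ particular linear forms $\ell_\zeta$, and carries out a direct supremum-norm estimate on $\Omega'_0$ showing $\theta(g)([z:1]) \equiv \chi_{-1,0}(g)^{-1} \bmod \CO^{\times\times}(\Omega'_0)$ for all $g\in I$. That one calculation delivers the proposition in a single stroke.

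However, your assessment of where the difficulty lies is backward, and there are real gaps. First, you need to work at level $n=1$, not $n=0$: the class $[\alpha]_{I,0}$ is obtained by restricting $[\alpha]_{G_0,1}\in H^1\big(G_0,\CO^\times(\Omega_1)\big)$ (since $\Omega'_0=\Omega_1\cap s\Omega_1\subset\Omega_1$; restriction from $\Omega_0$ goes the wrong way). At the correct level your "principal obstacle" — the vanishing $P'_{0,\ast}[\alpha]_{I,0}=0$ — is not a delicate combinatorial problem at all: $\gamma_1$ lands in $\CO^{\times\times}(\Omega_1)\subset\Ker(P'_0)$, so $P'_{0,\ast}[\alpha]_{I,0}=-q^2\,P'_{0,\ast}[j]_{I,0}$, and this is automatically zero because $\ul{H}^1\big(I,\Cur(\bE'_1,\BZ)\big)\cong\BZ/q\BZ$ is $q$-torsion while the multiplier is $q^2$. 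No geodesic analysis is needed. Second, and more seriously, Step 2 is not "essentially bookkeeping": the stabiliser of $[z:1]$ inside $I$ contains only the scalar matrices from $\mu_{q-1}(F)$ among its prime-to-$p$-torsion, and $\chi_{k,l}$ restricted to $\zeta I_2$ is $\widehat{\zeta}^{k+l}$. Thus $\rho_z$-evaluation only determines $k+l\equiv -1 \bmod (q-1)$, which does not distinguish $\chi_{-1,0}$ from, say, $\chi_{0,-1}$ or $\chi_{1,-2}$. Your proposed fix — "tracks the $I$-action on the cosets" — is precisely the nontrivial computation that must be written down, and once you do, it amounts to the same kind of reduction-mod-$\CO^{\times\times}(\Omega'_0)$ estimate that the paper carries out explicitly with $\theta$. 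As it stands the proposal sketches this as a minor afterthought and therefore leaves the actual content of the proposition unproved.
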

\begin{proof}
	Recall from the construction of $[\alpha]$ in \Cref{Prop - Prime to p torsion of group cohomology for affinoid} and \Cref{Def - Certain torsion cocycle} that its image $[\alpha]_{G_0,1} \in H^1 \big( G_0 , \CO^\times(\Omega_1) \big)$ is of the form $[\alpha]_{G_0,1} = \big[ \gamma_1^{-q} j^{-q^2}\big] $, where $\gamma_1 \colon G_0 \ra \CO^{\times \times} (\Omega_1)$ is a certain $1$-cocycle.

	Then $[\alpha]_{G_0,1}$ is mapped to $[\alpha]_{I, 0}$ under $H^1 \big( G_0 , \CO^\times(\Omega_1) \big) \ra H^1 \big( I , \CO^\times(\Omega'_0) \big)$.
	Since $[\gamma_1]$ is mapped into $\iota_\ast \big( H^1 \big( I, \CO^\times(\Omega'_0) \big) \big)$ and we have $\chi_{-1,0}^q = \chi_{-1,0}$, it suffices to find a continuous $1$-cocycle $\theta \colon G_0 \ra \CO^\times(\Omega_1)$ such that $[\theta] = [j]^{q}$ in $H^1 \big( G_0 , \CO^\times(\Omega_1) \big)$ and $\big( \chi_{-1,0} \, \theta  \big)(g)\res{\Omega'_0}$ is contained in $\CO^{\times \times}(\Omega'_0)$, for all $g\in I$.

	To this end, we consider the non-trivial, $F$-rational linear forms
	\begin{align*}
		\ell_\zeta \colon K^2 \ra K \,,\quad (x,y) \mto \unif \zeta x + y \,,\qquad \text{for $\zeta \in \mu_{q-1}(F)\cup \{ 0 \}$,}
	\end{align*}
	and define the $1$-cocycle
	\begin{equation*}
		\theta \colon G_0 \lra  \CO^\times (\Omega_1) \,,\quad g \lto \frac{g \cdot \ell_0}{\ell_0} \prod_{\zeta \in \mu_{q-1}(F)} \frac{g \cdot \ell_\zeta}{\ell_\zeta} .
	\end{equation*}
	Then $\theta$ certainly satisfies $[\theta] = [j_{\ell_0}] \prod_{\zeta \in \mu_{q-1}(F)} [j_{\ell_\zeta}] = [j]^q$.
	Furthermore, recall that under $[z:1] \mto z$ we can identify the affinoid subdomain $\Omega'_0 = \Omega_{e_0}$ with
	\begin{equation*}
		\left\{ z \in C \,\middle{|} \begin{array}{l}	\abs{z} \leq 1 \\
																					\forall \eta \in \big( \CO_F \! \setminus\! (\unif) \big) / (\unif) : \abs{z - \eta} \geq 1 \\
																					\forall \rho \in (\unif) / (\unif^2 ) : \abs{z-\rho} \geq \frac{1}{q}		\end{array}
		\right\} ,
	\end{equation*}
	see \cite[Sect.\ I.2.3]{BoutotCarayol91ThmCerednikDrinfeld} and \eqref{Eq - Identification for affinoid over edge}.
	In the following $\lVert \blank \rVert$ denotes the supremum norm of $ \Omega'_0$.

	For $[z:1] \in \Omega'_0$ and $g\in I$ with $g^{\ad} = \left( \begin{smallmatrix} a & b \\ \unif c & d \end{smallmatrix} \right)$, we have $g^{\ad} \cdot (z,1) = \big( az+b, \unif cz + d\big)$.
	Therefore, we compute that
	\begin{align*}
		\theta(g)([z:1]) 
			&= \frac{\unif c z +d }{1} \prod_{\zeta \in \mu_{q-1}(F)} \frac{\unif (\zeta a +c) z + \unif \zeta b + d}{ \unif \zeta z + 1} \\
			&= (\unif c z +d ) \prod_{\zeta \in \mu_{q-1}(F)} d \,\, \frac{ \unif \zeta \frac{a}{d} z + 1 + \unif \big( \frac{cz + \zeta b}{d} \big) }{\unif \zeta z + 1} .
	\end{align*}
	Because of $\lVert \unif c z \rVert \leq \frac{1}{q} \lVert z \rVert = \frac{1}{q}$, we have $d + \unif c z \equiv \widehat{d} \mod \CO^{\times \times}(\Omega'_0)$.
	Furthermore, for any $\zeta \in \mu_{q-1}(F)$ there exist unique $\zeta' \in \mu_{q-1}(F)$ and $x \in (\unif)$ such that $\zeta \frac{a}{d} = \zeta' (1 +x)$.
	Then
	\begin{equation}\label{Eq - Principal units term}
		\frac{ \unif \zeta \frac{a}{d} z + 1 + \unif \big( \frac{cz + \zeta b}{d} \big) }{\unif \zeta' z + 1}
			= \frac{ \unif \zeta'(1+x) z + 1 + \unif \big( \frac{cz + \zeta b}{d} \big) }{\unif \zeta' z + 1}
			= 1 + \frac{\unif \big( \zeta' x z + \frac{cz + \zeta b}{d} \big)}{\unif \zeta' z + 1} .
	\end{equation}
	Since $\lVert \unif \zeta' z \rVert = \frac{1}{q}$, it follows that $\lVert \unif \zeta' z +1 \rVert = \max \big( \lVert \unif \zeta' z \rVert , \lVert 1 \rVert \big) = 1$.
	We also have 
	\begin{equation*}
		\Big\lVert \, \unif \, \Big( \zeta' x z + \frac{cz + \zeta b}{d} \Big) \Big\rVert \leq \frac{1}{q} \max \bigg( \lVert \zeta' x z \rVert , \Big\lVert \frac{cz}{d} \Big\rVert , \Big\lVert \frac{\zeta b}{d} \Big\rVert \bigg) \leq \frac{1}{q}.
	\end{equation*}
	Therefore, \eqref{Eq - Principal units term} lies in $\CO^{\times \times}(\Omega'_0)$.
	Finally, $d^{q-1} \equiv 1 \mod (\unif)$ holds.
	In total, we conclude that $\theta(g)([z:1])  \equiv \widehat{d} = \chi_{-1,0}(g)^{-1} \mod \CO^{\times \times}(\Omega'_0)$ as claimed.
\end{proof}

\begin{proposition}\label{Prop - Lifting of prime-to-p torsion}
	There exists a unique torsion class in $H^1 \big( G^0 , \CO^\times (\Omega) \big)$ of order $(q^2 -1)$ that is mapped to $[\alpha] \in H^1 \big( G_0 , \CO^\times (\Omega) \big)$.
	We continue to denote this class by $[\alpha]$. 
	It moreover satisfies $\conjelt^\ast [\alpha] = [\alpha]^q $ with respect to the conjugation action of $\conjelt$ on $H^1 \big( G^0 , \CO^\times (\Omega) \big)$.
\end{proposition}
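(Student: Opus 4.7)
The strategy is to exploit the Mayer--Vietoris diagram~\eqref{Eq - Diagram induced by Mayer-Vietoris}: by exactness of the middle column, giving a class in $H^1(G^0, \CO^\times(\Omega))$ restricting to $[\alpha] \in H^1(G_0, \CO^\times(\Omega))$ is the same as producing $c' \in H^1({}^\conjelt G_0, \CO^\times(\Omega))$ with $c'|_I = [\alpha]|_I$ in $H^1(I, \CO^\times(\Omega))$. Guided by the sought formula, the natural candidate is $c' \defeq \conjelt^\ast [\alpha]^q$.

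The key step is to verify the compatibility $c'|_I = [\alpha]|_I$. Since $\CO^\times(\Omega) \cong \varprojlim_n \CO^\times(\Omega'_n)$ and the inverse systems of cohomology satisfy Mittag--Leffler (compare \Cref{Cor - Cohomology of currents for maximal compact subgroup} and the reasoning of \Cref{Thm - Left exact sequences for group cohomology}), it suffices to check equality at the affinoid level $\Omega'_0$. There \Cref{Prop - Shape of prime to p torsion class restricted to affinoid over edge} yields $[\alpha]|_{I,0} = \chi_{-1,0}\,\iota_\ast(c)$ for some $c \in H^1(I, \CO^{\times\times}(\Omega'_0))$. Applying $\conjelt^\ast$ and using $\conjelt^\ast \chi_{-1,0} = \chi_{0,-1}$ (\Cref{Lemma - Prime to p torsion characters}) together with the congruence $-q \equiv -1 \pmod{q-1}$, one finds $c'|_{I,0} = \chi_{0,-1}\,\iota_\ast(\conjelt^\ast c^q)$. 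Matching these two expressions in $H^1(I, \CO^\times(\Omega'_0))$ then reduces to exhibiting an explicit $I$-coboundary in $\CO^\times(\Omega'_0)$ that absorbs the character discrepancy $\chi_{-1,1}$, together with a principal-units identification exploiting $(\conjelt^\ast)^2 = \id$. A natural starting point is the coordinate function $z \in \CO^\times(\Omega'_0)$, since on the diagonal subgroup of $I$ the $1$-coboundary $g \mapsto (g \cdot z)/z$ reproduces $\chi_{-1,1}$ up to a controllable principal-units error that can be handled using the concrete description of $\Omega'_0$.

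Uniqueness of the lift among torsion classes of order $q^2-1$ follows from exactness of the diagram: any two lifts differ by a class coming (via Mayer--Vietoris) from $H^1({}^\conjelt G_0, \CO^\times(\Omega))$ whose restriction to $I$ is trivial, and \Cref{Lemma - Prime to p torsion characters} combined with \Cref{Prop - Prime to p torsion of group cohomology for affinoid} shows this kernel contains no nontrivial prime-to-$p$ torsion.

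Finally, the identity $\conjelt^\ast [\alpha] = [\alpha]^q$ in $H^1(G^0, \CO^\times(\Omega))$ is read off from the pair description: under the embedding $H^1(G^0, \blank) \hookrightarrow H^1(G_0, \blank) \oplus H^1({}^\conjelt G_0, \blank)$, the lift $[\alpha]$ corresponds to $([\alpha], \conjelt^\ast [\alpha]^q)$. Because $\conjelt$ exchanges $G_0$ with ${}^\conjelt G_0$ and $(\conjelt^\ast)^2 = \id$ (from centrality of $\conjelt^2 = \unif \cdot I_2$ acting trivially on $\Omega$), the involution $\conjelt^\ast$ on $H^1(G^0, \blank)$ acts on pairs by $(x, y) \mapsto (\conjelt^\ast y, \conjelt^\ast x)$. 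Hence $\conjelt^\ast [\alpha]$ has pair $([\alpha]^q, \conjelt^\ast [\alpha])$, which agrees with the componentwise $q$-th power $([\alpha]^q, \conjelt^\ast [\alpha]^{q^2}) = ([\alpha]^q, \conjelt^\ast [\alpha])$ using $[\alpha]^{q^2} = [\alpha]$. The main obstacle I expect is the explicit compatibility check on $I$ in the second paragraph: reconciling the differing character parts $\chi_{-1,0}$ and $\chi_{0,-1}$ inside $H^1(I, \CO^\times(\Omega'_0))$ requires a concrete coboundary construction leveraging the geometry of the affinoid $\Omega'_0$.
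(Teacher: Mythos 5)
Your candidate lift and high-level strategy coincide with the paper's: the second component $\conjelt^\ast[\alpha]^q$ equals $\widehat{\det}{}^{-1}\conjelt^\ast[\alpha]^{-1}$ (using $[\alpha]^{q+1}=\widehat{\det}{}^{-1}$ and $\conjelt$-invariance of $\det$), which is exactly the pair the paper feeds into the exactness of the middle column of \eqref{Eq - Diagram induced by Mayer-Vietoris}, and your deduction of $\conjelt^\ast[\alpha]=[\alpha]^q$ via the pair description and $(\conjelt^\ast)^2=\id$ is sound. However, the central compatibility check on $I$ has two genuine gaps. First, Mittag--Leffler only yields surjectivity of $H^1(I,\CO^\times(\Omega))\twoheadrightarrow\varprojlim_n H^1(I,\CO^\times(\Omega'_n))$; one cannot infer equality in $H^1(I,\CO^\times(\Omega))$ from equality at the single level $n=0$. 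The paper instead first shows (via the explicit description of $\widetilde{P_\ast}$ on $[\alpha]$ and $\conjelt^\ast[\alpha]$ coming from \Cref{Prop - Cohomology of currents}) that the discrepancy $[\alpha]|_I\,\widehat{\det}\,(\conjelt^\ast[\alpha])|_I$ lies in $\Ker(P_\ast)=\Hom(I,K^\times)$, notes it is prime-to-$p$ torsion, and only then invokes \Cref{Lemma - Embedding of characters} to say that vanishing at level $0$ forces vanishing. Checking at level $0$ works only after the reduction to a character.

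Second, the level-$0$ verification as you describe it would not succeed. The character discrepancy $\chi_{-1,1}$ is a non-trivial element of $\Hom(I,K^\times)[p']$ for $q>2$, and by \Cref{Lemma - Embedding of characters} it maps injectively into $H^1(I,\CO^\times(\Omega'_0))$; in particular it cannot be ``absorbed'' by an $I$-coboundary, since coboundaries are trivial in cohomology. What actually cancels it is the principal-units factor: one needs the identity $\conjelt^\ast c=c^{-1}$, which the paper extracts from $c^{q^2-1}=\partial(k\psi_0)$ together with $\conjelt^\ast\psi_0=-\psi_0$ and $\BZ_p$-linearity of $\partial$ and $\conjelt^\ast$ (\Cref{Lemma - Zp-module structure on group cohomology of principal units}, \Cref{Prop - Cohomology of currents on Iwahori finite subtree}); combined with the relation $\iota_\ast(c^{q+1})=\chi_{1,-1}$ that follows from $[\alpha]^{q+1}=\widehat{\det}{}^{-1}$, this makes $\iota_\ast(\conjelt^\ast c^q/c)=\iota_\ast(c^{-(q+1)})=\chi_{-1,1}$ exactly. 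The observation ``$(\conjelt^\ast)^2=\id$'' only says $\conjelt^\ast$ is an involution and does not imply $\conjelt^\ast c=c^{-1}$, and the coordinate function $z$ plays no role in the paper's argument. Finally, your uniqueness argument also needs care: the kernel of restriction to $G_0$ is not visibly torsion-free from the Mayer--Vietoris sequence alone, and the paper in fact does not spell out the uniqueness in the proof.
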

\begin{proof}
	Our argument for showing the existence of a lift of $[\alpha]$ is inspired by the proof of \cite[Cor.\ 4.4.5]{ArdakovWadsley23EquivLineBun}.
	In view of the exactness of the middle column of \eqref{Eq - Diagram induced by Mayer-Vietoris} it suffices to show that $\big( [\alpha] , \widehat{\det}{}^{-1} \, \conjelt^{\ast}[\alpha]^{-1} \big)$ lies in the kernel of 
	\begin{equation}\label{Eq - Sum of restriction to Iwahori map}
		H^1 \big( G_0 , \CO^\times (\Omega) \big) \oplus H^1 \big( {}^\conjelt G_0 , \CO^\times (\Omega) \big) \lra H^1 \big( I , \CO^\times (\Omega) \big) \,,\quad (c,c') \lto c\res{I} \, (c'\res{I})^{-1} .
	\end{equation}
	To do so, we first consider
	\begin{equation*}
		P_\ast \Big( [\alpha]_I \, \,\widehat{\det} \, (\conjelt^\ast[\alpha] )\res{I} \Big)
			= \Big( P_\ast[\alpha] \, \,\conjelt^\ast \big( P_\ast[\alpha] \big) \Big)\res{I} .
	\end{equation*}
	Since $\widetilde{P_{\ast}}[\alpha] = \big( 0, -1 \mod (q+1) \big) $ and $\conjelt^\ast \big( \widetilde{P_{\ast}}[\alpha] \big) = \big( 0, 1 \mod (q+1) \big) $ by \Cref{Prop - Cohomology of currents}, we conclude that $[\alpha]_I \, \,\widehat{\det}  \, (\conjelt^\ast[\alpha] )\res{I} $ lies in the kernel of $P_\ast$, i.e.\ in $\Hom(I, K^\times )$.

	In \Cref{Prop - Shape of prime to p torsion class restricted to affinoid over edge} we have seen that $[\alpha]_{I, 0} = \chi_{-1,0} \, \iota_\ast (c)$, for some $c \in H^1 \big( I, \CO^{\times \times}(\Omega'_0) \big)$. 
	As $[\alpha]_{I,0}$ is $(q^2-1)$-torsion, $c^{q^2 -1} $ is contained in $\Ker(\iota_\ast)$.
	Thus, via the exact sequence \eqref{Eq - Exact sequence for Iwahori on affinoid} there exists some $k\in \BZ$ such that $c^{q^2-1} = \partial( k \,\psi_0) \in H^1 \big( I, \CO^{\times \times}(\Omega'_0) \big)$, cf.\ \Cref{Lemma - Image of edge map}.
	Since $H^1 \big( I, \CO^{\times \times}(\Omega'_0) \big)$ is a $\BZ_p$-module and $\conjelt^\ast$ is $\BZ_p$-linear (see \Cref{Lemma - Zp-module structure on group cohomology of principal units}), it follows that
	\begin{equation*}
		\conjelt^\ast c= \conjelt^\ast \Big(  \partial( k \,\psi_0)^\frac{1}{q^2-1} \Big) = \partial \big( k \, \conjelt^\ast \psi_0  \big)^\frac{1}{q^2-1}  =  \partial ( -k \, \psi_0 )^\frac{1}{q^2-1}= c^{-1} .
	\end{equation*}
	Here we have used that $\conjelt^\ast \psi_0 = - \psi_0$, see \Cref{Prop - Cohomology of currents on Iwahori finite subtree}.
	We deduce that
	\begin{align*}
		\conjelt^\ast [ \alpha]_{I,0}
			= \conjelt^\ast (\chi_{-1,0}) \, \iota_\ast \big(  \conjelt^\ast (c )\big) 
			= \chi_{0,-1} \, \iota_\ast \big( c^{-1} \big)
			= \widehat{\det}{}^{-1} \, \iota_\ast \big( \chi_{1,0}\, c^{-1} \big)
			= \widehat{\det}{}^{-1}  \, [\alpha]_{I,0}^{-1}
	\end{align*}
	and hence indeed $[\alpha]_{I} \, \conjelt^\ast [ \alpha]_{I} = \widehat{\det}{}^{-1} $ by \Cref{Lemma - Embedding of characters}.

	Concerning the last assertion, we consider the commutative square
	\begin{equation*}
		\begin{tikzcd}[row sep= scriptsize]
			H^1 \big( G^0 , \CO^\times(\Omega) \big) \ar[r, "\conjelt^\ast"] \ar[d, "(\blank)\res{{}^\conjelt G_0}"'] & H^1 \big( G^0 , \CO^\times(\Omega) \big) \ar[d,"(\blank)\res{G_0}"] \\
			H^1 \big( {}^\conjelt G_0, \CO^\times(\Omega) \big) \ar[r, "\conjelt^\ast"]	& H^1 \big( G_0 , \CO^\times(\Omega) \big) .
		\end{tikzcd}
	\end{equation*}
	Because we have constructed the lift $[\alpha]$ as the pullback of $[\alpha]_{G_0}$ and $\widehat{\det}{}^{-1} \, \conjelt^\ast ( [\alpha]_{G_0} )^{-1}$ we have $[\alpha]_{{}^\conjelt G_0} =  \widehat{\det}{}^{-1} \, \conjelt^\ast([\alpha]_{G_0} )^{-1}$.
	Since $(\conjelt^\ast)^2 = \id$, the latter class is mapped to $\widehat{\det}{}^{-1} \, [\alpha]_{G_0}^{-1}$ under $\conjelt^\ast$.
	Because of $\widehat{\det}{}^{-1} = [\alpha]^{q+1}_{G_0}$, the commutativity of the above diagram implies that $(\conjelt^\ast[\alpha])\res{G_0} = [\alpha]^q_{G_0}$, and hence $\conjelt^\ast[\alpha] = [\alpha]^q$.	
\end{proof}

\begin{remark}
	Alternatively, to show that $[\alpha]$ lifts to $H^1 \big( G^0, \CO^\times(\Omega) \big)$ one can proceed as follows:
	Once it is established that $[\alpha]_I ( \conjelt^\ast [\alpha])\res{I}$ lies in $\Hom(I, K^\times)$, the fact that this element is $\conjelt^\ast$-invariant implies that $[\alpha]_I = \widehat{\det}{}^k ( \conjelt^\ast [\alpha]^{-1})\res{I}$, for some $k\in \BZ/(q-1)\,\BZ$, see \Cref{Lemma - Prime to p torsion characters} (iii).
	Then, one concludes via the exactness of the middle column of \eqref{Eq - Diagram induced by Mayer-Vietoris} again.

	Furthermore, it follows from a result of Taylor \cite[Cor.\ 7.4]{Taylor25CatLubinTateDrinfeldBun} that the canonical forgetful map
	\begin{equation*}
		\mathrm{PicCon}^{G^0}(\Omega)[p'] \overset{}{\lra} \Pic^{G^0}(\Omega)[p']
	\end{equation*}
	is an isomorphism, cf.\ \Cref{Rmk - Connection to Ardakov-Wadsley}.
	Hence, the assertion that $\conjelt^\ast [\alpha] = [\alpha]^q$ is equivalent to \cite[Thm.\ 4.4.11 (a)]{ArdakovWadsley23EquivLineBun}.
\end{remark}

\subsection{$G^0$-, $G^{(2)}$- and $\GL_2(F)$-equivariant Line Bundles}

We may apply the snake lemma to the middle and bottom row of \eqref{Eq - Diagram induced by Mayer-Vietoris}. 
It yields in particular a connecting homomorphism
\begin{equation}\label{Eq - Connecting homomorphism}
	\Delta \colon \ul{H}^1 \big( {G^0} , \Cur(\bE, \BZ) \big) \lra \Coker (\Xi)
\end{equation}
such that $\Ker(\Delta) = \Im(P_\ast)$.
To determine the precise image of $P_\ast$ in that diagram, we will therefore study the image of $\Delta$ more closely.

\begin{lemma}\label{Lemma - Image under connecting homomorphism}
	Under the connecting homomorphism $\Delta$ in \eqref{Eq - Connecting homomorphism}, the element $\delta\big( \1_{G^0 v_1} \big) \in  H^1 \big( G^0 , \Cur(\bE, \BZ) \big) $ is mapped to $\chi_{1,0} \mod \Im(\Xi)$.
\end{lemma}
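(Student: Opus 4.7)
The plan is to trace $c \defeq \delta(\1_{G^0 v_1})$ through the snake lemma for the middle and bottom rows of \eqref{Eq - Diagram induced by Mayer-Vietoris}. First I will identify the restrictions $c|_{G_0}$ and $c|_{\conjelt G_0}$ explicitly under the isomorphisms of \Cref{Cor - Cohomology of currents for maximal compact subgroup}. Under the $G_0$-identification $\ul{H}^1(G_0, \Cur(\bE,\BZ)) \cong \BZ_p \oplus \BZ/(q+1)\BZ$, the element $c|_{G_0}$ corresponds to $\big(\tfrac{1}{q-1}, 0\big)$. For the other restriction I will use the formula $\conjelt^\ast \delta(\1_{G^0 v_1}) = -\delta(\1_{G^0 v_0})$ (immediate from the $\conjelt^\ast$-action on $C(\bV, \BZ)$ and \Cref{Prop - Cohomology of currents}) to see that, under the analogous identification for $\conjelt G_0$, the restriction $c|_{\conjelt G_0}$ corresponds to $\big(\tfrac{1}{q-1}, -1\big)$.

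Next I will produce preimages under $P_\ast$. Since $q$ is a power of $p$, we have $(q-1) \in \BZ_p^\times$, so the $\BZ_p$-family of \Cref{Cor - Zp-family in group cohomology} yields a well-defined element $[j\alpha]^{1/(q-1)} \in \ul{H}^1(G_0, \CO^\times(\Omega))$ with $\widetilde{P_\ast}[j\alpha]^{1/(q-1)} = \big(\tfrac{1}{q-1}, 0\big)$. I set $\beta_1 \defeq [j\alpha]^{1/(q-1)}$ and, correspondingly, $\beta_2 \defeq \conjelt^\ast\big([j\alpha]^{1/(q-1)} \cdot [\alpha]\big) \in \ul{H}^1(\conjelt G_0, \CO^\times(\Omega))$; by construction, $\widetilde{P_\ast} \beta_2$ matches $c|_{\conjelt G_0}$. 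The snake lemma then yields $\Delta(c) \equiv \beta_1|_I \cdot (\beta_2|_I)^{-1} \mod \Im(\Xi)$ in $\Hom(I, K^\times) / \Im(\Xi)$.

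To evaluate the product on $I$ I will combine the relations $\conjelt^\ast[j] = [j]$, $\conjelt^\ast[\alpha] = [\alpha]^q$ from \Cref{Prop - Lifting of prime-to-p torsion}, and $[\alpha]^{q+1} = \widehat{\det}^{-1}$ from \Cref{Def - Certain torsion cocycle}. Formally unwinding, the $[j\alpha]^{1/(q-1)}$ factors cancel against their $\conjelt^\ast$-transforms up to a $(q-1)$-torsion character $\tau$ of $I$, leaving $\beta_1|_I \cdot (\beta_2|_I)^{-1} = [\alpha]|_I^{-(q+1)} \cdot \tau^{-1} = \chi_{1, 1} \cdot \tau^{-1}$. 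General constraints fix $\tau$ into the family $\chi_{k, 1-k}$: the $\conjelt^\ast$-anti-equivariance of $\Delta$ (arising from the sign introduced by swapping the two summands in the middle row when applying $\conjelt^\ast$) forces $\Delta(c) + \conjelt^\ast \Delta(c) \equiv 0 \mod \Im(\Xi)$, and $(q-1)\Delta(c) = 0$ since $(q-1)c = P_\ast[j\alpha] \in \Im(P_\ast) = \Ker(\Delta)$. To cut the remaining ambiguity I will appeal to the explicit description $[\alpha]|_{I, 0} = \chi_{-1, 0} \cdot \iota_\ast(c)$ of \Cref{Prop - Shape of prime to p torsion class restricted to affinoid over edge}, which pins down $\tau = \chi_{0, 1}$ and hence yields $\Delta(c) \equiv \chi_{1, 1} \cdot \chi_{0, -1} = \chi_{1, 0} \mod \Im(\Xi)$.

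The main obstacle I anticipate is the last identification. The element $\conjelt^\ast\big([j\alpha]^{1/(q-1)}\big)$ is a $(q-1)$-th root of $\conjelt^\ast[j\alpha]|_I = [j\alpha]|_I \cdot [\alpha]|_I^{q-1}$, and such roots are only unique up to characters of $I$ of order dividing $q-1$. The general symmetry arguments above narrow $\tau$ to $\chi_{k, 1-k}$ but do not distinguish $k = 0$ from the alternative $k = 1$ (which would yield $\chi_{0, 1}$); picking out $k=0$ seems to require either a careful bookkeeping of the $\BZ_p$-family restricted to $I$ at finite level (tracking $[j\alpha]^{q^n}|_{G_0, n}$ explicitly in $\CO^{\times\times}(\Omega_n)$ and comparing with its $\conjelt$-conjugate) or a direct computation at the level of $1$-cocycles on the Iwahori, exploiting the explicit form of $[\alpha]|_I$.
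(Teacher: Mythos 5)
Your plan follows the paper's proof almost exactly at the start (snake lemma, using $\beta_1 = [j\alpha]^{1/(q-1)}$ and $\beta_2 = \conjelt^\ast([j\alpha]^{1/(q-1)}[\alpha])$ as preimages, then analysing $\chi = \beta_1|_I(\beta_2|_I)^{-1}$), but the middle of your argument contains a false step. The claim that the "general constraints fix $\tau$ into the family $\chi_{k,1-k}$" does not hold: both constraints you invoke are vacuous. For any $\chi_{a,b}\in\Hom(I,K^\times)[p']$ the $(q-1)$-torsion condition holds automatically, and $\chi_{a,b}\cdot\conjelt^\ast\chi_{a,b}=\chi_{a,b}\cdot\chi_{b,a}=\widehat{\det}{}^{\,a+b}$ always lies in $\Im(\Xi)$ (see the proof of \Cref{Thm - Group cohomology of G^0}, where the image of $\Xi$ on prime-to-$p$ torsion is exactly $\{\widehat{\det}{}^k\}$). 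So the anti-equivariance and torsion constraints cut the candidate characters down by nothing at all — certainly not to a two-element family — and the ambiguity remains the full $(q-1)^2$ of $(q-1)$-torsion characters of $I$.

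This means the step you explicitly defer to the end ("a careful bookkeeping of the $\BZ_p$-family restricted to $I$ at finite level") is not an optional refinement but the entire content of the proof, and it is precisely what the paper does. The mechanism is the identity $\frac{1}{q-1}=-\sum_{i\geq 0}q^i$ in $\BZ_p$, which gives $[j\alpha]^{1/(q-1)}_{G_0,1}=[j\alpha]^{-1}_{G_0,1}\cdot\big([j\alpha]^q_{G_0,1}\big)^{1/(q-1)}$ with the second factor in $H^1\big(G_0,\CO^{\times\times}(\Omega_1)\big)$ by \Cref{Lemma - Power of certain element is principal unit group cohomology class}; applying this (and its $\conjelt^\ast$-conjugate) and restricting to $I$ at level $0$, the $[j]$-factors cancel via $\conjelt^\ast[j]=[j]$ and one arrives at $\chi|_{I,0}\equiv[\alpha]^{-1}_{I,0}$ modulo $H^1\big(I,\CO^{\times\times}(\Omega'_0)\big)$. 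Only then does \Cref{Prop - Shape of prime to p torsion class restricted to affinoid over edge}, together with the decomposition of \Cref{Prop - Decomposition of principal units and constants}, yield $\chi=\chi_{1,0}$. As written, the proposal has not established the result; removing the spurious narrowing argument and carrying out the explicit finite-level computation is what closes the gap.
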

\begin{proof}
	We trace the construction of the connecting homomorphism $\Delta$ of the snake lemma.
	First, we note that under
	\begin{equation*}
		H^1 \big( G^0 , \Cur(\bE, \BZ) \big) \lra H^1 \big( G_0 , \Cur(\bE, \BZ) \big) \oplus H^1 \big( {}^\conjelt G_0 , \Cur(\bE, \BZ) \big)
	\end{equation*}
	the class $\delta\big( \1_{G^0 v_1} \big)$ is mapped to $\big( \delta\big( \1_{G^0 v_1} \big)\res{G_0} \,,\, \delta\big( \1_{G^0 v_1} \big)\res{{}^\conjelt G_0} \big)$.
	We need to find a preimage of this latter element under 
	\begin{equation}\label{Eq - Van der Put map on direct sum of group cohomology for maximal compact and conjugate}
		H^1 \big( G_0 , \CO^\times(\Omega) \big) \oplus H^1 \big( {}^\conjelt G_0 ,  \CO^\times(\Omega) \big) \xrightarrow{P_\ast \oplus P_\ast} H^1 \big( G_0 , \Cur(\bE, \BZ) \big) \oplus H^1 \big( {}^\conjelt G_0 , \Cur(\bE, \BZ) \big) .
	\end{equation}
	Because the $\conjelt$-action permutes $\1_{G^0 v_1}$ and $- \1_{G^0 v_0}$, we have $\delta( \1_{G^0 v_1} )\res{{}^\conjelt G_0} = \conjelt^\ast \big( -\delta( \1_{G^0 v_0} )\res{G_0}  \big)$.
	It follows from \Cref{Prop - Cohomology of currents} and \Cref{Cor - Zp-family in group cohomology} that $P_\ast \big( [j\alpha]^{\frac{1}{q-1}}_{G_0} \big) = \delta\big( \1_{G^0 v_1} \big)\res{G_0}$ and $P_\ast \big( [j\alpha]^{\frac{1}{q-1}}_{G_0} [\alpha]_{G_0} \big) = - \delta\big( \1_{G^0 v_0} \big)\res{G_0}$.
	We thus deduce that under \eqref{Eq - Van der Put map on direct sum of group cohomology for maximal compact and conjugate}
	\begin{equation*}
		\Big( [j\alpha]^{\frac{1}{q-1}}_{G_0} \, , \, \conjelt^\ast \big( [j\alpha]^{\frac{1}{q-1}}_{G_0} [\alpha]_{G_0} \big)  \Big) \lto \big( \delta\big( \1_{G^0 v_1} \big)\res{G_0} \, , \, \delta\big( \1_{G^0 v_1} \big)\res{{}^\conjelt G_0} \big) .
	\end{equation*}

	The exactness of the bottom row of \eqref{Eq - Diagram induced by Mayer-Vietoris} implies that the image
	\begin{equation}\label{Eq - Definition of chi}
		\chi \defeq  [j\alpha]^{\frac{1}{q-1}}_I  \, \Big( \conjelt^\ast \big( [j\alpha]^{\frac{1}{q-1}}_{G_0}  \, [\alpha]_{G_0} \big) \Big)^{-1}\res{I}
	\end{equation}
	of this former element under \eqref{Eq - Sum of restriction to Iwahori map} is contained in $\Hom ( I, K^\times )$.
	Moreover, a direct computation shows that $\chi$ is $(q-1)$-torsion.
	By the definition of the connecting homomorphism, we now have
	\begin{equation*}
	 	\Delta \big( \delta\big( \1_{G^0 v_1} \big) \big) = \chi \mod \Im(\Xi) .
	\end{equation*}

	To show that $\chi = \chi_{1,0}$, we first recall that in $H^1 \big( G_0 , \CO^\times(\Omega_1) \big)$
	\begin{equation*}
		[j\alpha]_{G_0,1}^\frac{1}{q-1} = \prod_{i\geq 0} [j\alpha]_{G_0,1}^{-q^i} = [j\alpha]_{G_0,1}^{-1} \, \big( [j\alpha]^q_{G_0,1} \big)^\frac{1}{q-1} .
	\end{equation*}
	By \Cref{Lemma - Power of certain element is principal unit group cohomology class} the element $\big( [j\alpha]^q_{G_0,1} \big)^\frac{1}{q-1}$ is contained in the subgroup $H^1 \big( G_0 , \CO^{\times\times}(\Omega_1) \big)$.
	Via functoriality we therefore have $[j\alpha]_{I,0}^\frac{1}{q-1} = [j\alpha]_{I,0}^{-1} \, \, \iota_\ast (c)$, for some $c\in H^1 \big( I, \CO^{\times \times}(\Omega'_0) \big)$, and similarly for $\big( \conjelt^\ast [j \alpha] \big)\res{I,0}$.
	Using \Cref{Prop - Shape of prime to p torsion class restricted to affinoid over edge} we conclude that
	\begin{align*}
		\chi\res{I,0} &\equiv [j\alpha]^{-1}_{I,0} \,\, \Big(  \conjelt^\ast [j\alpha]_{I,0}^{-1} \,\, \conjelt^\ast[\alpha]_{I,0} \Big)^{-1}
			= [\alpha]_{I,0}^{-1} \equiv \chi_{1,0} \quad\mod H^1 \big( I, \CO^{\times \times}(\Omega'_0) \big) .
	\end{align*}
	In view of the decomposition of \Cref{Prop - Decomposition of principal units and constants} this proves that $\chi = \chi_{1,0}$.
\end{proof}

\begin{theorem}\label{Thm - Group cohomology of G^0}
	There is a short exact sequence of solid abelian groups
	\begin{equation*}
		1 \lra \ul{\Hom} (  {G^0}, K^\times ) \lra \ul{H}^1 \big(  {G^0}, \CO^\times(\Omega) \big) \xrightarrow{\widetilde{P_\ast}} \BZ \oplus \BZ/ (q+1) \, \BZ \lra 0 
	\end{equation*}
	with $\widetilde{P_\ast}[j] = (1,1)$ and $\widetilde{P_\ast}[\alpha] = (0, -1)$.
	Moreover, this yields an isomorphism 
	\begin{align*}
		\BZ \,\oplus\, \BZ/(q^2 -1 )\, \BZ \,\oplus\, \ul{\Hom} \big(\CO^{\times \times}_F , \CO^{\times \times}_K \big) &\overset{\sim}{\lra} \ul{H}^1 \big( {G^0}, \CO^\times(\Omega) \big) \qquad\text{with} \\
		\big( n, k , \chi\big) &\lto [j]^n \, [\alpha]^k \, \big(\chi \circ \langle \det \rangle \big)
	\end{align*}
	on the underlying abelian groups.
	Under this isomorphism, the automorphism $\conjelt^\ast$ corresponds on the left hand side to $\big( n,k ,\chi \big) \mto \big( n, qk, \chi \big)$.
\end{theorem}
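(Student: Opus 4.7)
The approach is to refine the left-exact sequence
\begin{equation*}
1 \lra \ul{\Hom}(G^0, K^\times) \lra \ul{H}^1 \big( G^0 , \CO^\times(\Omega) \big) \overset{\widetilde{P_\ast}}{\lra} \tfrac{1}{q-1}\BZ \oplus \BZ/(q+1)\,\BZ
\end{equation*}
from \Cref{Thm - Left exact sequences for group cohomology} by pinning down the exact image of $\widetilde{P_\ast}$ and then producing a splitting using $[j]$ and the lift $[\alpha]$. The value $\widetilde{P_\ast}[j] = (1,1)$ follows from the $G_0$-analogue together with the commutativity of the left-hand square in \eqref{Eq - Commutative diagram for isomorphisms for group cohomology of currents}. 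The value $\widetilde{P_\ast}[\alpha] = (0,-1)$ follows since the $G^0$-class $[\alpha]$ from \Cref{Prop - Lifting of prime-to-p torsion} restricts to the $G_0$-class $[\alpha]$. Hence $\BZ \oplus \BZ/(q+1)\,\BZ \subseteq \Im(\widetilde{P_\ast})$.

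The main obstacle is the reverse inclusion. For this I would exploit the connecting homomorphism $\Delta$ of \eqref{Eq - Connecting homomorphism} with $\Ker(\Delta) = \Im(P_\ast)$. By \Cref{Lemma - Image under connecting homomorphism} and $\BZ$-linearity, $\Delta\big(k \cdot \delta(\1_{G^0 v_1})\big) = \chi_{k,0} \bmod \Im(\Xi)$. Using \Cref{Lemma - Characters of quotient by principal units} to split $\Xi$ into a prime-to-$p$ and a principal-unit part, and \Cref{Lemma - Prime to p torsion characters} (i) to see that every continuous character of $G_0$ or ${}^\conjelt G_0$ factors through $\det$, the prime-to-$p$ part of $\Im(\Xi)$ consists exactly of the restrictions $\widehat{\det}{}^{m}\res{I} = \chi_{m,m}$ for $m \in \BZ/(q-1)\,\BZ$. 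Consequently $\chi_{k,0} \in \Im(\Xi)$ if and only if $(q-1) \mid k$. Since every element of $\tfrac{1}{q-1}\BZ \oplus \BZ/(q+1)\,\BZ$ differs from $k \cdot \delta(\1_{G^0 v_1})$ by an element of $\BZ \oplus \BZ/(q+1)\,\BZ$, no non-integer element lies in $\Im(\widetilde{P_\ast})$, yielding the announced short exact sequence.

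To produce the isomorphism I would decompose the kernel $\ul{\Hom}(G^0, K^\times)$. Since every continuous character factors through $\det\colon G^0 \twoheadrightarrow \CO_F^\times \cong \mu_{q-1}(F) \times \CO_F^{\times \times}$, one has an identification
\begin{equation*}
\ul{\Hom}(G^0, K^\times) \,\cong\, \BZ/(q-1)\,\BZ \,\oplus\, \ul{\Hom}\big(\CO_F^{\times \times}, \CO_K^{\times \times}\big) ,
\end{equation*}
with the torsion summand generated by $\widehat{\det}$ and the continuous summand given by $\chi \mto \chi \circ \langle\det\rangle$. Now consider the candidate homomorphism
\begin{equation*}
\BZ \,\oplus\, \BZ/(q^2 - 1)\,\BZ \,\oplus\, \ul{\Hom}\big(\CO_F^{\times \times}, \CO_K^{\times \times}\big) \lra \ul{H}^1\big(G^0, \CO^\times(\Omega)\big), \quad (n,k,\chi) \lto [j]^n [\alpha]^k (\chi \circ \langle \det \rangle) .
\end{equation*}
It is well-defined because $[\alpha]^{q^2-1} = \widehat{\det}{}^{-(q-1)} = 1$. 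Its composition with $\widetilde{P_\ast}$ is $(n,k,\chi) \mapsto (n, n-k)$, which surjects onto $\BZ \oplus \BZ/(q+1)\,\BZ$ with kernel $\{(0, (q+1)m, \chi)\}$; under the map above this kernel is sent to $\widehat{\det}{}^{-m}(\chi \circ \langle\det\rangle) \in \ul{\Hom}(G^0, K^\times)$, a bijection onto $\ul{\Hom}(G^0, K^\times)$ by the decomposition just noted. A diagram chase in the short exact sequence then gives bijectivity; passing from underlying abelian groups to solid ones is automatic since the summands $\BZ$ and $\BZ/(q^2-1)\,\BZ$ are discrete and $\ul{\Hom}\big(\CO_F^{\times \times}, \CO_K^{\times \times}\big)$ is directly embedded.

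Finally, the $\conjelt^\ast$-action follows term by term: $\conjelt^\ast [j] = [j]$ by the remark following \Cref{Example - Cohomology class of twisting sheaf}, $\conjelt^\ast [\alpha] = [\alpha]^q$ by \Cref{Prop - Lifting of prime-to-p torsion}, and $\conjelt^\ast (\chi \circ \langle\det\rangle) = \chi \circ \langle\det\rangle$ since $\det(\conjelt^{-1} g \conjelt) = \det g$. Together these yield $(n,k,\chi) \mapsto (n, qk, \chi)$. The conceptually hardest step is the second paragraph, where one must combine the snake-lemma analysis of \eqref{Eq - Diagram induced by Mayer-Vietoris} with the concrete character computation $\Im(\Xi) \cap \{\chi_{k,0}\}_k = \{\chi_{m,m}\}_{(q-1) \mid m-0}$ to rule out precisely the non-integer classes.
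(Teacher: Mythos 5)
Your proposal is correct and follows essentially the same route as the paper: both use the left-exact sequence of \Cref{Thm - Left exact sequences for group cohomology}, pin down $\Im(\widetilde{P_\ast})$ via the snake-lemma connecting homomorphism $\Delta$ of \eqref{Eq - Connecting homomorphism} and the computation of $\Delta(\delta(\1_{G^0 v_1}))$ in \Cref{Lemma - Image under connecting homomorphism}, then split off the kernel $\ul{\Hom}(G^0, K^\times)$ using $[\alpha]$. Your phrasing of the image computation is slightly more explicit than the paper's --- you characterize exactly which $\chi_{k,0}$ lie in $\Im(\Xi)$ rather than just showing $\Im(\Delta)$ has $\geq q-1$ elements --- but the underlying argument is identical. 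One small citation slip: for the classification of the prime-to-$p$ torsion characters of $G_0$ and ${}^\conjelt G_0$ you need \Cref{Lemma - Prime to p torsion characters} (ii), not (i) (which concerns $G^0$, and is moreover a statement about all characters rather than torsion ones); the phrase \emph{every continuous character of $G_0$ factors through $\det$} is false for $q=2$, though the conclusion you draw about $\Im(\Xi)[p']$ is still correct because you have already reduced to prime-to-$p$ torsion via \Cref{Lemma - Characters of quotient by principal units}.
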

\begin{proof}
	Concerning the short exact sequence, due to \Cref{Thm - Left exact sequences for group cohomology} it remains to show that the image of $\widetilde{P_{\ast}}$ is equal to the (discrete) condensed subgroup $\BZ \oplus \BZ/ (q+1) \, \BZ$ of $\frac{1}{q-1} \BZ \oplus \BZ/ (q+1) \, \BZ $. 
	Since $\widetilde{P_\ast}[j] = (1,1)$ and $\widetilde{P_\ast}[\alpha]=(0,-1)$, this image certainly contains the former subgroup.
	Using that $\Im({P_\ast}) = \Ker (\Delta)$ for the connecting homomorphism \eqref{Eq - Connecting homomorphism}, it thus suffices to show that $\Im(\Delta) \subset \Coker(\Xi)$ contains at least $q-1$ elements.

	In \Cref{Lemma - Image under connecting homomorphism} we have seen that $\chi_{1,0} \mod \Im(\Xi)$ is an element of $\Im(\Delta)$.
	Moreover, recall the decompositions from \Cref{Lemma - Characters of quotient by principal units}.
	The homomorphism $\Xi$ preserves these decompositions so that the order of $\chi_{1,0} \mod \Im(\Xi)$ is equal to the order of $\chi_{1,0}$ in $H^1 \big(I, K^\times)[p'] $ modulo the image of the restriction
	\begin{equation*}
		H^1 \big(G_0, K^\times)[p'] \oplus H^1 \big( {}^\conjelt G_0, K^\times)[p'] \overset{\Xi}{\lra } H^1 \big(I, K^\times)[p'] .
	\end{equation*}
	However, the image of this restriction map only contains of $\widehat{\det}{}^k$, for $k\in \BZ/ (q-1)\, \BZ$.
	We deduce that the element $\chi_{1,0} \mod \Im(\Xi)$ is of order $q-1$ in $\Coker(\Xi)$ as desired.

	For the decomposition of $\ul{H}^1 \big( {G^0}, \CO^\times(\Omega) \big)$ we remark that ${\Hom}(G^0 , K^\times) = {\Hom} (F^\times , K^\times)$ by \Cref{Lemma - Prime to p torsion characters} (i).
	Analogously to \Cref{Lemma - Characters of quotient by principal units} and via $\CO_F^\times \cong \mu_{q-1}(F) \times \CO_F^{\times \times}$ one then deduces the isomorphism
	\begin{equation*}
		\ul{\Hom} \big( \mu_{q-1}(F) , K^\times \big) \oplus \ul{\Hom} \big( \CO^{\times \times}_F , \CO_K^{\times \times} \big) \overset{\sim}{\lra}	\ul{\Hom} (  {G^0}, K^\times ) .
	\end{equation*}
	The proof of the asserted decomposition of $\ul{H}^1 \big( {G^0}, \CO^\times(\Omega) \big)$ now goes along the same lines as the one of \Cref{Thm - Group cohomology for G_0}.	
\end{proof}

\begin{remark}\label{Rmk - Connection to Ardakov-Wadsley}
	Let $F$ be of characteristic $0$, assume $K$ contain the unramified quadratic extension $L$ of $F$, and recall that $\mathrm{PicCon}^{G^0}(\Omega)$ denotes the group of isomorphism classes of $G^0$-equivariant line bundles with integrable connection on $\Omega$.
	The main part of the argument of Ardakov and Wadsley \cite{ArdakovWadsley23EquivLineBun} for describing $\mathrm{PicCon}^{G^0}(\Omega)$ is the construction of an isomorphism 
	\begin{equation*}
		\mathrm{PicCon}^{G^0}(\Omega)_\mathrm{tors} \overset{\sim}{\lra} \Hom \big( \CO_L^\times / P_L^1 , K^\times \big)_\mathrm{tors}
	\end{equation*}
	which involves the choice of some $[z:1] \in \Omega_F(L)$ and where $P_L^1 \defeq \Ker( \mathrm{N} ) \cap \CO_L^{\times \times}$.
	Together with $\CO_L^\times / P_L^1 \cong \big( \CO^\times_D\big)^\mathrm{ab}$ induced by an $F$-algebra homomorphism $L \hookrightarrow D$ (\cite[Prop.\ 2.3.6]{ArdakovWadsley23EquivLineBun}), they obtain an isomorphism
	\begin{equation*}
		\mathrm{PicCon}^{G^0}(\Omega)_\mathrm{tors} \overset{\sim}{\lra} \Hom \big( \CO^\times_D , K^\times \big)_\mathrm{tors} .
	\end{equation*}
	Here, $D$ is the quaternion division algebra over $F$ and $\CO_D$ its maximal order.

	Since the extension $L/F$ is unramified, its norm map $\mathrm{N} $ gives rise to an isomorphism $\CO_L^{\times \times} / P_L^1 \cong \CO_F^{\times \times}$ and thus $\CO_L^\times / P_L^1 \cong \mu_{q^2 - 1}(L) \times \CO_F^{\times \times}$.
	It follows that 
	\begin{equation*}
		\Hom \big( \CO_L^\times / P_L^1 , K^\times \big)_\mathrm{tors} \cong \BZ/(q^2 -1)\, \BZ \,\oplus\, \Hom \big( \CO^{\times \times}_F , \CO_K^{\times \times} \big)_\mathrm{tors} .
	\end{equation*}

	Furthermore, there is a forgetful group homomorphism $\mathrm{PicCon}^{G^0}(\Omega) \ra \Pic^{G^0}(\Omega)$.
	It is a special case of a result of Taylor \cite[Cor.\ 7.4]{Taylor25CatLubinTateDrinfeldBun} that this map induces an isomorphism 
	\begin{equation*}
		\mathrm{PicCon}^{G^0}(\Omega)_\mathrm{tors} \overset{\sim}{\lra} \Pic^{G^0}(\Omega)_\mathrm{tors} .
	\end{equation*}
	Using \cite[Lemma 3.3.4]{ArdakovWadsley23EquivLineBun}, one can concretely determine the equivariant integrable connection with which each element of $\mathrm{Pic}^{G^0}(\Omega)_\mathrm{tors}$ can be endowed.
	\sloppy
	In this way, \Cref{Thm - Group cohomology of G^0} recovers the assertion that $\mathrm{PicCon}^{G^0}(\Omega)_\mathrm{tors}$ is isomorphic to $\Hom \big( \CO_L^\times / P_L^1 , K^\times \big)_\mathrm{tors}$ and thus to $\Hom \big( \CO^\times_D , K^\times \big)_\mathrm{tors}$.
	\qed
\end{remark}

\begin{theorem}\label{Thm - Group cohomology for G}
	\begin{altitemize}
		\item[(i)]
			There is an isomorphism of solid abelian groups
			\begin{align*}
				\BZ \oplus  \BZ/ (q^2 - 1) \, \BZ \oplus \ul{\Hom}\big( \unif^{2 \BZ} \times \CO^{\times \times}_F  , K^\times \big) &\overset{\sim}{\lra}  \ul{H}^1 \big( G^{(2)}, \CO^\times(\Omega) \big) 	\qquad\text{with} \\
				{(n,k, \chi)} &\lto [j]^n \, [\alpha]^k \, \big(\chi \circ \widetilde{\det} \big) 
			\end{align*}
			on the underlying abelian groups.
			Here, $[\alpha]$ is the unique $(q^2-1)$-torsion class which is mapped to $[\alpha] \in H^1 \big( G^0, \CO^\times(\Omega) \big)$, and $x \mto \widetilde{x} \defeq \unif^{v_\unif(x)} \langle x \rangle $ denotes the projection $F^\times \twoheadrightarrow \unif^\BZ \times \CO^{\times \times}_F $.
			
		\item[(ii)]
			There is an isomorphism of solid abelian groups
				\begin{equation*}
				\BZ \oplus \ul{\Hom} ( F^\times , K^\times ) \lra \ul{H}^1 \big( {G} , \CO^\times(\Omega) \big) \qquad\text{with}\qquad (n, \chi) \lto [j]^n \, (\chi \circ \det)
			\end{equation*}
			on the underlying abelian groups.
			
	\end{altitemize}
\end{theorem}
\begin{proof}
	We first consider $G$.
	Recall that $G^0$ is the kernel of the homomorphism $G \ra \BZ$, $g \mto v_\unif (\det(g))$, and that $n \mto s^n$ is a section to this quotient map. 
	Then $\ul{G^0}$ is a normal condensed subgroup of $\ul{G}$ so that we obtain an associated Hochschild--Serre spectral sequence by \Cref{Prop - Hochschild-Serre spectral sequence}. 
	Under the identifications $\ul{G}/ \ul{G^0} \cong {\BZ}$ and $ \CO^\times(\Omega)^\ul{G^0} = K^\times$ (see \Cref{Lemma - Invariant function is constant}) we extract from it the exact sequence
	\begin{align}\label{Eq - Concrete 5-term exact sequence}
		1 \lra \ul{H}^1 \big( \BZ , K^\times \big) \lra \ul{H}^1 \big( {G} , \CO^\times(\Omega) \big) \lra \ul{H}^1 \big( {G^0} , \CO^\times(\Omega) \big)^{\BZ} \lra \ul{H}^2 \big( \BZ , K^\times \big) .	
	\end{align}
	Here, the action of $\BZ$ on $ \ul{H}^1 \big( {G^0} , \CO^\times(\Omega) \big)$ is through the conjugation action $\conjelt^\ast$.

	By \Cref{Thm - Group cohomology of G^0} an element $[j]^n \, [\alpha]^{k} \, (\chi \circ \det) $ of $H^1 \big( {G^0} , \CO^\times(\Omega) \big)$, for $n\in \BZ$, $k\in \BZ/(q^2-1)\,\BZ$ and $\chi \in \Hom \big( \CO^{\times \times}_F , \CO^{\times \times}_K \big)$ is $\conjelt^\ast$-invariant if and only if $q k \equiv k \mod (q^2 -1)$ or equivalently if and only if $k$ is divisible by $(q+1)$.
	Because of $[\alpha]^{q+1} = \widehat{\det}{}^{-1}$ this shows that
	\begin{equation*}
		\BZ  \, \oplus \, \ul{\Hom} \big( \CO^\times_F , K^\times \big) \overset{\sim}{\lra} \ul{H}^1 \big( G^0 , \CO^\times(\Omega) \big)^{\BZ} \quad\text{with}\quad (n, \chi) \lto [j]^n \, (\chi \circ \det).
	\end{equation*}

	On the other hand, since $\BZ$ is discrete, we have $\ul{H}^2 ( \BZ, K^\times ) = \ul{H^2 (\BZ, K^\times)}$ which vanishes.
	Moreover, the first homomorphism of \eqref{Eq - Concrete 5-term exact sequence} is the embedding $\ul{\Hom} \big(\BZ ,K^\times \big) \hookrightarrow \ul{H}^1 \big( {G} , \CO^\times(\Omega) \big)$, $\mu \mto \mu \circ v_\unif \circ \det$. 
	Therefore, \eqref{Eq - Concrete 5-term exact sequence} becomes the split exact sequence
	\begin{equation*}
		1 \lra \ul{\Hom} \big( \BZ , K^\times \big) \lra \ul{H}^1 \big( G , \CO^\times(\Omega) \big) \lra \BZ \,  \oplus \, \ul{\Hom} \big( \CO^\times_F , K^\times \big)\lra 1 . 
	\end{equation*}
	The statement for $G$ now follows from $F^\times \cong \unif^{\BZ} \times \CO^\times_F$.

	For $G^{(2)}$, we use the identification $\ul{G^{(2)}}/ \ul{G^0} \cong {2 \BZ}$.
	Analogously, we obtain the short exact sequence	
	\begin{align*}
		1 \lra  \ul{\Hom} \big( 2 \BZ , K^\times \big) \lra \ul{H}^1 \big( G^{(2)} , \CO^\times(\Omega) \big) \lra \ul{H}^1 \big( {G^0} , \CO^\times(\Omega) \big)^{2\BZ} \lra 1 .
	\end{align*}
	But it follows from the above that every element of $ H^1 \big( {G^0} , \CO^\times(\Omega) \big)$ is invariant under $2\BZ$.
	We can therefore deduce (i) via \Cref{Thm - Group cohomology of G^0}.
\end{proof}

\begin{remark}\label{Rmk - Connection to Junger}
	In \cite{Junger23CohModpFibresEquivDrinfeld}, Junger classifies the $G^{(2)}$-equivariant line bundle on the semi\-stable formal model $\widehat{\Omega}$ of $\Omega$, for $K= \breve{F}$ the completion of the maximally unramified extension of $F$.
	He identifies certain equivariant line bundles $\omega_0$ and $\CL_0$ on $\widehat{\Omega}$, and proves the isomorphism
	\begin{align*}
		\BZ \,\oplus\, \BZ \,\oplus\, \Hom\big( G^{(2)}_{\mathrm{disc}}, \CO^\times_{K} \big) \overset{\sim}{\lra} \Pic^{G^{(2)}}_{\mathrm{disc}}  \big(\widehat{\Omega} \big) \,,\quad 
		(n,m,\chi) \lto \big[ \omega_0^{\otimes n} \otimes \CL_0^{\otimes m} \otimes \CO_{\chi} \big] ,
	\end{align*}
	see Thm.\ 2.19 and Cor.\ 2.22 of {\it loc.\ cit.}.
	Here, $\Pic^{G^{(2)}}_{\mathrm{disc}} \big(\widehat{\Omega} \big) $ denotes the group of isomorphism classes of $G^{(2)}$-equivariant line bundles on $\widehat{\Omega}$ without any continuity condition, and also the occurring characters of $G^{(2)}$ are homomorphisms of abstract groups.

	Furthermore, under the canonical homomorphism $\Pic^{G^{(2)}}_{\mathrm{disc}} \big(\widehat{\Omega} \big) \ra \Pic^{G^{(2)}}_{\mathrm{disc}} (\Omega)$ induced by restriction to the generic fibre, $[\omega_0]$ is mapped to $[\CO(-1)]$ while $[\CL_0]$ is mapped to a class $[\CL_0\res{\Omega}]$ of order 
	$(q^2-1)$ which corresponds to an isotypic component of the pushforward of the structure sheaf of the first Drinfeld covering of $\Omega$, see Sect.\ 2.4 of {\it loc.\ cit.}.
	By \cite[Lemma A.8]{Taylor25EquivVecBun}, the $G^{(2)}$-equivariant structure of $\CL_0\res{\Omega}$ is continuous.
	Hence, $[\CL_0\res{\Omega}]$ is a generator of $\Pic^{G^{(2)}}(\Omega)[p']$.
\end{remark}

\appendix

\section{Condensed Group Cohomology}\label{Sect - Condensed group cohomology}

In this appendix, we recapitulate the definition of condensed group cohomology as treated for example in \cite{AnschuetzLeBras20SolidGrpCohom,BarthelSchlankStapletonWeinstein25RatK(n)LocSphere,Bosco23pAdicProEtCohomDrinfeldSymSp,Zou24CatFormFarguesConjTori} and consider some of its properties.
We begin by recalling relevant notions from condensed mathematics following Clausen and Scholze \cite{ClausenScholze19CondMath}.

\subsection{Condensed Mathematics and Definitions}

A \emph{condensed set} (\emph{group}, \emph{ring}, ...) is a sheaf of sets (groups, rings, ...) on the pro-\'etale site $\ast_\text{pro\'et}$ of a point\footnote{Or rather, the category of condensed sets (groups, rings, ...) is defined as in \cite[p.\ 15]{ClausenScholze19CondMath} to avoid set-theoretic problems.}, i.e.\ on the category $\Prof$ of profinite sets with coverings given by finite, jointly surjective families of maps.
For a condensed set $X$ we refer to $X(\ast)$ as its underlying set.

We let $\mathrm{ED}$ denote the full subcategory of $\mathrm{Prof}$ consisting of extremally disconnected compact Hausdorff spaces.
Then the topoi of $\mathrm{Prof}$ and $\mathrm{ED}$ (with the same kind of coverings) are equivalent.

There is a \emph{condensation} functor $X \mto \ul{X}$ from T1 topological spaces (groups, rings, ...) to condensed sets (groups, rings, ...) defined by $\ul{X}(S) \defeq C(S,X)$, for $S \in \Prof$. 
It is fully faithful when restricted to the full subcategory of compactly generated T1 spaces.
Therefore, we sometimes implicitly consider such spaces as condensed sets, i.e.\ omit the underline from the notation.
This functor admits a left adjoint $X \mto X(\ast)_\mathrm{top}$ by endowing the underlying set of $X$ with a certain topology.

Let $\mathrm{Cond(Ab)}$ denote the category of condensed abelian groups.
It is an abelian category containing all limits and colimits, and has a symmetric monoidal tensor product $\otimes$ and an internal $\Hom$-functor $\ul{\Hom}$.

The category $\mathrm{Cond(Ab)}$ has enough projective objects. 
We let $D(\mathrm{Cond(Ab)})$ denote its derived category, and let $\otimes^L$ (resp.\ $R\ul{\Hom}$) denote the derived tensor product (resp.\ derived internal $\Hom$-functor).

For a profinite set $S = \varprojlim_{i\in I} S_i$ with $S_i$ finite, consider the condensed abelian group $\BZ[S]^\blacksquare \defeq  \varprojlim_{i\in I} \ul{\BZ[S_i]}$ where $\BZ[S_i]$ carries the discrete topology.  
A \emph{solid abelian group} is a condensed abelian group $A$ such that, for all $S \in \Prof$, every morphism $\ul{S} \ra A$ uniquely extends to a morphism $\BZ[S]^\blacksquare \ra A$.
The full subcategory $\mathrm{Solid}$ consisting of solid abelian groups is abelian and stable under all limits, colimits and extensions.
Moreover, the functor $D(\mathrm{Solid}) \ra D(\mathrm{Cond(Ab)})$ is fully faithful.
\\

We also record some straightforward notions concerning condensed torsion subgroups which we could not find in the literature.

\begin{definition}
	For a condensed abelian group $A$ and $d\in \BN_{\geq 1}$, we let $A[d]$ denote the \emph{condensed $d$-torsion subgroup} of $A$ (see \Cref{Lemma - Condensed torsion subgroups} (i) below) defined by
	\begin{equation*}
		A[d](S) \defeq A(S)[d] = \big\{ a \in A(S) \,\big\vert\, da = 0 \big\} \,,\qquad\text{for $S \in \mathrm{ED}$.}
	\end{equation*}
	Analogously, we define the \emph{condensed torsion} (resp.\ \emph{$p$-power torsion}, \emph{prime-to-$p$-torsion}) \emph{subgroup} $A_\mathrm{tors}$ (resp.\ $A[p^\infty]$, $A[p']$) of $A$.
\end{definition}

\begin{lemma}\label{Lemma - Condensed torsion subgroups}
	\begin{altenumerate}
		\item 
			The presheaves $A[d]$, $A_\mathrm{tors}$, $A[p^\infty]$ and $A[p']$ are condensed abelian subgroups.
		
		\item 
			For a topological abelian group $A$ we have $\ul{A}[d] = \ul{A[d]}$.
			If $A_\mathrm{tors}$ (resp.\ $A[p^\infty]$, $A[p']$) is discrete, then $\ul{A}_\mathrm{tors} = \ul{A_\mathrm{tors}}$ (resp.\ $\ul{A}[p^\infty] = \ul{A[p^\infty]}$, $\ul{A}[p'] = \ul{A[p']}$).
	\end{altenumerate}
\end{lemma}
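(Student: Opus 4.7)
I plan to treat parts (i) and (ii) separately; both reduce to direct verifications once the right description of each subgroup is chosen. For (i), I would first identify $A[d]$ with the kernel of the multiplication-by-$d$ endomorphism $A \xrightarrow{d\cdot} A$ in the abelian category $\mathrm{Cond(Ab)}$. Since limits in any sheaf category of abelian groups are computed section-wise, this kernel coincides with the presheaf $A[d]$ from the definition, which is therefore automatically a condensed abelian subgroup. The remaining three presheaves are evidently sub-presheaves of abelian groups, so only the sheaf property needs verification. The key observation is that covers in the pro-\'etale site of a point are \emph{finite} jointly surjective families: given a compatible tuple $(a_i)_{i \in I}$ with each $a_i$ annihilated by some $d_i \in \BN_{\geq 1}$, the $\lcm$ of the finitely many $d_i$ annihilates every $a_i$ and hence also the glued section in $A(S)$. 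The analogous observation (maximum exponent for $A[p^\infty]$, $\lcm$ of prime-to-$p$ integers for $A[p']$) handles the other two cases.

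For the first half of (ii), I would invoke the universal property of the subspace topology on $A[d] \subset A$: a continuous map $f\colon S \to A$ satisfies $df = 0$ precisely if it factors through a continuous map $S \to A[d]$, and this correspondence is natural in $S$, giving $\ul{A}[d] = \ul{A[d]}$. For the torsion versions, the discreteness hypothesis enters crucially. Given $f \in \ul{A_\mathrm{tors}}(S)$ with $S$ profinite, the map $f$ has compact source and discrete target, hence finite image, which is then annihilated by some $d$; composing with $A_\mathrm{tors} \hookrightarrow A$ yields an element of $\ul{A}_\mathrm{tors}(S)$. Conversely, if $f \in \ul{A}_\mathrm{tors}(S)$ is killed by $d$, the first part provides a continuous corestriction $S \to A[d]$, and discreteness of $A_\mathrm{tors}$ makes $A[d] \hookrightarrow A_\mathrm{tors}$ a subspace inclusion between discrete spaces, so the further corestriction to $A_\mathrm{tors}$ remains continuous. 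The arguments for $A[p^\infty]$ and $A[p']$ are entirely analogous.

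I do not expect any serious obstacle. The only subtle point, which the finiteness of pro-\'etale covers sidesteps, is that a priori one might want to form $A_\mathrm{tors}$ as the filtered colimit $\varinjlim_{d} A[d]$ in $\mathrm{Cond(Ab)}$ and worry about sheafification; the argument above circumvents this by working directly with the section-wise description.
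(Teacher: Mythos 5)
Your proposal is correct and follows essentially the same strategy as the paper. The one minor variation is in (i): the paper reduces the sheaf condition to the extremally-disconnected criterion (only $\emptyset$ and binary disjoint unions need checking, via Clausen–Scholze Prop.\ 2.7) and notes that torsion commutes with finite products, whereas you check the sheaf axiom on $\mathrm{Prof}$ directly using that covers are \emph{finite} jointly surjective families and that the $\lcm$ of finitely many annihilating exponents annihilates the glued section; both arguments hinge on exactly the same finiteness phenomenon, and your kernel-of-$d\cdot$ observation for $A[d]$ is a clean shortcut compatible with the paper's treatment.
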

\begin{proof}
	To show that $A[d]$ is a sheaf, by the discussion after \cite[Prop.\ 2.7]{ClausenScholze19CondMath}, it suffices to verify that $A[d](\emptyset) = \{0\}$ and that for $S, S' \in \mathrm{ED}$ the natural map
	\begin{equation*}
		A[d](S\sqcup S' ) \lra A[d](S) \times A[d](S')
	\end{equation*}
	is a bijection.
	But this holds because $A$ itself is a condensed abelian group and taking $d$-torsion commutes with products.
	For the other presheaves in (i) one argues analogously.

	For a topological abelian group $A$ in (ii), we have $C(S, A)[d] = C \big(S,A[d] \big)$, for all $S \in \mathrm{ED}$.
	The inclusion $C(S, A)_\mathrm{tors} \subset C \big(S,A_\mathrm{tors} \big)$ is also clear.
	If $A_\mathrm{tors}$ is assumed to be discrete, this is an equality since then the image of every $\varphi \in C \big(S,A_\mathrm{tors} \big)$ is finite.
	The reasoning for $\ul{A}[p^\infty]$ and $\ul{A}[p']$ is similar.
\end{proof}

We now turn towards condensed group cohomology.
Let $G$ be a condensed group.
A \emph{condensed $G$-module} is a condensed abelian group $M$ endowed with a linear (left) $G$-action $G \times M \ra M$.
Equivalently, $M$ is a condensed (left) $\BZ[G]$-module where $\BZ[G]$ denotes the \emph{condensed group ring}, i.e.\ the sheafification of the presheaf $S \mto \BZ[G(S)]$.
We let $\mathrm{Cond(Ab)}_G$ denote the abelian category of condensed $G$-modules.
Furthermore, we define a \emph{solid $G$-module} to be a condensed $G$-module whose condensed abelian group is solid.

\begin{lemma}\label{Lemma - Solid G-module}
	Let $G$ be a topological group and $M$ a topological $G$-module.
	Then $\ul{M}$ is a condensed $\ul{G}$-module.

	Moreover, if the topology of $M$ is linear (i.e.\ its open subgroups form a neighbourhood basis of the identity element), Hausdorff and complete, then $\ul{M}$ is a solid $\ul{G}$-module.
\end{lemma}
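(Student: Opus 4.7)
The first assertion is formal. The action map $G \times M \to M$ is continuous by hypothesis, and the condensation functor preserves finite products (being right adjoint to topological realisation). Applying it to the action therefore yields a morphism $\ul{G} \times \ul{M} \to \ul{M}$ in $\mathrm{Cond(Ab)}$, and the module axioms for $M$ translate directly into those for $\ul{M}$ by functoriality.

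For the second assertion, the linearity, Hausdorff and completeness hypotheses on $M$ produce a canonical topological isomorphism
\[
	M \overset{\sim}{\lra} \varprojlim_{U} M/U,
\]
where $U$ runs over the directed poset of open subgroups of $M$ and each quotient $M/U$ is discrete. The plan is to show that $\ul{M} \cong \varprojlim_{U} \ul{M/U}$ in $\mathrm{Cond(Ab)}$ and then to invoke two standard facts from \cite{ClausenScholze19CondMath}: a discrete abelian group becomes solid upon condensation, and the full subcategory $\mathrm{Solid} \subset \mathrm{Cond(Ab)}$ is stable under all limits. For the displayed isomorphism, the universal property of the inverse limit of topological spaces gives, for every $S \in \Prof$,
\[
	\ul{M}(S) = C(S, M) = \textstyle \varprojlim_{U} C(S, M/U) = \big( \varprojlim_{U} \ul{M/U} \big) (S) ,
\]
and this identification is manifestly functorial in $S$, since limits in $\mathrm{Cond(Ab)}$ are computed objectwise.

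For the solidity of $\ul{D}$ with $D$ a discrete abelian group, one uses that any continuous map from a profinite $S = \varprojlim_i S_i$ into $D$ factors through some $S_i$ by compactness of $S$; the induced abelian group homomorphism $\BZ[S_i] \to D$ lifts to the required extension $\BZ[S]^\blacksquare = \varprojlim_i \ul{\BZ[S_i]} \to \ul{D}$, and uniqueness is forced by the same factorisation. Combined with the stability of $\mathrm{Solid}$ under limits, this shows that $\ul{M}$ is solid, and the $\ul{G}$-action constructed in the first part equips it with the claimed structure of a solid $\ul{G}$-module. Since both ingredients are standard, the proof presents no serious obstacles; the only mild care needed is checking that the linear-Hausdorff-complete hypothesis really yields the topological presentation $M \cong \varprojlim_U M/U$, which is a textbook fact about topological abelian groups.
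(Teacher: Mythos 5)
Your proof is correct. The first assertion matches the paper's argument exactly (condensation preserves products, so the module structure transfers by functoriality). For the second assertion the paper simply cites Lemma 3.2.1 of the Barthel--Schlank--Stapleton--Weinstein reference, whereas you unpack the argument from first principles: the linear-Hausdorff-complete hypothesis gives $M \cong \varprojlim_U M/U$ with discrete quotients, condensation preserves limits (being a right adjoint), discrete abelian groups condense to solid ones, and $\mathrm{Solid}$ is closed under limits. This is essentially what the cited lemma proves, so you have not changed the mathematics, only made it self-contained. One small nitpick: your sketch of why $\ul{D}$ is solid for discrete $D$ establishes existence of the extension but waves at uniqueness; this is a standard fact worth simply citing from \cite{ClausenScholze19CondMath} rather than reproving inline, which is also what the paper implicitly does by deferring to the external reference.
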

\begin{proof}
	As condensation preserves products, $\ul{M}$ is a condensed $\ul{G}$-module by functoriality.
	For $M$ satisfying the additional properties, it is the statement of \cite[Lemma 3.2.1]{BarthelSchlankStapletonWeinstein25RatK(n)LocSphere} that then $\ul{M}$ is solid.	
\end{proof}

The topological groups relevant to our setting all will be \emph{Polish groups}, i.e.\ topological groups that are separable and completely metrisable.
For this class of groups, an \emph{open mapping theorem} holds: Any surjective continuous homomorphism between Polish groups is open, see \cite[V.32 Cor.\ 6]{Husain66IntroTopGrps}.

Moreover, if $X$ is a locally compact, second countable space and $M$ a Polish group, then $C(X,M)$ is a Polish group again, see \cite[X.3.3 Cor.\ b), X.1.6 Cor.\ 3]{Bourbaki66GenTop2}.
We remark in passing that in \Cref{Lemma - Solid G-module} the additional conditions on $M$ are fulfilled if $M$ is an ultrametrisable Polish group.

\begin{lemma}\label{Lemma - Strictly exact sequence gives exact sequence of condensed modules}
	Let $0\ra L \xrightarrow{f} M \xrightarrow{g} N \ra 0$ be a short strictly exact sequence of Polish abelian groups\footnote{By the open mapping theorem it suffices to demand that $f$ and $g$ are continuous homomorphisms, $f$ is a topological embedding, and that the sequence is algebraically exact.}.
	Then the induced sequence $0 \ra \ul{L} \ra \ul{M} \ra \ul{N} \ra 0$ of condensed abelian groups is exact. 
\end{lemma}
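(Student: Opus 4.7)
My strategy is to verify exactness at each of the three positions of the sequence
\[
0 \to \underline L \to \underline M \to \underline N \to 0 .
\]

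For the positions $\underline L$ and $\underline M$ I expect essentially formal arguments. Strictness of the embedding $L \hookrightarrow M$ means $L$ carries the subspace topology, so for every profinite $S$ the map $C(S,L) \hookrightarrow C(S,M)$ is injective, yielding a monomorphism $\underline L \to \underline M$. Likewise, any continuous $\varphi \colon S \to M$ with $g \circ \varphi = 0$ factors set-theoretically through the closed subgroup $L = \ker(g)$ and is then automatically continuous into $L$ by the subspace topology; this identifies $\ker(\underline M \to \underline N)$ with $\underline L$ on sections.

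The substantive content is surjectivity of $\underline M \to \underline N$ in $\mathrm{Cond}(\mathrm{Ab})$. Since extremally disconnected profinite sets form a basis of the pro-\'etale site of a point, it suffices to prove the following local lifting statement: for every $S \in \mathrm{ED}$ and every continuous $\varphi \colon S \to N$, there exist an extremally disconnected profinite set $\tilde S$, a continuous surjection $\tilde S \twoheadrightarrow S$, and a continuous lift $\tilde\varphi \colon \tilde S \to M$ of the composition $\tilde S \to S \xrightarrow{\varphi} N$. To produce such a cover I would form the topological pullback $Z := S \times_N M$, which is Hausdorff as a closed subspace of $S \times M$, and whose projection $p \colon Z \to S$ is a continuous surjection. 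Strictness together with the open mapping theorem for Polish groups makes $g$ a continuous open surjection, and hence $p$ is open. A continuous lift on $\tilde S$ is then the same as a continuous section of $p$ over $\tilde S$.

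The main obstacle is that $Z$ is not compact in general -- its fibres are cosets of the Polish group $L$ -- so Gleason's theorem identifying extremally disconnected compact Hausdorff spaces with the projective objects in $\mathbf{CHaus}$ cannot be applied directly to split $p$. My plan for overcoming this is a compactification-plus-Gleason-cover argument: since $Z$ is Tychonoff and $S$ is compact, $p$ extends uniquely to a continuous surjection $\overline p \colon \overline Z \twoheadrightarrow S$ from some compactification $\overline Z$ of $Z$, and Gleason yields an ED surjection $\tilde S \twoheadrightarrow \overline Z$. The composition $\tilde S \to S$ is then a pro-\'etale cover of $S$, but lifting to $M$ requires that the induced map $\tilde S \to \overline Z$ factor through $Z \subset \overline Z$. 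Arranging this is the technical heart of the argument; here the openness of $p$ and the flexibility to refine $\tilde S$ by further Gleason covers of $\tilde S \times_S Z$ should suffice, possibly after first replacing $Z$ by a well-chosen subspace of $M$ mapping surjectively onto $\varphi(S)$. Once such $\tilde S$ is produced, the desired continuous lift is the composition $\tilde S \to Z \to M$, completing the proof.
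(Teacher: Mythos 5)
Your treatment of exactness at $\ul{L}$ and $\ul{M}$ is fine and matches the paper's. The real issue is surjectivity, and there your plan has a genuine gap precisely at the step you flag as ``the technical heart.'' You correctly identify that $Z = S\times_N M$ is not compact (its fibres are $L$-cosets), so Gleason's theorem does not apply; but the compactification-plus-Gleason-cover strategy you propose to get around this is not actually carried through, and it is far from clear that the induced map $\tilde S \to \beta Z$ can be arranged to factor through $Z$. The obstacle is real: without some additional input, nothing forces the Gleason cover of a compactification of $Z$ to land inside $Z$.

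The missing ingredient, which is what the paper uses, is a classical fact about quotient maps of metrisable (here Polish) spaces: if $g\colon M \to N$ is a continuous open surjection and $C \subseteq N$ is compact, then there exists a compact $K \subseteq M$ with $g(K) = C$ (Bourbaki, \emph{General Topology}, IX.2, Prop.\ 18). Applying this with $C = \varphi(S)$ (compact, as the continuous image of a compact space) produces a continuous surjection $g\vert_K \colon K \twoheadrightarrow \varphi(S)$ of \emph{compact Hausdorff} spaces. Gleason's projectivity of $S \in \mathrm{ED}$ in $\mathbf{CHaus}$ then lifts $\varphi\colon S \to \varphi(S)$ through $g\vert_K$ directly, giving a continuous $\varphi'\colon S \to K \subseteq M$ with $g\circ\varphi' = \varphi$. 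In particular $\ul{g}(S)\colon C(S,M) \to C(S,N)$ is \emph{surjective} for every $S \in \mathrm{ED}$ --- strictly stronger than the ``epi after passing to a pro-\'etale cover'' you aim for, and with no need to pass to $Z$, compactify, or iterate Gleason covers. This is exactly the ``well-chosen subspace of $M$'' you speculate about at the end; making that precise is the whole proof, and without the Bourbaki lemma your argument does not close.
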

\begin{proof}
	Since $\Ker(f)$ remains a limit in the category of topological spaces and the functor $X \mto \ul{X}$ admits a left adjoint, the exactness of $0 \ra \ul{L} \ra \ul{M} \ra \ul{N}$ follows.

	It remains to show that $\ul{g}\colon \ul{M} \ra \ul{N} $ is an epimorphism.
	To this end, we fix $S \in \mathrm{ED}$ and $\varphi \in \ul{N}(S) = C(S, N)$.
	We note that by the open mapping theorem $g$ is a quotient map.
	Then $\varphi(S)$ is compact, and by \cite[IX.2 Prop. 18]{Bourbaki66GenTop2} there exists a compact subset $K$ of $M$ such that $g(K) = \varphi(S)$. 
	Since $S$ is extremally disconnected, $\varphi$ lifts to a map $\varphi'\colon S \ra K$ such that $\varphi = g \circ \varphi'$. 
	This shows that $\ul{g}(S)\colon C(S,M) \ra C(S,N)$ is surjective.	
\end{proof}

Following Bosco \cite[Def.\ B.1]{Bosco23pAdicProEtCohomDrinfeldSymSp} we now define:

\begin{definition}\label{Def - Condensed Group Cohomology}
	Let $G$ be a condensed group and $M$ a condensed $G$-module.
	We define the \emph{condensed group cohomology} of $G$ with coefficients in $M$ to be
	\begin{align*}
		R \ul{\Gamma} (G,M) \defeq R\ul{\Hom}_{\BZ[G]} (\BZ, M) 
	\end{align*}
	in $D(\mathrm{Cond(Ab)})$ where $\BZ$ carries the trivial $G$-action.
	We set $\ul{H}^n (G,M) \defeq R^n \ul{\Gamma} (G,M)$ and sometimes abbreviate $M^G \defeq R^0 \ul{\Gamma} (G,M)$.
\end{definition}

For a topological group $G$ and a topological $G$-module $M$, the continuous group cohomology of Tate \cite[\S 2]{Tate76RelK2GalCohom} is defined using the complex $\big( C^\bullet (G,M), d^\bullet \big)$ of continuous cochains $C^n (G, M) \defeq C(G^n, M)$.
To compare it to the above condensed group cohomology in the cases relevant to us we will need a slightly strengthened version of \cite[Prop.\ B.2]{Bosco23pAdicProEtCohomDrinfeldSymSp}.

\begin{proposition}\label{Prop - Comparison condensed and continuous group cohomology}
	Let $G$ be a locally profinite group.
	\begin{altenumerate}
		\item 
		For any solid $\ul{G}$-module $M$, the complex\footnote{With differentials induced by the bar resolution, see the proof of \cite[Prop.\ B.2]{Bosco23pAdicProEtCohomDrinfeldSymSp}.}
		\begin{equation*}
			M \lra \ul{\Hom}( \BZ[\ul{G}], M ) \lra  \ul{\Hom}( \BZ[\ul{G}^2], M ) \lra \ldots
		\end{equation*}
		of solid abelian groups is quasi-isomorphic to $R \ul{\Gamma} (G,M) $.
		
		\item 
		Suppose $M$ is an ultrametrisable Polish $G$-module so that $\ul{M}$ is a solid $\ul{G}$-module.
		Then $R \ul{\Gamma} (\ul{G}, \ul{M})$ is quasi-isomorphic to the condensation of $\big( C^\bullet (G, M), d^\bullet \big)$.
		In particular, for all $n \in \BN$, there are natural isomorphisms of abelian groups
		\begin{equation*}
			\ul{H}^n ( \ul{G}, \ul{M} )   (\ast) \cong  H^n(G, M)  .
		\end{equation*}
	\end{altenumerate}
\end{proposition}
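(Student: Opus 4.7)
The plan is to follow Bosco's approach for part (i) and then derive part (ii) from the explicit shape of the resulting complex combined with exactness of evaluation at the point.

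For part (i), the key observation is that the unnormalised bar complex
\begin{equation*}
\ldots \to \BZ[\ul{G}^3] \to \BZ[\ul{G}^2] \to \BZ[\ul{G}] \to \BZ \to 0
\end{equation*}
in $\mathrm{Cond(Ab)}_{\ul{G}}$ admits a canonical contracting homotopy at the level of underlying condensed sets, hence is a resolution of the trivial $\BZ[\ul{G}]$-module. Regarding $\BZ[\ul{G}^{n+1}] \cong \BZ[\ul{G}] \otimes_{\BZ} \BZ[\ul{G}^n]$ as a free left $\BZ[\ul{G}]$-module and applying $\ul{\Hom}_{\BZ[\ul{G}]}(\blank, M)$, the displayed complex in (i) emerges. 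To conclude that it computes $R\ul{\Hom}_{\BZ[\ul{G}]}(\BZ, M) = R\ul{\Gamma}(G, M)$ whenever $M$ is solid, I would argue as in Bosco that for $G$ locally profinite each $\ul{G}^n$ is locally profinite, hence a filtered colimit of profinite sets. For a profinite $S$, the compact projectivity of the solidification $\BZ[S]^\blacksquare$ in solid abelian groups together with the identification $\ul{\Hom}(\BZ[\ul{S}], M) = \ul{\Hom}(\BZ[S]^\blacksquare, M)$ for solid $M$ ensures that $\BZ[\ul{G}^{n+1}]$ is $\ul{\Hom}_{\BZ[\ul{G}]}(\blank, M)$-acyclic, so the bar complex indeed resolves $\BZ$ by objects adapted to the functor.

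For part (ii), the next step is an explicit identification of the complex of (i). Using the tensor--hom adjunction, the fact that $\BZ[\blank]$ is strong monoidal, and the exponential law for $G^n$ locally compact Hausdorff with $M$ Polish, for any $S \in \mathrm{ED}$ we have
\begin{equation*}
\ul{\Hom}(\BZ[\ul{G}^n], \ul{M})(S) = \Hom_{\mathrm{Cond(Set)}}(\ul{G}^n \times S, \ul{M}) = C(G^n \times S, M) = C(S, C(G^n, M)) .
\end{equation*}
This exhibits $\ul{\Hom}(\BZ[\ul{G}^n], \ul{M}) \cong \ul{C^n(G, M)}$ as condensed abelian groups, and it is a direct verification that the differentials induced by the bar resolution on the left coincide under this identification with the usual continuous cochain differentials $d^n$ on the right.

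For the identification on underlying abelian groups, I would use that the evaluation functor $A \mapsto A(\ast)$ from condensed to ordinary abelian groups is exact, since $\ast$ is extremally disconnected. Exactness of $(\blank)(\ast)$ means it commutes with the passage to cohomology, so
\begin{equation*}
\ul{H}^n(\ul{G}, \ul{M})(\ast) = H^n\bigl( \ul{C^\bullet(G, M)}(\ast) \bigr) = H^n(C^\bullet(G, M)) = H^n(G, M),
\end{equation*}
as desired. The main technical obstacle is the acyclicity claim in part (i): one must verify that for $M$ solid and $\ul{G}^n$ locally profinite, the free condensed $\BZ[\ul{G}]$-modules $\BZ[\ul{G}^{n+1}]$ are acyclic for $\ul{\Hom}_{\BZ[\ul{G}]}(\blank, M)$. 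This rests on the compact projectivity of $\BZ[S]^\blacksquare$ in solid abelian groups for profinite $S$ together with compatibility of the solid tensor product with free functors, which is precisely the heart of Bosco's Prop.\ B.2 that we are adapting here.
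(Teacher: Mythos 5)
Your proposal follows essentially the same approach as the paper: part (i) adapts Bosco's Prop.\ B.2 by reducing to the vanishing of $\ul{\Ext}^j(\BZ[\ul{G}^n], M)$ for $j>0$ and $M$ solid, using that $G$ locally profinite makes $S\times G^n$ locally profinite for $S\in\mathrm{ED}$, and part (ii) uses the exponential law plus full faithfulness of condensation on compactly generated Hausdorff spaces to identify the complex from (i) with the condensation of the continuous cochain complex, then invokes exactness of global sections. The only difference is that the paper carries out the $\ul{\Ext}$-vanishing concretely by evaluating at each $S\in\mathrm{ED}$ and citing Anschütz--Le Bras, whereas you gesture at the compact projectivity of $\BZ[S]^\blacksquare$ in $\mathrm{Solid}$; both are valid formulations of the same acyclicity input.
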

\begin{proof}
	For (i), the same reasoning as in the proof of \cite[Prop.\ B.2]{Bosco23pAdicProEtCohomDrinfeldSymSp} works once we show that $\ul{\Ext}^j ( \BZ[\ul{G}^{i-1}], M ) = 0$, for all $j>0$ and $i>0$.
	But for $S\in \mathrm{ED}$ we have
	\begin{align*}
		\ul{\Ext}^j ( \BZ[\ul{G}^{i-1}], M )(S) 
		&= \Ext^j ( \BZ[S] \otimes \BZ[\ul{G}^{i-1}], M )
		= \Ext^j ( \BZ[\ul{S \times G^{i-1}}], M ) .
	\end{align*}
	The last $\Ext$-group vanishes by \cite[Lemma 2.2]{AnschuetzLeBras20SolidGrpCohom} since the set $S \times G^{i-1}$ is locally profinite.

	Also for (ii), we can argue along the line of the proof of Prop.\ B.2 in \cite{Bosco23pAdicProEtCohomDrinfeldSymSp}.
	For $S \in \mathrm{ED}$ we have by \cite[X.3.4 Cor.\ 2]{Bourbaki66GenTop2}
	\begin{align*}
		\ul{C^n(G, M)}(S) 
		&= C( S, C(G^n, M) )
		\cong C( S \times G^n , M) .
	\end{align*}
	Using the fully faithfulness of condensation on compactly generated T1 spaces and
	\begin{align*}
		\Hom_\mathrm{Cond(Set)} (\ul{S \times G^n} , \ul{M} )
		&\cong \Hom ( \BZ[ \ul{S \times G^n} ] , \ul{M} ) \\
		&\cong \Hom ( \BZ[S] \otimes \BZ[\ul{G}^n], \ul{M} ) \\
		&= \ul{\Hom} ( \BZ[\ul{G}^n], \ul{M} ) (S)
	\end{align*}
	we obtain natural isomorphisms $\ul{\Hom} ( \BZ[\ul{G}^n], \ul{M} ) \cong \ul{C^n(G, M)}$, for all $n\in \BN$.	
	These are compatible with the differentials of the two complexes. 
	The last claimed isomorphism follows from the exactness of the global sections functor $X \mto X(\ast)$.
\end{proof}

\begin{remark}\label{Rmk - Notation for condensed group cohomology}
	In the situation of (ii), i.e.\ for a locally profinite group $G$ and an ultrametrisable Polish $G$-module $M$, it follows that $\ul{H}^0 \big( \ul{G}, \ul{M} \big) = \ul{M^G}$ since condensation preserves limits.
	If the action of $G$ on $M$ is trivial, we also have $\ul{H}^1 \big(\ul{G},\ul{M} \big) \cong \ul{Z^1(G,M)} = \ul{\Hom (G,M)}$.

	This justifies our use of the notation 
	\begin{equation*}
		\ul{H}^n (G, M) \defeq \ul{H}^n \big(\ul{G}, \ul{M} \big)  \qquad\text{so that}\qquad \ul{H}^n (G,M) (\ast) = H^n (G,M) 
	\end{equation*}
	for the underlying abelian groups, as well as
	\begin{equation*}
		 \ul{\Hom}(G,A) \defeq \ul{H}^1 ( \ul{G}, \ul{A}) \qquad\text{so that}\qquad \ul{\Hom}(G,A) (\ast) = \Hom (G, A) .
	\end{equation*}
	when $A$ is an ultrametrisable Polish abelian group endowed with the trivial $G$-action.
\end{remark}

\subsection{Some Theorems for Condensed Group Cohomology}

Here, we collect a useful lemma for the condensed group cohomology of an inverse system as well as versions of Shapiro's lemma, the Hochschild--Serre spectral sequence and the Mayer--Vietoris sequence for condensed group cohomology.

\begin{lemma}\label{Lemma - Group cohomology and inverse system}
	Let $G$ be a condensed group and $( M_n)_{n\in \BN}$ an acyclic inverse system of condensed $G$-modules. 
	Then
	\begin{equation*}
		 \ul{H}^{0} \big( G,  \textstyle\varprojlim_{n\in \BN} M_n  \big) \overset{\sim}{\lra} \textstyle \varprojlim_{n\in\BN} \ul{H}^{0} \big( G, M_n \big) 
	\end{equation*}
	and, for $k \in \BN$, there are natural short exact sequences of condensed abelian groups
	\begin{equation*}
		0 \lra \textstyle R^1\!\varprojlim_{n\in \BN} \ul{H}^{k} \big( G, M_n \big) \lra \ul{H}^{k+1} \big( G, \varprojlim_{n\in \BN} M_n  \big) \lra \varprojlim_{n\in\BN} \ul{H}^{k+1} \big( G, M_n \big) \lra 0 .
	\end{equation*}
\end{lemma}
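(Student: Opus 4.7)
The plan is to deduce both assertions from three standard homological facts. First, the functor $R\ul{\Gamma}(G,-) = R\ul{\Hom}_{\BZ[G]}(\BZ,-)$ is a right derived functor of a right adjoint, and in particular $R\ul{\Hom}$ commutes with limits in its second argument; hence it commutes with arbitrary homotopy limits in $D(\mathrm{Cond(Ab)})$. Second, the acyclicity hypothesis on the tower $(M_n)_{n\in\BN}$ means exactly that the natural map $\varprojlim_{n\in\BN} M_n \to R\varprojlim_{n\in\BN} M_n$ is an isomorphism in $D(\mathrm{Cond(Ab)})$. Third, since $\mathrm{Cond(Ab)}$ admits exact countable products, for any tower $(N_n)_{n\in\BN}$ in $D(\mathrm{Cond(Ab)})$ the derived inverse limit is computed by the two-term complex
\begin{equation*}
	R\varprojlim_{n\in\BN} N_n \;\simeq\; \Bigl[\, \prod_{n\in\BN} N_n \xrightarrow{\,1-\sigma\,} \prod_{n\in\BN} N_n \,\Bigr]
\end{equation*}
placed in cohomological degrees $0$ and $1$, where $\sigma$ is induced by the transition maps; in particular $R\varprojlim_{n\in\BN}$ has cohomological dimension at most $1$.

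Combining the first two facts yields the key isomorphism
\begin{equation*}
	R\ul{\Gamma}\bigl(G, \textstyle\varprojlim_{n\in\BN} M_n\bigr) \;\simeq\; R\varprojlim_{n\in\BN} R\ul{\Gamma}(G, M_n)
\end{equation*}
in $D(\mathrm{Cond(Ab)})$. I would then apply the third fact to the tower of complexes $C_n^\bullet \defeq R\ul{\Gamma}(G, M_n)$. Exactness of countable products in $\mathrm{Cond(Ab)}$ gives $H^k\bigl(\prod_n C_n^\bullet\bigr) \cong \prod_n \ul{H}^k(G, M_n)$, and the long exact sequence in cohomology associated to the displayed two-term complex decomposes into the Milnor short exact sequences
\begin{equation*}
	0 \lra R^1\!\varprojlim_{n\in\BN} \ul{H}^{k}(G, M_n) \lra \ul{H}^{k+1}\bigl(G, \textstyle\varprojlim_{n\in\BN} M_n\bigr) \lra \varprojlim_{n\in\BN} \ul{H}^{k+1}(G, M_n) \lra 0
\end{equation*}
for every $k\in\BN$, together with the identification $\ul{H}^0\bigl(G, \varprojlim_{n\in\BN} M_n\bigr) \cong \varprojlim_{n\in\BN} \ul{H}^0(G, M_n)$ in degree zero coming from $\ker(1-\sigma) = \varprojlim_{n\in\BN}$.

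The main point to verify is the third ingredient in the condensed setting, namely the exactness of countable products in $\mathrm{Cond(Ab)}$; this is well-documented in the foundational work of Clausen and Scholze. Once this is granted, the two-term resolution of $R\varprojlim_{n\in\BN}$ follows by the standard argument valid in any Grothendieck abelian category satisfying AB4$^{\ast}$, and the commutation of $R\ul{\Gamma}(G,-)$ with homotopy limits is the purely formal statement that right adjoints preserve limits, applied at the derived level to the adjunction between induction and the forgetful functor from condensed $\BZ[G]$-modules. No further subtlety is required beyond these categorical inputs.
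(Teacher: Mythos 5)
Your proposal is correct and takes essentially the same route as the paper: the two-term complex $\prod_n M_n \xrightarrow{1-\sigma} \prod_n M_n$ you use to compute $R\varprojlim$ is precisely the paper's short exact sequence $0 \to \varprojlim_n M_n \to \prod_n M_n \to \prod_n M_n \to 0$ (exact by the acyclicity hypothesis), and passing to the Milnor sequence is exactly the paper's use of the resulting long exact sequence of condensed group cohomology together with the identification $\ul{H}^k\big(G,\prod_n M_n\big) \cong \prod_n \ul{H}^k(G,M_n)$. The one imprecision is attributing the derived commutation of $R\ul{\Gamma}(G,-)$ with $R\varprojlim$ solely to ``right adjoints preserve limits''; the derived statement additionally needs AB4${}^\ast$ (so that termwise products of K-injective resolutions resolve the product), which is the same exactness input you already invoke for the two-term model of $R\varprojlim$.
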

\begin{proof}
	We denote by $f_n \colon M_{n+1} \ra M_n$ the transition maps of $( M_n)_{n\in \BN}$.
	Since we assume the inverse system to be acyclic, the sequence
	\begin{equation}\label{Eq - Inverse limit short exact sequence}
		0 \lra \textstyle \varprojlim_{n\in \BN} M_n \lra \prod_{n\in \BN} M_n \overset{\tau}{\lra} \prod_{n\in \BN} M_n \lra 0 
	\end{equation}
	is exact.
	Here, $\tau$ is the difference between the identity and the shift map, i.e.\ $\tau$ is given by $(m_n)_{n\in \BN} \mto \big(m_n - f_n(m_{n+1}) \big)_{n\in \BN}$ on sections.

	From \eqref{Eq - Inverse limit short exact sequence} we obtain a long exact sequence of condensed group cohomology
	\begin{equation*}
		\ldots \lra\ul{H}^k \big( G , \textstyle\varprojlim_{n\in \BN} M_n \big) \lra \ul{H}^k \Big( G, \prod_{n\in \BN} M_n \Big) \xrightarrow{\ul{H}^k(\tau)} \ul{H}^k \Big( G, \prod_{n\in \BN} M_n \Big)  \lra \ldots .
	\end{equation*}
	It induces $\ul{H}^0 \big( G,  \textstyle\varprojlim_{n\in \BN} M_n  \big)  \cong \Ker\big( \ul{H}^0(\tau) \big)$ and short exact sequences, for $k \in \BN$,
	\begin{equation*}
		0 \lra \Coker \big( \ul{H}^k(\tau) \big) \lra \ul{H}^{k+1} \big( G,  \textstyle\varprojlim_{n\in \BN} M_n  \big) \lra \Ker \big( \ul{H}^{k+1}(\tau) \big) \lra 0 .
	\end{equation*}
	Moreover, we have $\ul{H}^k \big( G, \prod_{n\in \BN} M_n \big) \cong \prod_{n\in \BN} \ul{H}^k \big( G, M_n)$ because $R\ul{\Hom}$ preserves limits in the second entry.
	Under this identification $\ul{H}^k(\tau)$ is the difference between the identity and the shift map again.
	We therefore have 
	\begin{equation*}
		\Ker \big( \ul{H}^{k}(\tau) \big) \cong \varprojlim_{n\in \BN} \ul{H}^k (G, M_n) \qquad\text{and}\qquad \Coker \big( \ul{H}^k(\tau) \big)  \cong R^1\!\varprojlim_{n\in \BN} \ul{H}^k (G, M_n)
	\end{equation*}
	which shows the claim.
\end{proof}

Now, let $G$ be a condensed group and $H$ a condensed subgroup of $G$.
Following Zou \cite[Def.\ 3.0.7]{Zou24CatFormFarguesConjTori}, we define for a condensed $H$-module $M$ the \emph{coinduction}
\begin{align*}
	\mathrm{coind}^G_H(M) \defeq \ul{\Hom}_{\BZ[H]} (\BZ[G],M)
\end{align*}
and considers it as a left condensed $G$-module via precomposing with the inversion of $G$.

\begin{lemma}\label{Lemma - Coinduction and condensation}
	Let $G$ be a locally profinite group with an open subgroup $H$.
	Let $M$ be an ultrametrisable Polish abelian group endowed with the trivial $H$-action.
	Then there is an isomorphism of condensed $\ul{G}$-modules 
	\begin{equation*}
		\mathrm{coind}^\ul{G}_\ul{H} (\ul{M}) \cong \ul{C(G/H, M)}
	\end{equation*}
	where $g\in G$ acts on $f\in C(G/H, M)$ by $g \cdot f \defeq f(g^{-1} \blank)$.
\end{lemma}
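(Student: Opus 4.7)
The plan is to compute sections of both sides on any extremally disconnected $S$ and identify them naturally. First, unwinding the definition, for $S \in \mathrm{ED}$,
\[
\mathrm{coind}^\ul{G}_\ul{H}(\ul{M})(S) = \Hom_{\BZ[\ul{H}]}\!\big(\BZ[\ul{S}] \otimes \BZ[\ul{G}],\, \ul{M}\big),
\]
where $\ul{H}$ acts trivially on $\BZ[\ul{S}]$ and by right multiplication on $\BZ[\ul{G}]$. Since condensation preserves products, $\BZ[\ul{S}] \otimes \BZ[\ul{G}] = \BZ[\ul{S\times G}]$. Because the $\ul{H}$-action on $\ul{M}$ is trivial, the $\BZ[\ul{H}]$-equivariance condition forces the map to factor through the $\ul{H}$-coinvariants, which are precisely $\BZ[\ul{S \times G/H}]$ (the underlying quotient of $S \times G$ by the right $H$-translation is $S \times G/H$). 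Combined with the freeness of $\BZ[-]$, this gives
\[
\mathrm{coind}^\ul{G}_\ul{H}(\ul{M})(S) = \Hom_{\text{Cond(Set)}}\!\big(\ul{S \times G/H},\, \ul{M}\big).
\]

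Second, since $H$ is open in $G$, the quotient $G/H$ is discrete, so $S \times G/H$ is locally compact Hausdorff and in particular compactly generated T1. By the full faithfulness of condensation on compactly generated T1 spaces, the right hand side equals $C(S \times G/H,\, M)$. The exponential law for continuous mapping spaces, applicable since $G/H$ is locally compact Hausdorff, then provides a natural homeomorphism
\[
C(S \times G/H,\, M) \;\cong\; C\!\big(S,\, C(G/H, M)\big) = \ul{C(G/H, M)}(S).
\]
Assembling these identifications yields a natural isomorphism $\mathrm{coind}^\ul{G}_\ul{H}(\ul{M}) \overset{\sim}{\lra} \ul{C(G/H, M)}$ of condensed abelian groups.

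Finally, I would verify $\ul{G}$-equivariance. Tracing a section through the chain of identifications, the left $\ul{G}$-action on $\mathrm{coind}$ (obtained from the left action on $\BZ[\ul{G}]$ by inversion) sends a continuous map $F \colon S \times G/H \to M$ to $(s, xH) \mapsto F(s, g^{-1}xH)$; under the exponential law this corresponds exactly to the action $(g \cdot f)(xH) = f(g^{-1}xH)$ on $\ul{C(G/H, M)}$ specified in the statement. The main obstacle is bookkeeping the left/right action conventions and making sure that the condensed product $\ul{S} \times \ul{G/H}$ coincides with $\ul{S \times G/H}$, but both reduce to standard facts about compactly generated spaces and the exponential law once $G/H$ is recognised as a discrete (hence locally compact Hausdorff) space.
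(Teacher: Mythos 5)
Your chain of identifications runs the paper's proof in the reverse direction and arrives at the right answer, but the crucial middle step is asserted rather than proved. You claim that the $\ul{H}$-coinvariants of $\BZ[\ul{S\times G}]$ are $\BZ[\ul{S\times G/H}]$, citing only that the underlying topological quotient of $S\times G$ by the $H$-translation is $S\times G/H$. In $\mathrm{Cond(Ab)}$ coinvariants are a colimit and hence computed via sheafification, so agreement of the condensed quotient with the condensation of the topological quotient is exactly the non-trivial input --- and it is precisely here that the openness of $H$ (equivalently, discreteness of $G/H$) is used; without it the claim is false. The requisite fact is that $\BZ[\ul{G}]$ is free as a right $\BZ[\ul{H}]$-module on $\ul{G/H}$, i.e.\ $\BZ[\ul{H}][\ul{G/H}]\cong\BZ[\ul{G}]$; this is \cite[Lemma~3.0.9]{Zou24CatFormFarguesConjTori}, which the paper invokes at this same step, and applying $\blank\otimes_{\BZ[\ul{H}]}\BZ$ and then $\BZ[\ul{S}]\otimes\blank$ yields your coinvariants identity. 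Alternatively, a direct argument runs as in \Cref{Lemma - Free condensed abelian group over discrete set}: a continuous map from a compact extremally disconnected $T$ to the discrete set $G/H$ has finite image, so after a finite clopen decomposition of $T$ it lifts constantly to $G$, which makes the presheaf quotient already a sheaf.

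Once this input is supplied, the remaining steps --- the exponential law for the locally compact Hausdorff space $G/H$, full faithfulness of condensation on compactly generated T1 spaces, and the $\ul{G}$-equivariance check --- are correct and coincide with the paper's argument read backwards. The point that $\ul{S}\times\ul{G/H}=\ul{S\times G/H}$, which you flag, is indeed routine once $G/H$ is discrete; the coinvariants step is the one that needs a citation or a proof.
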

\begin{proof}
	For $S\in \Prof$, by \cite[X.3.4 Cor.\ 2]{Bourbaki66GenTop2} and fully faithfulness of $X \mto \ul{X}$ on the category of compactly generated Hausdorff spaces, we have natural isomorphisms
	\begin{align*}
		C\big( S, C  (G/H, M ) \big)
		&\cong C \big(G/H \times S, M \big) 
		\cong \Hom_{\mathrm{Cond(Set)}} \big( \ul{G/H}  \times\ul{S}  , \ul{M} \big) .
	\end{align*}
	Since the functor $X \mto \BZ[\ul{H}][X]$ is left adjoint to the forgetful functor from condensed $\BZ[\ul{H}]$-modules to condensed sets, we obtain
	\begin{align*}
		\Hom_{\mathrm{Cond(Set)}} \big(  \ul{G/H}\times \ul{S} , \ul{M} \big) 
		&\cong \Hom_{\BZ [\ul{H}] } \big( \BZ [ \ul{H} ][  \ul{G/H}  \times \ul{S}], \ul{M} \big) \\
		&\cong \Hom_{\BZ [\ul{H}] } \big( \BZ [ \ul{G} ] \otimes_{\BZ[\ul{H}]}   \BZ[\ul{H}] [ \ul{S} ], \ul{M} \big)  \\
		&\cong \Hom_{\BZ [\ul{H}] } \big( \BZ [ \ul{H} ][\ul{S}] , \ul{\Hom}_{\BZ[\ul{H}]} ( \BZ[ \ul{G} ] , \ul{M} ) \big) \\
		&\cong \Hom_{\mathrm{Cond(Set)}} \big( \ul{S} , \ul{\Hom}_{\BZ[\ul{H}]} ( \BZ[ \ul{G} ] , \ul{M} ) \big) .
	\end{align*}
	For the second isomorphism here, we have used that $X \mto \BZ[\ul{H}][X]$ is symmetric monoidal and that there is an isomorphism $\BZ[\ul{H}][\ul{G/H}] \cong \BZ[\ul{G}]$ of right $\BZ[\ul{H}]$-modules as $G/H$ is discrete \cite[Lemma 3.0.9]{Zou24CatFormFarguesConjTori}.
	The third isomorphism is the tensor-hom adjunction, cf.\ \cite[Prop.\ A.21]{Tang24ProfinSolidCohom}, and the last one comes from adjunction for $X \mto \BZ[\ul{H}][X]$ again.
	In total, we arrive at an isomorphism $	\ul{C (G/H, M)}	\cong \mathrm{coind}^\ul{G}_\ul{H} (\ul{M}) $ by the Yoneda lemma.
	Moreover, one verifies that the above isomorphisms are $\ul{G}(S)$-equivariant.
\end{proof}

\begin{proposition}[{Shapiro's lemma, \cite[Lemma 3.0.10]{Zou24CatFormFarguesConjTori}}]\label{Prop - Shapiro's lemma}
	Let $G$ be a condensed group with a condensed subgroup $H$, and let $M$ be a condensed $H$-module.
	If $\BZ[G]$ is projective over $\BZ[H]$, then there are canonical isomorphisms, for all $n\in \BN$, 
	\begin{align*}
		\ul{H}^n \big(G, \mathrm{coind}^G_H(M) \big) \cong \ul{H}^n (H, M) .
	\end{align*}
\end{proposition}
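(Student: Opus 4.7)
The plan is to derive the claimed isomorphisms from the standard tensor-hom adjunction between restriction and coinduction, with the projectivity hypothesis ensuring that the adjunction passes to the derived level.

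First I would establish the underived adjunction
\begin{equation*}
	\ul{\Hom}_{\BZ[G]}\big( N, \mathrm{coind}_H^G(M) \big) \cong \ul{\Hom}_{\BZ[H]}\big( N, M \big)
\end{equation*}
for any condensed $\BZ[G]$-module $N$, where on the right $N$ is viewed as a $\BZ[H]$-module via restriction along $\BZ[H] \hookrightarrow \BZ[G]$.  This is the internal tensor-hom adjunction in $\mathrm{Cond(Ab)}$ (cf.\ \cite[Prop.\ A.21]{Tang24ProfinSolidCohom}) applied to the identification $N \otimes_{\BZ[G]} \BZ[G] \cong N$, together with the definition $\mathrm{coind}_H^G(M) = \ul{\Hom}_{\BZ[H]}( \BZ[G], M)$.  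In particular this exhibits $\mathrm{Res}^G_H$ as left adjoint to $\mathrm{coind}_H^G$.

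Next I would pick a projective resolution $P_\bullet \to \BZ$ in the category of condensed $\BZ[G]$-modules (such resolutions exist since $\mathrm{Cond(Ab)}$ has enough projectives).  The hypothesis that $\BZ[G]$ is projective over $\BZ[H]$ is precisely the assertion that $\mathrm{coind}_H^G = \ul{\Hom}_{\BZ[H]}(\BZ[G], -)$ is exact; by the adjunction just established, this is equivalent to $\mathrm{Res}^G_H$ preserving projectives.  Hence $P_\bullet \to \BZ$ remains a projective resolution of $\BZ$ in the category of condensed $\BZ[H]$-modules.  Applying $\ul{\Hom}_{\BZ[G]}\big(-,\mathrm{coind}_H^G(M)\big)$ to $P_\bullet$ and invoking the adjunction termwise gives a natural isomorphism of complexes
\begin{equation*}
	\ul{\Hom}_{\BZ[G]}\big( P_\bullet, \mathrm{coind}_H^G(M) \big) \cong \ul{\Hom}_{\BZ[H]}\big( P_\bullet, M \big) .
\end{equation*}
Passing to the $n$-th cohomology and using \Cref{Def - Condensed Group Cohomology} yields the desired natural isomorphisms $\ul{H}^n \big( G, \mathrm{coind}_H^G(M) \big) \cong \ul{H}^n(H,M)$.

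I expect the main obstacle to be purely formal rather than technical: the only delicate point is the internal tensor-hom adjunction for condensed modules over condensed rings, which has to hold in the sheafified sense and not merely on global sections.  Once that is granted, the projectivity hypothesis plays its role exclusively in guaranteeing that a single projective resolution over $\BZ[G]$ simultaneously computes the derived functors on both sides, and the remainder of the argument is a routine manipulation.
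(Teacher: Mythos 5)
The paper does not give its own proof of this statement—it cites \cite[Lemma 3.0.10]{Zou24CatFormFarguesConjTori}—so there is no internal argument to compare against. The tensor-hom strategy you use (underived adjunction, then computing both sides from a single projective $\BZ[G]$-resolution $P_\bullet$ of $\BZ$) is the natural one and presumably the intended one.

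However, the step asserting that ``$\BZ[G]$ is projective over $\BZ[H]$ is precisely the assertion that $\mathrm{coind}^G_H = \ul{\Hom}_{\BZ[H]}(\BZ[G],-)$ is exact'' conflates external and internal projectivity, and this is a genuine gap in the condensed setting. Projectivity of $\BZ[G]$ gives exactness of the global functor $\Hom_{\BZ[H]}(\BZ[G],-)$, whereas exactness of the internal $\ul{\Hom}$ requires $\ul{\Ext}^j_{\BZ[H]}(\BZ[G],A) = 0$ for all $j>0$; evaluating at $S \in \mathrm{ED}$ this reads $\Ext^j_{\BZ[H]}\big(\BZ[G] \otimes_\BZ \BZ[S], A\big) = 0$. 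If $\BZ[G]$ is a direct summand of $\bigoplus_i \BZ[H]\otimes_\BZ\BZ[T_i]$ with $T_i \in \mathrm{ED}$, then $\BZ[G]\otimes_\BZ \BZ[S]$ is a direct summand of $\bigoplus_i \BZ[H]\otimes_\BZ\BZ[T_i\times S]$, and $T_i \times S$ is merely profinite; $\BZ[T]$ for a non-extremally-disconnected profinite set $T$ is in general not projective in $\mathrm{Cond(Ab)}$. The paper confronts exactly this obstruction in \Cref{Lemma - Projective implies internally projective in Solid} and overcomes it only under a solidity hypothesis on the target. Your argument therefore yields the conclusion under a somewhat stronger input—internal projectivity of $\BZ[G]$ over $\BZ[H]$, solidity of $M$, or $G/H$ discrete so that $\BZ[G] \cong \bigoplus_{G/H}\BZ[H]$ and restriction visibly preserves projectives (the case actually used in this paper)—but the claimed equivalence with the stated hypothesis is not justified, and supplying that justification is the nontrivial step your proof elides.
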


The condition on $\BZ[G]$ and $\BZ[H]$ is satisfied for example when $G/H$ is discrete \cite[Lemma 3.0.9]{Zou24CatFormFarguesConjTori}. 
\\

Now suppose that the condensed subgroup $H$ is \emph{normal}, i.e.\ $H(S) \subset G(S)$ is normal for all $S\in \Prof$.
For a condensed $G$-module $M$, the condensed $H$-group cohomology of $M$ then carries a residual $G/H$-action.
We have a version of the Hochschild--Serre spectral sequence in this situation.

\begin{proposition}[{Hochschild--Serre spectral sequence, cf.\ \cite[Prop.\ 3.3.6]{BarthelSchlankStapletonWeinstein25RatK(n)LocSphere}, \cite[Rmk.\ 3.0.16]{Zou24CatFormFarguesConjTori}}]\label{Prop - Hochschild-Serre spectral sequence}
	Let $G$ be a condensed group with a normal condensed subgroup $H$, and let $M$ be a condensed $G$-module.
	There is a spectral sequence 
	\begin{equation}\label{Eq - Hochschild-Serre spectral sequence}
		E_2^{i,j} = \ul{H}^i \big( G/H , \ul{H}^j ( H,M ) \big) \,\Rightarrow\, \ul{H}^{i+j} ( G, M ) .
	\end{equation}
	In particular, there is a $5$-term exact sequence of condensed abelian groups
	\begin{align*}
		0 \ra \ul{H}^1 \big( G/H , M^H \big) \ra \ul{H}^1 ( G, M) \ra \ul{H}^1 ( H, M )^{G/H} \ra \ul{H}^2 \big( G/H , M^H \big) \ra \ul{H}^2 (G, M) .
	\end{align*}
\end{proposition}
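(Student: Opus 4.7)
The plan is to obtain the spectral sequence \eqref{Eq - Hochschild-Serre spectral sequence} as a Grothendieck spectral sequence for the composition of two derived functors. Consider the commutative triangle of functors
\begin{equation*}
	\begin{tikzcd}[row sep=scriptsize]
		\mathrm{Cond(Ab)}_G \ar[rr,"(-)^G"] \ar[dr, "(-)^H"'] & & \mathrm{Cond(Ab)} \\
		& \mathrm{Cond(Ab)}_{G/H} \ar[ur, "(-)^{G/H}"'] &
	\end{tikzcd}
\end{equation*}
where for a condensed $G$-module $M$ the $H$-invariants $M^H = \ul{\Hom}_{\BZ[H]}(\BZ,M)$ inherit a residual $\ul{G/H}$-action because $H$ is normal (so that $\BZ[G/H]$ acts naturally on $\ul{\Hom}_{\BZ[H]}(\BZ,M)$ by precomposition with multiplication by representatives of $G/H$). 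One immediately checks that $(M^H)^{G/H} = M^G$. The right derived functors of $(-)^G$, $(-)^H$ and $(-)^{G/H}$ compute $R\ul{\Gamma}(G,-)$, $R\ul{\Gamma}(H,-)$ and $R\ul{\Gamma}(G/H,-)$ respectively, by definition.

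The first step is to check that both $\mathrm{Cond(Ab)}_G$ and $\mathrm{Cond(Ab)}_{G/H}$ are Grothendieck abelian categories and therefore have enough injectives; this is standard because they are equivalent to categories of sheaves of $\BZ[\underline{G}]$- (resp.\ $\BZ[\underline{G/H}]$-)modules on $\ast_\mathrm{pro\acute{e}t}$. The second step, which is the heart of the argument, is to verify that the functor $(-)^H$ sends injective condensed $G$-modules to $(G/H)$-acyclic condensed $(G/H)$-modules. For this I would show that the forgetful (restriction of scalars) functor
\begin{equation*}
	\mathrm{Res} \colon \mathrm{Cond(Ab)}_{G/H} \lra \mathrm{Cond(Ab)}_G
\end{equation*}
along the quotient $G \twoheadrightarrow G/H$ is exact and is left adjoint to $(-)^H$; indeed, for a condensed $(G/H)$-module $N$ we have $\mathrm{Res}(N)^H = N$ canonically (as $H$ acts trivially on $\mathrm{Res}(N)$), which gives the adjunction isomorphism
\begin{equation*}
	\Hom_{\BZ[G]}\big(\mathrm{Res}(N), M\big) \overset{\sim}{\lra} \Hom_{\BZ[G/H]}\big(N, M^H\big) \,,\quad \varphi \lto \varphi \,.
\end{equation*}
As a right adjoint of an exact functor, $(-)^H$ therefore preserves injectives, in particular acyclics.

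With these ingredients in place, Grothendieck's theorem on the spectral sequence of a composition of functors (applicable since the composition $(-)^{G/H} \circ (-)^H = (-)^G$ has an obviously well-defined derived functor) yields precisely
\begin{equation*}
	E_2^{i,j} = \ul{H}^i \big( G/H, \ul{H}^j(H,M) \big) \,\Rightarrow\, \ul{H}^{i+j}(G,M) .
\end{equation*}
The $5$-term exact sequence is then the usual formal consequence of the low-degree terms of any first-quadrant spectral sequence. The main obstacle I anticipate is the verification in step two, namely that the $G/H$-action on $M^H$ really is well-defined in the condensed sense and that the adjunction $\mathrm{Res} \dashv (-)^H$ genuinely holds internally (so that one may apply it to compute $\ul{\Hom}$, not just $\Hom$); but once one unwinds that $\BZ[\ul{G}] \otimes_{\BZ[\ul{H}]} \BZ \cong \BZ[\ul{G/H}]$ as condensed $\BZ[\ul{G/H}]$-modules (using that the sheafification of $S \mto \BZ[G(S)/H(S)]$ agrees with $\BZ[\ul{G/H}]$ since $H$ is normal), this reduces to a direct tensor--hom adjunction computation entirely analogous to the classical discrete case.
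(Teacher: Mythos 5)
Your approach — realizing the spectral sequence as a Grothendieck spectral sequence for the composition $(-)^{G/H} \circ (-)^H = (-)^G$, using that restriction of scalars along $G\twoheadrightarrow G/H$ is exact and left adjoint to $(-)^H$ so that $(-)^H$ preserves injectives — is precisely the strategy the paper uses. However, there is a genuine gap in your first step: you assert that $\mathrm{Cond(Ab)}_G$ and $\mathrm{Cond(Ab)}_{G/H}$ are Grothendieck abelian categories with enough injectives "because they are equivalent to categories of sheaves of modules on $\ast_{\mathrm{pro\acute{e}t}}$". This is false. The paper explicitly flags this point in the paragraph preceding the proposition: the only injective object of $\mathrm{Cond(Ab)}$ is the zero object, so $\mathrm{Cond(Ab)}_G$ does not have enough injectives. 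The reason is set-theoretic: $\mathrm{Cond(Ab)}$ as defined by Clausen--Scholze (see their footnote on p.\ 15) is a filtered union of the categories $\mathrm{Cond}_\kappa(\mathrm{Ab})$ over uncountable strong limit cardinals $\kappa$, and this union is not a Grothendieck category (the site $\ast_{\mathrm{pro\acute{e}t}}$ is not small).

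The paper's actual fix is \Cref{Lemma - kappa-condensed and condensed group cohomology}: one first chooses a single $\kappa$ large enough that all the $\ul{H}^n$ appearing in the spectral sequence coincide with their $\kappa$-condensed analogues $\ul{H}^n_\kappa$, computed in $\mathrm{Cond}_\kappa(\mathrm{Ab})_G$. For a fixed $\kappa$ the site $\ast_{\kappa\text{-pro\acute{e}t}}$ is small, so $\mathrm{Cond}_\kappa(\mathrm{Ab})_G$ is genuinely a category of sheaves of modules on a ringed site and has enough injectives. At that point your adjunction argument applies verbatim, and the Grothendieck spectral sequence exists in the $\kappa$-condensed category; the reduction lemma then transports it back to condensed group cohomology. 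Without this $\kappa$-reduction step your proof does not go through.
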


We would like to deduce this spectral sequence as the Grothendieck spectral sequence associated to the composition of functors $(\blank)^{G/H}\circ (\blank)^H $.
However, the category of condensed $H$-modules does not necessarily have enough injective objects\footnote{For example, the only injective object of $\mathrm{Cond(Ab)}$ is the zero object.}.
To circumvent this problem, we pass to the category $\mathrm{Cond_\kappa(Ab)}_G$ of $\kappa$-condensed $G$-modules, cf.\ \cite[Rmk.\ 1.3]{ClausenScholze19CondMath}, where $\kappa$ is an uncountable strong limit cardinal such that $G$ is a $\kappa$-condensed group.
For a $\kappa$-condensed $G$-module $M$, we define
\begin{equation*}
	R \ul{\Gamma}_\kappa (G,M) \defeq R\ul{\Hom}_{\kappa, \BZ[G]} (\BZ, M) \qquad\text{and}\qquad \ul{H}^n_\kappa (G,M) \defeq R^n \ul{\Gamma}_\kappa (G,M) .
\end{equation*}

\begin{lemma}\label{Lemma - kappa-condensed and condensed group cohomology}
	Let $G$ be a condensed group and $M$ a condensed $G$-module.
	Then there exists an uncountable strong limit cardinal $\kappa$ such that $G$ and $M$ are $\kappa$-condensed and, for all $\kappa' \geq \kappa$,
	\begin{equation*}
		\ul{H}^n (G,M) = \ul{H}^n_{\kappa'} (G,M)\,,\quad\text{for all $n\in \BN$.}
	\end{equation*}
\end{lemma}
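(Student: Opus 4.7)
The plan is to show that both condensed group cohomologies are computed by the same cochain model — the bar complex — whose terms are manifestly invariant under enlarging the bounding cardinal.

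First I would fix an uncountable strong limit cardinal $\kappa_0$ such that both $G$ and $M$ are $\kappa_0$-condensed. Because strong limits are closed under finite products, every Cartesian power $G^n$ is then also $\kappa_0$-condensed, and a fortiori $\kappa'$-condensed for each $\kappa' \geq \kappa_0$. Next I would introduce the bar complex with $n$-th term $B^n(G,M) \defeq \ul{\Hom}(\BZ[G^n], M)$ and differentials induced by the standard face maps of $G^{\bullet+1}$. By the tensor-hom adjunction, $B^n(G,M) \cong \ul{\Hom}_{\BZ[G]}(\BZ[G^{n+1}], M)$, where $\BZ[G^{n+1}]$ carries the $\BZ[G]$-action coming from translation in the first factor. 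The simplicial object $\BZ[G^{\bullet+1}] \to \BZ$ is the canonical resolution arising from the free-forgetful comonad between $\mathrm{Cond}(\mathrm{Ab})$ and $\mathrm{Cond}(\mathrm{Ab})_{\BZ[G]}$, and general comonadic resolution arguments show that $B^\bullet(G,M)$ computes $R\ul{\Gamma}(G,M) = R\ul{\Hom}_{\BZ[G]}(\BZ, M)$. The same reasoning carried out inside $\mathrm{Cond}_{\kappa'}(\mathrm{Ab})$ shows that the analogously defined complex there computes $R\ul{\Gamma}_{\kappa'}(G,M)$.

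It then remains to identify these two cochain complexes. For each $\kappa'$-condensed $X$ and $M$, one verifies that $\ul{\Hom}(\BZ[X], M)$ is itself $\kappa'$-condensed, with value $\Hom_{\mathrm{Cond(Set)}}(X \times S, M)$ on any extremally disconnected $S$; restricting this formula to $\kappa'$-small $S$ recovers exactly the internal Hom taken in $\mathrm{Cond}_{\kappa'}(\mathrm{Ab})$. Applying this term by term to $B^\bullet(G,M)$ yields the desired equality of cohomology groups in every degree.

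The main obstacle is the foundational bookkeeping of the first step: checking carefully that the bar construction does resolve $\BZ$ as a condensed $\BZ[G]$-module (both in the full condensed category and in its $\kappa'$-truncated version), and that internal Hom is compatible under the fully faithful inclusion $\mathrm{Cond}_{\kappa'}(\mathrm{Ab}) \hookrightarrow \mathrm{Cond}(\mathrm{Ab})$. These are standard but set-theoretically delicate facts from the foundational discussion in \cite{ClausenScholze19CondMath}; once they are invoked the remainder of the argument is formal.
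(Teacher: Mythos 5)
There is a genuine gap in your approach. Your proof plan hinges on the claim that the bar complex $B^n(G,M)=\ul{\Hom}(\BZ[G^n],M)\cong\ul{\Hom}_{\BZ[G]}(\BZ[G^{n+1}],M)$ computes $R\ul{\Gamma}(G,M)=R\ul{\Hom}_{\BZ[G]}(\BZ,M)$, invoking comonadic resolution generalities. This fails in the generality of the lemma. The simplicial object $\BZ[\ul{G}^{\bullet+1}]\to\BZ$ is indeed a resolution, but its terms $\BZ[\ul{G}]\otimes\BZ[\ul{G}^n]$ are not projective over $\BZ[\ul{G}]$, because $\BZ[\ul{G}^n]$ is generally not projective in $\mathrm{Cond(Ab)}$ (projectives there are retracts of sums of $\BZ[T]$ with $T$ extremally disconnected). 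To use a non-projective resolution to compute $R\ul{\Hom}_{\BZ[G]}(\BZ,M)$ you would need $\ul{\Ext}^j_{\BZ[\ul{G}]}(\BZ[\ul{G}^{n+1}],M)\cong\ul{\Ext}^j(\BZ[\ul{G}^n],M)$ to vanish for $j>0$; the paper proves exactly this vanishing in the proof of \Cref{Prop - Comparison condensed and continuous group cohomology}, but only under the hypotheses that $G$ is locally profinite and $M$ is solid. The present lemma has no such hypotheses — it is stated for an arbitrary condensed group and condensed module precisely because it is needed (via \Cref{Lemma - kappa-condensed and condensed group cohomology}) to establish the Hochschild--Serre spectral sequence, which goes through $\kappa$-condensed categories where enough injectives exist. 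So the bar complex is not a legitimate cochain model here.

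A secondary issue: your second step asserts that restricting the formula $\ul{\Hom}(\BZ[X],M)(S)\cong\Hom_{\mathrm{Cond(Set)}}(X\times S,M)$ to $\kappa'$-small $S$ ``recovers exactly'' $\ul{\Hom}_{\kappa'}(\BZ[X],M)$, and you treat this as standard bookkeeping. In fact this is the real content of the lemma and is not automatic: the internal Hom in $\mathrm{Cond}_{\kappa'}(\mathrm{Ab})$ need not coincide with the restriction of the internal Hom in $\mathrm{Cond}(\mathrm{Ab})$; they agree only for $\kappa'$ large enough, and the required bound depends on the objects. The paper's proof fixes an honest projective resolution $P_\bullet$ of $\BZ$ over $\BZ[\ul{G}]$ in $\mathrm{Cond}_{\kappa_{-1}}(\mathrm{Ab})_G$, observes that it stays projective under enlarging the cardinal (so the same resolution computes all the cohomologies), invokes [Lan22, Prop.\ 5.5] to find, degree by degree, a cardinal $\kappa_i$ past which $\ul{\Hom}_{\kappa',\BZ[G]}(P_i,M)$ stabilizes, and then takes $\kappa=\sup_{i\in\BN}\kappa_i$. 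That degree-dependent choice followed by a supremum over $\BN$ is exactly the step your plan elides; without it the uniformity over all $n$ claimed in the lemma is not established.
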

\begin{proof}
	Let $\kappa_{-1}$ be an uncountable strong limit cardinal such that $G$ and $M$ are $\kappa_{-1}$-condensed sets.
	Let $P_\bullet$ be a projective resolution of $\BZ[G]$ in $\mathrm{Cond_{\kappa_{-1}}(Ab)}_G$ so that $\ul{H}^n_{\kappa_{-1}} (G,M)$ is the cohomology of the complex $\ul{\Hom}_{\kappa_{-1}, \BZ[G]} (P_\bullet, M)$.

	When viewing $\mathrm{Cond_{\kappa_{-1}}(Ab)}_G$ as subcategory of $\mathrm{Cond_{\kappa'}(Ab)}_G$, for $\kappa'\geq \kappa_{-1}$, or $\mathrm{Cond(Ab)}_G$ this remains a projective resolution of $\BZ[G]$, see the proof of \cite[Thm.\ 5.1]{Land22CondMath}.
	Therefore, $P_\bullet$ also computes $\ul{H}^n_{\kappa'} (G,M)$ and $\ul{H}^n (G,M)$.

	As seen in the proof of \cite[Prop.\ 5.5]{Land22CondMath}, for all $i \in \BN$, there exists $\kappa_i \geq \kappa_{-1}$ such that 
	\begin{equation*}
		\ul{\Hom}_{\kappa_i, \BZ[G]} (P_i, M) = \ul{\Hom}_{\kappa', \BZ[G]} (P_i, M) = \ul{\Hom}_{\BZ[G]} (P_i, M) \,,
	\end{equation*}
	for all $\kappa'\geq \kappa_i$.
	We define $\kappa \defeq \sup_{i\in \BN} \kappa_i$.
	For $\kappa' \geq \kappa$ we then have
	\begin{align*}
		\ul{\Hom}_{\kappa', \BZ[G]} (P_\bullet, M) = \ul{\Hom}_{ \BZ[G]} (P_\bullet, M) 
	\end{align*}
	which proves the claim.  
\end{proof}

\begin{proof}[{Proof of \Cref{Prop - Hochschild-Serre spectral sequence}}]
	By \Cref{Lemma - kappa-condensed and condensed group cohomology} and taking the supremum we find an uncountable strong limit cardinal $\kappa$ such that $\ul{H}^n = \ul{H}^n_\kappa$ for all group cohomology terms appearing in \eqref{Eq - Hochschild-Serre spectral sequence}.
	Since the inclusion functor of $\mathrm{Cond_\kappa(Ab)}$ into $\mathrm{Cond(Ab)}$ is exact (see the proof of \cite[Thm.\ 5.1]{Land22CondMath}), it thus suffices to work with $\mathrm{Cond_\kappa(Ab)}_G$ and show the spectral sequence for $\kappa$-condensed group cohomology.

	In this situation, we can apply the classical reasoning for the Hochschild--Serre spectral sequence, see for example \cite[6.8.2]{Weibel94IntroHomolAlg}:
	For $M \in \mathrm{Cond_\kappa(Ab)}_G$ we have 
	\begin{align*}
		\ul{\Hom}_{\kappa, \BZ[G/H]} \big( \BZ, \ul{\Hom}_{\kappa, \BZ[H]} ( \BZ, M ) \big) 
		&\cong \ul{\Hom}_{\kappa, \BZ[G/H]} \big( \BZ, \ul{\Hom}_{\kappa, \BZ[G]} \big( \BZ[G/H] , M \big) \big) \\
		&\cong \ul{\Hom}_{\kappa, \BZ[G]} \big( \BZ \otimes_{\kappa, \BZ[G/H]} \BZ[G/H] , M \big) \\
		&\cong \ul{\Hom}_{\kappa, \BZ[G]} ( \BZ,  M ) 
	\end{align*}
	by \cite[Rmk.\ 3.0.2]{Zou24CatFormFarguesConjTori} and the tensor-hom adjunction \cite[Prop.\ A.21]{Tang24ProfinSolidCohom}.
	Furthermore, $\mathrm{Cond_\kappa(Ab)}_G$ is the category of modules on the ringed site $\big(\ast_{\kappa\text{-pro\'et}}, \BZ[G]\big)$ and similarly for $G/H$.
	These categories thus have enough injective objects.

	Finally, $(\blank)^H$ is right adjoint to the forgetful functor $\mathrm{Cond_\kappa(Ab)}_G \ra \mathrm{Cond_\kappa(Ab)}_{G/H}$ induced by $G \ra G/ H$ which is exact. 
	Therefore, $(\blank)^H$ preserves injective objects and the Grothendieck spectral sequence associated to $(\blank)^{G/H} \circ (\blank)^H = (\blank)^G$ exists.
\end{proof}
$\,$

Finally, we consider a family $G_i$, $i\in I$, of condensed groups which all have a common condensed subgroup $H$.
Following the terminology of \cite[Sect.\ I.1.2]{Serre80Trees} for the setting of abstract groups, we call the associated colimit $\ast_H \, G_i$ the \emph{condensed amalgamated sum} of the $G_i$ along $H$.
It arises as the sheafification of the presheaf $S \mto \ast_{H(S)}\, G_i(S)$ on $\Prof$.

\begin{proposition}\label{Prop - Short exact sequence for condensed amalgamated sum}
	Let $G_1$ and $G_2$ be condensed groups with a common condensed subgroup $H$ and let $G \defeq G_1 \ast_H G_2$ denote their condensed amalgamated sum.
	Then the sequence 
	\begin{equation*}
	\arraycolsep=1.4pt
	\def\arraystretch{1.2}
	\begin{array}{rclcrcl}
		0 \lra \BZ[G] \otimes_{\BZ[H]} \BZ[G] &\lra&  \big( \BZ[G] \otimes_{\BZ[G_1]} \BZ[G] \big) &\oplus&  \big( \BZ[G] \otimes_{\BZ[G_2]} \BZ[G] \big) &\lra& \BZ[G] \lra 0 \\
													a \otimes b &\lto& ( a \otimes b ,  a \otimes b)	\,,							&&(a_1 \otimes b_1, a_2 \otimes b_2) &\lto& a_1 b_1 - a_2 b_2
	\end{array}
	\end{equation*}
	of $\BZ[G]$-$\BZ[G]$-bimodules is exact.
\end{proposition}
\begin{proof}
	This short exact sequence can be derived from its analogue for abstract groups. 
	Let $G'$ denote the presheaf $S \mto G_1(S) \ast_{H(S)} G_2(S)$.
	Then, \cite[Thm.\ 2]{Dic77} implies that 
	\begin{align*}
			0 \lra &\, \BZ[G'(S)] \otimes_{\BZ[H(S)]} \BZ[G'(S)] \\
					&\lra \! \Big( \BZ[G'(S)] \otimes_{\BZ[G_1(S)]} \BZ[G'(S)]\Big)\! \oplus\!  \Big( \BZ[G'(S)] \otimes_{\BZ[G_2(S)]} \BZ[G'(S)] \Big) \! \lra \BZ[G'(S)] \lra 0 
	\end{align*}
	is exact for all $S \in \Prof$.
	It follows that	the sequence of the proposition is an exact sequence of presheaves when $G$ is replaced by $G'$.
	Therefore, the associated sequence of sheaves is exact as well.
	The claim now follows because $G$ is the sheafification of $G'$ and thus $\BZ[G]$ the sheafification of $S \mto \BZ[G'(S)]$.
\end{proof}

\begin{corollary}[{Mayer--Vietoris sequence\footnote{
		For the amalgamated sum of abstract groups, such a ``Mayer--Vietoris-type'' long exact sequence of group cohomology is the content of \cite[Thm.\ 2.3]{Swan69GrpsCohomDim} for example.
		}}]\label{Cor - Mayer-Vietoris sequence}
	Let $G \defeq G_1 \ast_H G_2$ be the amalgamated sum of condensed groups $G_1$ and $G_2$ along a common condensed subgroup $H$.
	Let $M$ be a condensed $G$-module.
	Then the canonical restriction maps induce a functorial long exact sequence of condensed abelian groups
	\begin{equation*}
		\ldots \lra \ul{H}^n ( G , M ) \lra \ul{H}^n (G_1 , M) \oplus \ul{H}^n (G_2 , M) \lra \ul{H}^n ( H, M) \lra \ldots .
	\end{equation*}
\end{corollary}
\begin{proof}
	We take the tensor product of the short exact sequence of \Cref{Prop - Short exact sequence for condensed amalgamated sum} with the trivial left $\BZ[G]$-module $\BZ$ from the right:
	\begin{equation}\label{Eq - Resolution of trivial representation}
		0 \lra \BZ[G] \otimes_{\BZ[H]} \BZ \lra  \big( \BZ[G] \otimes_{\BZ[G_1]} \BZ \big) \oplus  \big( \BZ[G] \otimes_{\BZ[G_2]} \BZ \big) \lra \BZ \lra 0 .
	\end{equation}
	This sequence of left $\BZ[G]$-modules remains exact since $\BZ[G]$ in particular is a flat right $\BZ[G]$-module. 
	For a condensed $G$-module $M$, we moreover have
	\begin{align*}
		R \ul{\Hom}_{\BZ[G]} \big( \BZ[G] \otimes_{\BZ[H]} \BZ , M) &\cong 		R \ul{\Hom}_{\BZ[G]} \big( \BZ[G] \otimes_{\BZ[H]}^L \BZ , M) \\
			&\cong R \ul{\Hom}_{\BZ[H]} \big( \BZ , R \ul{\Hom}_{\BZ[G]}(\BZ[G] , M) \big) \\
			&\cong R \ul{\Hom}_{\BZ[H]} \big( \BZ , M \big)
	\end{align*}
	using that $\BZ[G]$ is a flat right $\BZ[H]$-module and the derived version of the tensor-hom adjunction \cite[Prop.\ A.21]{Tang24ProfinSolidCohom}.
	We obtain analogous isomorphisms for $G_1$ and $G_2$.
	Therefore, \eqref{Eq - Resolution of trivial representation} induces a distinguished triangle
	\begin{equation*}
		R \ul{\Gamma} ( G, M ) \lra R \ul{\Gamma}( G_1, M  ) \oplus R \ul{\Gamma}  ( G_2, M  ) \lra R \ul{\Gamma}  ( H, M  ) \lra R \ul{\Gamma}  ( G, M  )[1]
	\end{equation*}
	which yields the claimed long exact sequence.
\end{proof}

Given topological groups $G_i$, $i\in I$, with a common topological subgroup $H$, one can likewise form their \emph{amalgamated sum} $\ast_H\, G_i$ in the category of topological groups.
It is constructed as the colimit of the underlying abstract groups (see \cite[Sect.\ I.1.2]{Serre80Trees}) endowed with the final topology with respect to the canonical homomorphisms $f_i \colon G_i \ra \ast_H \, G_i$.
We remark that the $f_i$ are injective by Thm.\ 1 of {\it loc.\ cit.}.

We want to compare the condensation of $\ast_H \, G_i$ with the condensed amalgamated sum of the $\ul{G_i}$ along $\ul{H}$ in certain good situations.

\begin{proposition}\label{Prop - Condensation of pushout}
	In the above setting, additionally assume that $\ast_H \, G_i$ is Hausdorff, the $f_i$ are topological embeddings and $H$ is an open subgroup of $\ast_H \, G_i$.
	Then the canonically induced homomorphism
	\begin{equation*}
		\ast_\ul{H} \,\, \ul{G_i} \lra \ul{\ast_H \, G_i}
	\end{equation*}
	is an isomorphism of condensed groups.
\end{proposition}
\begin{proof}
	The condensed group $\ast_\ul{H}\,\, \ul{G_i} $ is the sheafification of $S \mto \ast_{C(S,H)} \, C(S,G_i)$ where the latter colimit is taken in the category of groups.
	Hence, it suffices to show that, for all $S \in \Prof$, the induced map
	\begin{equation*}
		\alpha \colon  \ast_{C(S,H)} \,C(S,G_i) \lra C \big( S, \ast_H \, G_i \big)
	\end{equation*}
	is injective and locally surjective.
	We fix such profinite $S$.

	In the following, we use that elements of amalgamated sums are uniquely represented by reduced words. 
	We refer to {\it loc.\ cit.} for the relevant definitions and precise statements.
	In this context, we choose a system of right coset representatives $R_i$ of $G_i/ H$ with $1 \in R_i$, for all $i\in I$. 
	Since $H$ is open in $G_i$, the functions $C(S,R_i) \subset C(S,G_i)$ then constitute right coset representatives of $C(S,G_i)/ C(S,H) \cong C(S, G_i/H)$.

	For $\psi \in \ast_{C(S,H)} \, C(S,G_i)$ there exist unique $n \geq 0$, a type $(i_1,\ldots,i_n) \in I^n$ and a reduced word $(\eta,\phi_{i_1},\ldots,\phi_{i_n})$ of this type with $\eta \in C(S,H)$ and $\phi_{i_k} \in C(S, R_{i_k})$, for $k=1,\ldots, n$, such that $	\psi = \eta \, \phi_{i_1} \cdots \phi_{i_n} $.
	It follows that, for all $s\in S$, 
	\begin{equation*}
		\alpha(\psi)(s) = \eta(s) \, \phi_{i_1}(s) \cdots \phi_{i_n}(s) 
	\end{equation*}
	is a representation of $\alpha(\psi)(s)$ by a reduced word as an element of $\ast_H \, G_i$.
	If $\psi \in \Ker(\alpha)$, the unique\-ness of such a representation therefore implies that $\eta$ and all $\phi_{i_k}$ are equal to the trivial function.
	Hence, $\psi$ is the identity element of $\ast_{C(S,H)} \, C(S,G_i)$.
	This show that $\alpha$ is injective.

	To prove that $\alpha$ is locally surjective, we consider $\phi \in C(S, \ast_H \, G_i)$.
	Because $H $ is open in $\ast_H \, G_i$ and by passing to a disjoint open covering of $S$, we may assume that $\phi$ takes values in only one coset $Hg$ of $\ast_H \, G_i$.
	Then there exist $n\geq 0$, a type $(i_1,\ldots,i_n) \in I^n$ and a reduced word $(h,r_{i_1},\ldots, r_{i_n})$ of this type with $h \in H$ and $r_{i_k} \in R_{i_k}$ such that $g = h\, r_{i_1} \cdots r_{i_n}$.
	We let $\phi_{i_k} \in C(S, R_{i_k})$ denote the constant function with value $r_{i_k}$ and define $\eta \in C(S,H)$ to be $\eta(s)\defeq   \phi(s)\, r_{i_n}^{-1} \cdots r_{i_1}^{-1}$.
	Then $\psi \defeq \eta \, \phi_{i_1} \cdots \phi_{i_n} $ is a preimage of $\phi$ under $\alpha$.	
\end{proof}

\end{document}